\theoremstyle{plain}
\newtheorem{lem}{Lemme}[section]
\newtheorem{thm}[lem]{Theorem}
\newtheorem{prop}[lem]{Proposition}
\newtheorem{cor}[lem]{Corollary}
\newtheorem{ass}[lem]{Assumption}
\theoremstyle{definition}
\newtheorem{defin}[lem]{Definition}
\theoremstyle{remark}
\newtheorem{re}[lem]{Remark}
\numberwithin{equation}{section}
\numberwithin{figure}{section}
\newcommand{\cE}{\mathcal{E}}
\newcommand{\cF}{\mathcal{F}}
\newcommand{\cH}{\mathcal{H}}
\newcommand{\cO}{\mathcal{O}}
\newcommand{\wE}{\widetilde{E}}
\newcommand{\bZ}{\mathbf{Z}}
\newcommand{\bR}{\mathbf{R}}
\newcommand{\bC}{\mathbf{C}}
\newcommand{\fb}{\mathfrak{b}}
\newcommand{\fg}{\mathfrak{g}}
\newcommand{\fh}{\mathfrak{h}}
\newcommand{\fk}{\mathfrak{k}}
\newcommand{\fm}{\mathfrak{m}}
\newcommand{\fn}{\mathfrak{n}}
\newcommand{\fp}{\mathfrak{p}}
\newcommand{\ft}{\mathfrak{t}}
\newcommand{\fz}{\mathfrak{z}}
\newcommand{\fu}{\mathfrak{u}}
\newcommand{\fa}{\mathfrak{a}}
\newcommand{\fc}{\mathfrak{c}}
\newcommand{\fii}{\mathfrak{i}}
\DeclareMathOperator{\Tr}{\mathrm Tr}
\DeclareMathOperator{\Ad}{\mathrm Ad}
\DeclareMathOperator{\vol}{\mathrm vol}
\DeclareMathOperator{\ad}{\mathrm ad}
\DeclareMathOperator{\Sp}{\mathrm{Sp}}
\DeclareMathOperator{\Supp}{\mathrm Supp}
\renewcommand{\Re}{\mathrm{Re}\,}
\DeclareMathOperator{\im}{\mathrm Im}
\DeclareMathOperator{\End}{\mathrm End}
\DeclareMathOperator{\Hom}{\mathrm Hom}
\DeclareMathOperator{\rk}{\mathrm rk}
\DeclareMathOperator{\GL}{\mathrm GL}
\newcommand{\<}{\langle}
\renewcommand{\>}{\rangle}
\newcommand{\ol}{\overline}
\newcommand{\p}{\partial}
\renewcommand{\(}{\left(}
\renewcommand{\)}{\right)}
\renewcommand{\[}{\left[}
\renewcommand{\]}{\right]}
\renewcommand{\l}{\leqslant}
\newcommand{\g}{\geqslant}
\newcommand{\e}{\epsilon}
\newcommand{\bbS}{\mathbb{S}}
\newcommand{\Trs}{{\rm Tr}_{\rm s}}
\begin{document}

\title[Complex valued analytic torsion and dynamical zeta function]{ 
Complex valued analytic torsion and dynamical zeta function on 
locally symmetric spaces}
\author{Shu Shen}
 \address{Institut de Math\'ematiques de Jussieu-Paris Rive Gauche, 
Sorbonne Universit\'e,  4 place Jussieu, 75252 Paris Cedex 5, France.}
\email{shu.shen@imj-prg.fr}
\thanks {I am indebted to  Xiaolong Han, Xiaonan Ma,  Jianqing Yu, 
and Weiping Zhang 
for reading  the preliminary version of  this article and for  useful 
discussions.}

\makeatletter
\@namedef{subjclassname@2020}{%
  \textup{2020} Mathematics Subject Classification}
\makeatother

\subjclass[2020]{58J20,58J52,11F72,11M36,37C30}
\keywords{Index theory and related fixed point theorems, analytic 
torsion, Selberg trace formula, Ruelle dynamical zeta function}
\date{\today}

\dedicatory{}

\begin{abstract}
We show that the Ruelle dynamical 
zeta function on a closed odd dimensional  locally symmetric space 
twisted by an arbitrary  flat vector bundle has a meromorphic 
extension to the whole complex plane 
and that its leading term in the  Laurent series at the zero point  
is  related to the regularised determinant of the flat Laplacian of 
Cappell-Miller. 
When the flat vector bundle is close to an acyclic and unitary one, we show that  the dynamical zeta function is 
regular at the zero point and that its value is equal 
to the complex valued analytic torsion of  Cappell-Miller. This generalises author's previous 
results  for unitarily flat vector bundles as well as   M\"uller and Spilioti's results on hyperbolic manifolds. 
\end{abstract}

\maketitle
\tableofcontents

\settocdepth{section}
\section*{Introduction}
The purpose of this article is to study the   relation between the 
complex valued analytic torsion of Cappell-Miller and the  value at the zero point of the Ruelle dynamical zeta 
function associated to a  flat vector bundle, which is not 
necessarily unitarily flat, on a closed  odd dimensional  locally symmetric space of reductive type. 

Let $Z$ be a smooth closed manifold. Let $F$ be a complex flat vector bundle on $Z$.
Let $H^{\scriptscriptstyle\bullet }(Z,F)$ be the cohomology of the 
sheaf of locally constant sections of $F$. We assume $H^{\scriptscriptstyle\bullet }(Z,F)=0$.

%

Let $g^{TZ},g^{F}$ be the metrics  on $TZ$ and $F$.
The analytic torsion is a real positive number  introduced by Ray and Singer \cite{RSTorsion}. It is a spectral invariant defined by the Hodge Laplacian associated with 
$g^{TZ},g^F$. If $\dim Z$ is odd, they showed that the analytic torsion does not depend on the metric data, and conjectured that, if  $F$ is unitarily flat (i.e., 
the holonomy representation of $F$ is unitary), there 
is an  equality between the  
analytic torsion and its topological counterpart, the Reidemeister 
torsion \cite{ReidemeisterTorsion,FranzTorsion, 
dRTorsion}. 

%
%
%


Ray and Singer's conjecture was proved by Cheeger \cite{Ch79} and M\"uller \cite{Muller78}, and is now known under the name of 
Cheeger-M\"uller Theorem. Bismut-Zhang \cite{BZ92} and M\"uller 
\cite{Muller2} simultaneously considered generalisations of this 
result. M\"uller \cite{Muller2} extended his result to  odd 
dimensional  oriented manifold and only $\det F$ is required to be unitarily flat. Using the 
Witten deformation \cite{Witten82}, Bismut and Zhang \cite{BZ92}  
generalised the original Cheeger-M\"uller theorem to arbitrary flat 
vector bundles with arbitrary Hermitian metrics on a manifold with arbitrary dimension.

On the other hand, Turaev \cite{Turaev89,FarberTuraev00} generalised the concept of the Reidemeister torsion to a complex valued invariant whose absolute value provides the original Reidemeister torsion, with the help of the so-called 
Euler structure. 

The analytic counterpart of the Turaev torsion was  introduced by 
Braverman-Kappeler, Burghelea-Haller, and Cappell-Miller 
separately.  In \cite{BravermanKappeler,BravermanKappeler07}, using 
the signature operator, Braverman and Kappeler defined what they called the refined analytic torsion 
for flat vector bundles on odd dimensional manifolds, and showed that 
it is equal to the Turaev torsion up to a multiplication by a complex number of absolute value one.  In 
\cite{BurgheleaHaller07,BurgheleaHaller08}, Burghelea and Haller defined a generalised analytic torsion associated to a nondegenerate symmetric bilinear form on a flat vector bundle over an arbitrary 
dimensional manifold. Following the idea of \cite{BZ92},  Su and Zhang 
\cite{SuZhang08} showed that the Burghelea-Haller torsion coincides 
with the square of the Turaev torsion. In 
\cite{BurgheleaHallerII10}, using different methods, Burghelea and 
Haller  proved Su-Zhang's result up to the sign,  in the  
case of odd dimension manifolds.
 In \cite{CappellMiller}, with the help of the flat  Laplacian, Cappell-Miller introduced  another version 
 of the analytic torsion associated to a flat vector bundle on an arbitrary dimensional 
manifold without assuming the existence of the 
nondegenerate symmetric bilinear form.  By adopting the proof of 
Su-Zhang \cite{SuZhang08}, Cappell-Miller showed that   their torsion 
invariant is equal to its topological counterpart when the manifold is oriented and is of odd dimension. 

Milnor \cite{MilnorZcover} initiated  the study of the relation 
between the  torsion invariant and a 
dynamical system.  Fried \cite{FriedRealtorsion} showed that the 
Ray-Singer analytic torsion of an acyclic unitarily flat vector bundle on  a closed odd dimensional  hyperbolic manifold  is equal 
to the value at the zero point of the Ruelle dynamical zeta function of 
the geodesic flow. He conjectured \cite[p.~66, Conjecture]{Friedconj} 
that similar results should hold true for more general flows. In 
\cite{Shfried}, following previous contributions by Moscovici-Stanton 
\cite{MStorsion}, using Bismut's orbital integral formula \cite{B09}, the author affirmed the Fried conjecture for
geodesic flows on closed odd dimensional locally symmetric manifolds equipped with a unitarily flat vector bundle.  In \cite{Shen_Yu}, the 
authors made a further generalisation to closed  odd 
dimensional locally symmetric orbifolds. We refer the reader to \cite{Ma_bourbaki} for an 
introduction to the technique used in \cite{Shfried}.






The Fried conjecture for arbitrary flat vector bundle  is recently considered  by Spilioti \cite{Spilioti15,Spilioti18,Spilioti20} and 
by M\"uller \cite{Muller20} for closed odd dimensional hyperbolic 
manifolds. When the underlying manifold is hyperbolic, Spilioti \cite{Spilioti15,Spilioti18} showed that the Ruelle dynamical  zeta function has a meromorphic extension to $\bC$. M\"uller \cite{Muller20} 
related  its leading coefficient in the  Laurent series at the   zero 
point to the complex valued analytic torsion of Cappell-Miller. When 
the flat vector bundle is close to an acyclic and unitary one, it is 
shown \cite{Muller20,Spilioti20} that  the dynamical  zeta function  is regular at the 
zero point and its value  is equal to the Cappell-Miller analytic torsion. 

In this article, we extend the above results of M\"uller and Spilioti 
to closed odd dimensional locally symmetric spaces of arbitrary ranks. 

%

We refer the reader to  \cite{S19a} for a survey 
on the Fried conjecture on different flows, like Morse-Smale flow \cite{ShenYuMorseSmale} and Anosov flow \cite{ShVariationRuelle}.

Now, we will describe our results in more detail, and explain the techniques used in their proofs.


\subsection{The analytic torsions of Ray-Singer and Cappell-Miller}
Let $Z$ \index{Z@$Z$}   be a smooth closed manifold of dimension $m$, 
and let $(F,\nabla^{F})$\index{F@$F$} be a complex flat vector bundle 
of rank $r$ on $Z$ with flat connection $\nabla^{F}$. Let 
$\Gamma$\index{G@$\Gamma$} be the fundamental group of $Z$. Let 
 $\rho:\Gamma\to \GL_{r}(\bC)$ be the  holonomy representation  of $F$.  
Let 
$\(\Omega^{\scriptscriptstyle\bullet }(Z,F),d^{Z}\)$ 
\index{O@$\Omega^\cdot(Z,F)$} be the de Rham complex  with values in 
$F$. Let $H^{\scriptscriptstyle\bullet }(Z,F)$ be the corresponding 
cohomology. 

If $X$ is the universal cover 
of $Z$, if $\(\Omega^{\scriptscriptstyle\bullet }(X)\otimes 
	\bC^{r}\)^{\Gamma}$ is the space of $\bC^{r}$-valued 
	$\Gamma$-invariant forms on $X$, then  
\begin{align}\label{eqOXR}
	\Omega^{\scriptscriptstyle\bullet 
	}(Z,F)=\(\Omega^{\scriptscriptstyle\bullet }(X)\otimes 
	\bC^{r}\)^{\Gamma}.
\end{align} 

Let $g^{TZ}$ \index{G@$g^{TZ}$} be a Riemannian metric on $TZ$.  Let  
$\Box^{Z}$ be the associated flat Laplacian acting on 
$\Omega^{\scriptscriptstyle\bullet }(Z,F)$. It can be obtained by the restriction of the Hodge Laplacian on $X$ to 
the $\Gamma$-invariant subspace \eqref{eqOXR}.

If $F$ has a flat 
metric $g^{F}$, $\Box^{Z}$ is just the Hodge Laplacian associated 
with $g^{TZ},g^{F}$. By Hodge theory, we have
\begin{align}
	\ker \Box^{Z}=H^{\scriptscriptstyle\bullet }(Z,F). 
\end{align} 
The Ray-Singer analytic torsion is a positive real number defined by the following weighted product of the zeta regularised  determinants 
\begin{align}\label{eq:inn}
T_{\rm RS}(F)=\prod_{i=1}^{m}{\rm 
det}^{*}\(\Box^Z|_{\Omega^{i}(Z,F)}\)^{(-1)^ii/2}.
\end{align}
 Formally, $\det^{*}$  means the product of non zero   
 eigenvalues  counted  with  multiplicities. By a fundamental result  
 of  Ray-Singer \cite{RSTorsion} (see also \cite[Theorem 0.1]{BZ92}), if $\dim Z$ is odd and if 
 $H^{\scriptscriptstyle\bullet }(Z,F)=0$, $T_{\rm RS}(F)$ is independent of  $g^{TZ}$ and $g^{F}$. 
 
For general $F$, $\Box^{Z}$ is a second order 
elliptic differential operator, which is not  necessarily 
self-adjoint. Still $\Box^{Z}$ has a  Hodge-like theory. Namely, if 
$\Omega_{0}^{\scriptscriptstyle\bullet }(Z,F)$ is the characteristic space of $\Box^{Z}$ associated with the eigenvalue $0$, then 
$(\Omega_{0}^{\scriptscriptstyle\bullet }(Z,F),d^{Z})$ is a finite 
dimensional complex such that  
\begin{align}\label{eqOm0}
	H^{\scriptscriptstyle\bullet 
	}\(\Omega_{0}^{\scriptscriptstyle\bullet }(Z,F),d^{Z}\)\simeq 
	H^{\scriptscriptstyle\bullet }(Z,F). 
\end{align} 

By \eqref{eqOm0}, if $\Box^{Z}$ is invertible, then $H^{\scriptscriptstyle\bullet 
}(Z,F)=0$. In this case, the Cappell-Miller torsion $T_{\rm CM}(F)\in 
\bC$ is a non vanishing complex number  given by  the following weighted product of the zeta regularised 
determinants (see Section \ref{srDet} for a definition) \begin{align}\label{eq:inn}
T_{\rm CM}(F)=\prod_{i=1}^{m}{\rm 
det}^{*}\(\Box^Z|_{\Omega^{i}(Z,F)}\)^{(-1)^ii}.
\end{align}
 Note that with 
 our convention, Cappell-Miller's original definition 
 \cite{CappellMiller} corresponds to the inverse of $T_{\rm CM}(F)$.  


For a general $F$, the Cappell-Miller torsion $T_{\rm CM}(F)$ is an 
element in the dual of  the  determinant line of the graded space 
$H^{\scriptscriptstyle\bullet }(Z,F\oplus F^{*})$  (see \cite[Section 
8]{CappellMiller}). It has been shown that \cite[Theorem 8.3]{CappellMiller} if $\dim Z$ is odd, $T_{\rm 
CM}(F)$ does not depend on $g^{TZ}$ and is a topological invariant.

\subsection{The  dynamical zeta function of Ruelle}
Let us recall the definition of the Ruelle dynamical zeta 
function  for geodesic flows introduced by Fried \cite[Section 5]{Friedconj} (see also \cite[Section 2]{S19a}).

Let $(Z,g^{TZ})$ be a connected manifold with nonpositive sectional 
curvature. 
Let $[\Gamma]$ be the set of the conjugacy classes of the fundamental 
group $\Gamma$. 
For $[\gamma]\in [\Gamma]$ \index{G@$[\gamma]$}, let 
$B_{[\gamma]}$\index{B@$B_{[\gamma]}$} be the set of closed 
geodesics in the free homotopy class associated to $[\gamma]$. It is easy to see that all the 
elements in $B_{[\gamma]}$ have the same length $\ell_{[\gamma]}$. 


For simplicity, 
assume that  all the $B_{[\gamma]}$ are  
smooth finite dimensional  submanifolds of the loop space of $Z$. 
This is the case if $(Z,g^{TZ})$ has a negative sectional curvature 
or if $Z$ is locally symmetric. If $\gamma=1$, $B_{[1]}=Z$ is the 
space of trivial geodesics. 
If $\gamma\neq 1$, the group 
$\bbS^1$ acts locally freely on  $B_{[\gamma]}$ by rotation, so that 
$ B_{[\gamma]}/\bbS^1$ is an orbifold. Let $\chi_{\rm orb}(B_{[\gamma]}/\bbS^1)\in \mathbf{Q}$ \index{C@$\chi_{\rm orb}$} be the orbifold Euler characteristic \cite{SatakeGaussB}. Denote by \index{M@$m_{[\gamma]}$}
\begin{align}
m_{[\gamma]}=\left|\ker\big(\bbS^1\to {\rm 
Diff}\big(B_{[\gamma]}\big)\big)\right|\in \mathbf{N}^{*}
\end{align}
the multiplicity of a generic element in $B_{[\gamma]}$. Let 
$\e_{[\gamma]}=\pm1$ be the Lefschetz index of the Poincar\'e return map along a closed geodesic in $B_{[\gamma]}$  induced by the geodesic flow (see 
\cite[(2.17)]{S19a} for a precise definition). If $Z$ is locally 
symmetric, then $\e_{[\gamma]}=1$. 

Let $\rho:\Gamma\to \GL_{r}(\bC)$ be a representation of $\Gamma$.  The formal dynamical zeta function of Ruelle is defined for $\sigma\in \bC$ by
\begin{align}\label{eq:ZetaE}
R_{\rho}(\sigma)=\exp\(\sum_{[\gamma]\in [\Gamma_{+}]} 
\e_{[\gamma]}\Tr[\rho(\gamma) ]\frac{\chi_{\rm orb}\(B_{[\gamma]}/\bbS^1\)}{m_{[\gamma]}}e^{-\sigma \ell_{[\gamma]}}\),
\end{align}
where $[\Gamma_{+}]= [\Gamma]-\{1\}$ is the set of the non trivial 
conjugacy classes of $\Gamma$. We will say that the formal dynamical zeta function is well defined if $R_\rho(\sigma)$ is  holomorphic for $\Re(\sigma)\gg1$ and extends meromorphically to $\sigma\in \bC$.

Observe  that if $(Z,g^{TZ})$ is of  negative sectional curvature,  then $B_{[\gamma]}\simeq \mathbb{S}^1$ and
\begin{align}
\chi_{\rm orb}\(B_{[\gamma]}/\bbS^1\)=1.
\end{align}
In this case, if $\rho$ is a trivial representation,
$R_{\rho}(\sigma)$ was recently  shown to be well defined\footnote{More 
precisely, the authors \cite{GLP2013,DyatlovZworski} showed the meromorphic 
extension when the flow is Anosov.}  
by Giulietti-Liverani-Pollicott \cite{GLP2013} and Dyatlov-Zworski \cite{DyatlovZworski}. 
Moreover, Dyatlov and Zworski \cite{zworski_zero} and  Borns-Weil and Shen \cite{BWShen20} showed that, if $(Z,g^{TZ})$ is a negatively curved  surface, the order of the zero of $R_\rho(\sigma)$ at $\sigma=0$ is related to  
the genus of $Z$. For general $\rho$, the proof of the meromorphic 
extension of $R_{\rho}$ is not particularly difficult. In 
particular, when $0$ is not a resonance of the weight dynamical system 
associated to $\rho$, then $R_{\rho}(0)$ is well defined. In this 
case,  Dang, Guillarmou, Rivi\`ere, and Shen \cite{ShVariationRuelle} 
showed that 
$R_{\rho}(0)$ remains  unchanged under a small perturbation of the geodesic flow.  

\subsection{Results of Fried, M\"uller, and  Spilioti on hyperbolic 
manifolds}
Assume that $Z$ is a connected closed odd dimensional  orientable  hyperbolic manifold.
%
%
%
Recall that $(F,\nabla^{F})$ is a flat vector bundle on $Z$ with holonomy  $\rho:\Gamma\to \GL_{r}(\bC)$.


If $F$ has a  flat metric or equivalently $\rho$ is unitary, 
using the Selberg trace formula, Fried  \cite[Theorem 
3]{FriedRealtorsion} showed that there exist explicit constants 
$C_\rho\in \bR^{*}$ and $r_\rho\in \mathbf{Z}$ such that as  $\sigma\to 0$,
\begin{align}\label{eqintroFried1}
R_\rho(\sigma)=C_\rho T_{\rm RS}(F)^2\sigma^{r_\rho}+\cO(\sigma^{r_{\rho}+1}).
\end{align}
Moreover, if $H^\cdot(Z,F)=0$, then
\begin{align}
&C_\rho=1,&r_\rho=0,
\end{align}
so that
\begin{align}\label{eq:Fintro}
R_\rho(0)=  T_{\rm RS}(F)^2.
\end{align}

When $\rho$ is not unitary, in \cite[Theorem 1.1]{Muller20}, M\"uller has shown a similar result. Namely, there exist explicit constants 
$C_\rho\in \bR^{*}$ and $r_\rho\in \mathbf{Z}$ such that as  $\sigma\to 0$,
\begin{align}
R_\rho(\sigma)=C_\rho \prod_{i=1}^{m}{\rm 
det}^{*}\(\Box^Z|_{\Omega^{i}(Z,F)}\)^{(-1)^ii}+\cO(\sigma^{r_{\rho}+1}).
\end{align}
Moreover, the constants  $C_{\rho}$ and $r_{\rho}$ depend only on 
the dimension of $\Omega_{0}^{\scriptscriptstyle\bullet }(Z,F)$.

If $\rho$ is close to a unitary and acyclic representation of 
$\Gamma$ in the sense of $C^{0}$ (\cite[Section 
6.7]{BravermanKappeler}, see also Section \ref{sAT}), by \cite[Proposition 
6.8]{BravermanKappeler} (see also \cite[Lemma 6.2]{Muller20}), we have $\Omega_{0}^{\scriptscriptstyle\bullet 
}(Z,F)=0$. In this case, in \cite[Proposition 1.3]{Muller20}, M\"uller  obtained $C_{\rho}=1$, $r_{\rho}=0$, and 
\begin{align}
	R_{\rho}(0)=T_{\rm CM}(F). 
\end{align} 
 Similar results were also obtained by Spilioti \cite[Theorem 2]{Spilioti20}.


%

\subsection{The main result of the article}
Let $G$ \index{G@$G$} be a linear connected real reductive group 
\cite[p.~3]{Knappsemi}, and let $\theta$\index{T@$\theta$} be the  Cartan involution. Let
$K$\index{K@$K$} be the maximal compact subgroup of the points of $G$ that are fixed by $\theta$.
Let $\fk$ \index{K@$\fk$}and  $\fg$\index{G@$\fg$} be the  Lie algebras of $K$ and $G$, and let $\fg=\fp\oplus \fk$ be the Cartan decomposition. Let $B$\index{B@$B$} be a nondegenerate bilinear symmetric form
on $\fg$ which is invariant under the adjoint action of $G$ and under $\theta$. Assume that $B$ is positive on $\fp$ and negative on
$\fk$. Set $X = G/K$.\index{X@$X$} Then $B|_{\fp}$ induces a 
Riemannian metric $g^{TX}$ on $X$, such that
$(X,g^{TX})$ has a parallel  nonpositive sectional curvature.

Let $\Gamma\subset G$\index{G@$\Gamma$} be a discrete  
cocompact subgroup of $G$. 
%
Assume for the moment that $\Gamma$ is torsion free. 
Set $Z=\Gamma\backslash X$.  Then $Z$ is a 
closed locally symmetric manifold,  equipped with the induced 
Riemannian metric $g^{TZ}$. Let $\rho:\Gamma\to \GL_{r}(\bC)$ 
be a representation of $\Gamma$. Let $F=\Gamma\backslash (X\times 
\bC^{r})$ be the associated flat vector bundle on $Z$.

When $\rho$ is unitary,  in \cite[p.~66, Conjecture]{Friedconj}, Fried  
conjectured similar results \eqref{eqintroFried1}-\eqref{eq:Fintro} 
still hold. This conjecture was recently proved by the author  
\cite{Shfried} 
and  generalised by Shen-Yu \cite{Shen_Yu} on locally symmetric orbifolds where 
$\Gamma$ is no longer torsion free, following previous contributions by Moscovici-Stanton  \cite{MStorsion}, using Bismut's orbital integral formula \cite{B09}. 


%

  The main result of this article concernes the 
  case where $\rho$ is not always unitary. It generalises  \cite[Theorem 
3]{FriedRealtorsion}, \cite[Theorem 1.1, Proposition 
1.3]{Muller20}, \cite[Theorem 2]{Spilioti20} to locally symmetric 
space, as well as \cite[Theorem 1.1]{Shfried} to non unitary twist. 
Recall that $\Box^Z$ is the flat Laplacian on 
$\Omega^{\scriptsize\bullet}(Z,F)$. 



\begin{thm}\label{Thm1}
	Assume that $\dim Z$ is odd. The following statements hold:
	\begin{enumerate}[i)]
	\item\label{eq1i}  The	dynamical zeta function $R_{\rho}(\sigma)$ is holomorphic for 	$\Re(\sigma)\gg1$ and extends 
	meromorphically to $\sigma \in \bC$.

	\item\label{eq1ii}  There exist explicit  constants 
 $C_{\rho}\in \bR^{*}$ 	and $r_{\rho}\in \mathbf{Z}$ (see 
 \eqref{eqCr111} and \eqref{eqCr1111}) such that, when $\sigma\to 0$,  
 we have 
	\begin{align}\label{eqdCt}
		R_{\rho}(\sigma)=C_{\rho} \left\{\prod_{i=1}^{m}{\rm 
det}^{*}\(\Box^{Z}|_{\Omega^{i}(Z,F)}\)^{(-1)^{i}i} \right\}\sigma^{r_{\rho}}+\cO(\sigma^{r_{\rho}+1}).
\end{align}
\item\label{eq1iii} If $\rho$ is close enough (see  \eqref{eqkeclose}) to an acyclic and unitary 
	representation of $\Gamma$, then $\Box^{Z}$ is invertible and 
\begin{align}\label{eqdCt11int}
&C_{\rho}=1,&r_{\rho}=0,
\end{align} 
 so that 
\begin{align}
	R_{\rho}(0)=T_{\rm CM}(F). 
\end{align} 
\end{enumerate} 
	 \end{thm}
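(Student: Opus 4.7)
The plan is to adapt the method of \cite{Shfried} to the setting of arbitrary (not necessarily unitary) flat bundles, following the roadmap of \cite{Muller20,Spilioti20} in the hyperbolic case. For $\Re(\sigma) \gg 1$, I first rewrite $\log R_{\rho}(\sigma)$ as an alternating sum
\begin{align*}
\log R_{\rho}(\sigma) = \sum_{j=0}^{m-1} (-1)^{j} \log \Xi_{j}(\sigma),
\end{align*}
where each $\Xi_{j}(\sigma)$ is a Selberg-type zeta function built from the sum over $[\Gamma_{+}]$ weighted by $\Tr[\rho(\gamma)]$ and by the contribution of $\Lambda^{j} T^{*}X$. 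The geometric side of $\log \Xi_{j}(\sigma)$ is then realised as a Mellin transform in $t$ of a heat-type quantity $\Trs\bigl[e^{-t\Box^{Z}}|_{\Omega^{j}}\bigr]$ with an exponential damping in $\sigma$.

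Next, by Bismut's orbital integral formula \cite{B09}, applied as in \cite[Section 6]{Shfried}, the hyperbolic orbital integrals on $X = G/K$ have a fully explicit Gaussian form on $\fp$, and the scalar $\Tr[\rho(\gamma)]$ enters only as a multiplicative coefficient in each $[\gamma]$-contribution, so the spectral-side computation of \cite{Shfried} goes through essentially unchanged. This identifies each $\Xi_{j}(\sigma)$, for $\Re(\sigma) \gg 1$, with a regularised determinant of a shifted flat Laplacian of the form $\Box^{Z}|_{\Omega^{j}(Z,F)} + a_{j}(\sigma)$, where $a_{j}(\sigma)$ is a polynomial in $\sigma$ built from the action of $B|_{\fp}$ on $\Lambda^{j}\fp^{*}$. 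Each such regularised determinant extends meromorphically to $\bC$ by the heat-kernel calculus for elliptic operators, with the Cappell-Miller variant \cite{CappellMiller} handling the lack of self-adjointness; this proves part (i).

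For part (ii), the behaviour of the right-hand side as $\sigma \to 0$ is governed by the Laurent expansions of the shifted determinants, together with an explicit identity-class contribution involving an Euler-characteristic-type factor and $\vol(\Gamma \backslash X)$. The integer $r_{\rho}$ is determined combinatorially from $\dim \Omega_{0}^{j}(Z,F)$ and the vanishing orders of $a_{j}(\sigma)$ at $0$, while $C_{\rho}$ comes from the Plancherel polynomial of $(G,K)$. Specialising at $\sigma=0$ and collecting the $\sigma$-powers coming from the generalised kernels yields \eqref{eqdCt}. For part (iii), if $\rho$ is close in $C^{0}$ to an acyclic unitary $\rho_{0}$, the semicontinuity of $\dim \Omega_{0}^{\scriptscriptstyle\bullet}(Z,F)$ (as in \cite[Proposition 6.8]{BravermanKappeler}, see also \cite[Lemma 6.2]{Muller20}) forces $\Omega_{0}^{\scriptscriptstyle\bullet}(Z,F)=0$, so $\Box^{Z}$ is invertible and one reads off $r_{\rho}=0$ and $C_{\rho}=1$ from the explicit formulae; the right-hand side of \eqref{eqdCt} is then the product \eqref{eq:inn} defining $T_{\rm CM}(F)$.

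The main obstacle is the very first step: in contrast to the unitary case, the character values $\Tr[\rho(\gamma)]$ can grow exponentially in $\ell_{[\gamma]}$, so the geometric series defining $R_{\rho}$ and the lifted spectral sum on $X$ only converge absolutely for $\Re(\sigma)$ larger than some constant $C(\rho)$ depending on $\rho$. One therefore has to set up Bismut's orbital integral machinery carefully in a right half-plane and continue meromorphically in $\sigma$ using the explicit $\sigma$-dependence of the Gaussian integrals on $\fp$ rather than relying on the spectral side. A secondary technical difficulty is the non-self-adjointness of $\Box^{Z}$: one must work throughout with generalised eigenspaces and the Cappell-Miller regularised determinant, and verify that the orbital-integral identities of \cite{B09,Shfried} survive intact under the passage from unitary to complex holonomy.
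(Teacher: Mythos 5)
Your broad strategy — factor $R_{\rho}$ into Selberg-type zeta functions, express each via a regularised determinant, and use a closeness argument to pin down $C_{\rho}$ and $r_{\rho}$ — is the right outline, and you correctly identify that $|\Tr[\rho(\gamma)]|$ may grow exponentially and that non-self-adjointness of $\Box^{Z}$ forces the use of generalised eigenspaces and the Cappell--Miller determinant. However, several steps are off, and one of them is a genuine gap.

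First, the paper's proof begins with a case split that your proposal skips: when $\delta(G)\geq 3$ both $R_{\rho}\equiv 1$ and the weighted determinant equals $1$ by Moscovici--Stanton's vanishing theorem, so everything reduces to $\delta(G)=1$. Second, your factorisation of $R_{\rho}$ uses $\Lambda^{j}T^{*}X$; the correct decomposition (as in Fried and in the paper's \eqref{eqRZi3}) is by the $M$-representations $\eta_{j}$ on $\Lambda^{j}(\fn^{*}_{\bC})$, and each Selberg factor $Z_{\eta_{j},\rho}$ is matched not with a shift of $\Box^{Z}|_{\Omega^{j}(Z,F)}$, but with $\det_{\rm gr}\bigl(C^{\fg,Z,\widehat{\eta}_{j},\rho}+\sigma_{\eta_{j}}+\sigma^{2}\bigr)$, i.e.\ the Casimir acting on sections of the auxiliary homogeneous bundle $\cF_{\widehat{\eta}_{j}}\otimes F$. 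The relation to $\prod_{i}\det^{*}(\Box^{Z}|_{\Omega^{i}})^{(-1)^{i}i}$ appears only after combinatorial manipulation of the alternating product, and the constants $C_{\rho},r_{\rho}$ are built from the integers $r_{j}=m_{\eta_{j},\rho}(0)$, not from $\dim\Omega_{0}^{j}(Z,F)$.

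The genuine gap is in part~(iii). You claim that $C^{0}$-closeness to an acyclic unitary $\rho_{0}$ suffices: invertibility of $\Box^{Z}$ (which indeed follows from $C^{0}$-closeness via the Braverman--Kappeler/M\"uller perturbation of the flat Hodge--Dirac operator) would yield $r_{\rho}=0$ and $C_{\rho}=1$. This is false outside the hyperbolic (or non-compact-center) case. Invertibility of the flat Laplacian controls only the zero generalised eigenspace of the Casimir $C^{\fg}$ acting on $\Omega^{\scriptscriptstyle\bullet}(Z,F)$; it says nothing about the Casimirs $C^{\fg,Z,\widehat{\eta}_{j},\rho}$ on the bundles $\cF_{\widehat{\eta}_{j}}\otimes F$, which is what the $r_{j}$'s measure. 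The paper instead shows that the $r_{j}$ are Euler characteristics of $(\fm,K_{M})$-cohomology of $\fn$-homology of the Harish--Chandra module $V_{\chi_{\bf 1}}(\Gamma,\rho)$, and proves $V_{\chi_{\bf 1}}(\Gamma,\rho)=0$ by perturbing not a first-order operator, but a self-adjoint elliptic operator $L_{\rho}$ of order $4d$ built from a full system of generators $z_{1},\dots,z_{\dim\fh}$ of $\mathscr Z(\fg_{\bC})$; this requires $C^{k_{0}}$-closeness with $k_{0}=4d-1$. Moreover, it is \emph{not} true that acyclicity of $\rho$ forces $V_{\chi_{\bf 1}}(\Gamma,\rho)=0$ when $\rho$ is non-unitary, so there is no shortcut here. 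Your argument would only recover M\"uller's hyperbolic theorem, not the locally symmetric generalisation, and the perturbation machinery needed for the general case is absent from your proposal.
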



%
%
%
 We will also extend the above result to the case where $\Gamma$ is not torsion free. Then $Z$ is an orbifold and 
 $F$ is a flat orbifold vector bundle.  In this case  the analytic 
 torsion of Ray-Singer is still well-defined 
 \cite{Ma_Orbifold_immersion,Daiyu,Shen_Yu}, and we can define the 
 Cappell-Miller torsion in a similar way. Also, the Ruelle zeta 
 function $R_{\rho}(\sigma)$ was  introduced in \cite{Shen_Yu}. 
 
\begin{thm}\label{Thm2}
 	The statement of Theorem \ref{Thm1} holds when 
 	$Z=\Gamma\backslash G/K$ is an orbifold.  
 \end{thm}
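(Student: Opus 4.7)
The plan is to transfer the proof of Theorem \ref{Thm1} to the orbifold setting by replacing each analytic ingredient with its orbifold counterpart developed in \cite{Shen_Yu}, while keeping the non-unitary modifications introduced for the torsion-free case. The first step is to set up the Cappell-Miller torsion for flat orbifold vector bundles: since $\Omega^{\scriptscriptstyle\bullet}(Z,F)=\bigl(\Omega^{\scriptscriptstyle\bullet}(X)\otimes\bC^{r}\bigr)^{\Gamma}$ remains well-defined for any discrete cocompact $\Gamma$, the flat Laplacian $\Box^{Z}$ acts on this space, its generalised $0$-eigenspace $\Omega_{0}^{\scriptscriptstyle\bullet}(Z,F)$ is finite dimensional, and the spectral zeta function of $\Box^{Z}|_{\Omega^{i}(Z,F)}$ admits a meromorphic continuation via orbifold pseudo-differential calculus as in \cite{Ma_Orbifold_immersion,Daiyu}. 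This defines $T_{\mathrm{CM}}(F)$ and preserves the Hodge-like isomorphism \eqref{eqOm0}.

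The central analytic tool is a Selberg-type trace formula decomposing a suitably regularised graded heat trace of $\Box^{Z}$ as
\begin{align*}
\sum_{[\gamma]\in[\Gamma]}\Tr[\rho(\gamma)]\,I_{\gamma}(\sigma),
\end{align*}
with $I_{\gamma}$ an orbital integral on the symmetric space $X$. I split $[\Gamma]$ into the elliptic (finite order) classes, including the identity, and the hyperbolic ones. For the hyperbolic classes, Bismut's orbital integral formula \cite{B09}, together with the Mellin transform machinery of \cite{Shfried,Shen_Yu}, identifies the resulting sum with $\log R_{\rho}(\sigma)$ for $\Re(\sigma)\gg 1$ and yields its meromorphic continuation to $\bC$; the insertion of the scalar factor $\Tr[\rho(\gamma)]$ does not alter the analytic behaviour of the orbital integrals and gives \ref{eq1i}. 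For the identity and the non-trivial elliptic classes, Bismut's formula at torsion semisimple elements of $G$ combined with the Kawasaki-type localisation of \cite{Shen_Yu} produces, after regularisation at $\sigma=0$, the expression $C_{\rho}\prod_{i=1}^{m}{\det}^{*}(\Box^{Z}|_{\Omega^{i}(Z,F)})^{(-1)^{i}i}\sigma^{r_{\rho}}$, where $C_{\rho}$ and $r_{\rho}$ depend only on the characters of the isotropy groups acting on $\fp$ and on $\dim\Omega_{0}^{\scriptscriptstyle\bullet}(Z,F)$. This proves \ref{eq1ii}.

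For \ref{eq1iii}, when $\rho$ is $C^{0}$-close to an acyclic unitary representation, the Braverman-Kappeler perturbation argument \cite[Proposition 6.8]{BravermanKappeler} is a purely local spectral statement and applies verbatim on the orbifold, forcing $\Omega_{0}^{\scriptscriptstyle\bullet}(Z,F)=0$ and $\Box^{Z}$ invertible. Since $C_{\rho}$ and $r_{\rho}$ depend on $\rho$ only through the isotropy data and $\dim\Omega_{0}^{\scriptscriptstyle\bullet}(Z,F)$, they take the same values as at the unitary acyclic base point, namely $C_{\rho}=1$ and $r_{\rho}=0$ by the orbifold Fried theorem of \cite{Shen_Yu}, whence $R_{\rho}(0)=T_{\mathrm{CM}}(F)$.

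The principal obstacle is the identification in \ref{eq1ii} of the combined contributions of the identity and the non-trivial elliptic conjugacy classes with the logarithm of the orbifold regularised determinant of $\Box^{Z}$. In the unitary setting this is achieved in \cite{Shen_Yu} by matching Bismut's orbital integrals at torsion semisimple elements of $G$ with the orbifold heat kernel expansion; extending it to an arbitrary $\rho$ amounts to carrying the scalar factor $\Tr[\rho(\gamma)]$ through each step and verifying that the meromorphic continuations established in the torsion-free proof of Theorem \ref{Thm1} remain valid in the presence of isotropy, which is the main bookkeeping task of the argument.
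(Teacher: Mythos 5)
Your overall plan—replace each analytic ingredient with its orbifold counterpart from \cite{Shen_Yu} while retaining the non-unitary modifications—matches the paper's strategy in Section \ref{Snontorsionfree}, which proves the orbifold Selberg trace formula (Theorem \ref{thmselnonob}) and then shows that the statements of Theorem \ref{thm:detfor}, \eqref{eqRZi}, \eqref{eqRZi3}, and all of Section \ref{S:rep} carry over verbatim with $m_{[\gamma]}$ replaced by \eqref{eqmggorb}. However, the details of your argument contain two genuine gaps.

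First, the structure you describe for \ref{eq1ii} is not correct. You claim the identity and elliptic conjugacy classes ``produce'' the regularised determinant factor $\prod_{i}\det^{*}(\Box^{Z}|_{\Omega^{i}(Z,F)})^{(-1)^{i}i}$. But in the trace-formula identity, the determinants arise from the Mellin transform of the \emph{spectral} side—namely the heat traces of the auxiliary operators $C^{\fg,Z,\widehat{\eta}_{j},\rho}$—while the identity and elliptic orbital integrals produce explicit geometric factors of the type $\exp(r\vol(Z)P_{\eta}(\sigma))$ appearing in Theorem \ref{thm:detfor}. Moreover, the hyperbolic classes yield $\log Z_{\eta,\rho}(\sigma)$, not directly $\log R_{\rho}(\sigma)$: passing from the Selberg zeta functions to $R_{\rho}$ requires the alternating product with shifted arguments \eqref{eqRZi3}, $R_{\rho}(\sigma)=\prod_{j=0}^{2\ell}Z_{\eta_{j},\rho}(\sigma+(j-\ell)|\alpha_{0}|)^{(-1)^{j-1}}$, a step that is the backbone of the meromorphic continuation and of the identification with the flat-Laplacian determinants. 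Your proposal omits this intermediate step entirely, so the proof as written does not close.

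Second, and more seriously, the argument for \ref{eq1iii} is false when $Z_{G}$ is compact. You assert that $C_{\rho}$ and $r_{\rho}$ depend on $\rho$ only through the isotropy data and $\dim\Omega_{0}^{\scriptscriptstyle\bullet}(Z,F)$, so that a $C^{0}$ Braverman--Kappeler perturbation forcing $\Box^{Z}$ invertible suffices. But by \eqref{eqCr1111} and Theorem \ref{thmcor818}, $C_{\rho}$ and $r_{\rho}$ are built from the integers $r_{j}=r_{\eta_{j},\rho}$, which are Euler characteristics of $(\fm_{\bC},K_{M})$-cohomologies of $\fn$-homologies of the Harish-Chandra module $V_{\chi_{\mathbf 1}}(\Gamma,\rho)$, not invariants of $\Omega_{0}^{\scriptscriptstyle\bullet}(Z,F)$ alone. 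Invertibility of $\Box^{Z}$ is strictly weaker than $V_{\chi_{\mathbf 1}}(\Gamma,\rho)=0$; in general $V_{\chi_{\mathbf 1}}(\Gamma,\rho)$ need not vanish even when $F$ is acyclic (see \cite[(1.19)]{Muller20}). The paper needs Theorem \ref{thml3} and Proposition \ref{propMBK}, which force $V_{\chi_{\mathbf 1}}(\Gamma,\rho)=0$ by making all generators $z_{i}-\chi_{\mathbf 1}(z_{i})$ of $\mathscr Z(\fg_{\bC})$ act invertibly. Because these are differential operators of order higher than $2$, the perturbation must be controlled in $C^{k_{0}}$ for a potentially large $k_{0}$, not merely $C^{0}$; the $k=0$ reduction works only when the flat Dirac operator alone controls everything, i.e.\ when $Z_{G}$ is non compact. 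Finally, the appeal to ``the orbifold Fried theorem of \cite{Shen_Yu}'' to transfer $C_{\rho}=1$, $r_{\rho}=0$ from the unitary base point is not legitimate: $r_{j}$ and hence $r_{\rho}$ are integers and cannot be concluded to remain zero by continuity in $\rho$; one must prove the vanishing directly for the perturbed $\rho$.
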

 
If the representation $\rho:\Gamma\to \GL_{r}(\bC)$ admits an 
invariant bilinear form, it induces a flat bilinear form $b_{F}$ on 
$F$. Since $b_{F}$ is flat, by \eqref{eqOXR}, the Laplacian of  
Burghelea-Haller \cite[(26)]{BurgheleaHaller07} \cite[(2.20)]{SuZhang08} coincides with the flat Laplacian $\Box^{Z}$ of 
Cappell-Miller. In particular, we get the following corollary. 

\begin{cor}\label{corin1}
	If the representation $\rho:\Gamma\to \GL_{r}(\bC)$ admits an 
invariant bilinear form,  the statements of Theorems \ref{Thm1} and 
\ref{Thm2} hold for the complexed valued analytic torsion of Burghelea-Haller. 
\end{cor}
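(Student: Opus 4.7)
The plan is to deduce Corollary \ref{corin1} directly from Theorems \ref{Thm1} and \ref{Thm2} by identifying the Burghelea--Haller analytic torsion with the Cappell--Miller torsion whenever the holonomy representation admits a nondegenerate invariant symmetric bilinear form. The key mechanism is already indicated in the excerpt: flatness of the induced bilinear form on $F$ forces the Burghelea--Haller Laplacian to coincide with the flat Laplacian $\Box^{Z}$ used by Cappell--Miller, so the weighted products of zeta-regularised determinants defining the two torsions become literally equal, and the two invariants are therefore the same complex number (or the same element of the appropriate determinant line).

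First I would make the construction of $b_{F}$ explicit. Given a nondegenerate symmetric bilinear form $b$ on $\bC^{r}$ invariant under $\rho(\Gamma)$, $b$ defines a $\Gamma$-equivariant constant section of $(\bC^{r})^{*}\otimes (\bC^{r})^{*}$ over the universal cover $X$, and hence descends to a fibrewise nondegenerate symmetric bilinear form $b_{F}$ on $F=\Gamma\backslash (X\times \bC^{r})$. Since $b$ is constant in the trivialisation inherited from $X\times \bC^{r}$ and the flat connection $\nabla^{F}$ is induced by the trivial connection on this bundle, one has $\nabla^{F}b_{F}=0$, i.e., $b_{F}$ is flat.

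Next I would verify the equality of Laplacians. Following \cite{BurgheleaHaller07,SuZhang08}, the Burghelea--Haller codifferential is the formal adjoint of $d^{Z}$ with respect to the $\bC$-bilinear pairing on $\Omega^{\scriptscriptstyle\bullet }(Z,F)$ induced by $g^{TZ}$ and $b_{F}$. Through the identification \eqref{eqOXR}, this pairing agrees with the one on $\Gamma$-invariant $\bC^{r}$-valued forms on $X$ built from $g^{TX}$ and $b$; because $b$ is parallel, the resulting adjoint coincides with the flat codifferential used by Cappell--Miller. Hence the Burghelea--Haller Laplacian and $\Box^{Z}$ agree as unbounded operators on $\Omega^{\scriptscriptstyle\bullet }(Z,F)$, their spectra coincide with multiplicities, and the weighted products of regularised determinants in the definitions of $T_{\rm BH}(F,b_{F})$ and $T_{\rm CM}(F)$ are identical.

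In the situation of part \ref{eq1iii} of Theorem \ref{Thm1}, both torsions are nonzero complex numbers and we immediately obtain $T_{\rm BH}(F,b_{F})=T_{\rm CM}(F)$. In the general non-acyclic setting the two invariants \emph{a priori} live in different determinant lines, and the main bookkeeping step is to check that the canonical isomorphism between the Burghelea--Haller determinant line of $H^{\scriptscriptstyle\bullet }(Z,F)$ and the Cappell--Miller determinant line of $H^{\scriptscriptstyle\bullet }(Z,F\oplus F^{*})$ induced by $b_{F}$ transports one torsion element to the other; this is where the only real obstacle lies, but it follows routinely from the parallelism of $b_{F}$ together with the construction of the determinant lines in \cite{BurgheleaHaller07,CappellMiller}. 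Once $T_{\rm BH}=T_{\rm CM}$ is established in both settings, the corollary follows by applying Theorems \ref{Thm1} and \ref{Thm2} verbatim, since $R_{\rho}(\sigma)$ depends only on $\rho$ and not on the bilinear form.
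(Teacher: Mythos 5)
Your argument is essentially the same as the paper's: since the invariant bilinear form on $\bC^{r}$ descends to a flat bilinear form $b_{F}$ on $F$, the Burghelea--Haller codifferential and hence the Burghelea--Haller Laplacian coincide, via the identification \eqref{eqOXR}, with the flat codifferential $d^{Z,*}$ and the flat Laplacian $\Box^{Z}$ of Cappell--Miller, so the regularised determinants in Theorems \ref{Thm1} and \ref{Thm2} are unchanged. Your extra discussion of determinant lines in the non-acyclic case is reasonable but not needed here, since parts \ref{eq1ii}) and \ref{eq1iii}) of Theorem \ref{Thm1} already involve only $\det^{*}(\Box^{Z}|_{\Omega^{i}})$ and the invertible case, respectively.
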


\subsection{Moscovici-Stanton's vanishing theorem and Kazhdan's property (T)}
Let $\delta(G)\in \mathbf{N}$ be 
the fundamental rank of $G$, i.e., the 
difference between the complex ranks of $G$ and $K$. Note that $\delta(G)$ and $\dim Z$ 
have the same parity.

If $\delta(G)\g 3$, our theorem 
follows easily from an observation originally due to 
Moscovici-Stanton \cite[Corollary 2.2, Remark 3.7]{MStorsion} (see also Bismut 
\cite[Theorem 7.93]{B09}). More precisely, 
	\begin{align}
		&R_{\rho}(\sigma)\equiv 1,&\prod_{i=1}^{m}{\rm 
det}^{*}\(\Box^{Z}|_{\Omega^{i}(Z,F)}\)^{(-1)^{i}i} =1. 
	\end{align} 

When $\delta(G)=1$, Theorem \ref{Thm1} \ref{eq1i}) \ref{eq1ii}) is 
more significant (see
\cite{BMZ1,BMZ} and also \cite{MullerPfaffL2}  when $\rho$ is the restriction of a representation of $G$).  By the classification theory of real simple Lie 
algebras (\cite[Remark 7.9.2]{B09} and \cite[Theorem 6.15]{Shfried}), $\delta(G)=1$ is equivalent to   
	\begin{align}\label{eqMSdcl}
		\fg=\fg_{1}\oplus 
		\begin{cases}
		\bR\\
		{\mathfrak{sl}}_{3}(\bR)\\
		{\mathfrak {
so}}(p,q)\hbox{ with $p\g q$, $pq> 1$ odd}, 
		\end{cases}
	\end{align} 
	where $\fg_{1}$ is a real semisimple Lie algebra such that 
	$\delta(\fg_{1})=0$. Here, $\delta(\fg_{1})$ is defined in an 
	obvious way as $\delta(G)$. Note that $\mathfrak {so}(1,1)\simeq \bR$. 
	However, we single out this case since $\bR$ is not considered to 
	be simple. 
	
Theorem \ref{Thm1} \ref{eq1iii}) makes sense only if $\Gamma$ has an 
acyclic and unitary representation and if such a representation is not 
isolated. A construction of acyclic and unitary representations can be 
founded in \cite{FriedlNagel15}. Assume now that $\Gamma$ admits an 
acyclic and unitary representation. If $\Gamma$ has Kazhdan's property (T), by \cite{Rapinchuk99}, all unitary representations of $\Gamma$ 
are isolated.  Therefore, Theorem \ref{Thm1} \ref{eq1iii}) gives interesting results only if 
$\Gamma$ does not have Kazhdan's property (T). By \cite[Theorems 
1.7.1 and 3.5.4]{PropertyT08}, this is the case when $\fg$ contains a factor $\mathfrak {so}(p,1)$ or $\mathfrak {
su}(p,1)$. Combining it with \eqref{eqMSdcl}, we get 
	\begin{align}
		\fg=\fg_{1}\oplus 
		\begin{cases}
		\bR\\
		{\mathfrak {
so}}(p,1)\hbox{ with $p\g 3$ odd},
		\end{cases}
	\end{align} 
or  
	\begin{align}
		\fg=\fg_{1}\oplus \begin{cases}
		\mathfrak{
su}(s,1)\hbox{ with $s\g1$}\\
		{\mathfrak {
so}}(s,1)\hbox{ with $s\g 2$ even}
		\end{cases}
		\oplus 		\begin{cases}
		\mathfrak{sl}_{3}(\bR)\\
		{\mathfrak {
so}}(p,q)\hbox{ with $p\g q>1$, $pq$ odd},
		\end{cases}
	\end{align} 
where $\fg_{1}$ is a real semisimple Lie algebra with 
$\delta(\fg_{1})=0$ in both of the above two cases.  

\subsection{The proof of Theorem \ref{Thm1}}Our proof of  Theorem \ref{Thm1} is inspired by 
\cite{Shfried}. As we have seen that we can reduce the proof  to the case  where $\delta(G)=1$.  
In this case, 
our proof relies on   the Selberg zeta functions. 

Assume now $\delta(G)=1$.  Let $\ft\subset \fk$ be a Cartan subalgebra of $\fk$. Let 
$\fh=\fz(\ft)\subset \fg$ be the stabiliser of $\ft$ in $\fg$. By 
\cite[p.~129]{Knappsemi}, 
$\fh\subset \fg$ is a  $\theta$-invariant fundamental Cartan 
subalgebra of $\fg$. Let 
$\fh=\fb\oplus \ft$ be the Cartan decomposition of $\fh$. Note that  
$\dim \fb=\delta(G)=1$.  Let $H\subset G$ be the associated Cartan 
subgroup of $G$.  

Let $Z(\fb)\subset G$ be the 
stabiliser of $\fb$ in $G$ with Lie algebra $\fz(\fb)$. Let $Z^{0}(\fb)$ be the connected 
component of the identity in $Z(\fb)$.  Then $\fz(\fb),Z^{0}(\fb)$ split
\begin{align}
&\fz(\fb)=\fb\oplus\fm, &Z^{0}(\fb)=\exp(\fb)\times M,
\end{align} 
where $M$ is a connected reductive subgroup of $G$ with Lie algebra 
$\fm$. Let $\fm=\fp_{\fm}\oplus \fk_{\fm}$ be the Cartan 
decomposition of $\fm$.  Let 
$\fz^{\bot}(\fb)\subset \fg$ be the orthogonal space of 
$\fz^{\bot}(\fb)$ with respect to  $B$. 

We fix an orientation on 
$\fb$, whose choice  is irrelevant.  Let $\fn\subset \fg$ be the direct sum of the eigenspaces 
associated with the positive eigenvalues 
of a positive element in $\fb$. Then,  $\fn$ is a Lie subalgebra of $\fg$ so that 
\begin{align}
	\fz^{\bot}(\fb)=\fn\oplus \theta\fn. 
\end{align} 

Let $\eta=\eta^{+}-\eta^{-}$ be a virtual representation of $M$ 
acting on the finite dimensional complex vector spaces $E_{\eta}=E_{\eta}^{+}-E_{\eta}^{-}$ 
such that 
\begin{itemize}
	\item  the Casimir of $M$ acts on $\eta^{\pm}$ by the same 
	scalar;
	\item  the restriction of $\eta$ to $K_{M}=K\cap M$ lifts uniquely to a virtual 
representation of $K$.
\end{itemize} 
The Selberg zeta function for the pair $(\eta,\rho)$ is defined 	formally for $\sigma\in \bC$ by 
	\begin{align}\label{ieqdefsel}
		Z_{\eta,\rho}(\sigma)=\exp\Bigg(-\sum_{\tiny\substack{[\gamma]\in 
		[\Gamma_{+}]\\ \gamma\sim e^{a}k^{-1}\in H}}\frac{\chi_{\rm 
		orb}(B_{[\gamma]}/\mathbb{S}^{1})}{m_{[\gamma]}}\Tr\[\rho(\gamma)\]\frac{\Trs^{E_{\eta}}[k^{-1}]}{\left|\det(1-\Ad(e^{a}k^{-1}))|_{\fz^{\bot}(\fb)}\right|^{1/2}}e^{-\sigma \ell_{[\gamma]}}\Bigg),
\end{align} 
where the sum is taken  over the non elliptic conjugacy classes 
$[\gamma]$ of $\Gamma$ such that $\gamma$ can be conjugate by 
elements of $G$ into the Cartan subgroup $H$. 

In \cite[Section 6]{Shfried}, we have shown that the adjoint action of $K_{M}$ on 
$\fp_{\fm}$ lifts uniquely to 
a  virtual representation  of $K$. Let 
$\widehat{\eta}=\widehat{\eta}^{+}-\widehat{\eta}^{-}$ be the unique virtual  representation of $K$ such that 
\begin{align}
\widehat{\eta}|_{K_{M}}=	\Lambda^{\scriptscriptstyle\bullet 
}(\fp_{\fm}^{*})\widehat{\otimes}_{\bR} \eta. 
\end{align} 
The Casimir operator of $\fg$ acts as a generalised Laplacian
 $C^{\fg,Z,\widehat{\eta}^{\pm},\rho}$ 
on the smooth sections over $Z$ of the locally homogenous vector bundle, induced by 
$\widehat{\eta}^{\pm}$, twisted by the flat vector bundle  $F$(see 
\eqref{eqC316}).  By the general theory on elliptic differential 
operators, the regularised determinant
${\rm det} 
\big(C^{\fg,Z,\widehat{\eta}^{\pm},\rho}+\sigma\big)$ is holomorphic on $\sigma\in \bC$. 
Following \cite[Section 7]{Shfried}, using M\"uller's 
Selberg trace formula for arbitrary twists \cite{Muller11},  in 
Section \ref{Sselbergzeta}, we show that for $\Re(\sigma)\gg1$, up to a multiplication by a non zero entire function,  
$Z_{\eta,\rho}(\sigma)$ is just the graded regularised determinant 
\begin{align}\label{eqdetCi1}
\frac{	{\rm det} 
\big(C^{\fg,Z,\widehat{\eta}^{+},\rho}+\sigma_{\eta}+\sigma^{2}\big)}{{\rm det} 
\big(C^{\fg,Z,\widehat{\eta}^{-},\rho}+\sigma_{\eta}+\sigma^{2}\big)},
\end{align} 
where $\sigma_{\eta}\in \bR$ is some constant. In this way, we deduce   
the meromorphic extension of $Z_{\eta,\rho}$ and we get precise  information about the poles and zeros as well as their multiplicities. 

In \cite[Section 6]{Shfried}, we have  shown that 
the adjoint representations $\eta_{j}$ of $M$ on $\Lambda^{j}(\fn^{*})\otimes_{\bR}\bC$ 
satisfy our two previous assumptions.  Using the fact that $R_{\rho}$ is  an alternating product of $Z_{\eta_{j},\rho}$, we 
deduce the meromorphic  extension of $R_{\rho}$. In 
particular, up to a multiplication by a non zero entire function,  
$R_{\rho}$ is  a product of the graded  regularised determinants  
\eqref{eqdetCi1}. Comparing this with the weighted  regularised determinant of the flat Laplacian \eqref{eq:inn}, we get \eqref{eqdCt}. 

As in \cite[Section 8]{Shfried}, the proof of \eqref{eqdCt11int} 
requires a detailed  analysis on the right regular representation of $G$ on 
the left $\Gamma$-invariant space  
$C^{\infty}(G,\bC^{r})^{\Gamma}$.  In general,   
$C^{\infty}(G,\bC^{r})^{\Gamma}$ is not  unitarisable. Let $\mathscr Z(\fg_{\bC})$ be 
the centre of the enveloping algebra of the complexified Lie algebra 
$\fg_{\bC}$.  Given a character 
$\chi:\mathscr Z(\fg_{\bC})\to \bC$, let 
$V_{\chi}(\Gamma,\rho)\subset C^{\infty}(G,\bC^{r})^{\Gamma}$ be 
the subspace of $C^{\infty}(G,\bC^{r})^{\Gamma}$ consisting  of the 
$K$-finite elements such that for all $z\in \mathscr Z(\fg_{\bC})$, $z-\chi(z)$ 
acts  nilpotently. In Section \ref{sRightR}, we will show that $V_{\chi}(\Gamma,\rho)$ is a Harish-Chandra  
$(\fg_{\bC},K)$-module with a generalised infinitesimal character 
$\chi$ (see Definitions \ref{definfch} and \ref{defHC}). Proceeding 
similarly  as in \cite[Section 8]{Shfried}, we 
can deduce  that $C_{\rho}$ and $r_{\rho}$ are  determined  
by  the Euler  characteristics  of  certain cohomologies of $V_{\chi_{\mathbf 
1}}(\Gamma,\rho)$, where $\chi_{\mathbf 
1}$ is the trivial character.  The proof  of 
\eqref{eqdCt11int} reduces to showing  $V_{\chi_{\bf 1}}(\Gamma,\rho)=0$.

When $\rho$ is unitary, using  fundamental results of 
	Vogan-Zuckerman \cite{VoganZuckerman}, Vogan \cite{Vogan2}, and 
	Salamanca-Riba \cite{Salamanca}, in \cite[Proposition 
	8.12]{Shfried}, we have shown that $V_{\chi_{\bf 	1}}(\Gamma,\rho)=0$  if and 
only if $\rho$ is 
acyclic. When $\rho$ is not unitary, even if $\rho$ is acyclic, 
$V_{\chi_{\bf 1}}(\Gamma,\rho)$ does not always vanish (see 
\cite[(1.19)]{Muller20}).  

In Section \ref{S:rep}, using a perturbation argument, we will show that $V_{\chi_{\bf 1}}(\Gamma,\rho)$ 
vanishes if $\rho$ is close enough to an  acyclic and unitary 
representation $\rho_{0}$. Indeed, 
if $\{z_{i}\}_{i=1}^{\dim \fh}$ is a system of generators  $\mathscr Z(\fg_{\bC})$, 
we need to show that the operators $z_{i}-\chi_{\bf 1}(z_{i})$ act invertiblely  on  
sections of certain locally homogenous vector bundle (see Proposition 
\ref{propMBK}) twisted by the 
flat vector bundle associated to  $\rho$. Note that $z_{i}-\chi_{\bf 
1}(z_{i})$ are   differential operators. If $\rho$ is 
close enough to  $\rho_{0}$ in the sense 
of $C^{k}$ with $k$ big enough, all the above operators are lower 
order small 
perturbations of the corresponding  operators associated to 
$\rho_{0}$ which are invertible.  From the above considerations, we can deduce the desired invertibility. 

Remark that in the case of hyperbolic manifolds, only the flat Laplacian is involved. Since the flat Laplacian is the square of the 
flat Hodge-Dirac operator, and since the flat  Hodge-Dirac operator is a 
perturbation  of an invertible self-adjoint Hodge-Dirac operator 
associated to  $\rho_{0}$ by a small $0$-th order operator. Therefore, we can take $k=0$ (see \cite[Proposition 6.8]{BravermanKappeler} and \cite[Lemma 6.2]{Muller20}). 

%
%
%
%
%
%

In summary, the principle of the proofs is 
similar to  \cite{Shfried}. In effect, we refer to \cite{Shfried} as much as 
necessary, and we only develop the proofs when the arguments of  \cite{Shfried} do not extend in a straightforward way. Let us remark finally that 
\begin{itemize}
	\item as in the case where $\rho$ is unitary, Bismut's geometric 
	orbital integral formula plays an essential role in our proof of 
Theorem \ref{Thm1}.  Here, we use such  formulas inexplicitly, since all the involved orbital integrals have been 	evaluated in \cite{Shfried} and \cite{Shen_Yu}.

	\item  it is crucial that the adjoint representations of $K_{M}$ 
	on   
	$\fp_{\fm}$ and $\fn$ lift to virtual representations of 
	$K$. 		In \cite{Shfried}, we prove this fact by the 
	classification theory of simple real Lie algebras \eqref{eqMSdcl}. 
	One of the contributions of the current paper is to give a simple 	
	new conceptual proof of a complexified version of 
	\cite[Theorem 6.11]{Shfried}, which is enough for our applications. 
		\end{itemize}

%

%
%
%
%
%
%
%
%
%

\subsection{ Organisation of the article}
This article is organised as follows.
In Section \ref{SgLap}, we introduce the generalised Laplacian, its 
heat kernel,  its regularised determinant, as well as the flat Laplacian and the Cappell-Miller analytic torsion.


In Section \ref{Sselberg}, we show the  Selberg trace formula with 
arbitrary twists for the heat operator associated to the Casmir,  which 
is originally  due to M\"uller \cite{Muller11}. 

In Section  \ref{Sfondcs}, we  give a simple new conceptual proof for 
the lifting property   \cite[Theorem 6.11]{Shfried}.  

In Section \ref{SRuelle}, we introduce the Ruelle zeta function 
$R_{\rho}$, and 
we state  Theorem \ref{Thm1} as Theorem \ref{thmRmeo}. 

In Section \ref{Sselbergzeta}, we introduce the Selberg zeta 
function $Z_{\eta,\rho}$. We show the  meromorphic extensions of $Z_{\eta,\rho}$ and $R_{\rho}$, and  we establish Theorem \ref{Thm1} \ref{eq1i}) 
\ref{eq1ii}). 

In Section \ref{S:rep}, we show certain   cohomological  formulas for 
$C_{\rho}$ and $r_{\rho}$. We prove   Theorem \ref{Thm1} \ref{eq1iii}).

In Section \ref{Snontorsionfree}, we extend Theorem \ref{Thm1} to orbifolds. 
\subsection{Notation}
Throughout the paper, we use the superconnection formalism of 
\cite{Quillensuper} and \cite[Section 1.3]{BGV}. If $A$ is a 
$\mathbf{Z}_2$-graded algebra and if $a, b\in A$, the 
supercommutator $[a,b]$ is given by
\begin{align}
ab-(-1)^{\deg a \deg b}ba.
\end{align}
If $B$ is another $\mathbf{Z}_2$-graded algebra, we denote 
by $A  \widehat{\otimes}  B$ the super tensor product algebra of $A$ and $B$. 
If $E=E^{+}\oplus E^{-}$ is a $\mathbf{Z}_2$-graded vector 
space, the algebra $\End(E)$ is $\mathbf{Z}_2$-graded. If 
$\tau=\pm1$ on $E^{\pm}$ and if $a\in \End(E)$, the supertrace 
$\Trs[a]$ is defined by $\Tr[\tau a]$.

If $V$ is a real vector space, we will use the notation 
$V_{\bC}=V\otimes_{\bR}\bC$ for its complexification. 
If $M$ is a topological group, we will denote by $M^{0}$ the 
connected component of the identity in $M$.   We make the convention 
that $\mathbf{N}=\{0,1,2,\ldots\}$, 
$\mathbf{N}^{*}=\{1,2,\ldots\}$, $\bR_{+}^{*}=(0,\infty)$. 


\settocdepth{subsection}

\section{The generalised Laplacian} \label{SgLap}
The purpose of this section is to recall  some properties  of  a generalised Laplacian defined on a closed manifold. 
The generalised Laplacian $P$ is a second order (not necessarily 
self-adjoint) elliptic differential operator,  whose principal symbol is a positive scalar.  
We show that the regularised determinant 
$\det(\sigma+P)$ is a  holomorphic function on $\sigma\in \bC$.  Our proof of the meromorphic 
extension of the Ruelle zeta function associated to arbitrary twist given in Section \ref{Sselbergzeta} relies 
on this fact.  

This section is organised as follows.  In Sections  \ref{sspGL}-\ref{srDet}, we introduce the generalised Laplacian, the 
 associated semigroup, and the regularised determinant.

%
%
%
%
%

In Section \ref{sflatv}, we introduce a flat vector bundle and its 
holonomy representation.  We 
establish  an estimate on the growth of the holonomy representation. 

In Section \ref{sgLflat}, we construct a generalised Laplacian 
acting on the sections of  a flat vector bundle. We relates its heat  
kernel with the lifted heat kernel on the universal cover. 

Finally,  in Section \ref{sAT}, given a flat vector bundle, we recall the 
construction of the flat Laplacian of Cappell-Miller, which is a  
generalised Laplacian. We  introduce the complex valued analytic torsion of Cappell-Miller under the assumption that the flat Laplacian is invertible.

\subsection{The spectral theory of a generalised Laplacian}\label{sspGL}
Let $Z$ be a closed manifold of dimension $m$.  Let $E$ be a complex vector bundle on $Z$. Let $C^{\infty}(Z,E)$ be 
the space of $C^{\infty}$-sections of $E$. 

Let $g^{TZ}$ be a Riemannian metric on $Z$. Denote by $dv_{Z}\in 
\Omega^{m}(Z,o(TZ))$ the associated Riemannian volume form, where 
$o(TZ)$ is the orientation line bundle of $Z$. 
Let  $g^{E}$ be a 
Hermitian metric on $E$. Let $\<,\>_{g^{E}}$ be the 
induced Hermitian product on $E$. 
We equip $C^{\infty}(Z,E)$ with an $L^{2}$-product defined for 
$u_{1},u_{2}\in C^{\infty}(Z,E)$ by
\begin{align}\label{eqL2}
	\<u_{1},u_{2}\>_{L^{2}}=\int_{z\in Z}\<u_{1}(z),u_{2}(z)\>_{g^{E}}dv_{Z}. 
\end{align}
Let $L^{2}(Z,E)$ be the space of $L^{2}$-sections of $E$.  For $s\in 
\bR$, let  $\cH^{s}(Z,E)$ be the $s$-Sobolev space of sections of 
$E$. 

Let $P$ be a generalised Laplacian acting on $C^{\infty}(Z,E)$  in 
the sense of \cite[Definition 
2.2]{BGV}. Namely, if $|\cdot|^{2}_{g^{TZ,*}}$ is the dual metric on 
$T^{*}Z$ induced by $g^{TZ}$, then $P$ is a second order  elliptic differential operator with the principal symbol
\begin{align}
&	\sigma(P)(x,\xi)=|\xi|^{2}_{g^{TZ,*}_{x}}\cdot  {\rm id}_{E_{x}},& \hbox{for }(x,\xi)\in 
T^{*}Z. 
\end{align}
Equivalently, there is a metric connection $\nabla^{E}$ on $E$ and  
a first order differential operator $A$ such that 
\begin{align}\label{eqP+A}
	P=(\nabla^{E})^{*}\nabla^{E}+A,
\end{align}
where $(\nabla^{E})^{*}:C^{\infty}(Z,T^{*}Z\otimes_{\bR} E)\to 
C^{\infty}(Z,E)$ is the formal adjoint of $\nabla^{E}$ with  
respect to the $L^{2}$-product induced by $g^{E}$ and $g^{TZ}$. 

By \eqref{eqP+A},	there are $c>0$ and $C>0$ such that  for 
any $u\in C^{\infty}(Z,E)$, 
\begin{align}\label{eqdis}
	\Re \<Pu,u\>_{L^{2}}\g c\|u\|_{\cH^{1}}^{2} 
	-C\|u\|^{2}_{L^{2}}.
\end{align}

%
%
%

For $s_{1},s_{2}\in \bR$, if $L$ is a 
bounded  operator from $\cH^{s_{1}}(Z,E)$ to  $\cH^{s_{2}}(Z,E)$, denoted by 
$\|L\|_{\cH^{s_{1}},\cH^{s_{2}}}$ the corresponding operator norm. 
Consider $P$ as an operator  with domain  $\cH^{2}(Z,E)$. Since $P$ 
is elliptic, the unbounded operator $(P,\cH^{2}(Z,E))$ is closed. By \cite[Theorem 9.3]{ShubinPDO}, $(P,\cH^{2}(Z,E))$ has a compact resolvent, so that its spectrum $\Sp(P)$ is discrete. 
%
Moreover, for any 
$\e >0$, there is $r>0$ such that  
\begin{align}\label{eqSpre}
	\Sp(P)\subset \Big\{\lambda\in \bC: |\lambda|< r \hbox{ or } |\im(\lambda)|<\e 
	\Re(\lambda)\Big\}. 
\end{align}
Also, there are $C_{\e,r}>0$ and $C_{\e,r,n}>0$ for all $n\in \bZ$ such that for all $\lambda\in \bC$ with $ |\lambda |\g r$ and $ 
|\im(\lambda)|\g\e 
	\Re(\lambda)$, we 
have 
\begin{align}\label{eqres}
&	\left\|(\lambda-P)^{-1}\right\|_{L^{2},L^{2}}\l 
\frac{C_{\e,r}}{|\lambda|},&\left\|(\lambda-P)^{-1}\right\|_{\cH^{n},\cH^{n+2}}\l 
C_{\e,r,n}. 
\end{align}


Take $\lambda\in \Sp(P)$.  Let   $V_{P}(\lambda)\subset 
L^{2}(Z,E)$ be the 
characteristic space of $P$ associated to the eigenvalue $\lambda$. 
For $N\gg1$ large enough, we have 
\begin{align}\label{eqVpch}
V_{P}(\lambda)=\ker (\lambda-P)^{N}.
\end{align} 
Set
\begin{align}
m_{P}(\lambda)=\dim 
V_{P}(\lambda)\in \mathbf{N}.	
\end{align}

Since $P$ is elliptic, $V_{P}(\lambda)\subset  
C^{\infty}(Z,E)$.  Since $P$ is a relatively compact 
perturbation of the self-adjoint operator $(\nabla^{E})^{*}\nabla^{E}$, 
by the Keldysh theorem \cite[Theorem 10.1]{Gohberg69}, we have 
\begin{align}\label{eqvp}
	L^{2}(Z,E)=\ol{\oplus_{\lambda\in \Sp(P)}V_{P}(\lambda)}. 
\end{align}
In particular, $\Sp(P)$ is not empty and is an infinite set. 

\subsection{The semigroup of a generalised Laplacian}\label{sSG}
By \eqref{eqdis}, \eqref{eqres}, and by the Lumer-Phillips theorem 
\cite[p. 250]{YosidaFunctionalAnalysis}, $-P$ is the   generator of a 
$C^{0}$-semigroup $\exp(-tP)$  on $L^{2}(Z,E)$. 

Fix $r>0$ and $\e>0$ such that \eqref{eqSpre} and \eqref{eqres} hold.  Let 
$\Gamma=\Gamma_{1}\cup \Gamma_{2}\cup \Gamma_{3}$ be the contour in 
the Figure \ref{fig:1} with the indicated orientation. 

\begin{figure}[htp]
\centering 
\includegraphics[width = 7cm]{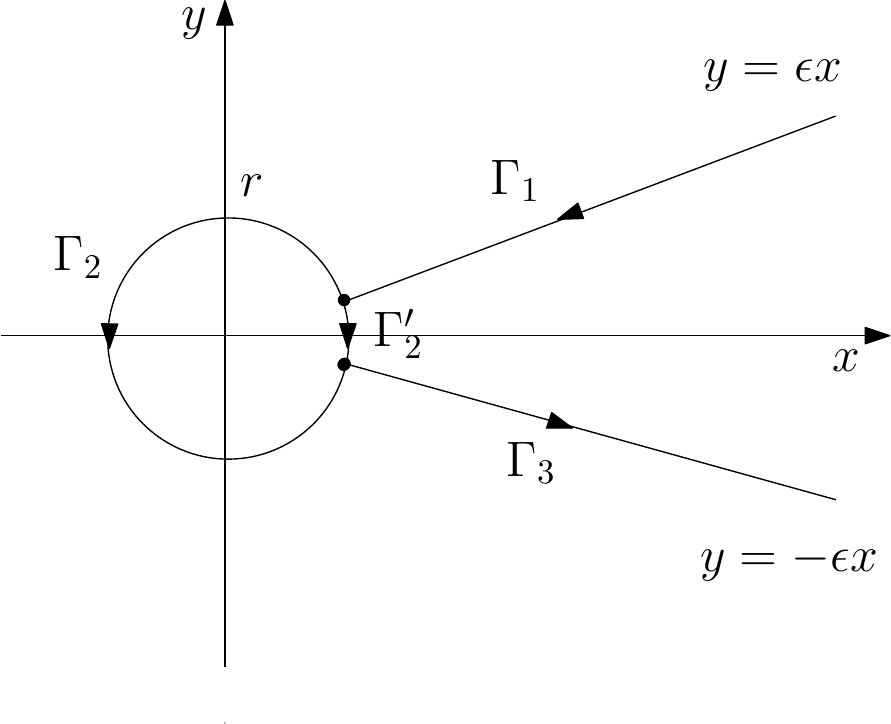}
\caption{The contour.}
\label{fig:1}
\end{figure}

\begin{prop}\label{propsemig}
For $t>0$, the following identity of bounded operators in $L^{2}(X,E)$ holds  
\begin{align}\label{eqetp}
	\exp(-tP)=\frac{1}{2i\pi}\int_{\Gamma}e^{-t\lambda}(\lambda-P)^{-1}d\lambda. 
\end{align} 
The heat operator $\exp(-tP)$ has a smooth integral kernel. In 
particular, it is in trace class such that 
\begin{align}\label{eqTrsg}
	\Tr\[\exp(-tP)\]=\sum_{\lambda\in 
	\Sp(P)}m_{P}(\lambda)e^{-t\lambda},
\end{align}
where the right hand side of \eqref{eqTrsg} converges absolutely. 
\end{prop}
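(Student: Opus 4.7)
The plan is to build on the Lumer-Phillips semigroup from Section \ref{sSG} using Dunford-Riesz functional calculus, deduce smoothness of the heat kernel from elliptic regularity, and then extract the spectral expansion from the generalised eigenspace decomposition \eqref{eqvp}.

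First, I would establish the contour integral representation \eqref{eqetp}. By \eqref{eqSpre} and the choice of $\Gamma$, the contour avoids $\Sp(P)$. On the diagonal arms $\Gamma_{1}$ and $\Gamma_{3}$ one has $\Re\lambda \to +\infty$ with $|\im\lambda|$ comparable to $\Re\lambda$, so $|e^{-t\lambda}|=e^{-t\Re\lambda}$ decays exponentially, and combined with the bound $\|(\lambda-P)^{-1}\|_{L^{2},L^{2}}=O(1/|\lambda|)$ from \eqref{eqres} the integral converges absolutely in the operator norm on $L^{2}(Z,E)$. Call this operator $T(t)$. A standard Fubini and contour-shift argument, based on the resolvent identity $(\lambda-P)^{-1}(\mu-P)^{-1}=\frac{1}{\lambda-\mu}\bigl[(\mu-P)^{-1}-(\lambda-P)^{-1}\bigr]$ applied to a pair of nested contours, shows that $\{T(t)\}_{t>0}$ is a $C^{0}$-semigroup on $L^{2}(Z,E)$ whose infinitesimal generator extends $-P$ on $\cH^{2}(Z,E)$. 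Since $-P$ already generates $\exp(-tP)$ via Lumer-Phillips, uniqueness of the semigroup attached to a given generator forces $T(t)=\exp(-tP)$.

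For the smoothness of the kernel I would iterate the identity $P(\lambda-P)^{-1}=\lambda(\lambda-P)^{-1}-I$ to rewrite $P^{k}T(t)$ as a contour integral of $e^{-t\lambda}\lambda^{k}(\lambda-P)^{-1}$ plus polynomial terms in $\lambda$. The exponential factor $e^{-t\lambda}$ on $\Gamma$ beats every power of $\lambda$, so combined with \eqref{eqres} the integral converges in $L^{2}$ operator norm for each $k \in \mathbf{N}$. Hence $T(t)$ maps $L^{2}(Z,E)$ into $\bigcap_{k}\Dom(P^{k})\subset \bigcap_{s}\cH^{s}(Z,E)$; a parallel argument applied to the formal adjoint shows that the Schwartz kernel of $T(t)$ lies in $C^{\infty}(Z\times Z, E\boxtimes E^{*})$. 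On the closed manifold $Z$ this smoothness of the kernel immediately gives trace class.

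Finally, for the trace formula I would invoke Lidskii's theorem together with Weyl's inequality for trace-class operators, which together yield absolute summability of the eigenvalue sequence with algebraic multiplicities and identify the trace with their sum. By \eqref{eqVpch} each characteristic space $V_{P}(\lambda)$ is finite-dimensional and $\exp(-tP)$-invariant, and one has $P|_{V_{P}(\lambda)}=\lambda\cdot \mathrm{id}+N_{\lambda}$ for some nilpotent $N_{\lambda}$, so $\exp(-tP)|_{V_{P}(\lambda)}=e^{-t\lambda}\exp(-tN_{\lambda})$ has trace $m_{P}(\lambda)e^{-t\lambda}$. Combined with \eqref{eqvp} this yields \eqref{eqTrsg}. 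The step I expect to be the main obstacle is the passage from the Dunford integral to the Lumer-Phillips semigroup, i.e.\ verifying the semigroup law $T(t_{1})T(t_{2})=T(t_{1}+t_{2})$ via nested-contour deformation; everything beyond that reduces to standard resolvent calculus and trace-class theory.
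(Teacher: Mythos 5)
Your proposal takes essentially the same route as the paper: the Dunford contour integral justified by the resolvent bounds \eqref{eqres}, extraction of kernel regularity from the integral representation, and Lidskii's theorem combined with the density of generalised eigenspaces \eqref{eqvp} to identify the heat trace with the eigenvalue sum. The minor technical variations (re-deriving the semigroup identity by contour deformation where the paper invokes the standard semigroup representation theorem; iterating $P(\lambda-P)^{-1}=\lambda(\lambda-P)^{-1}-I$ to gain regularity instead of integrating by parts to reach $(\lambda-P)^{-(k+1)}$ as in \eqref{eqseipp}) do not alter the substance of the argument.
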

\begin{proof}
By the first estimate of \eqref{eqres},  the integral  on 
the right-hand 
side of \eqref{eqetp} converges absolutely in the space of bounded 
operators. By the general theory of 
$C^{0}$-semigroup \cite[Theorem I.7.7]{Pazy83}, we get 
\eqref{eqetp}. 
	
Using integration by parts, we find that for any $k\in \mathbf{N}$, we 
have
\begin{align}\label{eqseipp}
	\exp(-tP)=\frac{(-1)^{k}k!}{2i\pi 
	t^{k}}\int_{\Gamma}e^{-t\lambda}(\lambda-P)^{-(k+1)}d\lambda. 
\end{align}
Take $k>m/2$. The imbedding $\cH^{2k}(Z,E)\to L^{2}(Z,E)$ is in trace 
class. Denote by $\|\cdot\|_{\rm tr}$ the trace norm.  By the second 
estimate of \eqref{eqres},  there is $C>0$ such that for all 
$\lambda\in \Gamma$, we have 
\begin{align}\label{eqRtr}
\left	\|(\lambda-P)^{-(k+1)}\right\|_{\rm tr}\l C. 
\end{align}
By \eqref{eqRtr}, the right-hand side of  \eqref{eqseipp} converges in the space 
of trace class operators. In particular, $\exp(-tP)$ is in trace class.  Similar methods show that for any $s,s'\in \mathbf{R}$,  $\exp(-tP)$ 
maps $\cH^{s}(Z,E)$ to $\cH^{s'}(Z,E)$ continuously. In particular, $\exp(-tP)$ 
has a smooth integral kernel. 

We claim that 
\begin{align}\label{eqSpsemi}
	\Sp(\exp(-tP))=\{0\}\cup \left\{e^{-t\lambda}: \lambda \in 
	\Sp(P)\right\},
\end{align}
and that  each $e^{-t\lambda}$ has 
the multiplicity 
$m_{P}(\lambda)$. Indeed, it is clear that  $e^{-t\lambda}$ 
with $\lambda\in \Sp(P)$ is the spectrum of $\exp(-tP)$. Since 
$\Sp(P)$ is infinite and since $\Sp(\exp(-tP))$ is a closed set, we see that $0\in \Sp(\exp(-tP))$.  Assume 
that  $\mu\in 
\bC\backslash \{0\}$ does not equal to any 
$e^{-t\lambda}$ with $\lambda\in \Sp(P)$. Note that  $\exp(-tP)$ is 
compact. For $\delta>0$ small enough, the spectral projection of $\exp(-tP)$ to the 
characteristic space of $\mu$ 	
is given by 
\begin{align}
\Pi_{\mu}=	\frac{1}{2i \pi 
	}\int_{|\lambda-\mu|=\delta}\frac{d\lambda}{\lambda-\exp(-tP)}. 
\end{align}
By our choice of $\mu$, $\Pi_{\mu}$ varnishes on $\oplus_{\lambda\in 
\Sp(P)}V_{P}(\lambda)$. By \eqref{eqvp}, $\Pi_{\mu}=0$, from which we 
get \eqref{eqSpsemi}.  Similarly,  
$\Pi_{e^{-t\lambda}}|_{\oplus_{\lambda\in \Sp(P)}V_{P}(\lambda)}$ is just the obvious projection  onto 
$V_{P}(\lambda)$. By \eqref{eqvp}, $\Pi_{e^{-t\lambda}}$ coincides with the spectral projection of $P$ onto 
$V_{P}(\lambda)$.  Thus, the algebraic multiplicity 
of $\mu=e^{-t\lambda}$ is equal to $m_{P}(\lambda)$.

By \eqref{eqSpsemi} and by Lidskii's 
Theorem \cite[Theorem 8.4]{Gohberg69}, we get \eqref{eqTrsg}. 
\end{proof}

Let us study the  long time exponential decay and the short time 
asymptotic of the heat trace $\Tr\[\exp(-tP)\]$. 
We choose $r>0$ and $\e>0$  such that \eqref{eqSpre} and \eqref{eqres} hold and that   
 \begin{align}\label{eqSpre2}
 \Sp(P)\cap \{z\in \bC: |z|=r\}=\varnothing. 
\end{align}
Let $\pi_{<}$ be the spectral projection of $P$ onto the space $\oplus_{\lambda\in 
\Sp(P): |\lambda|<r}V_{P}(\lambda)$.
Then 
\begin{align}\label{eqpi<}
	\pi_{<}=\frac{1}{2i\pi}\int_{|\lambda|=r}\frac{d\lambda}{\lambda-P}.
\end{align}
Set
\begin{align}\label{eqpi>}
	\pi_{>}=1-\pi_{<}. 
\end{align}

\begin{prop}\label{propexptd}
	There are $c>0$ and $C>0$ such that for $t\g 1$, we have 
	\begin{align}\label{eqtrd}
	\left\|\exp(-tP)\pi_{>}\right\|_{\rm tr}\l Ce^{-ct}. 
	\end{align}
\end{prop}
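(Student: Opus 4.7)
The plan is to represent $\exp(-tP)\pi_{>}$ as a contour integral over a contour that has been pushed into the open right half-plane, so that the exponential decay of $|e^{-t\lambda}|$ on the contour yields the desired bound.

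First I would construct a suitable contour. By \eqref{eqSpre} and \eqref{eqSpre2}, every $\lambda\in\Sp(P)$ with $|\lambda|\geqslant r$ lies in the open cone $\{|\im\lambda|<\e\Re\lambda\}$, and in particular satisfies $\Re\lambda\geqslant r/\sqrt{1+\e^{2}}=:c_{0}>0$. Pick $\e'>\e$ and let
\begin{align*}
R_{>}=\left\{\lambda\in\bC:|\lambda|>r,\ |\im\lambda|<\e'\Re\lambda\right\}.
\end{align*}
Let $\Gamma_{>}=\partial R_{>}$, oriented so that $R_{>}$ lies on the left. Then $\Gamma_{>}$ is the union of an arc of the circle $\{|\lambda|=r\}$ and two rays at angles $\pm\arctan\e'$; it encloses exactly those eigenvalues with $|\lambda|\geqslant r$, avoids $\Sp(P)$, and lies in the half-plane $\{\Re\lambda\geqslant c\}$ with $c=r/\sqrt{1+\e'^{2}}>0$.

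Next I would establish the contour representation
\begin{align*}
\exp(-tP)\pi_{>}=\frac{1}{2i\pi}\oint_{\Gamma_{>}}e^{-t\lambda}(\lambda-P)^{-1}d\lambda.
\end{align*}
This follows from \eqref{eqetp}, \eqref{eqpi<}, and Cauchy's theorem since $\Gamma_{>}$ and $\Gamma-\{|\lambda|=r\}$ are homologous as $1$-cycles in $\bC\setminus\Sp(P)$ (the integrand is holomorphic there). Repeating the integration-by-parts argument of the proof of Proposition \ref{propsemig} gives, for any $k\in\mathbf{N}$,
\begin{align*}
\exp(-tP)\pi_{>}=\frac{(-1)^{k}k!}{2i\pi t^{k}}\oint_{\Gamma_{>}}e^{-t\lambda}(\lambda-P)^{-(k+1)}d\lambda.
\end{align*}
No boundary contributions arise because on each ray, $|e^{-t\lambda}|$ decays exponentially while $\|(\lambda-P)^{-(k+1)}\|$ stays bounded.

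Finally I would estimate the trace norm. Fix $k>m/2$. The resolvent bound \eqref{eqres} applies on the two rays (where $|\im\lambda|/\Re\lambda=\e'>\e$ and $|\lambda|\geqslant r$), and on the compact arc $\Gamma_{>}\cap\{|\lambda|=r\}$ the resolvent is bounded by compactness and \eqref{eqSpre2}. As in \eqref{eqRtr}, this gives a uniform bound $\|(\lambda-P)^{-(k+1)}\|_{\mathrm{tr}}\leqslant C$ on $\Gamma_{>}$. Combining with $|e^{-t\lambda}|\leqslant e^{-t\Re\lambda}\leqslant e^{-tc}$ on $\Gamma_{>}$ and the elementary estimate
\begin{align*}
\oint_{\Gamma_{>}}e^{-t\Re\lambda}|d\lambda|\leqslant e^{-tc}\left(\pi r+\frac{2}{t\cos\arctan\e'}\right),
\end{align*}
we get $\|\exp(-tP)\pi_{>}\|_{\mathrm{tr}}\leqslant C_{k}t^{-k}(1+t^{-1})e^{-tc}$. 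For $t\geqslant 1$ the polynomial factor is bounded, yielding \eqref{eqtrd}.

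The only mildly delicate point is verifying that the resolvent estimates \eqref{eqres} transfer to the modified contour $\Gamma_{>}$ uniformly—in particular, that the trace-norm bound on $(\lambda-P)^{-(k+1)}$ survives on the rays at angle $\arctan\e'$. This is handled exactly as in the proof of \eqref{eqRtr}: factor one $(\lambda-P)^{-1}$ as an element of the trace class using a sufficient Sobolev regularity gain, and control the remaining factors as bounded operators on Sobolev spaces via the second inequality of \eqref{eqres}, which applies because $\e'>\e$.
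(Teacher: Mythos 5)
Your proof is correct and follows essentially the same route as the paper: deform the contour of \eqref{eqetp} into the half-plane $\{\Re\lambda\geqslant c>0\}$ so that it encloses only $\Sp(P)\cap\{|\lambda|>r\}$, raise the resolvent to the power $k+1$ with $k>m/2$ by integration by parts as in \eqref{eqseipp}, and combine the uniform trace-norm bound of type \eqref{eqRtr} (extended to the bounded piece of the contour by continuity, compactness, and \eqref{eqSpre2}) with $|e^{-t\lambda}|\leqslant e^{-ct}$. The only difference is the shape of the deformed contour: the paper keeps the rays $\Gamma_1,\Gamma_3$ of Figure~\ref{fig:1} and replaces only the middle piece by a $\Gamma_2'$ lying in the right half-plane, whereas you steepen the rays to angle $\arctan\e'$ and close them with an arc of $\{|\lambda|=r\}$ --- a cosmetic variant, with your choice $\e'>\e$ correctly ensuring that the resolvent bound \eqref{eqres} applies on the new rays.
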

\begin{proof}
	Let $\Gamma'=\Gamma_{1}\cup \Gamma_{2}'\cup \Gamma_{3}$ be the 
 	oriented contour indication in the Figure \ref{fig:1}. Clearly, 
	there is $c>0$ such that if $\lambda\in \Gamma'$ and $t>0$,
	\begin{align}\label{eqreet}
	\left|e^{-t\lambda}\right|\l e^{-ct}. 	
\end{align} 
	

By \eqref{eqvp}, \eqref{eqetp}, \eqref{eqseipp}, 
\eqref{eqSpre2}-\eqref{eqpi>}, for $k\in \mathbf N$, we can deduce that  
	\begin{align}\label{eqetpd}
		\begin{aligned}
		\exp(-tP)\pi_{>}
=\frac{(-1)^{k}k!}{2i\pi 
	t^{k}}\int_{\Gamma'}e^{-t\lambda}(\lambda-P)^{-(k+1)}d\lambda. 
	\end{aligned}
	\end{align}
	Note that the integral converges for 	the operator norm  $\|\cdot \|_{L^{2},L^{2}}$. 

	If $k>m/2$, by the  resolvent identity,  the maps 	$\lambda\in 
	\(\Sp(P)\)^{c}\to 
	\left\|(\lambda-P)^{-(k+1)}\right\|_{\rm tr}$ is continuous.  
	Since $\Gamma_{2}'$ is compact, by \eqref{eqSpre2}, there is $C>0$ such 
	that for all $\lambda \in \Gamma_{2}'$, we have 
	\begin{align}\label{eqreR}
		\left\|(\lambda-P)^{-(k+1)}\right\|_{\rm tr}\l C.
	\end{align}
	By \eqref{eqRtr}, \eqref{eqreet},  and \eqref{eqreR}, if $k>m/2$, the integral 
	on the right-hand side of \eqref{eqetpd} 	converges for the 
	trace norm and we get \eqref{eqtrd}. 
\end{proof}

By \cite[Defintion 2.15, Lemma 2.34]{BGV}, the  integral kernel of $\exp(-tP)$ is exactly the one constructed in \cite[Section 2.4]{BGV}. As a 
consequence of  \cite[Theorem 2.30]{BGV}, we have the following 
proposition. 
\begin{prop}\label{propasy}
	There are  families of complex numbers $\{a_{k}\in \bC\}_{k\in 
	\mathbf{N}}$ and  of  positive real numbers $\{C_{k}>0\}_{k\in \mathbf{N}}$ such that for 
	$k\in \mathbf{N}$ and $t\in (0,1]$, we have
	\begin{align}
\left|		
\Tr\[\exp(-tP)\]-\frac{1}{t^{m/2}}\sum_{i=0}^{k}a_{i}t^{i}\right|\l 
C_{k}t^{k+1-m/2}. 
	\end{align}
\end{prop}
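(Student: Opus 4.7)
The plan is to identify the semigroup $\exp(-tP)$ of Proposition \ref{propsemig} with the heat kernel produced by the parametrix / Volterra series construction of \cite[Section 2.4]{BGV}, and then quote the pointwise diagonal asymptotic of \cite[Theorem 2.30]{BGV} and integrate.

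Following \cite[Section 2.4]{BGV}, I would construct a smooth kernel $\widetilde{k}_t(x,y)$ on $(0,\infty)\times Z\times Z$ solving $(\partial_t + P_x)\widetilde{k}_t = 0$ with delta-function initial condition, via a truncated Gaussian parametrix written in normal coordinates and corrected by a convergent Volterra series. This construction only uses the fact that $P$ is a generalised Laplacian in the sense of \cite[Definition 2.2]{BGV}; the first order term $A$ in \eqref{eqP+A} and its possible non-self-adjointness are absorbed into the recursive transport equations for coefficient sections $\Phi_i\in C^\infty(Z,\End(E))$, which are now complex-valued but otherwise produced exactly as in the self-adjoint case.

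The identification of $\widetilde{k}_t$ with the Schwartz kernel of $\exp(-tP)$ follows by uniqueness for the Cauchy problem: for $s\in C^\infty(Z,E)$, both $u_t = \int_Z \widetilde{k}_t(\cdot,y)\,s(y)\,dv_Z(y)$ and $v_t = \exp(-tP)s$ solve $(\partial_t + P)w_t = 0$ with initial datum $s$, and uniqueness of such solutions in $L^2(Z,E)$ is guaranteed by the semigroup generation property established in Proposition \ref{propsemig}. In particular, Proposition \ref{propsemig} together with Lidskii's theorem and the smoothness of the kernel yields
\begin{equation*}
\Tr[\exp(-tP)] = \int_Z \mathrm{tr}[\widetilde{k}_t(x,x)]\, dv_Z(x).
\end{equation*}

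Finally, \cite[Theorem 2.30]{BGV} provides the pointwise diagonal asymptotic $\widetilde{k}_t(x,x) = (4\pi t)^{-m/2}\sum_{i=0}^{k} t^{i}\Phi_i(x) + t^{k+1-m/2}R_{k,t}(x)$, with $R_{k,t}$ uniformly bounded in $\End(E)$ over $Z$ for $t\in(0,1]$. Taking fibrewise trace, integrating against $dv_Z$, and setting $a_i = (4\pi)^{-m/2}\int_Z \mathrm{tr}[\Phi_i(x)]\, dv_Z(x)\in\bC$ yields the claimed expansion with the claimed remainder. The only mild obstacle is checking that none of the estimates in \cite[Sections 2.4--2.5]{BGV} implicitly relies on self-adjointness of $P$; since the discussion there is phrased for arbitrary generalised Laplacians, this is routine, and the only genuine departure from the classical case is that the coefficients $a_i$ are now complex rather than real.
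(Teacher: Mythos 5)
Your proposal is correct and follows essentially the same route as the paper: the paper simply cites \cite[Definition 2.15, Lemma 2.34]{BGV} for the identification of the Schwartz kernel of $\exp(-tP)$ with the Volterra-parametrix heat kernel of \cite[Section 2.4]{BGV}, and then invokes \cite[Theorem 2.30]{BGV} and integrates. Your uniqueness-of-the-Cauchy-problem argument via the semigroup from Proposition~\ref{propsemig} is precisely the content of the cited BGV Lemma~2.34, and your observation that the construction never uses self-adjointness is the (unstated) point behind the paper's terse reference.
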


\subsection{The regularised determinant}\label{srDet} 
Fix $r>0$ and $\e>0$ such that \eqref{eqSpre}, 
\eqref{eqres},  and \eqref{eqSpre2} hold. 
Set
\begin{align}
	&{\rm det}_{<}(P)=\prod_{\lambda\in \Sp(P): 
	|\lambda|<r}\lambda^{m(\lambda)}\in 
	\bC,&{\rm det}_{<}^{*}(P)=\prod_{\lambda\in \Sp(P): 
	0<|\lambda|<r}\lambda^{m(\lambda)}\in \bC^{*}.
\end{align}

For $s>m/2$, set
\begin{align}
	\theta_{>}(s)=-\frac{1}{\Gamma(s)}\int_{0}^{\infty}\Tr\[\exp(-tP)\pi_{>}\]t^{s-1}ds.
\end{align}
By Propositions  \ref{propexptd} and \ref{propasy}, proceeding as 
\cite[Section 9.6]{BGV}, we see that
$\theta_{>}(s)$  has a meromorphic extension to $s\in \bC$, which is 
holomorphic at $s=0$. Set
\begin{align}
	{\rm det}_{>}(P)=\exp\(\frac{\p}{\p 
	s}\theta_{>}(0)\)\in \bC^{*}. 
\end{align}

\begin{defin}
Define
\begin{align}
		&\det(P)= {\rm det}_{<}(P)\cdot {\rm det}_{>}(P)\in 
		\bC,&{\rm det}^{*}(P)={\rm det}^{*}_{<}(P)\cdot 
		{\rm det}_{>}(P)\in \bC^{*}. 
	\end{align}
\end{defin}
By \eqref{eqTrsg}, it is easy to see that this definition does not 
depend on the choice of $r>0$ or $\e>0$.

\begin{thm}\label{thmGLH}
	The function $\det(\sigma+P)$	is holomorphic on $\sigma\in 
	\bC$. Its zeros are located  at $\sigma= -\lambda$ with multiplicity 
	$m_{P}(\lambda)$, where $\lambda\in \Sp(P)$. 
\end{thm}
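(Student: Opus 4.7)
The plan is to establish holomorphy locally around any fixed $\sigma_{0} \in \bC$, using a spectral splitting that is $\sigma$-independent in a small neighbourhood of $\sigma_{0}$. First I would choose $R>0$ so that the circle $|z|=R$ avoids $\Sp(\sigma_{0}+P)$ and separates it into a finite `inner' part and the unbounded rest. By the discreteness of $\Sp(P)$, there is $\delta>0$ such that for $|\sigma-\sigma_{0}|<\delta$ the finite set $A=\{\lambda\in\Sp(P):|\sigma+\lambda|<R\}$ equals its value at $\sigma_{0}$ and $|z|=R$ remains a valid separating contour for $\sigma+P$. Since the definition of $\det$ is independent of the chosen radius, I may use this common $R$ to write $\det(\sigma+P)=\det_{<}(\sigma+P)\cdot\det_{>}(\sigma+P)$ throughout the neighbourhood.

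The spectral projector of $\sigma+P$ onto $\{|z|<R\}$ coincides with the $\sigma$-independent projector $\pi^{(A)}$ of $P$ onto $\bigoplus_{\lambda\in A}V_{P}(\lambda)$, and on each $V_{P}(\lambda)$ the operator $\sigma+P$ has sole generalised eigenvalue $\sigma+\lambda$ of multiplicity $m_{P}(\lambda)$. Hence
\begin{align*}
\det{}_{<}(\sigma+P)=\prod_{\lambda\in A}(\sigma+\lambda)^{m_{P}(\lambda)},
\end{align*}
a polynomial in $\sigma$ whose zeros in the neighbourhood are exactly $\sigma=-\lambda$, $\lambda\in A$, with multiplicity $m_{P}(\lambda)$. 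Letting $\sigma_{0}$ range over $\bC$ accounts for all zeros claimed in the theorem, since the factor $\det_{>}$ is nowhere vanishing by construction.

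It remains to show that $\det_{>}(\sigma+P)=\exp(\partial_{s}\theta_{>}^{\sigma}(0))$ is holomorphic in $\sigma$. Since $\pi_{>}^{(\sigma,R)}=1-\pi^{(A)}$ is $\sigma$-independent and commutes with $P$, one has
\begin{align*}
\Tr\bigl[\exp(-t(\sigma+P))\pi_{>}^{(\sigma,R)}\bigr]=e^{-t\sigma}\Tr\bigl[e^{-tP}(1-\pi^{(A)})\bigr],
\end{align*}
so
\begin{align*}
\theta_{>}^{\sigma}(s)=-\frac{1}{\Gamma(s)}\int_{0}^{\infty}t^{s-1}e^{-t\sigma}\Tr\bigl[e^{-tP}(1-\pi^{(A)})\bigr]\,dt.
\end{align*}
For $\Re s>m/2$ this integral converges absolutely and is holomorphic in $\sigma$. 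Splitting $\int_{0}^{\infty}=\int_{0}^{1}+\int_{1}^{\infty}$, the tail is jointly entire in $(s,\sigma)$ by the uniform exponential decay of Proposition \ref{propexptd} (with constants locally uniform in $\sigma$). For the short-time part, Proposition \ref{propasy} still applies to $\Tr[e^{-tP}(1-\pi^{(A)})]$ because $e^{-tP}\pi^{(A)}$ is smooth in $t\g 0$; multiplying its expansion $\sum a_{i}t^{-m/2+i}$ by the Taylor series $e^{-t\sigma}=\sum_{k}(-t\sigma)^{k}/k!$ and integrating term by term gives a meromorphic continuation of $\theta_{>}^{\sigma}(s)$ whose Laurent coefficients are polynomial, hence entire, in $\sigma$. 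Any potential pole at $s=0$ is simple and is absorbed by the simple zero of $1/\Gamma(s)$, so $\theta_{>}^{\sigma}(s)$ is jointly holomorphic in $(\sigma,s)$ near $s=0$, and $\det_{>}(\sigma+P)$ is a nowhere vanishing holomorphic function of $\sigma$. The main obstacle is this joint-holomorphy verification, which becomes routine once the $\sigma$-independence of the spectral projector is in hand.
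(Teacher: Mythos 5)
Your proof is correct and follows essentially the same approach as the paper: reduce to a small $\sigma$-neighbourhood, observe that the low-spectrum projector is $\sigma$-independent there, identify $\det_{<}(\sigma+P)$ as the polynomial $\prod_{\lambda}(\sigma+\lambda)^{m_{P}(\lambda)}$, and conclude once $\det_{>}(\sigma+P)$ is seen to be a nowhere-vanishing holomorphic function of $\sigma$. The only difference is one of exposition: the paper dismisses the holomorphy of $\det_{>}(\sigma+P)$ with the phrase ``by our construction,'' whereas you spell out the factorisation $\Tr[\exp(-t(\sigma+P))\pi_{>}]=e^{-t\sigma}\Tr[e^{-tP}\pi_{>}]$ and check that the resulting $\theta_{>}^{\sigma}(s)$ is jointly holomorphic near $s=0$ via the split $\int_{0}^{1}+\int_{1}^{\infty}$; this is exactly what is implicit in the paper's appeal to Propositions \ref{propexptd} and \ref{propasy} and to the construction of $\det_{>}$.
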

\begin{proof}
Since $\sigma+P$ is a generalised Laplacian, it is enough to show our 
theorem near $\sigma=0$. Recall that $r>0$ and $\e>0$ are chosen such 
that \eqref{eqSpre}, \eqref{eqres}, and \eqref{eqSpre2} hold.  There is a small 
neighborhood $U\subset \bC$ of $0$ such that 
$U\cap \{-\Sp(P)\}\subset \{0\}$, and that 
	\eqref{eqSpre}, \eqref{eqres}, and \eqref{eqSpre2} still  hold for all $\sigma+P$ with 
	$\sigma\in U$. 
	
	We have the identity of functions on $U$, 
\begin{align}
	\det(\sigma+P)={\rm det}_{<}(\sigma+P){\rm det}_{>}(\sigma+P).
\end{align}
By our construction,  ${\rm det}_{>}(\sigma+P)$ is a holomorphic 
function without zero on  $U$, and that ${\rm det}_{<}(P+\sigma)$ is a 
polynomial function on $U$ where 
the only possibility of zeros is situated at $\sigma=0$ with multiplicity $m_{P}(0)$. The proof of our theorem is completed. 
\end{proof}

\begin{re}\label{re17}
	The statements of Propositions \ref{propsemig}-\ref{propasy}, and Theorem 	
	\ref{thmGLH} hold true when $Z$ is a Riemannian closed orbifold 
	and when $E$ is a  Hermitian  orbifold vector bundle on $Z$. 
	Indeed, the proofs of Propositions \ref{propsemig} and \ref{propexptd} are based 
on \eqref{eqSpre} and \eqref{eqres}, which are obtained using the 
peusdodifferential calculus. Using the peusdodifferential calculus  on 
orbifolds \cite[p.~2213]{Ma_Orbifold_immersion}, the generalisations of \eqref{eqSpre} and \eqref{eqres} to 
orbifolds are straightforward. The proof of Proposition \ref{propasy} 
in the orbifold setting is originally due 
to Ma \cite[Proposition 2.1]{Ma_Orbifold_immersion}. The generalisation of Theorem 		
\ref{thmGLH} is a consequence of the orbifold version of Propositions 
\ref{propsemig}-\ref{propasy}. 
\end{re}

\subsection{Flat vector bundles}\label{sflatv}
Let $Z$ be a connected closed manifold. 
Let 
$\Gamma=\pi_{1}(Z)$ be the fundamental group of $Z$.
Let $\widehat{\pi}:X\to Z$ be the universal cover of $Z$. The group  
$\Gamma$ acts on the left on $X$ as the deck transformation, so that 
\begin{align}
	Z=\Gamma\backslash X.
\end{align}

Let $g^{TZ}$ be a Riemannian metric	on $Z$. It lifts to a Riemannian 
metric $g^{TX}$ on $X$, so that  $\Gamma$ acts on $X$ isometrically.  Let $d_{X}(\cdot,\cdot)$ be the Riemannian distance on 
$X$. 
If $\gamma \in \Gamma$, let $d_{\gamma}:X\to \bR$ be the  displacement function on X, i.e.,
\begin{align}\label{eqdisp}
	d_{\gamma}(x)=d_{X}(x,\gamma x). 
\end{align}
%
 Set 
 \begin{align}\label{eqell=inf1}
 	\ell_{\gamma}=\inf_{x\in X} d_{\gamma}(x). 
 \end{align}
If $i_{Z}>0$ is the injectivity radius of $Z$, then for $\gamma\in 
\Gamma$ and $\gamma\neq1$, 
\begin{align}\label{eqdgc01}
	\ell_{\gamma}\g i_{Z}. 
\end{align} 	
 

 Let $F$ be a flat vector bundle of rank $r$ on $Z$ 
with flat connection $\nabla^F$. 
Let $\rho:\Gamma\to \GL_{r}(\bC)$ be the holonomy representation of 
$F$, so that 
\begin{align}\label{eqFhol1}
	F=\Gamma\backslash (X\times \bC^{r}).
\end{align}
Take $x\in X$ such that $\widehat{\pi}(x)=z$. Then $\rho(\gamma)$ can 
be considered as  an element in $\End(F_{z})$ defined by the parallel  transport
	with respect to the flat connection $\nabla^{ F}$ along the loop on 
	$Z$ based at $z$, which is obtained by the projection of a path on $X$	from $\gamma x$ to $x$.

Let $g^{F}$ be a 
Hermitian metric on $F$. 


\begin{prop}\label{proprhor}
	There is 
$C>0$ such that for any $\gamma\in 
\Gamma$, $x\in X$ with $z=\widehat{\pi}(x)$, \begin{align}\label{eqgno}
&\left|	\rho(\gamma)\right|_{g^{F}_{z}}\le 
Ce^{Cd_{\gamma}(x)}, &\big|	\Tr[\rho(\gamma)]\big|\le C e^{C\ell_{\gamma}}. 
\end{align}
\end{prop}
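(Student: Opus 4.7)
The strategy is a Gronwall estimate on parallel transport along loops in $Z$, combined with the fact that $\rho(\gamma)$ is realised as parallel transport along a loop of length $d_\gamma(x)$. Since $Z$ is compact, the section $\nabla^F g^F \in C^{\infty}(Z, T^*Z\otimes_{\bR} F^*\otimes F^*)$ is bounded; set
\begin{align*}
C_0 = \sup_{z\in Z,\ |\xi|_{g^{TZ}_z}=1}\bigl|(\nabla^F g^F)_z(\xi)\bigr|_{g^F_z} < \infty.
\end{align*}
For a piecewise smooth path $c:[0,L]\to Z$ parametrised by arc length and $v_0\in F_{c(0)}$, the parallel lift $v(t)=P_{c|_{[0,t]}}v_0$ satisfies $\tfrac{d}{dt}|v(t)|^2_{g^F}=(\nabla^F g^F)_{c(t)}(\dot c)(v,v)$, since $v(t)$ is $\nabla^F$-parallel. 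Gronwall's inequality then yields $|v(L)|_{g^F}\le e^{C_0 L/2}|v_0|_{g^F}$, so the operator norm of $P_c:F_{c(0)}\to F_{c(L)}$ is at most $e^{C_0 L/2}$.

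Given $\gamma\in\Gamma$ and $x\in X$ with $z=\widehat{\pi}(x)$, I take a minimising geodesic in $X$ from $\gamma x$ to $x$, of length $d_\gamma(x)$. Since $\widehat{\pi}$ is a local isometry, its projection is a loop $c$ in $Z$ based at $z$ of the same length, and by the very definition of the holonomy recalled just above the proposition, $\rho(\gamma)=P_c$. The previous step gives
\begin{align*}
	\bigl|\rho(\gamma)\bigr|_{g^F_z}\le e^{C_0 d_\gamma(x)/2},
\end{align*}
which is the first estimate with $C$ chosen to absorb $C_0/2$.

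For the second estimate, I use the elementary bound $|\Tr[\rho(\gamma)]|\le r\,|\rho(\gamma)|_{g^F_z}$ together with the crucial fact that $\Tr[\rho(\gamma)]$ depends only on the conjugacy class of $\gamma$ in $\Gamma$, and not on the lift $x$ or basepoint $z$. Consequently, for any $x\in X$ the bound $|\Tr[\rho(\gamma)]|\le r\,e^{C_0 d_\gamma(x)/2}$ holds, and choosing a sequence $x_n\in X$ with $d_\gamma(x_n)\to \ell_\gamma$ and passing to the limit gives $|\Tr[\rho(\gamma)]|\le r\,e^{C_0 \ell_\gamma/2}$. The estimate is essentially routine; the only mild subtlety is that the infimum defining $\ell_\gamma$ need not be attained (for example at $\gamma=1$), which forces one to argue via a minimising sequence rather than by evaluation at a single minimiser.
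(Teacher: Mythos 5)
Your proof is correct and takes essentially the same approach as the paper: a Gronwall estimate on parallel transport along a path realising $d_\gamma(x)$, with the uniform bound on $\nabla^F g^F$ coming from compactness of $Z$, followed by $\lvert\Tr[\rho(\gamma)]\rvert\le r\lvert\rho(\gamma)\rvert_{g^F_z}$ and the infimum over $x$ using conjugation-invariance of the trace. The only packaging difference is that the paper factors $\rho(\gamma)=\tau^0_{t_0}U_{t_0}$ through the Bismut--Zhang metric connection $\nabla^{F,u}=\nabla^F+\tfrac12\omega^F$ and applies Gronwall to $U_t$, while you apply it directly to $\lvert v(t)\rvert^2_{g^F}$; this is cosmetic.
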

\begin{proof}
	Following \cite[Definitions 4.1, 4.2]{BZ92},  set
	\begin{align}
		\omega^{F}=\(g^{F}\)^{-1}\(\nabla^{F}g^{F}\)\in 
		\Omega^{1}(Z,\End(F)), 
	\end{align}
	and 
	\begin{align}
	\nabla^{F,u}=\nabla^{F}+\frac{1}{2}\omega^{F}.
    \end{align}
	Then, $\nabla^{F,u}$ is a connection on $F$ which 	 preserves the metric $g^{F}$.
	 
	 Let $(x_{t})_{0\l t\l t_{0}}$ be a smooth path  on $X$ of  speed $1$ from $\gamma x$ to $ x$. We 
	trivialise $F$ over $\widehat{\pi}(x_{\cdot})$ by the  parallel  transport 	
	with respect to  the metric connection $\nabla^{F,u}$. For $s,t\in \[0, t_{0}\]$, 
	let $\tau^{s}_{t}\in 
	\Hom(F_{\widehat{\pi}x_{s}},F_{\widehat{\pi} x_{t}})$ be the corresponding parallel  transport. 
	Let $U_{t}\in 
	C^{\infty}(\[0,t_{0}\],\End(F_{z}))$ be the unique 
	solution of the differential equation 
	\begin{align}
&		\dot{U}_{t}-\frac{1}{2}\tau^{t}_{0}\omega^{F}_{\widehat\pi x_{t}}(\dot x_{t})\tau^{0}_{t}U_{t}=0, &U_{0}={\rm Id}. 
	\end{align}
As elements in $\End(F_{z})$, we have 
\begin{align}\label{eqGro1}
	\rho(\gamma)= \tau^{0}_{t_{0}}U_{t_{0}}. 
\end{align}	

Since $Z$ is compact, the smooth section $\omega^{F}$ is uniformly 
bounded. Since $|\dot{x}_{s}|=1$ and since $\tau^{s}_{t}$ preserves $g^{F}$, by Gronwall's  
inequality, there is $C>0$ independent of $x$, of $\gamma$,  and  of 
the path $x_{\cdot}$,   such 
that for all $t>0$, 
\begin{align}\label{eqGro2}
	\left|U_{t}\right|_{g^{F}_{z}}\l Ce^{Ct}. 
\end{align}

By  \eqref{eqGro1} and \eqref{eqGro2}, using again that 
$\tau^{0}_{t_{0}}$ is unitary with respect to  
$g^{F}_{z}$,
we get
\begin{align}
	\left|\rho(\gamma)\right|_{g^{F}_{z}}\le Ce^{Ct_{0}}.
\end{align}
Since $t_{0}$ is the length of the path $(x_{t})_{t\in [0,t_{0}]}$, 
by taking the infimum of all such paths, we get the first inequality of \eqref{eqgno}. 

Using the trivial inequality  on matrices, by the first inequality of \eqref{eqgno}, we have 
\begin{align}\label{eqtrsmno}
\left|	\Tr\[\rho(\gamma)\]\right| \l r 
|\rho(\gamma)|_{g^{F}_{z}}\l rCe^{Cd_{\gamma}(x)}. 
\end{align}
Since the constants $C$ and $r$ do not depend on $x$, by taking the  
infimum on $x\in X$ in \eqref{eqtrsmno}, using \eqref{eqell=inf1}, we get the second inequality of \eqref{eqgno}.
\end{proof}

\subsection{A generalised Laplacian and flat vector 
bundles}\label{sgLflat}
We use the notation of Section \ref{sflatv}. 
Let $\widetilde{E}$ be a Hermitian vector bundle on $X$ with a metric connection $\nabla^{\wE}$. 
Assume that the $\Gamma$-action on $X$ lifts to $\widetilde{E}$ and 
preserves the Hermitian metric and the connection $\nabla^{\wE}$. 
Then $(\widetilde{E},\nabla^{\wE})$ descends  to a Hermitian vector 
bundle $E$ on $Z$ with a metric connection $\nabla^{E}$. 

Let $P^{X}$ be a $\Gamma$-invariant self-adjoint  generalised Laplacian 
acting on $C^{\infty}(X,\widetilde{E})$. Assume that 
\begin{align}\label{eqPX}
	P^{X}=\(\nabla^{\wE}\)^{*}\nabla^{\wE}+\widetilde{A},
\end{align}
where $\widetilde{A}\in C^{\infty}(X,\End(\wE))^{\Gamma}$. Since the 
Riemannian manifold $(X,g^{TX})$ is complete, for $t>0$, it is classical that the heat operator 
$\exp(-tP^{X})$ exists and has a smooth integral kernel. Let 
$p^{X}_{t}(x,y)$ be the   smooth integral kernel  with respect to the Riemannian volume $dv_{X}$.

Recall that $F$ is a flat vector bundle on $Z$ with holonomy  $\rho:\Gamma\to \GL_{r}(\bC)$. We have the identification 
\begin{align}\label{eqsectionEF}
	C^{\infty}(Z,E\otimes F)=\(C^{\infty}(X,\widetilde{E})\otimes 
	\bC^{r}\)^{\Gamma}. 
\end{align}
The operator $P^{X}\otimes {\rm id}$ preserves the above 
$\Gamma$-invariant space. It descends to a generalised  
Laplacian $P^{Z,\rho}$ acting on $C^{\infty}(Z,E\otimes F)$.
Since $\rho$ is not necessarily unitary, there is no obvious metric  on $F$ such that $P^{Z,\rho}$ is 
self-adjoint. Still, by Proposition \ref{propsemig}, 
the heat operator $\exp(-tP^{Z,\rho})$ exists and has a 
smooth integral kernel. For $t>0$, the  smooth integral kernel  with respect to the Riemannian volume $dv_{Z}$ can be identified with a $\Gamma\times 
\Gamma$-invariant section $\widetilde{q}_{t}^{Z,\rho}(x,y)$ in $C^{\infty}(X\times X, (\widetilde{E}\otimes 
\bC^{r})\boxtimes (\widetilde{E}\otimes 
\bC^{r})^{*})$. 

Note that $p_{t}^{X}(\gamma^{-1} x, y)\in \Hom(\widetilde{E}_{
y},\widetilde{E}_{\gamma^{-1} x})$. Let  $\gamma_{*}\in \Hom (\wE_{ 
\gamma^{-1}x},\wE_{x})$  be 
the obvious element. Then, $ \gamma_{*}p_t^{X}(\gamma^{-1}x,  
  y)  \in \Hom(\wE_{ 
y},\wE_{x})$. 
Recall that we have  fixed a metric $g^{F}$ on $F$. 

\begin{thm}\label{thmpsump}
	Given $t_{0}>0$, $T_{0}>0$ with $t_{0}<T_{0}$ and given a bounded open set 
	$X_{0}\subset X$, for any $s\in \mathbf{N}$, the sum on the right-hand side of \eqref{eqsumheat1}
converges absolutely in the space of $C^{s}([t_{0},T_{0}]\times 
X_{0}\times X_{0},\bC\boxtimes(\widetilde{E}\otimes 
\bC^{r})\boxtimes (\widetilde{E}\otimes 
\bC^{r})^{*})$, so that 
\begin{align}\label{eqsumheat1}
	\widetilde{q}_{t}^{Z}(x,y)=\sum_{\gamma\in \Gamma} \rho(\gamma)\otimes    \gamma_{*}p_t^{X}(\gamma^{-1}x, 
  y) .
\end{align}
\end{thm}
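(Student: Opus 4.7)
The plan is to establish the formula \eqref{eqsumheat1} by verifying (a) that the series on the right hand side converges absolutely in every $C^{s}$-norm over a fixed compact cylinder $[t_{0},T_{0}]\times X_{0}\times X_{0}$, and (b) that its sum coincides with the heat kernel $\widetilde{q}_{t}^{Z,\rho}(x,y)$ after descending to $Z$. Step (b) will be immediate from uniqueness of the heat kernel once (a) and the $\Gamma\times\Gamma$-invariance are known, so the real content is (a).

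First, I would use the bounded geometry of $(X,g^{TX})$, inherited from the compactness of $Z$, to derive Gaussian off-diagonal estimates for $p^{X}_{t}$ and its derivatives. Since $P^{X}$ is a $\Gamma$-invariant self-adjoint generalised Laplacian of the form \eqref{eqPX}, standard arguments (Cheeger-Gromov-Taylor, Davies, or direct estimates via the maximum principle together with elliptic regularity on balls of fixed radius) yield: for every $s\in\mathbf{N}$ and every $0<t_{0}<T_{0}$, there are $C_{s}>0$ and $c_{s}>0$ such that for all $t\in[t_{0},T_{0}]$ and all $x,y\in X$,
\begin{align}\label{eqPlanGauss}
\bigl|\nabla^{s}_{x,y}p_{t}^{X}(x,y)\bigr|\l C_{s}\,\exp\bigl(-c_{s}\,d_{X}(x,y)^{2}\bigr),
\end{align}
where the derivatives include arbitrary combinations of derivatives in $x$ and $y$ (of total order $\l s$) and the covariant derivative is taken with respect to $\nabla^{\wE}$ and the Levi-Civita connection.

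Second, I would combine \eqref{eqPlanGauss} with Proposition \ref{proprhor} and a lattice counting argument. For $x\in X_{0}$ with $\widehat{\pi}(x)=z$, Proposition \ref{proprhor} gives $|\rho(\gamma)|_{g^{F}_{z}}\l C e^{C d_{\gamma}(x)}$. Since $X$ has bounded geometry and $Z$ is compact, there exist constants $A,B>0$ such that the number of $\gamma\in\Gamma$ with $d_{X}(x,\gamma x)\l R$ is $\l A e^{BR}$, uniformly for $x\in X_{0}$. Using the triangle inequality $d_{X}(\gamma^{-1}x,y)\g d_{\gamma}(x)-\mathrm{diam}(X_{0})$ for $y\in X_{0}$, the generic term of \eqref{eqsumheat1} is bounded, on $[t_{0},T_{0}]\times X_{0}\times X_{0}$ and in every $C^{s}$-norm, by
\begin{align}
C_{s}'\,\exp\bigl(Cd_{\gamma}(x_{0})\bigr)\,\exp\bigl(-c'_{s}\,d_{\gamma}(x_{0})^{2}\bigr),
\end{align}
where $x_{0}\in X_{0}$ is fixed. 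Grouping the indices $\gamma$ in annuli $\{d_{\gamma}(x_{0})\in[R,R+1]\}$, the quadratic Gaussian decay in \eqref{eqPlanGauss} dominates both the exponential growth $e^{Cd_{\gamma}}$ coming from $\rho$ and the exponential lattice growth $e^{BR}$, yielding absolute convergence.

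Finally, the absolute $C^{s}$-convergence allows me to apply $\partial_{t}+P^{X}\otimes\mathrm{id}$ termwise; each summand is annihilated because $p_{t}^{X}$ satisfies the heat equation and $P^{X}$ is $\Gamma$-invariant. The sum is $\Gamma$-invariant under the diagonal action: replacing $x$ by $\gamma_{0}x$ amounts to relabelling $\gamma\mapsto\gamma\gamma_{0}^{-1}$ in the series, and the cocycle property $\rho(\gamma\gamma_{0}^{-1})=\rho(\gamma)\rho(\gamma_{0})^{-1}$ together with $\Gamma$-invariance of $p_{t}^{X}$ reconstructs the expected transformation law for the kernel of $P^{Z,\rho}$; similarly in the $y$ variable. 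As $t\to 0^{+}$, the $\gamma=1$ term produces a Dirac mass along the diagonal of $X$, while every other term is exponentially small by \eqref{eqdgc01} and \eqref{eqPlanGauss}. Hence the series descends to a smooth kernel on $Z$ which solves the heat equation for $P^{Z,\rho}$ with the correct initial condition, and must therefore equal $\widetilde{q}_{t}^{Z,\rho}$ by uniqueness.

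The main obstacle is step one, namely securing the Gaussian bound \eqref{eqPlanGauss} with sharp enough $t$- and derivative-uniformity on a non-compact cover. Provided this standard ingredient is invoked cleanly, the rest is routine summation; the role of Proposition \ref{proprhor} is precisely to ensure that the non-unitarity of $\rho$ only introduces an exponential (and not super-exponential) factor, which is then absorbed by the Gaussian.
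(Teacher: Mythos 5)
Your main convergence argument matches the paper's: Gaussian off-diagonal bounds on $p_{t}^{X}$ (the paper derives these from finite propagation speed, citing \cite{MaMa,MaMar_cover}, rather than Cheeger--Gromov--Taylor or Davies, but the conclusion is the same), exponential counting of $\Gamma$ via \eqref{eqestgamma}, and the holonomy bound of Proposition~\ref{proprhor}, with the quadratic Gaussian decay absorbing both exponentials. The termwise heat-equation verification is also identical.

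There are, however, two genuine gaps in your initial-condition step. First, your estimate \eqref{eqPlanGauss} is formulated only on $[t_{0},T_{0}]$ with $t_{0}>0$, so it cannot be invoked as $t\to 0^{+}$; what is needed there is the stronger small-time bound $|p_{t}^{X}(x,y)|\l C\,t^{-m/2}\exp\(-c\,d_{X}(x,y)^{2}/t\)$ on $(0,1]$, which together with $\ell_{\gamma}\ge i_{Z}$ for $\gamma\neq 1$ gives $O(e^{-c'/t})$ for the off-diagonal terms --- this is the paper's \eqref{eqestheatd}. Second, the lifted test sections $\widetilde{u}$ are \emph{not} bounded on $X$: they grow exponentially (see \eqref{equexp}), so one cannot simply assert that ``the $\gamma=1$ term produces a Dirac mass'' and pass to $\widetilde{u}(x)$ in the limit. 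The paper circumvents this by splitting $\widetilde{u}=\widetilde{u}_{1}+\widetilde{u}_{2}$ with $\widetilde{u}_{1}$ compactly supported (handled by Sobolev embedding and the $L^{2}$ semigroup property of $\exp(-tP^{X})$) and $\widetilde{u}_{2}$ supported away from a neighbourhood of the fundamental domain (killed by the small-time Gaussian together with \eqref{eq:N<er2}). You would need to add both ingredients to make the initial-condition claim, and hence the appeal to uniqueness, rigorous.
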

\begin{proof}
	We will first show that sum on the right hand side of  
	\eqref{eqsumheat1} converges  to a 
	$\Gamma\times \Gamma$-invariant smooth section $q_{t}(x,y)$ and 
	then show that $q_{t}(x,y)$ satisfies the 
	heat equation.	

	We fix $x_{0}\in X_{0}$. We claim that  for any $\delta>0$, we have 
\begin{align}\label{eq:N<er2}
	 \sum_{\gamma\in \Gamma}\exp\(-\delta d^{2}_{\gamma}(x_{0})\)<+\infty.
\end{align}	
Indeed, by \cite[Remark p.~1, Lemma 2]{Milnor_fundgroup} or  
\cite[(3.19)]{MaMar_cover}, there is $C>0$ such that for all $r\g 0$,
\begin{align}\label{eqestgamma}
	\left|\left\{\gamma\in \Gamma: d_{\gamma}(x_{0})\l r\right\}\right|\l Ce^{Cr}.
\end{align}
For $\delta>0$, by \eqref{eqestgamma}, we have
\begin{multline}
	\sum_{\gamma\in \Gamma}\exp\(-\delta 
	d^{2}_{\gamma}(x_{0})\)=\sum_{\gamma\in 
	\Gamma}\int^{\infty}_{\delta 
	d^{2}_{\gamma}(x_{0})}\exp(-t)dt\\
	=\int^{\infty}_{0}\exp(-t)\left|\left\{\gamma\in \Gamma: 
	d_{\gamma}(x_{0})\l \sqrt{t/\delta}\right\}\right|dt\l 
	C\int^{\infty}_{0}e^{-t+C\sqrt{t/\delta}}dt<+\infty.
\end{multline}

Since $X_{0}$ is bounded, by \eqref{eqgno} and by triangle inequality,  there is $C>0$ such that, 
for any $x\in X_{0}$, we have
\begin{align}\label{eqgno1}
	\left|\rho(\gamma)\right |_{g^{F}_{\widehat{\pi} x}}\l 
	Ce^{Cd_{\gamma}(x_{0})}.
\end{align}



	
	By finite propagation speed of solutions of hyperbolic 
	equations \cite[Theorem D.2.1]{MaMa},  as in \cite[Theorem 
	4]{MaMar_cover},  there are $c>0, C>0$ such that 
	 for any $t\in [t_{0},T_{0}]$ and $x,y\in X$, 
	\begin{align}\label{eqehet}
		\left|p_t^{X}(x,  y) \right|\l C\exp\( -c d_{X}^{2}(x,y)\). 
	\end{align}
		Moreover, similar estimates hold uniformly on $[t_{0},T_{0}]\times 
	X\times X$, for all derivations on $t,x,y$. By \eqref{eqehet} and by triangle inequality,  there are $c>0, C>0$ such that 
	for $t\in [t_{0},T_{0}]$, $x,y\in X_{0}$, $\gamma\in \Gamma$, we have
		\begin{align}\label{eqehetr}
		\left|p_t^{X}(\gamma^{-1}x,  y) \right|\l C\exp\( -c
		d_{\gamma}^{2}(x_{0})\),
	\end{align}	
	and similar estimates for all derivations on $t,x,y$. 

By \eqref{eq:N<er2}, \eqref{eqgno1},  and \eqref{eqehetr}, for all 
$s\in \mathbf{N}$, the sum on the right-hand side of  
	\eqref{eqsumheat1} converges absolutely   in $C^{s}([t_{0},T_{0}]\times X_{0} 
	\times X_{0}, \bC\boxtimes(\widetilde{E}\otimes 
\bC^{r})\boxtimes (\widetilde{E}\otimes 
\bC^{r})^{*})$ to a $\Gamma\times 
	\Gamma$-invariant smooth section $q_{t}(x,y)$, 	
	so that 
	\begin{align}\label{eqehet1}
		\begin{aligned}
		\frac{\p}{\p 		t}q_{t}(x,y)&=\sum_{\gamma\in \Gamma} 
		\rho(\gamma) \otimes \frac{\p}{\p 		
		t}\gamma_{*}p_t^{X}(\gamma^{-1} x,  
  y), \\
 P^{X}_{x}q_{t}(x,y)&=\sum_{\gamma\in \Gamma} \rho(\gamma) \otimes  P^{X}_{x} \gamma_{*}p_t^{X}(\gamma^{-1}x,  
  y),
  \end{aligned}
	\end{align}
where $P^{X}_{x}$ denotes the differential operator 
$P^{X}$ acting on the  variable $x$. 	
By \eqref{eqehet1}, using
$	\frac{\p}{\p 		t} p_t^{X}(x,  
  y)=-P^{X}_{x} p_t^{X}(x,y),
$ and using the fact that 
$p_{t}^{X}$ is $\Gamma\times \Gamma$-invariant,  we get	
	\begin{align}\label{eqehet3}
	\frac{\p}{\p 		t}q_{t}(x,y)=-P^{X}_{x}q_{t}(x,y).
	\end{align}
	
Take $u\in C^{\infty}(Z,E\otimes F)$. We identify $u$ with its $\Gamma$-invariant lifting
$\widetilde{u}\in \(C^{\infty}(X,\widetilde{E})\otimes 
\bC^{r}\)^{\Gamma}$.  Let $F_{Z}\subset X$ be a fundamental 
domain\footnote{For example, we can take $F_{Z}=\{x\in 
X:\text{ for all }\gamma\in \Gamma, d_{X}(x,x_{0})<d_{X}(x,\gamma x_{0})\}.$} of $Z$ in $X$. 
By \eqref{eqgno}, there is $C>0$, for $y\in 
F_{Z}$ and $\gamma\in \Gamma$, we have  
\begin{align}\label{equexp}
	\left|\widetilde{u}(\gamma y)\right|\l Ce^{Cd_{\gamma}(y)}. 
\end{align}
By \eqref{eq:N<er2}, \eqref{eqehetr},  and \eqref{equexp},  using Fubini's theorem, we have
\begin{align}
	\int_{y\in F_{Z}}q_{t}(x,y)\widetilde{u}(y)dv_{X}=\int_{y\in 
	X}p_{t}^{X}(x,y)\widetilde{u}(y)dv_{X}.  
\end{align}
By \cite[Definition 2.15, Lemma 2.34]{BGV}, it remains to show 
that for any $u\in  C^{\infty}(Z,E\otimes F)$, 
as $t\to0$, we have 
\begin{align}\label{eqehet4}
	\sup_{x\in 
	F_{Z}}\left|\int_{y\in X}p^{X}_{t}(x,y)\widetilde{u}(y)dv_{X}- 
	\widetilde{u}(x)\right|\to 0. 
\end{align}

Let $F^{0}_{Z}, F^{1}_{Z}$ be two bounded  open 
neighbourhoods of $\ol{F}_{Z}$ such that $\ol{F}^{0}_{Z}\subset F^{1}_{Z}$.  Write 
\begin{align}
	\widetilde{u}=\widetilde{u}_{1}+\widetilde{u}_{2}, 
\end{align}
where $\widetilde{u}_{1}$ and  $\widetilde{u}_{2}$ are  
$C^{\infty}$-sections on $X$ such that   
$	\Supp(\widetilde{u}_{1})\subset 
F^{1}_{Z}$ and $\Supp(\widetilde{u}_{2})\subset X\backslash 
F^{0}_{Z}.$
Since $\widetilde{u}_{1}$ has a compact support in  $X$,  by the 
Sobolev imbedding theorem, for $k>m/4$, there is $C_{k}>0$ such that  
\begin{multline}\label{eqexpC0}
	\left\|\exp\(-tP^{X}\)\widetilde{u}_{1}-\widetilde{u}_{1}\right\|_{C^{0}(F^{1}_{Z})}
	\l C_{k}
	\left \|\(1+P^{X}\)^{k}
	\(\exp\(-tP^{X}\)\widetilde{u}_{1}-\widetilde{u}_{1}\)\right\|_{L^{2}(X)}\\
	= C_{k}
	\left\|\exp\(-tP^{X}\)\(1+P^{X}\)^{k}\widetilde{u}_{1}-\(1+P^{X}\)^{k}\widetilde{u}_{1}\right\|_{L^{2}(X)}. 
\end{multline}
Since $\(1+P^{X}\)^{k}\widetilde{u}_{1}$ is $L^{2}$ on $X$, 
by  a  property  of the semigroup $\exp(-tP^{X})$ and by \eqref{eqexpC0}, as $t\to 
0$, we have
\begin{align}\label{eqehet5}
	\sup_{x\in F_{Z}}&\left|	
\int_{y\in X}p^{X}_{t}(x,y)\widetilde{u}_{1}(y)dv_{X}- 
	\widetilde{u}_{1}(x)\right|
	\to 0. 
\end{align}
By finite propagation speed of solutions of hyperbolic equations  
\cite[Theorem D.2.1]{MaMa}, there are $c>0,C>0$, for $t\in 
(0,1], x,y\in X$, 
\begin{align}\label{eqestheatd}
	\left|p_{t}^{X}(x,y)\right|\l 
	\frac{C}{t^{m/2}}\exp\(-c\frac{d_{X}^{2}(x,y)}{t}\). 
\end{align}
Since $\Supp (\widetilde{u}_{2})\subset X\backslash F_{Z}^{0}$ and since $d_{X}(X\backslash F^{0}_{Z},F_{Z} )>c'>0$, 
using \eqref{eq:N<er2}, \eqref{equexp}, and \eqref{eqestheatd}, we 
can deduce that there are $c>0$ and $C>0$, for $t\in (0,1]$, 
\begin{align}\label{eqehet6}
\sup_{x\in F_{Z}}\left|	\int_{y\in X}p^{X}_{t}(x,y)\widetilde{u}_{2}(y)dv_{X}\right|\l C 
e^{-c/t}. 
\end{align}
By \eqref{eqehet5} and \eqref{eqehet6}, we get \eqref{eqehet4}. The 
proof of our theorem is completed. 
\end{proof}

\begin{cor}\label{cordpresel}
For $t>0$,	 the right-hand side of the following identity 
convergences absolutely, so that 
	\begin{align}\label{eqselb}
	\Tr\[\exp\(-tP^{Z,\rho}\)\]=\sum_{\gamma\in 
	\Gamma}\Tr[\rho(\gamma)]\int_{x\in F_{Z}}\Tr^{\wE}\[\gamma_{*}p^{X}_{t}\(\gamma^{-1}x,x\)\]dv_{X}.
\end{align}
\end{cor}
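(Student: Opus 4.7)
The plan is to combine the kernel expansion from Theorem \ref{thmpsump} with the standard formula expressing the trace of a heat operator on a closed manifold as the integral of its Schwartz kernel along the diagonal, and then to justify the interchange of sum and integral via Fubini's theorem.

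First, I would invoke Proposition \ref{propsemig} applied to the generalised Laplacian $P^{Z,\rho}$ on $Z$: the heat operator $\exp(-tP^{Z,\rho})$ is of trace class with smooth integral kernel $\widetilde{q}_{t}^{Z,\rho}(x,y)$ (viewed as a $\Gamma\times\Gamma$-invariant section on $X\times X$ via the identification \eqref{eqsectionEF}). The trace is then computed by integrating the pointwise fibrewise trace along the diagonal over any fundamental domain, i.e.
\begin{align}
\Tr\[\exp(-tP^{Z,\rho})\]=\int_{x\in F_{Z}}\Tr\[\widetilde{q}_{t}^{Z,\rho}(x,x)\]\,dv_{X}.
\end{align}

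Next, I would apply Theorem \ref{thmpsump}. Since $Z$ is compact, one can choose the fundamental domain $F_{Z}\subset X$ to be bounded; choose a bounded open set $X_{0}\subset X$ with $F_{Z}\subset X_{0}$. Taking $s=0$, $t_{0}=T_{0}=t$ (or any small interval around $t$), Theorem \ref{thmpsump} gives that the series
\begin{align}
\widetilde{q}_{t}^{Z,\rho}(x,x)=\sum_{\gamma\in\Gamma}\rho(\gamma)\otimes \gamma_{*}p_{t}^{X}(\gamma^{-1}x,x)
\end{align}
converges absolutely and uniformly in $x\in F_{Z}$. Taking pointwise traces and using that $\Tr[\rho(\gamma)\otimes \gamma_{*}p_{t}^{X}(\gamma^{-1}x,x)]=\Tr[\rho(\gamma)]\cdot\Tr^{\wE}[\gamma_{*}p_{t}^{X}(\gamma^{-1}x,x)]$ (the trace of a tensor product factorises), I obtain uniform absolute convergence of the scalar-valued series, so the series is also absolutely integrable on $F_{Z}$.

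Finally, Fubini's theorem justifies interchanging the sum over $\Gamma$ with the integral over $F_{Z}$, yielding
\begin{align}
\Tr\[\exp(-tP^{Z,\rho})\]=\sum_{\gamma\in\Gamma}\Tr[\rho(\gamma)]\int_{x\in F_{Z}}\Tr^{\wE}\[\gamma_{*}p_{t}^{X}(\gamma^{-1}x,x)\]\,dv_{X}.
\end{align}
The only mildly delicate point is to ensure that the absolute convergence from Theorem \ref{thmpsump} (stated for a bounded open $X_{0}$) actually covers the fundamental domain $F_{Z}$ needed here; but as observed above this is immediate from compactness of $Z$, so there is no real obstacle. All other steps are routine once Theorem \ref{thmpsump} is in hand.
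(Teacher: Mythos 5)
Your proposal is correct and follows essentially the same route as the paper's proof, which simply lists the needed ingredients (the trace-as-integral-along-the-diagonal fact from \cite[Theorem D.1.5]{MaMa}, Theorem \ref{thmpsump}, and the estimates \eqref{eqgno}, \eqref{eq:N<er2}, \eqref{eqehetr}) without spelling out the steps. The only small presentational point is that you attribute the ``trace equals the integral of the diagonal kernel'' step to Proposition \ref{propsemig}, which actually only asserts trace-classness and the eigenvalue sum; the diagonal-integral formula is what the paper's citation of \cite[Theorem D.1.5]{MaMa} supplies, but this is a standard fact once the kernel is known to be smooth and the operator trace class, so the argument is sound.
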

\begin{proof}
	This is a consequence of \cite[Theorem D.1.5]{MaMa}, Proposition 
	\ref{proprhor}, Theorem \ref{thmpsump}, and  the estimates 
	\eqref{eqgno}, 	\eqref{eq:N<er2},  \eqref{eqehetr}. 
\end{proof}

Set $\Gamma_{+}=\Gamma-\{{\rm id}\}$. We have a generalisation of \cite[Proposition 4.8]{Shfried}.
\begin{prop}\label{propdgc01}
	There are $c>0,C>0$ such that for any $x\in X$ and 
$t>0$,  
	\begin{align}\label{eqsumheat}
	\sum_{\gamma\in \Gamma_{+}}\big|\Tr[\rho(\gamma)]\big| 
	\left|p_t^{X}\(\gamma^{-1} 
  x,x\)\right|\l C\exp\(-\frac{c}{t}+Ct\). 
\end{align}
\end{prop}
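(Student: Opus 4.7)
The plan is to bound the trace of the representation using Proposition \ref{proprhor}, combine it with Gaussian upper bounds for the heat kernel on $X$, and then invoke an exponential counting estimate for conjugacy-class translates.

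First, by the first inequality in \eqref{eqgno}, for any $\gamma\in\Gamma$ and $x\in X$ we have $|\Tr[\rho(\gamma)]|\le C e^{Cd_{\gamma}(x)}$ (more than just the $e^{C\ell_\gamma}$ bound, which is useful because $d_\gamma(x)$ is exactly what appears in the heat-kernel bound, noting $d_{\gamma}(x)=d_{X}(\gamma^{-1}x,x)$). Next, because $X$ covers a compact manifold it has bounded geometry, so a standard Gaussian estimate gives constants $c,C>0$ with
\begin{align*}
|p_t^X(\gamma^{-1}x,x)| \;\le\; \frac{C}{t^{m/2}}\,\exp\!\Big(Ct - \frac{c\,d_{\gamma}(x)^2}{t}\Big),\qquad t>0.
\end{align*}
For $t\in(0,1]$ this follows from \eqref{eqestheatd}; for $t\ge1$ it follows by composing the heat semigroup and using the large-time sub-exponential growth. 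Thus each term in the sum is bounded by $\frac{C}{t^{m/2}}\exp\!\big(Ct + Cd_\gamma(x) - cd_\gamma(x)^2/t\big)$.

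The key algebraic step is the elementary completion-of-square inequality
\begin{align*}
Cd - \frac{cd^2}{t} \;\le\; -\frac{cd^2}{2t} + \frac{C^2 t}{2c},
\end{align*}
which separates a quadratic Gaussian factor $e^{-cd^2/(2t)}$ from a factor that is at most linearly exponential in $t$. Using $d_\gamma(x)\ge i_Z$ for $\gamma\ne 1$ from \eqref{eqdgc01}, we further split $e^{-cd_\gamma^2/(2t)}\le e^{-ci_Z^2/(4t)}\,e^{-cd_\gamma^2/(4t)}$, producing the desired $e^{-c/t}$ factor.

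It remains to show that $\sum_{\gamma\in\Gamma_{+}} e^{-cd_{\gamma}(x)^2/(4t)}\le C e^{Ct}$ uniformly in $x$. By the Milnor--Mar\v s\'ik estimate \eqref{eqestgamma}, $|\{\gamma:d_{\gamma}(x_0)\le r\}|\le Ce^{Cr}$ at any fixed $x_0$; cocompactness of $\Gamma$ upgrades this to a uniform bound in $x$ (conjugating by an element moving $x$ into a fixed fundamental domain only shifts $r$ by the bounded diameter). Writing $e^{-cd_\gamma^2/(4t)}=\int_{cd_\gamma^2/(4t)}^{\infty}e^{-s}\,ds$ and interchanging summation and integration reduces the sum to
\begin{align*}
\int_0^\infty e^{-s}\,\big|\{\gamma:d_\gamma(x)\le 2\sqrt{st/c}\,\}\big|\,ds \;\le\; C\!\int_0^\infty e^{-s+C'\sqrt{st/c}}\,ds \;\le\; C'' e^{C''t},
\end{align*}
by completing the square in $\sqrt{s}$. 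Absorbing the polynomial factor $t^{-m/2}$ (which is $\le Ce^{-c'/t}\cdot C'$ for $t\le 1$ and $\le 1$ for $t\ge 1$) into the prefactors $e^{-c/t}$ and $e^{Ct}$ respectively yields the required estimate $C\exp(-c/t+Ct)$.

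The main obstacle is securing the Gaussian heat-kernel bound \emph{uniformly for all $t>0$ and all $x\in X$}; this rests on the bounded geometry of $X$ (inherited from the compactness of $Z$) combined with the cocompactness of $\Gamma$ needed to make the counting constant uniform. Once those are in place, the rest is the standard quadratic-versus-linear balancing argument in $d_\gamma$.
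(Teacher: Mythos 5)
Your proof is correct and follows essentially the same route as the paper: the all-time Gaussian heat-kernel bound (which the paper obtains by citing \cite[(4-27)]{Shfried} or \cite[Theorem 4]{MaMar_cover}, whereas you sketch it via \eqref{eqestheatd} plus semigroup composition), the bound $|\Tr[\rho(\gamma)]|\le Ce^{Cd_{\gamma}(x)}$ from Proposition \ref{proprhor}, the quadratic-versus-linear balancing $Cd - cd^2/t\le -cd^2/2t + C^2t/2c$, the injectivity-radius bound $d_\gamma(x)\ge i_Z$ to produce $e^{-c/t}$, and the Milnor-type counting estimate \eqref{eqestgamma} for the final sum. Your remark on upgrading the counting bound from a fixed $x_0$ to uniformity in $x$ by conjugating into a fundamental domain is a useful explication of a point the paper leaves implicit (it defers to \cite[(4-31)]{Shfried}).
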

\begin{proof}
%
	Since $X$ is a cover of a compact manifold, by 
	\cite[(4-27)]{Shfried} or \cite[Theorem 4]{MaMar_cover}),  there exist $c>0$, $C>0$ such that for $t>0$, $x,x'\in X$, 
we have 
\begin{align}\label{eqdgc02}
	\left|p_{t}^{X}\(x, x'\)\right|\l 
	\frac{C}{t^{m/2}}\exp\(-c\frac{d_{\gamma}^{2}(x)}{t}+Ct\). 
\end{align}

%
%
By \eqref{eqdgc01} and \eqref{eqdgc02}, there are $c_{1}>0$, $c_{2}>0$, and $C>0$ 
such that for $t>0$, $x\in X$, and $\gamma\in \Gamma_{+}$, we have
\begin{align}\label{eqestheat}
	\left|p_{t}^{X}\(\gamma^{-1}x, x\)\right|\l 
	C\exp\(-\frac{c_{1}}{t}-c_{2}\frac{d_{\gamma}^{2}(x)}{t}+Ct\). 
\end{align}
By \eqref{eqgno} and \eqref{eqestheat}, using 
%
$Cd_{\gamma}(x)\l c_2\frac{d_\gamma^2(x)}{2t}+
\frac{C^{2}}{2c_{2}}t$, we get
\begin{align}
	\big|\Tr[\rho(\gamma)]\big| \left|p_t^{X,\tau}\(\gamma^{-1}x, 
  x\)\right|\l   C
  \exp\(-\frac{c_1}{t}-c_2\frac{d_\gamma^2(x)}{2t}+C't\).
\end{align}
Now proceeding as in \cite[(4-31)]{Shfried}, we get \eqref{eqsumheat}. 
\end{proof}

%
%
%

\subsection{Flat Laplacian and the Cappell-Miller analytic torsion}\label{sAT}
Let  $Z$ be a compact manifold of dimension $m$. Let $(F,\nabla^{F})$ 
be a  complex flat vector bundle on 
$Z$ with holonomy $\rho:\Gamma\to \GL_{r}(\bC)$. Let $\Omega^{\scriptscriptstyle\bullet}(Z,F)$ be the space of smooth differential forms 
with coefficients in $F$. Let $(\Omega^{\scriptscriptstyle\bullet}(Z,F),d^{Z})$ be the de Rham complex, and let $H^{\scriptscriptstyle\bullet}(Z,F)$ be the corresponding  de Rham cohomology. 

Let $g^{TZ}$ be a Riemannian metric on $Z$. It induces a Riemannian 
metric $g^{TX}$ on $X$. Let $\<,\>_{L^{2}}$ be the $L^{2}$-product on 
$\Omega^{\scriptscriptstyle\bullet}(X)$ induced by $g^{TX}$. Let $d^{X,*}$ be the formal 
adjoint of $d^{X}$. Set 
\begin{align}
 \Box^X=\[d^X,d^{X,*}\].
\end{align}
Then, $\Box^{X}$ is a self-adjoint generalised Laplacian  acting on $\Omega^{\scriptscriptstyle\bullet}(X)$. 

We use the convention in Section \ref{sgLflat} with 
$E=\Lambda^{\scriptscriptstyle\bullet } (T^{*}Z)$ and 
$\widetilde{E}=\Lambda^{\scriptscriptstyle\bullet }(T^{*}X)$.
By \eqref{eqsectionEF}, $d^{X}\otimes {\rm id}$ descends to the de 
Rham operator $d^{Z}$. Similarly, $d^{X,*}\otimes {\rm id},\Box^{X}\otimes {\rm id}$ descend to  operators 
$d^{Z,*},\Box^{Z}$, so that 
\begin{align}
	\Box^{Z}=\[d^{Z},d^{Z,*}\].
\end{align} 
The operator $\Box^{Z}$ is a generalised Laplacian which will be called 
the flat Laplacian. 

Let $\Omega_{0}^{\scriptscriptstyle\bullet }(Z,F)$ be the 
characteristic space of $\Box^{Z}$ associated with the eigenvalue 
$0$. Since $d^{Z}$ commutes with $\Box^{Z}$, we see that  
$\(\Omega_{0}^{\scriptscriptstyle\bullet }(Z,F),d^{Z}\)$ is a 
complex. By \cite[p. 
181]{CappellMiller},  
\begin{align}\label{eqHodgeCM}
	H^{\scriptscriptstyle\bullet 
	}\(\Omega_{0}^{\scriptscriptstyle\bullet }(Z,F),d^{Z}\)\simeq 
	H^{\scriptscriptstyle\bullet }(Z,F). 
\end{align} 

Set
\begin{align}\label{eqchiCM}
\chi'_{\rm CM}(F)=\sum_{i=1}^{m}(-1)^{i}i\dim	\Omega_{0}^{i}(Z,F).
\end{align} 
Let $T_{\rho}(\sigma)$ be a meromorphic function on $\bC$ defined by 
\begin{align}\label{eqTs}
	T_{\rho}(\sigma)=\prod_{i=1}^{ m} 
	\det\(\sigma+\Box^{Z}|_{\Omega^{i}(Z,F)}\)^{(-1)^{i}i}. 
\end{align}
As $\sigma\to 0$, we have
\begin{align}\label{eqTs1}
	T_{\rho}(\sigma)=\left\{\prod_{i=1}^{m}{\rm det}^{*}\(\Box^{Z}|_{\Omega^{i}(Z,F)}\)^{(-1)^{i}i}\right\}\sigma^{\chi'_{\rm 
	CM}(Z,F)}+\cO\(\sigma^{\chi'_{\rm CM}(Z,F)+1}\). 
\end{align}

\begin{defin}
	If $\Box^{Z}$ is invertible,  the  complex valued analytic 
	torsion of Cappell-Miller is defined by 
\begin{align}
T_{\rm CM}(F)=	
\prod_{i=1}^{m}{\rm 
det}^{*}\(\Box^{Z}|_{\Omega^{i}(Z,F)}\)^{(-1)^{i}i}\in \bC^{*}.
\end{align}
We refer the reader to \cite{CappellMiller} 
for the definition of the Cappell-Miller torsion in the case where $\Box^{Z}$ is not invertible. 
\end{defin}

Assume that $\Box^{Z}$ is invertible. By \cite[p.~179]{CappellMiller}, if $Z$ is orientable and has even dimension, 
then $T_{\rm CM}(F)=1$. By \cite[Theorem 8.3]{CappellMiller}, if $\dim Z$ is odd, $T_{\rm CM}(F)$ does not depend on the metric $g^{TZ}$. It becomes a 
topological invariant.  

Let ${\rm Rep}(\Gamma,\bC^{r})$ be the set of all $r$-dimensional 
complex representations of $\Gamma$. It is well known that ${\rm 
Rep}(\Gamma,\bC^{r})$ has a natural structure of a complex algebraic 
variety (see \cite[Section 13.6]{BravermanKappeler}). Let us follow \cite[Proposition 4.5]{GoldmanMillson88} and \cite[Section 6.2]{Muller20}. Let $U\subset {\rm 
Rep}(\Gamma,\bC^{r})$ be a contractible neighbourhood 
of $\rho_{0}$. Consider the $\Gamma$-action on  $U\times X\times \bC^{r}$ defined by 
\begin{align}
\gamma \cdot (\rho, x, v)=(\rho,\gamma x, \rho(\gamma)v). 
\end{align} 
The projection on the quotient space 
\begin{align}
\Gamma\backslash (	U\times X\times \bC^{r} )\to U\times Z
\end{align} 
define a vector bundle on $U\times Z$, whose  restriction to 
$\{\rho\}\times Z$ is just the flat vector bundle with holonomy  
$\rho$. We write $F_{\rho}$ to emphasise the dependence on $\rho$. 

Since $U$ is contractible, we have an identification of vector bundles 
over $U\times Z$, 
\begin{align}\label{eq181}
	\Gamma\backslash (	U\times X\times \bC^{r} )\simeq U\times 	
	F_{\rho_{0}}. 
\end{align} 
Note that the  identification is non canonical and is only continuous 
in the variables in $U$. 

For $\rho\in U$, by \eqref{eq181}, we have a bundle isomorphism 
	\begin{align}\label{eqFNrho1}
		F_\rho\simeq F_{\rho_{0}}. 
	\end{align} 
Under this identification, the flat connection on $F_{\rho}$ can be 
written as 
\begin{align}\label{eqFNrho}
		\nabla^{F_{\rho}}=\nabla^{F_{\rho_{0}}}+A_{\rho},
	\end{align} 
	with  $A_\rho\in \Omega^{1}(Z,\End(F_{\rho_{0}}))$.  For $k\in \mathbf N$ 
and $\e>0$, we 
call $\rho$ is $C^{k}$ $\e$-close to $\rho_{0}$, if
\begin{align}\label{eqkeclose}
	\left\|A_{\rho}\right\|_{C^{k}}\l \e.
\end{align} 

Recall that we have fixed a Riemannian metric on $TZ$.

\begin{prop}\label{propclosesign}
	If $\rho_{0}\in {\rm Rep}(\Gamma,\bC^{r})$ is unitary and acyclic,  then there is 
	$\e>0$ such that if $\rho$ is $C^{0}$ $\e$-close to $\rho_{0}$, 
	then the flat 	Laplacian on $F_{\rho}$ is invertible, so that 
	\begin{align}
	T_{\rm CM}(F_{\rho})=\prod_{i=1}^{m}{\rm 
det}^{*}\(\Box^{Z}|_{\Omega^{i}(Z,F_{\rho})}\)^{(-1)^{i}i}\in \bC^{*}.
\end{align} 
\end{prop}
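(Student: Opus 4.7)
The plan is to exploit a factorisation of the flat Laplacian through a first-order Dirac-type operator. Consider the flat Hodge-Dirac operator
\begin{align*}
D_\rho = d^Z_\rho + d^{Z,*}_\rho
\end{align*}
acting on $\Omega^{\scriptscriptstyle\bullet}(Z, F_\rho)$, where both $d^Z_\rho$ and $d^{Z,*}_\rho$ are obtained by descent from $d^X\otimes \mathrm{id}$ and $d^{X,*}\otimes \mathrm{id}$ on the universal cover. Since $(d^X)^2=0$ and $(d^{X,*})^2=0$, the same holds for the descended operators, and the cross terms in $D_\rho^2$ give exactly $\Box^Z_\rho$. Thus it suffices to prove that $D_\rho$ is invertible for $\rho$ close to $\rho_0$: then $\Box^Z_\rho=D_\rho^2$ is invertible and the stated formula for $T_{\rm CM}(F_\rho)$ follows from the definition in Section \ref{sAT}.

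At the reference point $\rho_0$, the standard Hermitian form on $\bC^r$ is $\rho_0$-invariant (by unitarity) and descends to a flat Hermitian metric $g^{F_{\rho_0}}$ on $F_{\rho_0}$. With respect to the induced $L^2$-product, $d^{Z,*}_{\rho_0}$ is the formal adjoint of $d^Z_{\rho_0}$, so $D_{\rho_0}$ is a first-order elliptic formally self-adjoint operator on the closed manifold $Z$, hence essentially self-adjoint with discrete real spectrum. By Hodge theory, $\ker D_{\rho_0}=\ker\Box^Z_{\rho_0}\simeq H^{\scriptscriptstyle\bullet}(Z,F_{\rho_0})$, which vanishes by the acyclicity hypothesis. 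Combined with the discreteness of the spectrum, this produces a spectral gap: there exists $c>0$ such that $\|D_{\rho_0}u\|_{L^2}\geq c\|u\|_{L^2}$ for every $u$ in the domain.

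Next I transport everything to $F_{\rho_0}$ via the identification \eqref{eqFNrho1}. The de Rham operator transforms as $d^Z_\rho=d^Z_{\rho_0}+A_\rho\wedge\cdot$, a purely algebraic zeroth-order perturbation of $C^0$-size $\|A_\rho\|_{C^0}$. For the codifferential, if $\tilde\phi:X\to \GL_r(\bC)$ realises the identification on the universal cover, the principal symbol computation $[d^{X,*},\tilde\phi^{-1}]=-\iota_{(d\tilde\phi^{-1})^\sharp}$ shows that $d^{Z,*}_\rho-d^{Z,*}_{\rho_0}$ is likewise a zeroth-order endomorphism whose coefficient is built algebraically from $\tilde\phi\, d\tilde\phi^{-1}$, which descended to $Z$ is (up to sign) exactly $A_\rho$. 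This is essentially the content of \cite[Proposition 6.8]{BravermanKappeler} and \cite[Lemma 6.2]{Muller20} in the hyperbolic case, and their arguments are purely local so extend verbatim. Consequently, $D_\rho-D_{\rho_0}$ extends to a bounded operator on $L^2$ with
\begin{align*}
\|D_\rho-D_{\rho_0}\|_{L^2\to L^2}\leq C\|A_\rho\|_{C^0}.
\end{align*}

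Fix $\e>0$ with $C\e<c/2$. For $\rho$ that is $C^0$ $\e$-close to $\rho_0$, the triangle inequality yields $\|D_\rho u\|_{L^2}\geq(c/2)\|u\|_{L^2}$, so $D_\rho$ is injective on $\cH^1$. Since the embedding $\cH^1\hookrightarrow L^2$ is compact by Rellich's theorem, $D_\rho-D_{\rho_0}$ is compact as a map $\cH^1\to L^2$; therefore $D_\rho$ is Fredholm with the same index as the self-adjoint $D_{\rho_0}$, namely zero. Injectivity combined with vanishing Fredholm index gives invertibility, and so $\Box^Z_\rho=D_\rho^2$ is invertible. The main technical point is the zeroth-order nature of $d^{Z,*}_\rho-d^{Z,*}_{\rho_0}$ under the identification: this relies crucially on $\rho_0$ being unitary, since the descent of $d^{X,*}$ is governed by the standard Hermitian form on $\bC^r$, which produces a flat Hermitian metric on $F_{\rho_0}$ only in that case, ensuring that the $C^0$ norm of $A_\rho$ alone controls the perturbation without any need for $C^k$ closeness with $k\geq 1$.
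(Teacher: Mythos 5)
Your argument is correct and follows the same route that the paper implicitly takes: the paper simply cites \cite[Lemma 6.2]{Muller20} (\cite[Proposition 6.8]{BravermanKappeler}) and notes that the proof goes through the invertibility of the first order operator $d^Z+d^{Z,*}$, which is exactly what you carry out — factoring $\Box^Z_\rho$ as $D_\rho^2$, using the spectral gap of the self-adjoint $D_{\rho_0}$ coming from acyclicity and unitarity, and controlling $D_\rho-D_{\rho_0}$ as a zeroth order perturbation of $L^2$-operator norm $O(\|A_\rho\|_{C^0})$, from which injectivity plus a zero-index Fredholm argument give invertibility. One small imprecision in your closing remark: the zeroth-order nature of $d^{Z,*}_\rho-d^{Z,*}_{\rho_0}$ does not by itself rely on $\rho_0$ being unitary — both operators are first order with the same principal symbol, so their difference is automatically zeroth order. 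What unitarity of $\rho_0$ buys you is that $D_{\rho_0}$ is genuinely self-adjoint with a quantitative spectral gap once acyclicity kills the kernel. Both ingredients are needed for the $C^0$-perturbation argument to close, but they are logically distinct; the ``ensuring'' in your last sentence conflates them slightly, though this does not affect the validity of the proof.
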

\begin{proof}
This is \cite[Lemma 6.2]{Muller20} (\cite[Proposition 6.8]{BravermanKappeler}), whose proof is 
based on the invertibility of the first order elliptic differential operator 
$d^{Z}+d^{Z,*}$ (c.f. Proposition \ref{propMBK}). 
\end{proof}




\begin{re}\label{re12}
	Assume that $Z$ is a closed orbifold and that $F$ is a flat 	
	orbifold vector bundle on $Z$. Given a Riemannian metric on $Z$, 
	we can define the flat Laplacian, the regularised determinant,  
	Cappell-Miller analytic torsion in the same way, so that 
	\eqref{eqTs1} and Proposition \ref{propclosesign} still hold true. 
\end{re}

\section{M\"uller's Selberg trace formula}\label{Sselberg}


The purpose of this section is to establish the 
Selberg trace formula for the heat operator  of  the Casimir on the locally symmetric space twisted by an arbitrary  flat vector 
bundle.  Such a formula is obtained by M\"uller \cite{Muller11} for the 
functional of the Casimir  with respect to an even Paley-Wiener 
function. The extension to the heat operator does not contain particular difficulties. We include some detail 
for completeness.

This section is organised as follows. 
In Sections \ref{sReductive}-\ref{sec:sym}, we introduce the real 
reductive  group $G$,  its maximal compact subgroup $K$,  and  the 
Casimir $C^{\fg}$,  the associated  symmetric space $X=G/K$, and the 
$K$-principal bundle $p:G\to X$. Given a finite dimensional unitary representation  $\tau:K\to 
U(E_{\tau})$ of $K$, we construct  the associated Hermitian vector bundle 
$\cE_{\tau}$ on $X$. The Casimir operator on  
$C^{\infty}(X,\cE_{\tau})$ is a self-adjoint generalised Laplacian $C^{\fg,X,\tau}$. 

In Sections \ref{sSemi} and \ref{sec:orb}, we introduce the 
semisimple elements in $G$ and the associated semisimple orbital  
integrals with respect to the heat 
 operator of $C^{\fg,X,\tau}$.


Finally, in Section \ref{sec:Gamma}, we introduce a discrete cocompact  subgroup 
$\Gamma\subset G$ of $G$ and the corresponding 
locally symmetric space $\Gamma\backslash X$. 
Given a finite dimensional representation of $\Gamma$, the Casimir operator descends to 
a generalised Laplacian on $Z$. We establish the Selberg trace 
formula for the associated heat operator.

%

\subsection{Real reductive groups}\label{sReductive}
Let $G$ be a linear connected real reductive group 
\cite[p.~3]{Knappsemi}, and let $\theta \in {\rm Aut}(G)$ be the Cartan involution. 
That means $G$ is a closed connected group of real matrices that is 
stable under transpose, and $\theta$ is the composition of transpose 
and inverse of matrices. Let $K \subset G$ be the subgroup of $G$ fixed 
by $\theta$, so that $K$ is a maximal compact subgroup of $G$.

Let $\fg,\fk$ be the Lie algebras of $G,K$.  The Cartan involution acts by differential as  Lie algebra automorphism on $\fg$, which will still be denoted by $\theta$. Then $\fk$ is the eigenspace of $\theta$ associated with the eigenvalue $1$. Let $\fp$ be
the eigenspace of $\theta$ associated with the eigenvalue $-1$, so that
\begin{align}\label{eq:Cartande}
\fg = \fp \oplus \fk.
\end{align}
Set
\begin{align}
&	m=\dim \fp,&n=\dim \fk.
\end{align}
By \cite[Proposition 1.2]{Knappsemi}, we have the diffeomorphism
\begin{align}\label{eq:cartan2}
 (Y,k)\in \fp\times K\to  e^Y k\in G.
\end{align}

Let $B$ \index{B@$B$} be a real-valued  nondegenerate bilinear 
symmetric form on $\fg$ which is invariant under the adjoint action 
$\Ad$ of $G$, and also under $\theta$. Then 
\eqref{eq:Cartande} is an orthogonal splitting of $\fg$ with respect 
to $B$. We assume $B$ to be positive-definite on $\fp$, and negative-definite on $\fk$. 
The form $\<\cdot,\cdot\>=-B(\cdot,\theta\cdot)$ defines an 
$\Ad(K)$-invariant scalar product on $\fg$ such that the splitting 
\eqref{eq:Cartande} is still orthogonal. We denote by $|\cdot|$ \index{1@$\lvert\cdot\rvert$}the corresponding norm.

Let $Z_G\subset G$\index{Z@$Z_G$} be the centre of $G$ with Lie algebra $\fz_\fg\subset \fg$.\index{Z@$\fz_\fg$}
By \cite[Corollary 1.3]{Knappsemi}, $Z_G$ is a (possibly non 
connected) reductive group  with 
maximal compact subgroup $Z_{G}\cap K$ with the Cartan decomposition
\begin{align}\label{eq:ZG}
&\fz_\fg=\fz_{\fp}\oplus \fz_\fk.
\end{align}
Since $\fz_{\fp}$ commutes with $Z_{G}\cap K$, by \eqref{eq:cartan2}, 
we have an identification  of the groups
\begin{align}
	 Z_G=\exp (\fz_\fp) \times (Z_G\cap K).
\end{align}	

Let $G_{\rm ss}\subset G$ be the connected subgroup of $G$ associated 
with  the Lie algebra $[\fg,\fg]$. By \cite[Corollary 7.11]{KnappLie}, 
$G_{\rm ss}$ is a closed subgroup of $G$. Moreover, $G_{\rm ss}$ is 
semisimple and   
\begin{align}\label{eqGOSS}
	G=G_{\rm ss}\cdot Z^{0}_{G}.
\end{align}
\begin{prop}\label{propreal1t}
	If $G$ has a compact center, any one dimensional real 
	representation of 	$G$   is trivial.
\end{prop}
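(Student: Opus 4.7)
The plan is to reduce a one-dimensional real representation $\chi\colon G\to\GL_1(\bR)=\bR^{*}$ to a homomorphism into $\bR^{*}_{+}$, exploit the decomposition $G=G_{\rm ss}\cdot Z_{G}^{0}$ of \eqref{eqGOSS}, and show that $\chi$ vanishes separately on the semisimple factor and on the center.

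First, since $G$ is connected, $\chi(G)$ is a connected subgroup of $\bR^{*}$, hence contained in $\bR^{*}_{+}$. We may therefore write $\chi=\exp\circ\psi$ for a Lie group homomorphism $\psi\colon G\to\bR$, whose differential $d\psi\colon\fg\to\bR$ is a Lie algebra homomorphism into an abelian Lie algebra. In particular, $d\psi$ annihilates the commutator $[\fg,\fg]$, the Lie algebra of $G_{\rm ss}$. Since $G_{\rm ss}$ is connected, $\psi|_{G_{\rm ss}}=0$, and hence $\chi|_{G_{\rm ss}}=1$.

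Next I would treat the center. By \eqref{eqGOSS}, we have $G=G_{\rm ss}\cdot Z_{G}^{0}$, so $\chi$ is entirely determined by its restriction to $Z_{G}^{0}$. The hypothesis that $G$ has compact center means $Z_{G}$, and a fortiori $Z_{G}^{0}$, is compact. The image $\chi(Z_{G}^{0})$ is then a compact subgroup of $\bR^{*}_{+}$; but the only compact subgroup of $\bR^{*}_{+}$ is $\{1\}$. Thus $\chi|_{Z_{G}^{0}}=1$, and combining with the previous step yields $\chi\equiv 1$ on all of $G$.

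There is no real obstacle here: the proof is essentially a matter of unpacking definitions together with \eqref{eqGOSS}. The only small subtlety worth flagging is that one uses connectedness of $G$ twice—once to force the image of $\chi$ into $\bR^{*}_{+}$ (so that passage to the Lie algebra via $\log$ is legitimate), and once to conclude triviality of $\psi|_{G_{\rm ss}}$ from vanishing of $d\psi|_{\fg_{\rm ss}}$.
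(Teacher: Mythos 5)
Your proof is correct and follows essentially the same route as the paper: decompose $G=G_{\rm ss}\cdot Z_G^0$ via \eqref{eqGOSS} and observe that a homomorphism to $\bR^{*}$ must vanish on the connected semisimple factor and on the connected compact center; you simply unpack the two standard facts the paper cites in one line.
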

\begin{proof}
This is a 
	consequence of \eqref{eqGOSS} and of the fact that any morphism 
	of groups from a connected semisimple Lie group or a connected compact Lie group to  	$\bR^{*}$ is trivial. 
\end{proof}

Let $\fg_\bC=\fg\otimes_\bR\bC$ \index{G@$\fg_\bC$}be the complexification of $\fg$ and let $\fu=\sqrt{-1}\fp\oplus \fk$\index{U@$\fu$}
be the compact form of $\fg$. By $\bC$-linearity, the bilinear form $B$ extends 
to a complex symmetric bilinear form on $\fg_{\bC}$. Its restriction 
$B|_{\fu}$ to $\fu$ is a real negative-definite symmetric bilinear 
form.   


Let $G_{\bC}$ be the connected group of complex matrices associated 
with the Lie algebra $\fg_{\bC}$. Let $U,U_{\rm ss}\subset G_{\bC}$ be  
the connected subgroup of $G_{\bC}$ associated with the Lie algebras 
$\fu,[\fu,\fu]$.  If $G$ has a compact centre, by \cite[Propositions 
5.3, 5.6]{Knappsemi}, 
$G_{\bC}$ is a reductive group with maximal compact subgroup 
$U$. By \cite[Theorem 4.32]{Knappsemi}, $U_{\rm ss}\subset U$ is a semisimple compact subgroup, so that 
	\begin{align}\label{eqUtss0}
		U=Z_{G}^{0}U_{\rm ss}. 
	\end{align} 
By Weyl's theorem \cite[Theorem 4.26]{Knappsemi}, the 
universal cover $\widetilde{U}_{\rm ss}$ of $U_{\rm ss}$ is 
compact. Set
	\begin{align}\label{eqUtss}
		\widetilde{U}=Z^{0}_{G}\times \widetilde{U}_{ss}.
	\end{align} 
	By \eqref{eqUtss0} and \eqref{eqUtss}, 
the obvious projection $\widetilde{U}\to U$ is a finite cover of  $U$. 

\begin{re}\label{reweyltrick}
	By Weyl's unitary trick \cite[Proposition 5.7]{Knappsemi}, if $G$ is semisimple and if $U$ is simply connected, it is 
	equivalent to consider the finite dimensional complex 
	representations of the Lie groups $G$, $U$ or of the Lie algebras $\fg$, $\fu$. 
	
	For general $G$,  any finite dimensional complex representation of $G$ 
	induces a representation of $\fg$. It extends uniquely to a  representation of $\fu$. In generally, the 
$\fu$-representation does not alway lift to $U$. However, if 
$G$ has a compact centre, by \eqref{eqUtss}, the $\fu$-representation lifts to $\widetilde{U}$. 
\end{re}

Denote by $\rk_{\bC}(G)$ (resp. $\rk_{\bC}(K)$) the complex rank of $G$ 
(resp. $K$), i.e., the dimension of  a Cartan subalgebra of $\fg_{\bC}$ (resp. $\fk_{\bC}$). 

\begin{defin}
  The fundamental rank of $G$ is defined by 
\begin{align}\label{eqd=g-k}
	\delta(G)=\rk_{\bC}(G)-\rk_{\bC}(K)\in \mathbf{N}. 
\end{align}
Note  that $m$ and $\delta(G)$ have the same parity. 
\end{defin}

In the sequel, if $\gamma\in G$, we denote by $Z(\gamma) \subset G$ the centraliser 
of $\gamma$ in $G$, and by $\fz(\gamma)\subset \fg$ its Lie algebra. 
If $a\in \fg$, let $Z(a)\subset G$ 
 be the stabiliser of $a$ in $G$, 
and let $\fz(a)\subset \fg$ be its Lie algebra. If $\fa\subset \fg$ 
is a subset, we define $Z(\fa)$ and $\fz(\fa)$ similarly.


\subsection{The Casimir operator}\label{sCasimir}
Let $\mathscr{U}(\fg)$ \index{U@$\mathscr{U}(\fg)$} be the enveloping algebra 
of $\fg$,  and let $\mathscr{Z}(\fg)\subset \mathscr{U}(\fg)$ \index{Z@$\mathcal{Z}(\fg)$} be the centre of $\mathscr{U}(\fg)$.

Let $C^\fg\in \mathscr{Z}(\fg)$ be the Casimir element associated to $B$.
If $e_1,\cdots,e_m$ is an orthonormal basis of $(\fp,B|_{\fp})$, and 
if $e_{m+1},\cdots,e_{m+n}$ is an  orthonormal basis of 
$(\fk,-B|_{\fk})$.
Then,
\begin{align}\label{eq:Cg}
  C^\fg=-\sum_{i=1}^{m}e^2_i+\sum_{i=m+1}^{n+m}e_i^{2}.\index{C@$C^{\fg}$}
\end{align}

If $V$ is a finite dimensional complex vector space, and if $\rho : \fg 
\to \End(V)$ is a morphism of Lie algebras, the map $\rho$ extends to a 
morphism $\mathscr{U} (\fg) \to \End (V)$ of algebras. 
%
We 
denote by $C^{\fg,V}$ or $C^{\fg,\rho}\in \End(V)$ the corresponding 
Casimir operator acting on $V$, i.e.,
\begin{align}\label{eq:ckkckp}
	C^{\fg,V}=C^{\fg,\rho}=\rho(C^{\fg}).
\end{align}
Similarly,  the Casimir of $\fu$ (with respect to $B$)  acts on 
$V$, so that 
\begin{align}\label{eqCu=Cg}
	C^{\fu,V}=C^{\fg,V}. 
\end{align}

\subsection{The symmetric space}\label{sec:sym}We use the notation in 
Section \ref{sReductive}. 
Let $\omega^{\fg}$ be the canonical left-invariant $1$-form on $G$ 
with values in $\fg$. By \eqref{eq:Cartande}, $\omega^{\fg}$ splits as 
\begin{align}
	\omega^{\fg}=\omega^{\fp}+\omega^{\fk}. 
\end{align}

Let $X=G/K$ be the associated symmetric space. Let $p:G\to X$ be the 
natural  projection. Then $p: G\to X$ is a 
$K$-principal bundle with connection form $\omega^{\fk}$. The 
group $K$ acts isometrically on $\fp$. The tangent bundle is given by
\begin{align}\label{eqTX}
TX=G\times_K \fp.
\end{align}
By \eqref{eqTX}, the scalar product $B|_{\fp}$ on $\fp$ induces a Riemannian metric $g^{TX}$ on $X$. 
The connection $\nabla^{TX}$ on $TX$ which is induced by 
$\omega^{\fk}$ is the Levi-Civita connection of $TX$. Its 
curvature is parallel and nonpositive.

Let 
$e(TX,\nabla^{TX})\in \Omega^{ m}(X,o(TX))$ be the Euler characteristic 
form on $X$. Let  $dv_{X}\in \Omega^{m}(X,o(TX))$ be  the 
Riemannian volume form on the Riemannian manifold $(X,g^{TX})$. Define 
$\[e(TX,\nabla^{TX})\]^{\max}\in \mathbf{R}$ by 
\begin{align}
	e\(TX,\nabla^{TX}\)=\[e\(TX,\nabla^{TX}\)\]^{\max}dv_{X}.  
\end{align}
An explicit formula for $\[e(TX,\nabla^{TX})\]^{\max}$ can be found in \cite[(4-5)]{Shfried}. 

More generally, if $\tau$  is an orthogonal (resp. unitary) 
representation of $K$ on a finite dimensional Euclidean (resp. 
Hermitian) space $E_\tau$, set
\begin{align}
  \label{eq:Etau}
  \cE_\tau=G\times_K E_\tau.
\end{align}
Then $\cE_{\tau}$ is a Euclidean (resp. Hermitian) vector bundle on 
$X$, which is equipped with a connection induced by $\omega^{\fk}$. 

We 
identify the space $C^{\infty}(X,\cE_\tau)$ of smooth sections of 
$\cE_{\tau}$ to the space   $C^\infty(G,E_\tau)^K$ of smooth 
$E_{\tau}$-valued $K$-invariant functions on $G$. 
The group $G$ acts on the left on $C^{\infty}(X,\cE_\tau)$. 
Denote by $C^{\fg,X,\tau}$ the Casimir element of $G$ on 
$C^{\infty}(X,\cE_\tau)$. By \eqref{eq:Cg}, $C^{\fg,X,\tau}$ is a 
self-adjoint  generalised Laplacian  on $X$ satisfying  \eqref{eqPX}. 
When $E_{\tau}=\Lambda^{\cdot}(\fp^{*})$, we use the notation 
$C^{\fg,X}$. It is 
classical (see \cite[Proposition 7.8.1]{B09}) that  
$C^{\fg,X}$ is just the Hodge Laplacian on $X$ associated to the 
trivial line bundle.




\subsection{Semisimple elements}\label{sSemi}
The group $G$ acts isometrically on $X$. If $\gamma \in G$, let $d_{\gamma}$ be the corresponding displacement 
 function on $X$ defined in \eqref{eqdisp}. 
Also, $\ell_{\gamma}$ depends only on the conjugacy class of  $\gamma$ in $G$, and will be denoted by $\ell_{[\gamma]}$. 
Let $X(\gamma)\subset X$ be the closed subset where $d_{\gamma}$ reaches its 
minimum. Clearly, the group $Z(\gamma)$ acts on $X(\gamma)$. 

An element $\gamma\in G$ is called semisimple \cite[Definition 
2.19.21]{Eberlin96}, if $X(\gamma)$ is nonempty. 
If $\gamma$ is semisimple, by \cite[Theorem 3.1.2]{B09}, there is 
$g_{\gamma}\in G$ 
such that  $\gamma=g_{\gamma}e	
^{a}k^{-1}g_{\gamma}^{-1}$ and
\begin{align}\label{eqasr}
	&a\in \fp,&k\in K,&& \Ad(k)a=a.  
\end{align}
Moreover, the norm $|a|$ depends only  on the conjugacy class of $\gamma$ 
in $G$, and 
\begin{align}
	|a|=\ell_{[\gamma]}. 
\end{align}
A semisimple element $\gamma$ is 
called elliptic, if $\ell_{[\gamma]}=0$. 

If $\gamma$ is semisimple, by \cite[Proposition 
7.25]{KnappLie}, $Z(\gamma)$ is a 
reductive group (with Cartan involution $g_{\gamma}\theta 
g_{\gamma}^{-1}$) with maximal compact subgroup $K(\gamma)$ with 
Cartan decomposition 
\begin{align}\label{eqzpkgamma}
\fz(\gamma)=\fp(\gamma)\oplus \fk(\gamma).	
\end{align} 
By 
\cite[Theorem 3.1.1]{B09}, the map $g\in Z(\gamma)\to pg\in X$ 
induces the identification of $Z (\gamma)$-manifolds,
\begin{align}\label{eqXr=ZrKr}
	Z(\gamma)/K(\gamma)\simeq X(\gamma). 
\end{align}

\subsection{Semisimple orbital integrals}\label{sec:orb}

For $t>0$, let $\exp(-tC^{\fg,X,\tau})$ be the heat operator of $C^{\fg,X,\tau}$. 
Let $p_t^{X,\tau}(x,x')$ \index{P@$p_t^{X,\tau}(x,x')$} be the smooth integral  kernel of  $\exp(-tC^{\fg,X,\tau})$
 with respect to the Riemannian volume $dv_X$. 
 
 Let $\gamma\in G$ be a semisimple element.  Since  
 $C^{\fg,X,\tau}$ commutes with the $G$-action on 
 $C^{\infty}(X,\cE_{\tau})$, the function 
 \begin{align}
	g\in G\to \Tr^{E_\tau}\[p_t^{X,\tau}(p g,p\gamma g)\]\in \bC
\end{align} 
 descends to $Z(\gamma)\backslash G$.

The form $-B(\cdot,\theta\cdot)$ induces a volume form $dv_{G}$ on 
$G$. We define $dv_{Z(\gamma)}$ similarly. Let 
$dv_{Z(\gamma)\backslash G}$ be the induced volume form on 
$Z(\gamma)\backslash G$, so that $dv_{G}=dv_{Z(\gamma)\backslash 
G}dv_{Z(\gamma)}$. In the same way, we can also  define 
$dv_{K(\gamma)\backslash K}$ and its volume $\vol(K(\gamma)\backslash 
K)$.

%

\begin{defin}\label{def:orbital}Let $\gamma\in G$ be semisimple.   The orbital integral of $\exp(-tC^{\fg,X,\tau})$ is defined by
\begin{align}\label{eq:TRrz}
	\Tr^{[\gamma]}\[\exp\(-tC^{\fg,X,\tau}\)\]=\frac{1}{\vol(K(\gamma)\backslash  K)}\int_{g\in Z(\gamma)\backslash G}\Tr^{E_\tau}\[p_t^{X,\tau}(p g,p\gamma g)\]dv_{Z(\gamma)\backslash G}.\index{T@$\Tr^{[\gamma]}[\cdot]$}
\end{align}
Clearly, $ \Tr^{[\gamma]}\[\exp\(-tC^{\fg,X,\tau}\)\]$  depends 
only on the conjugacy class of $\gamma$ in $G$. 
\end{defin}


\begin{re}
If $E_\tau$ is a $\bZ_2$-graded or 
virtual  representation of $K$, we use the notation 
$\Trs^{[\gamma]}[\cdot]$ \index{T@$\Trs^{[\gamma]}[\cdot]$}
 when the trace on the right-hand side of \eqref{eq:TRrz} is replaced by the supertrace  on $E_\tau$.
\end{re}

\begin{re}
	An explicit geometric formula for 
	$\Tr^{[\gamma]}\[\exp\(-tC^{\fg,X,\tau}\)\]$ is obtained by 
	Bismut \cite[Theorem 6.1.1]{B09}. This formula involves  an 	
	explicit  function $J_{\gamma}$ defined on the Lie algebra 	
	$\fk(\gamma)$. It  can be written down with 	the 
	help of  a root system \cite{BS19_CRAS}, \cite[Theorem 4.7]{BS19}. 
\end{re}

\begin{re}
Most of the results obtained in 
	\cite{Shfried,Shen_Yu} and as well as in this paper rely on Bismut's formula  \cite[Theorem 6.1.1]{B09}. In this paper, the 
	explicit formula for $J_{\gamma}$ is not needed, since all the 
	involved 	orbital integrals have already been calculated in 	
	\cite{Shfried,Shen_Yu} except for a trivial one \eqref{eq:dgn1}. 
\end{re}

\subsection{A discrete subgroup of $G$}\label{sec:Gamma}
Let $\Gamma\subset G$  be a discrete  
cocompact subgroup of $G$. By \cite[Lemma 1]{Selberg60} (see also 
\cite[Proposition 3.9]{Ma_bourbaki}), $\Gamma$ 
contains only semisimple elements. 
Let $\Gamma_{e}\subset \Gamma$ be the subset of 
elliptic  elements. Then,   $\Gamma_{+}=\Gamma-\Gamma_{e}$ consists of nonelliptic elements. 

The group $\Gamma$ acts isometrically on the left on $X$. 
Take 
\begin{align}
	Z=\Gamma\backslash X=\Gamma \backslash 
G/K. 
\end{align}
Then $Z$ is a compact orbifold. We denote by $\widehat{p}:\Gamma\backslash G\to Z$ and $\widehat{\pi}:X\to Z$ \index{P@$\widehat{p},\widehat{\pi}$}the natural projections, so that the diagram
\begin{align}
\begin{aligned}
\xymatrix{
G \ar[d]^p \ar[r] &\Gamma \backslash G\ar[d]^{\widehat{p}}\\
X \ar[r]^{\widehat{\pi}} &Z}
\end{aligned}
\end{align}
commutes.

From now on until Section \ref{S:rep}, we assume that 
$\Gamma$ is torsion free, i.e., $\Gamma_{e}=\{\rm id\}$. 
Then $Z$ is a 
connected closed orientable Riemannian locally symmetric manifold with nonpositive sectional curvature. Since $X$ is contractible,
  $\pi_1(Z)=\Gamma$ and $X$ is the universal cover of $Z$. In Section 
  \ref{Snontorsionfree}, we will treat the case where $\Gamma$ is no 
  longer torsion free.

The $\Gamma$-action on $X$ lifts to all the homogeneous Euclidean or 
Hermitian vector bundles $\cE_\tau$ on $X$ constructed in 
\eqref{eq:Etau}, and preserves the metric connections. 
Then $\cE_\tau$ descends to a 
Euclidean or Hermitian  vector 
bundle 
\begin{align}\label{eqFtau0}
	\cF_{\tau}=\Gamma\backslash \cE_{\tau}=\Gamma\backslash 
G\times_{K}E_{\tau} 
\end{align}
on $Z$, which is equipped with a canonical metric connection. 

Let $F$ be a flat vector bundle on $Z$ with holonomy representation  
$\rho:\Gamma\to \GL_{r}(\bC)$, so that \eqref{eqFhol1} holds. 
%
As in \eqref{eqsectionEF}, we have the 
identification 
\begin{align}\label{eqC316}
	C^{\infty}(Z,\cF_{\tau}\otimes 
	F)=\(C^{\infty}(X,\cE_{\tau})\otimes \bC^{r}\)^{\Gamma}. 
\end{align}

We use the notation in  Section \ref{sgLflat}. In particular, the 
self-adjoint generalised Laplacian  $C^{\fg,X,\tau}\otimes {\rm id}$
descends to a generalised Laplacian operator $C^{\fg,Z,\tau,\rho}$ 
acting on $C^{\infty}(Z,\cF_{\tau}\otimes F)$. As 
before, if $E_\tau=\Lambda^{\cdot}(\fp^{*})$, we denote 
$C^{\fg,Z,\rho}$ for simplification.  Clearly, $C^{\fg,Z,\rho}$ is 
just the flat Laplacian introduced in Section \ref{sAT}.

For $\gamma\in \Gamma$, set 
\begin{align}
	\Gamma(\gamma)=Z(\gamma)\cap \Gamma. 
\end{align}
By \cite[Lemma2]{Selberg60} (see also \cite[Proposition 
4.9]{Shfried}, \cite[Proposition 3.9]{Ma_bourbaki}), $\Gamma(\gamma)$ is cocompact in $Z(\gamma)$.

Let $[\Gamma_{+}]$ and $[\Gamma]$ be the sets of conjugacy classes in $\Gamma_{+}$ and $\Gamma$.  
If $\gamma\in \Gamma$, the associated conjugacy class in $\Gamma$ is 
denoted by 
$[\gamma]\in [\Gamma]$.\footnote{
The quantities  $\ell_{[\gamma]}$ and 
$\Tr^{[\gamma]}[\cdot]$ depend only on  the conjugacy class of 
$\gamma$ in $G$. So they are well defined on the conjugacy classes of $\Gamma$.  
}
If $[\gamma]\in [\Gamma]$, for all $\gamma'\in [\gamma]$,  the 
locally symmetric spaces 
\begin{align}\label{eqBgamma}
	\Gamma(\gamma')\backslash X(\gamma')
\end{align}
are canonically diffeomorphic, which will be denoted by 
$B_{[\gamma]}$. Let $\vol(B_{[\gamma]})$ be the Riemannian volume 
of $B_{[\gamma]}$ induced by the bilinear form $B$. 

We have a generalisation of \cite[Theorem 4.10]{Shfried}.

\begin{thm}\label{thmselnon}
There exist $c>0$, $C>0$ such that
  for $t>0$, we have
  \begin{align}\label{eq:h11exp}
    \sum_{[\gamma]\in [\Gamma_{+}]} 
	\vol\big(B_{[\gamma]}\big) 
	\big|\Tr[\rho(\gamma)]\big| 
	\left|\Tr^{[\gamma]}\[\exp\(-tC^{\fg,X,\tau}\)\]\right|
	\l C\exp\(-\frac{c}{t}+Ct\).
  \end{align}
  For $t>0$, the following identity holds, 
\begin{align}\label{eq:sel}
  \Tr\[\exp\(-tC^{\fg,Z,\tau,\rho}\)\]=\sum_{[\gamma]\in [\Gamma]} 
  \vol\big(B_{[\gamma]}\big) \Tr[\rho(\gamma)] 
  \Tr^{[\gamma]}\[\exp\(-tC^{\fg,X,\tau}\)\].
\end{align}
\end{thm}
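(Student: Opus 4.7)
The plan is to deduce Theorem \ref{thmselnon} from Corollary \ref{cordpresel}, applied with $P^{X}=C^{\fg,X,\tau}$ and $\wE=\cE_{\tau}$, by an unfolding argument that follows the scheme of \cite[Theorem 4.10]{Shfried} with the non-unitary twist $\rho$ controlled through Proposition \ref{proprhor}.

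Starting from the pretrace identity
\begin{align*}
\Tr\[\exp\(-tC^{\fg,Z,\tau,\rho}\)\]
=\sum_{\gamma\in \Gamma}\Tr[\rho(\gamma)]\int_{F_{Z}}\Tr^{\cE_{\tau}}\[\gamma_{*}p^{X,\tau}_{t}(\gamma^{-1}x,x)\]dv_{X},
\end{align*}
the first step is to group terms by conjugacy classes $[\gamma]\in[\Gamma]$. Since both $\Tr[\rho(\cdot)]$ and the inner integrand are class functions of $\gamma\in G$, the sum over representatives of $[\gamma]$ combined with the integral over $F_{Z}$ unfolds into a single integral over $\Gamma(\gamma)\backslash X$. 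Because $p_{t}^{X,\tau}$ is $G$-equivariant, the integrand is $Z(\gamma)$-invariant; combining the cocompactness of $\Gamma(\gamma)$ in $Z(\gamma)$ with the identification $Z(\gamma)/K(\gamma)\simeq X(\gamma)$ from \eqref{eqXr=ZrKr}, a further unfolding through $K(\gamma)\backslash Z(\gamma)\backslash G$ peels off the factor $\vol(B_{[\gamma]})/\vol(K(\gamma)\backslash K)$ in front of the integral over $Z(\gamma)\backslash G$ appearing in Definition \ref{def:orbital}; this produces \eqref{eq:sel} at the formal level.

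The estimate \eqref{eq:h11exp} would be obtained by running the above unfolding backwards on absolute values and then invoking Proposition \ref{propdgc01}: the non-elliptic sum is dominated by $\sum_{\gamma\in\Gamma_{+}}|\Tr[\rho(\gamma)]|\int_{F_{Z}}|p^{X,\tau}_{t}(\gamma^{-1}x,x)|dv_{X}$, and the non-unitarity of $\rho$ contributes only the factor $|\Tr[\rho(\gamma)]|\l Ce^{Cd_{\gamma}(x)}$ from Proposition \ref{proprhor}. The main obstacle is balancing this exponential factor against the Gaussian heat-kernel bound $|p^{X}_{t}(\gamma^{-1}x,x)|\l Ct^{-m/2}\exp(-cd_{\gamma}^{2}(x)/t+Ct)$; this is resolved by the elementary inequality $Cd_{\gamma}(x)\l \frac{c}{2t}d_{\gamma}^{2}(x)+\frac{C^{2}t}{2c}$, together with the Milnor lattice-counting bound $|\{\gamma:d_{\gamma}(x_{0})\l r\}|\l Ce^{Cr}$ and the short-time margin $\ell_{\gamma}\g i_{Z}>0$ from \eqref{eqdgc01}, which collectively yield the claimed estimate $C\exp(-c/t+Ct)$ and, via Fubini, justify the termwise identity \eqref{eq:sel}.
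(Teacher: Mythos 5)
Your proof is correct and takes essentially the same approach as the paper's: the pretrace formula from Corollary \ref{cordpresel}, the Selberg unfolding to regroup by conjugacy classes (which the paper cites from \cite[(4.8.8), (4.8.11), (4.8.16)]{B09} as the identity \eqref{eq:sel2}), and Proposition \ref{propdgc01} for the absolute-convergence bound. The one minor imprecision is calling $\Tr[\rho(\cdot)]$ a class function of $\gamma\in G$ — it is only constant on $\Gamma$-conjugacy classes, which is all the unfolding over $\Gamma$-orbits actually requires.
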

\begin{proof}
Proceeding as the proof of the Selberg trace formula \cite{Se56}, by  \cite[(4.8.8), (4.8.11), (4.8.16)]{B09}, for $[\gamma]\in [\Gamma]$, we have
\begin{align}\label{eq:sel2}
 \sum_{\gamma'\in [\gamma]}\int_{x\in 
 F_{Z}}\Tr^{E_\tau}\[\gamma'_{*}p_t^{X,\tau}\((\gamma')^{-1} x, 
 x\)\]dv_{X}
 =\vol\big(B_{[\gamma]}\big) \Tr^{[\gamma]}\[\exp\(-tC^{\fg,X,\tau}\)\]. 
\end{align}
By Corollary \ref{cordpresel}, Proposition \ref{propdgc01},  and \eqref{eq:sel2}, 
we get our proposition. 
\end{proof}

\begin{re}
In \cite[Theorem 1.1]{Muller11}, instead of heat operators, M\"uller 
obtain  a similar formula for $\varphi(C^{\fg,Z,\tau,\rho})$ where 
$\varphi$ is an  even Paley-Wiener function on $\bR$.  
\end{re}

Let us give a direct application of the  Selberg trace formula. 
Recall the following theorem due to \cite[p.~194]{MStorsion} and 
\cite[Theorem 7.9.1]{B09}.  Let 
$N^{\Lambda^{\scriptscriptstyle\bullet  }(T^{*}X)}$ be the number 
operator on $\Lambda^{\scriptscriptstyle\bullet  }(T^{*}X)$, which is 
multiplication by $p$ on $\Lambda^{p}(T^{*}X)$. 

\begin{thm}\label{thmMS}
If $\delta(G)\g 2$, for any  semisimple element $\gamma\in G$, 
\begin{align}
	\Trs^{[\gamma]}\[N^{\Lambda^{\scriptscriptstyle\bullet 
	}(T^{*}X)}\exp\(-t 
	C^{\fg,X}\)\]=0. 
\end{align} 
\end{thm}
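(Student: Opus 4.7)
My plan is to reduce the statement to a finite-dimensional Berezin/supertrace identity using Bismut's geometric formula for semisimple orbital integrals, and then exploit the hypothesis $\delta(G)\g 2$ through a trivial-summand argument.

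First, I would invoke Bismut's formula \cite[Theorem 6.1.1]{B09} for the orbital integral of the heat kernel of the Casimir. Writing $\gamma = e^{a}k^{-1}$ with $a\in \fp(\gamma)$, $k\in K(\gamma)$, $\Ad(k)a = a$, this formula expresses $\Trs^{[\gamma]}[N^{\Lambda^{\scriptscriptstyle\bullet}(T^{*}X)}\exp(-tC^{\fg,X})]$ as a Gaussian integral over $\fk(\gamma)$ whose integrand is an explicit function $J_{\gamma}(Y^{\fk})$ multiplied by a supertrace of the form
\begin{align*}
  \Trs^{\Lambda^{\scriptscriptstyle\bullet}(\fp^{*})}\big[N^{\Lambda^{\scriptscriptstyle\bullet}(\fp^{*})}\,\rho(k^{-1})\,\exp\big(-i\,\ad(Y^{\fk})|_{\fp}\big)\big].
\end{align*}
Since everything has been moved to the Lie algebra level, it suffices to show that this finite-dimensional supertrace vanishes identically in $Y^{\fk}\in \fk(\gamma)$.

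Next, I would use $\delta(G)\g 2$ to produce a trivial subspace of dimension at least $2$ inside $\fp(\gamma)$. Let $\fh=\fb\oplus\ft$ be a $\theta$-stable fundamental Cartan subalgebra of $\fz(\gamma)$ with $\fb\subset \fp(\gamma)$, $\ft\subset\fk(\gamma)$. By a standard rank inequality for semisimple centralisers, $\delta(Z^{0}(\gamma))\g \delta(G)\g 2$, so $\dim \fb\g 2$. The subspace $\fb\subset \fp(\gamma)$ is fixed pointwise by $k$ (since $\fb\subset \fh$ commutes with $\ft$ and with $a$) and is annihilated by $\ad(Y^{\fk})$ for every $Y^{\fk}\in\ft$, which after averaging over the Weyl integration formula in Bismut's integrand reduces the computation to the case where $Y^{\fk}\in\ft$; hence $\ad(Y^{\fk})$ is trivial on $\fb$ throughout.

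Orthogonally split $\fp = \fb\oplus W$, so that
\begin{align*}
\Lambda^{\scriptscriptstyle\bullet}(\fp^{*})= \Lambda^{\scriptscriptstyle\bullet}(\fb^{*})\,\widehat{\otimes}\,\Lambda^{\scriptscriptstyle\bullet}(W^{*}),\qquad N^{\Lambda^{\scriptscriptstyle\bullet}(\fp^{*})} = N_{\fb}\otimes 1 + 1\otimes N_{W}.
\end{align*}
Since $k$ and $\exp(-i\ad(Y^{\fk}))$ both act as the identity on $\Lambda^{\scriptscriptstyle\bullet}(\fb^{*})$, the supertrace factorises and I only need the two elementary identities
\begin{align*}
\Trs^{\Lambda^{\scriptscriptstyle\bullet}(\fb^{*})}[1]=\sum_{p}(-1)^{p}\binom{\dim\fb}{p}=0,\qquad \Trs^{\Lambda^{\scriptscriptstyle\bullet}(\fb^{*})}[N_{\fb}]=\sum_{p}(-1)^{p}p\binom{\dim\fb}{p}=0,
\end{align*}
both valid because $\dim\fb\g 2$. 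The first kills the $1\otimes N_{W}$ contribution, the second kills the $N_{\fb}\otimes 1$ contribution, so the integrand vanishes pointwise in $Y^{\fk}$, and integrating gives the claim.

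The main obstacle is the bookkeeping: one must confirm that the $\fb$-factor genuinely splits off from Bismut's integrand, i.e.\ that $\ad(Y^{\fk})|_{\fp}$ really is block-diagonal with a trivial $\fb$-block for the relevant $Y^{\fk}$ (handled by a Weyl reduction to $\ft$ inside $J_{\gamma}$) and that the number operator decomposition is compatible with $\widehat\otimes$. Once this is checked, the two combinatorial identities above finish the proof.
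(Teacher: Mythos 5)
The paper does not prove this statement; it recalls it as a result of Moscovici--Stanton \cite[p.~194]{MStorsion} and Bismut \cite[Theorem 7.9.1]{B09}. Your reconstruction is essentially Bismut's argument: pass through the geometric orbital integral formula, reduce to a finite-dimensional supertrace over $\Lambda^{\scriptscriptstyle\bullet}(\fp^{*})$, split off a trivial $\fb$-factor using a fundamental Cartan $\fh(\gamma)=\fb\oplus\ft$ of $\fz(\gamma)$, invoke the rank inequality $\dim\fb=\delta(Z^{0}(\gamma))\geq\delta(G)\geq 2$, and finish with the two binomial identities $\sum_{p}(-1)^{p}\binom{d}{p}=0$ (for $d\geq1$) and $\sum_{p}(-1)^{p}p\binom{d}{p}=0$ (for $d\geq 2$). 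This is correct and is the standard route.

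Two points, however, should be tightened. First, your parenthetical justification for why $k$ fixes $\fb$ pointwise --- that ``$\fb\subset\fh$ commutes with $\ft$ and with $a$'' --- is not the right reason. Commuting with $\ft$ controls $\ad(Y^{\fk})|_{\fb}$ for $Y^{\fk}\in\ft$, not $\Ad(k)|_{\fb}$. The clean argument is: $\Ad(k^{-1})=\Ad(e^{-a})\Ad(\gamma)$, and on $\fz(\gamma)$ both factors are the identity ($\Ad(\gamma)$ by definition of $\fz(\gamma)$, $\Ad(e^{-a})=e^{-\ad(a)}$ because $a$ is central in $\fz(\gamma)$); since $\fb\subset\fz(\gamma)$, this gives $\Ad(k^{-1})|_{\fb}=\mathrm{id}$. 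Second, your ``Weyl reduction'' is really just conjugation invariance, and one must check that the supertrace term $\Trs^{\Lambda^{\scriptscriptstyle\bullet}(\fp^{*})}[N\,\Ad(k^{-1})\exp(-i\ad(Y^{\fk}))]$ is $\Ad(K(\gamma)^{0})$-invariant in $Y^{\fk}$ before replacing $Y^{\fk}$ by an element of $\ft$; this holds because $K(\gamma)^{0}\subset Z(\gamma)\subset Z(k)$ (by uniqueness of the hyperbolic--elliptic Jordan decomposition), so that $\Ad(g)$ commutes with both $N$ and $\Ad(k^{-1})$ for $g\in K(\gamma)^{0}$. With that, every $Y^{\fk}\in\fk(\gamma)$ is $K(\gamma)^{0}$-conjugate into $\ft$, the supertrace vanishes pointwise on all of $\fk(\gamma)$, and the integral vanishes without any appeal to the Weyl integration formula or to properties of $J_{\gamma}$.
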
 
We have a generalisation of \cite[Corollary 2.2]{MStorsion} and \cite[Theorem 
7.9.3]{B09}. 

\begin{cor}\label{corMS0}
	Let $F$ be a 
flat vector bundle on $Z$ with holonomy $\rho$.  Assume that $\dim Z$ 
is odd and that $\delta(G)\neq 1$. Then, for $t>0$, we have 
\begin{align}\label{eqMS0}
	\Trs\[N^{\Lambda^{\scriptscriptstyle\bullet 
	}(T^{*}Z)}\exp\(-t \Box^{Z}\)\]=0.
\end{align} 
In particular, 
\begin{align}\label{eqMS01}
\prod_{i=1}^{m}	{\rm det}^{*}\(\Box^{Z}|_{\Omega^{i}(Z,F)} 
\)^{(-1)^ii}=1. 
\end{align} 
For any acyclic and unitary representation $\rho_{0}$ of $\Gamma$, 
there is $\e>0$ such that if $\rho$ is $C^{0}$ $\e$-close  to $\rho_{0}$, then 
\begin{align}\label{eqMS02}
	T_{\rm CM}(F)=1.
\end{align} 
\end{cor}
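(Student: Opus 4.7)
The plan is to reduce everything to the vanishing of $\Trs[N^{\Lambda^{\scriptscriptstyle\bullet}(T^{*}Z)}\exp(-t\Box^{Z})]$, which will follow from Theorem \ref{thmMS} via the Selberg trace formula of Theorem \ref{thmselnon}. Since $m$ and $\delta(G)$ have the same parity, the hypotheses that $\dim Z=m$ is odd and $\delta(G)\neq 1$ force $\delta(G)\geqslant 3$, so in particular $\delta(G)\geqslant 2$ and Theorem \ref{thmMS} is applicable to every semisimple element of $G$.

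For \eqref{eqMS0}, I would apply the Selberg trace formula \eqref{eq:sel} in each degree $p$ separately, taking $\tau=\Lambda^{p}(\fp^{*})$ in Theorem \ref{thmselnon}. The exponential bound \eqref{eq:h11exp} holds for each of the finitely many representations $\Lambda^{p}(\fp^{*})$, so the finite alternating combination $\sum_{p}(-1)^{p}p\,\Tr[\exp(-t\Box^{Z}|_{\Omega^{p}(Z,F)})]$ converges absolutely, and Fubini yields
\begin{align*}
\Trs\[N^{\Lambda^{\scriptscriptstyle\bullet}(T^{*}Z)}\exp(-t\Box^{Z})\]
= \sum_{[\gamma]\in [\Gamma]}\vol(B_{[\gamma]})\,\Tr[\rho(\gamma)]\,
\Trs^{[\gamma]}\[N^{\Lambda^{\scriptscriptstyle\bullet}(T^{*}X)}\exp(-tC^{\fg,X})\].
\end{align*}
Each orbital integral on the right vanishes by Theorem \ref{thmMS}, which proves \eqref{eqMS0}.

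To deduce \eqref{eqMS01}, I would show the stronger statement that the meromorphic function $T_{\rho}(\sigma)$ of \eqref{eqTs} is identically equal to $1$ on $\bC$. For $\Re(\sigma)$ large enough, every eigenvalue of $\sigma+\Box^{Z}|_{\Omega^{i}(Z,F)}$ has large positive real part, so the spectral projection $\pi_{<}$ associated in Section \ref{srDet} to $\sigma+\Box^{Z}$ vanishes, the regularised determinant reduces to $\det_{>}$, and admits a global Mellin representation. Taking the alternating weighted sum over $i$ gives
\begin{align*}
\log T_{\rho}(\sigma) = \frac{\p}{\p s}\bigg|_{s=0}
\frac{1}{\Gamma(s)}\int_{0}^{\infty}
e^{-t\sigma}\,\Trs\[N^{\Lambda^{\scriptscriptstyle\bullet}(T^{*}Z)}\exp(-t\Box^{Z})\]\,t^{s-1}\,dt.
\end{align*}
By \eqref{eqMS0} the integrand is identically zero, hence $T_{\rho}(\sigma)\equiv 1$ on a right half plane; since $T_{\rho}$ is meromorphic on $\bC$ by Theorem \ref{thmGLH}, this identity extends to all $\sigma\in\bC$. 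Comparing with the expansion \eqref{eqTs1} near $\sigma=0$ simultaneously forces $\chi'_{\rm CM}(F)=0$ and yields \eqref{eqMS01}.

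Finally, \eqref{eqMS02} will follow by combining \eqref{eqMS01} with Proposition \ref{propclosesign}: for $\rho$ sufficiently $C^{0}$-close to an acyclic unitary $\rho_{0}$, the flat Laplacian on $\Omega^{\scriptscriptstyle\bullet}(Z,F_{\rho})$ is invertible, so $T_{\rm CM}(F_{\rho})$ is exactly the weighted product appearing in \eqref{eqMS01}, hence equals $1$. The main delicate point I expect is justifying the Mellin representation in the non-self-adjoint setting: this requires applying the construction of Section \ref{srDet} to the parametric family $\sigma+\Box^{Z}$ and exchanging $\p_{s}|_{s=0}$ with the Mellin integral for $\Re(\sigma)$ sufficiently large, using uniform spectral bounds in $\sigma$ to control the semigroup.
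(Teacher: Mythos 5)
Your proposal is correct and takes essentially the same route as the paper: the paper's proof is the two-line statement ``By Theorems \ref{thmselnon} and \ref{thmMS}, we get \eqref{eqMS0} and \eqref{eqMS01}. By Proposition \ref{propclosesign}, we get \eqref{eqMS02}.'', and you have simply spelled out the (correct) Mellin-transform passage from \eqref{eqMS0} to \eqref{eqMS01} --- applying Theorem \ref{thmselnon} with $\tau=\Lambda^{p}(\fp^{*})$ in each degree, invoking Theorem \ref{thmMS} for $\delta(G)\geqslant 2$, and then using the definition of $\det_{>}$ in Section \ref{srDet} and the entireness from Theorem \ref{thmGLH} to conclude $T_{\rho}\equiv 1$ and hence $\chi'_{\rm CM}(F)=0$ together with \eqref{eqMS01}.
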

\begin{proof}
	Since $m$ is odd,  $\delta(G)$ is odd. Since $\delta(G)\neq 1$, 
	we have $\delta(G)\g 3$. By Theorems \ref{thmselnon} and 
	\ref{thmMS}, we get \eqref{eqMS0} and \eqref{eqMS01}. By Proposition 	\ref{propclosesign}, we get \eqref{eqMS02}. 
\end{proof} 


\section{Fundamental Cartan subalgebra and related 
constructions}\label{Sfondcs}
The purpose of this section is to give a simple new conceptual proof 
for a complexified version of \cite[Theorem 6.11]{Shfried}.  There, 
the corresponding  proof is based on the classification theory of real simple Lie algebras. 

This section is organised as follows. 
In Section \ref{sCartan}, we introduce a $\theta$-invariant fundamental Cartan 
subalgebra $\fh=\fb\oplus \ft$.

In Section \ref{sslpg}, we introduce a splitting of $\fg$ according to the 
action of $\fb$. 

In Section \ref{sRoot}, we introduce a root system of $\fg$ with respect 
to the fundamental Cartan subalgebra $\fh$. We reinterpret some 
objets constructed in Section \ref{sslpg}.

Finally, in Section \ref{skey}, we give a simple new conceptual proof  for a 
complexified version of \cite[Theorem 6.11]{Shfried}.


\subsection{A fundamental Cartan subalgebra}\label{sCartan}
Let $T\subset K$ be a maximal torus of $K$. Let $\ft\subset \fk$ be the Lie 
algebra of $T$. Set
\begin{align}\label{eqdefb}
	\fb=\{a\in \fp: [a,\ft]=0\}. 
\end{align}
Put
\begin{align}\label{eqh=b+t}
	\fh=\fb\oplus \ft.
\end{align}
By \cite[p.~129]{Knappsemi},  $\fh$ is a  Cartan subalgebra of $\fg$. Let $H\subset G$ be the associated 
Cartan subgroup, that is the centraliser of $\fh$ in $G$. By 
\cite[Theorem 5.22]{Knappsemi}, $H$ is a connected abelian reductive 
subgroup of $G$, so that 
\begin{align}
	H= \exp(\fb) \times T. 
\end{align}
We will call $\fh$ and $H$ the fundamental Cartan subalgebra of $\fg$ 
and the  fundamental Cartan subgroup of $G$. 


Since $\fh_{\bC}=\fh\otimes_{\bR}\bC$ is a Cartan subalgebra of 
$\fg_{\bC}$, by  \eqref{eqd=g-k} and \eqref{eqh=b+t}, we have 
\begin{align}
	\delta(G)=\dim \fb. 
\end{align}

\subsection{A splitting of $\fg$}\label{sslpg}
Recall that $Z(\fb)\subset G$ is  the stabiliser of $\fb$ in $G$ with 
Lie algebra $\fz(\fb)\subset \fg$. By \cite[Proposition 
7.25]{KnappLie},  $Z(\fb)$ is a  possibly non connected reductive subgroup of 
$G$. Also, $\theta$ acts on $\fz(\fb)$ so that we have the Cartan 
 decomposition
 \begin{align}\label{eqzb0}
 	\fz(\fb)=\fp(\fb)\oplus \fk(\fb). 
 \end{align}

Let $\fm\subset 
\fz(\fb)$ be the orthogonal space (with respect to B) of $\fb$ in $\fz(\fb)$. Then $\fm$ 
is a Lie subalgebra of $\fg$, and  $\theta$ acts on $\fm$ so that 
 \begin{align}\label{eqmb0}
 	\fm=\fp_{\fm}\oplus \fk_{\fm}. 
 \end{align}
Let 
$M\subset G$ be the connected Lie group associated to the  Lie 
algebra $\fm$. By \cite[(3.3.11) and Theorem 3.3.1]{B09}, $M$ is  
closed in $G$ and is  a connected reductive subgroup of $G$ 
with maximal compact subgroup 
\begin{align}
	K_{M}=M\cap K.
\end{align}
Moreover, we have
\begin{align}\label{eqZ(b)}
&Z^{0}(\fb)=\exp(\fb)\times M,&\fz(\fb)=\fb\oplus \fm, &&	
\fp(\fb)=\fb\oplus\fp_{\fm},&&& \fk(\fb)=\fk_{\fm}. 
\end{align}
Since $\fh\subset \fz(\fb)$ is also a fundamental Cartan subalgebra 
of $\fz(\fb)$, we have
\begin{align}\label{eqdm=0}
	\delta(M)=0. 
\end{align}
Let 
\begin{align}
	X_{M}=M/K_{M}
\end{align}
be the associated  symmetric space. By \eqref{eqdm=0}, we see that  $\dim X_{M}$ 
is even.

Let $\fp^{\bot}(\fb),\fk^{\bot}(\fb),\fz^{\bot}(\fb)$ be respectively the 
orthogonal spaces (with respect to $B$) of $\fp(\fb),\fk(\fb),\fz(\fb)$ 
in $\fp,\fk,\fg$. Clearly,
\begin{align}
	\fz^{\bot}(\fb)=\fp^{\bot}(\fb)\oplus \fk^{\bot}(\fb). 
\end{align}
And also 
\begin{align}\label{eq:mpk1}
 & \fp=\fb\oplus\fp_\fm\oplus\fp^\bot(\fb),&\fk=\fk_\fm\oplus\fk^\bot(\fb),&& \fg=\fb\oplus \fm\oplus \fz^{\bot}(\fb).
\end{align}
The group $K_M$ acts trivially on $\fb$. It also acts on $\fp_\fm$, 
$\fp^\bot(\fb)$, $\fk_\fm$ and $\fk^{\bot}(\fb)$, and  preserves the 
splittings \eqref{eq:mpk1}. Similarly, the groups $M$ and $Z^{0}(\fb)$ act trivially on $\fb$, act  on $\fm,\fz^{\bot}(\fb)$, and 
preserves the third splitting in \eqref{eq:mpk1}.

\subsection{A root system of $(\fh,\fg)$}\label{sRoot}
Let $R\subset \fh_{\bC}^{*}$ be a root system of $(\fh,\fg)$ \cite[Section II.4]{KnappLie}. If 
$\alpha\in R$, let $\fg_{\alpha}\subset \fg_{\bC}$ be the weight space associated with  $\alpha$, which is of dimension $1$. Then we have the splitting
\begin{align}\label{eqgc}
	\fg_{\bC}=\fh_{\bC}\bigoplus \oplus_{\alpha\in R}\fg_{\alpha}. 
\end{align}

If $\alpha\in R$, then $\ol{\alpha}\in R$.
A root is called real if $\ol{\alpha}=\alpha$, imaginary if 
$\ol{\alpha}=-\alpha$, and complex otherwise. 
By \cite[Proposition 11.16]{KnappLie} (see also \cite[Proposition 
3.7]{BS19}), since $\fh$ is fundamental, there are no real roots. Let 
$R^{\rm im}\subset R$ and $R^{\rm c}\subset R$ be the subsets of imaginary and complex 
roots, so that 
\begin{align}
	R=R^{\rm im}\sqcup R^{\rm c}. 
\end{align}

Let $\fh^{\bot}$ be the orthogonal to $\fh$ in $\fg$ with respect to 
$B$.
Set
\begin{align}\label{eqihb0}
	\mathfrak i=\fz(\fb)\cap \fh^{\bot}. 
\end{align}
Let $\fc$ be the orthogonal to $\mathfrak i$ in $\fh^{\bot}$.
Then
\begin{align}\label{eqg=hic}
	\fg=\fh\oplus \mathfrak i\oplus \fc. 
\end{align}
By \eqref{eqgc}, we have
\begin{align}\label{eqiccc}
&	\fii_{\bC}=\oplus_{\alpha\in R^{\rm 
im}}\fg_{\alpha},&\fc_{\bC}=\oplus_{\alpha\in R^{\rm c}}\fg_{\alpha}.
\end{align}

Also, $\theta$ acts on $\mathfrak i, \fc$, so that we have the splittings
\begin{align}\label{eqKMic}
&	\fii=\fii_{\fp}\oplus \fii_{\fk},&\fc=\fc_{\fp}\oplus \fc_{\fk}. 
\end{align}
By \eqref{eqzb0}, \eqref{eqmb0}, \eqref{eq:mpk1}, \eqref{eqihb0}, and 
\eqref{eqg=hic}, we have 
\begin{align}\label{eqKMic2}
	&\fm=\ft\oplus \fii,&\fp_{\fm}=\fii_{\fp},&&\fk_{\fm}=\ft\oplus 
	\fii_{\fk},\\
&\fz^{\bot}(\fb)=\fc,&\fp^{\bot}(\fb)=\fc_{\fp},&&\fk^{\bot}(\fb)=\fc_{\fk}.\notag
\end{align}
In particular, we can rewrite the third identity of \eqref{eq:mpk1},
\begin{align}
	\fg=\fb\oplus \fm\oplus \fc.
\end{align}

\begin{prop}\label{prop38}
	The vector spaces $\fii_{\fp},\fii_\fk$ have even 
	dimensions, and $\fc_\fp,\fc_\fk$ have the same even dimension. 
	The  $K_{M}$-action  preserves the second splitting in  
	\eqref{eqKMic}, so that  the actions  on $\fc_\fp,\fc_\fk$ are equivalent.
\end{prop}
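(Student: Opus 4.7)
The plan is to derive all three claims from the root-space decomposition \eqref{eqgc}; by \eqref{eqiccc}, $\fii_\bC$ and $\fc_\bC$ are the sums of the imaginary and complex root spaces, respectively. Throughout I would exploit that the Cartan involution acts on roots so that imaginary roots are precisely those fixed by $\theta$ (equivalently, those vanishing on $\fb$), while complex roots satisfy $\theta\alpha \neq \pm\alpha$, together with the identity $\theta(\fg_\alpha) = \fg_{\theta\alpha}$.

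For the evenness of $\dim \fii_\fp$ and $\dim \fii_\fk$, I would pair imaginary roots as $\{\alpha, -\alpha\}$: each $\fg_\alpha$ is preserved by $\theta$ and hence lies entirely in $\fk_\bC$ (compact imaginary) or in $\fp_\bC$ (noncompact imaginary), so the $2$-dimensional real form of $\fg_\alpha \oplus \fg_{-\alpha}$ sits entirely inside $\fii_\fk$ or inside $\fii_\fp$ accordingly. Summing over pairs gives even dimension for each. For $\fc_\fp$ and $\fc_\fk$, I would group complex roots into orbits of size exactly four, $\{\alpha, -\alpha, \theta\alpha, -\theta\alpha\}$, the four being distinct precisely because $\alpha$ is neither real nor imaginary. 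For each orbit, the $4$-dimensional complex subspace $V_\alpha^\bC := \fg_\alpha \oplus \fg_{-\alpha} \oplus \fg_{\theta\alpha} \oplus \fg_{-\theta\alpha}$ is stable under both $\theta$ and conjugation; since $\theta$ swaps $\fg_\alpha \leftrightarrow \fg_{\theta\alpha}$ and $\fg_{-\alpha} \leftrightarrow \fg_{-\theta\alpha}$, its $\pm 1$-eigenspaces are each $2$-dimensional, and taking real points yields equal $2$-dimensional real contributions to both $\fc_\fk$ and $\fc_\fp$ from every orbit.

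The heart of the proposition is the equivalence of the $K_M$-representations. Because $K_M \subset K$ commutes with $\theta$, its adjoint action preserves both eigenspaces of $\theta|_\fc$, giving the splitting claim. For the equivalence itself, my plan is a regular-element argument on $\fb$: since complex roots do not vanish on $\fb$, one can choose $a \in \fb$ with $\alpha(a) \neq 0$ for every $\alpha \in R^{\rm c}$, and then $\mathrm{ad}(a)$ acts by a nonzero scalar on each complex root space, so $\mathrm{ad}(a)\colon \fc_\bC \to \fc_\bC$, and hence $\mathrm{ad}(a)\colon \fc \to \fc$, is a linear isomorphism. Using $a \in \fp$ together with $[\fp,\fk] \subset \fp$ and $[\fp,\fp] \subset \fk$, this isomorphism interchanges $\fc_\fp$ and $\fc_\fk$; and since $K_M$ centralises $\fb$ and hence $a$, it commutes with $\mathrm{Ad}(K_M)$, yielding the sought $K_M$-equivariant isomorphism $\fc_\fp \xrightarrow{\sim} \fc_\fk$.

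The main obstacle is exactly this last step: one needs a natural $K_M$-equivariant identification between $\fc_\fp$ and $\fc_\fk$ that the purely root-theoretic bookkeeping of the first two steps does not readily supply. The regular-element trick resolves it cleanly, subject only to verifying that complex roots do not restrict to zero on $\fb$, which is immediate from the definitions. The degenerate case $\delta(G) = 0$, where $\fb = 0$ and $\fc = 0$, makes all three claims vacuous.
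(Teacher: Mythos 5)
Your proof is correct, and for the only step the paper actually spells out—the $K_M$-equivariant isomorphism $\fc_\fp \simeq \fc_\fk$—you use exactly the paper's argument: pick $b \in \fb$ with $\langle\alpha,b\rangle \neq 0$ for all complex roots $\alpha$, and use that $\ad(b)$ interchanges $\fc_\fp$ and $\fc_\fk$ and commutes with $\Ad(K_M)$ because $K_M$ centralises $\fb$. For the dimension statements, the paper simply cites \cite[Proposition 3.8]{BS19}, whereas you supply a direct root-theoretic proof (pairing imaginary roots $\{\alpha,-\alpha\}$ and grouping complex roots into $\theta$-orbits of size four); this is a clean self-contained substitute for the citation and introduces no new ideas beyond what BS19 uses.
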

\begin{proof}
	This is \cite[Proposition 3.8]{BS19}, 	except in the last 
	statement the group 
 	$K_{M}$ is replaced by $T$.  
	By the consideration after 
	\eqref{eq:mpk1}, by the last two identities in \eqref{eqKMic2}, 
	$K_{M}$ acts on $\fc_{\fp},\fc_{\fk}$.  Since $K_{M}$ and $ 	
	\fb$ commutes, if $b\in \fb$ is such 
	that $\<\alpha,b\>\neq0$ for all $\alpha\in R^{\rm c}$, then 
	$\ad(b):\fc_{\fp}\to \fc_{\fk}$ defines  a  $K_{M}$-equivalent.  
%
\end{proof}

Let $R_{+}\subset R$ be a positive root system. Set
\begin{align}
&	R_{+}^{\rm im}=R_{+}\cap R^{\rm im},&R_{+}^{\rm c}=R_{+}\cap 
R^{\rm c}. 
\end{align}
As explained in \cite[Section 3.5]{BS19}, we can choose $R_{+}$ so 
that $R_{+}^{\rm c}$ is preserved by the complex conjugation.  

Set
\begin{align}
&	\fc_{+,\bC}=\oplus_{\alpha\in 
R^{c}_{+}}\fg_{\alpha},&\fc_{-,\bC}=\oplus_{\alpha\in 
R^{c}_{-}}\fg_{\alpha}. 
\end{align}

\begin{prop}\label{prop310}
	 	The following statements hold.
 	\begin{enumerate}[i)]
\item\label{eqp1}
The vector spaces $\fc_{+,\bC},\fc_{-,\bC}$ are the 
complexifications 
of real Lie subalgebras $\fc_{+},\fc_{-}$ of $\fg$, which have the same even dimension, and are such that
\begin{align}\label{eqtcpm}
&	\fc=\fc_{+}\oplus \fc_{-},&\fc_{+}=\theta\fc_{-}. 
\end{align}
\item\label{eqp2} The bilinear form  $B$ vanishes on $\fc_+,\fc_-$ and induces the 
identification,
\begin{align}\label{eqBcpm}
\fc_{-}^{*} \simeq  \fc_+.
\end{align}
\item\label{eqp3} The group $Z^{0}(\fb)$ acts on $\fc_{\pm}$, so that 
\eqref{eqBcpm} is $Z^{0}(\fb)$-equivalent. 
\item\label{eqp4} The actions of $M$ on $\fc_{\pm}$ are equivalent.  
\item\label{eqp5} The projections on $\fp,\fk$ map $\fc_{\pm}$ into $\fc_\fp,\fc_\fk$ 
isomorphically. 
\item\label{eqp6} 
Finally, the actions of $K_{M}$ on $\fc_+, \fc_-, 
\fc_\fp, \fc_\fk$ are equivalent.
\end{enumerate}
\end{prop}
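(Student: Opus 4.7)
The plan is to realise $\fc_{+}$ as the unipotent radical $\fn$ from the introduction and $\fc_{-}$ as $\theta\fn$, after which five of the six statements reduce to root-space bookkeeping; the one substantive step is iv), which exploits the equal-rank property $\delta(M)=0$ from \eqref{eqdm=0}. I begin by picking $b_{0}\in\fb$ with $\alpha(b_{0})\neq 0$ for every $\alpha\in R^{\rm c}$, which is possible because the imaginary roots are precisely those vanishing on $\fb$, and setting $R^{\rm c}_{+}=\{\alpha\in R^{\rm c}:\alpha(b_{0})>0\}$. Since $\fb$ is real, $\overline{\alpha(b_{0})}=\alpha(b_{0})$, so complex conjugation stabilises $R^{\rm c}_{+}$ and $\fc_{+,\bC}$ is the complexification of a real form $\fc_{+}\subset\fg$; as $\theta$ acts by $-1$ on $\fb$ it interchanges $R^{\rm c}_{\pm}$, so $\theta\fc_{+}=\fc_{-}$. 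Finally $[\fg_{\alpha},\fg_{\beta}]\subset\fg_{\alpha+\beta}$, and whenever $\alpha+\beta$ is a root one has $(\alpha+\beta)(b_{0})>0$, excluding the imaginary case; thus $\alpha+\beta\in R^{\rm c}_{+}$, proving that $\fc_{+}$ is a Lie subalgebra and completing i).

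Parts ii) and iii) are immediate. The orthogonality $B(\fg_{\alpha},\fg_{\beta})=0$ unless $\alpha+\beta=0$ shows $B|_{\fc_{\pm}\times\fc_{\pm}}=0$ and yields a nondegenerate pairing $\fc_{+}\times\fc_{-}\to\bR$, which is \eqref{eqBcpm}. For iii), any $g\in Z^{0}(\fb)$ commutes with $\ad(b_{0})$, so $\Ad(g)$ preserves the positive and negative $b_{0}$-eigenspaces $\fc_{\pm}$, and the $Z^{0}(\fb)$-equivariance of the pairing follows from $\Ad$-invariance of $B$.

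The main obstacle is iv). The map $\theta:\fc_{+}\to\fc_{-}$ is not $M$-linear but only intertwines $\Ad(m)$ with $\Ad(\theta m)$, so the task is to absorb this twist. Since $\delta(M)=0$, the Cartan subalgebra $\ft\subset\fk_{\fm}$ of $\fm$ is compactly embedded and is therefore fixed pointwise by $\theta|_{\fm}$; because an automorphism of a real reductive Lie algebra trivial on a Cartan subalgebra is inner, there exists $m_{0}\in M$ with $\Ad(m_{0})|_{\fm}=\theta|_{\fm}$, and a connectedness argument on $M$ upgrades this to $\theta(m)=m_{0}mm_{0}^{-1}$ for every $m\in M$. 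I then set $T=\Ad(m_{0}^{-1})\circ\theta|_{\fc_{+}}:\fc_{+}\to\fc_{-}$, which is well-defined by iii); using $\Ad(\theta m)=\Ad(m_{0})\Ad(m)\Ad(m_{0})^{-1}$ on all of $\fg$, a one-line calculation gives $T\Ad(m)=\Ad(m)T$, producing the required $M$-equivariant isomorphism.

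Parts v) and vi) are then short. The projection $\fc_{+}\to\fc_{\fp}$ equals $v\mapsto(v-\theta v)/2$, whose kernel is $\fc_{+}\cap\theta\fc_{+}=\fc_{+}\cap\fc_{-}=0$, and the dimensions match by Proposition \ref{prop38}; the $\fc_{\fk}$-projection and the two analogous maps out of $\fc_{-}$ are handled identically. For vi), $K_{M}\subset K$ is pointwise fixed by $\theta$, so both the projections of v) and the map $\theta:\fc_{+}\to\fc_{-}$ are $K_{M}$-equivariant, whence $\fc_{+},\fc_{-},\fc_{\fp},\fc_{\fk}$ are mutually equivalent as $K_{M}$-modules.
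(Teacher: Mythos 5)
Your proof is correct and follows essentially the same strategy as the paper, but you fill in several steps the paper delegates to references. Specifically, the paper attributes parts i), ii), v) to Proposition 3.10 of [BS19], and for iv) it cites Knapp's Problem XII.14 for the existence of $k_{0}\in K_{M}$ with $\Ad(k_{0})|_{\fm}=\theta|_{\fm}$, whereas you re-derive all of this. Two stylistic choices of yours are actually a little cleaner than the paper's: for iii) you observe directly that $Z^{0}(\fb)$ commutes with $\ad(b_{0})$ and hence preserves its eigenspaces, rather than checking $[\fii,\fc_{\pm,\bC}]\subset\fc_{\pm,\bC}$ by root combinatorics; and for iv) you take $T=\Ad(m_{0}^{-1})\circ\theta$ together with the group-level identity $\theta(m)=m_{0}mm_{0}^{-1}$, which makes the $M$-equivariance calculation a one-liner, while the paper's map $\Ad(k_{0})\theta$ implicitly needs the extra observation that $k_{0}^{2}$ is central in $M$. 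One imprecision worth flagging: the assertion that an automorphism of a real reductive Lie algebra trivial on a Cartan subalgebra is inner is false in general (it requires the Cartan to be compactly embedded, or some equivalent hypothesis); you do have $\ft\subset\fk_{\fm}$ and $\delta(M)=0$, so the conclusion holds here, but the hypothesis should be made explicit --- this is exactly the content of the Knapp problem the paper cites. Finally, your formula $v\mapsto(v-\theta v)/2$ for the $\fc_{\fp}$-projection is the correct one; the paper writes $(1+\theta)/2$ there, which is the $\fc_{\fk}$-projection and appears to be a typo (harmless for vi), since $K_{M}$ is $\theta$-fixed either way).
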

\begin{proof}
The statements \ref{eqp1}), \ref{eqp2}), \ref{eqp5}) are just \cite[Proposition 
3.10]{BS19}, and  \ref{eqp6}) has been established  for  the 
$T$-action  instead of the $K_{M}$-action . 



By our choice of the positive root system, we have
\begin{align}
[\fii,\fc_{\pm,\bC}]\subset \fc_{\pm,\bC}.
\end{align}
Therefore, $\fz(\fb)=\fh\oplus \fii$ preserves $\fc_{\pm,\bC}$, and $Z^{0}(\fb)$ 
acts on  $\fc_{\pm,\bC}$. Since the $Z^{0}(\fb)$-action on $\fc_{\pm,\bC}$ commutes with the complex conjugation,  $Z^{0}(\fb)$ acts on $\fc_{\pm}$.  
Since $B$ is $Z^{0}(\fb)$-invariant, we see that \eqref{eqBcpm} is 
$Z^{0}(\fb)$-equivalent, from which we get  \ref{eqp3}).

Since $\delta(M)=0$, 
by \cite[Probleme XII.14]{Knappsemi}, there is $k_{0}\in K_{M}$ such that 
$\Ad(k_{0})|_{\fm}=\theta|_{\fm}$. 
By the second identity 
of \eqref{eqtcpm}, $\Ad(k_{0})\theta:\fc_{+}\to \fc_{-}$ is an 
equivalence of $M$-representations, from which  we get \ref{eqp4}).

Since the  projection $\fc_{\pm}\to \fc_{\fp}$ is $(1+\theta)/2$, 
since $K_{M}$ is fixed by $\theta$,  we get \ref{eqp6}). 
\end{proof}

\begin{cor}\label{propVAM}
	For $0\l j\l \dim \fc_{\pm}$,  we have isomorphisms of 
	representations of $M$,
\begin{align}\label{eqVAMs1}
	\Lambda^{j}(\fc_{\pm}^{*})\simeq \Lambda^{\dim 
	\fc_{\pm}-j}(\fc_{\pm}^{*}). 
\end{align}
\end{cor}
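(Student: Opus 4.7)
The plan is to deduce the isomorphism from two inputs: self-duality $\fc_\pm \simeq \fc_\pm^*$ as $M$-representations, and triviality of the one-dimensional $M$-character carried by the top exterior power $\Lambda^{n}(\fc_\pm)$, where $n = \dim \fc_\pm$. Granted these, the conclusion will follow from the standard exterior algebra identity $\Lambda^{n-j}(V) \simeq \Lambda^{j}(V^*) \otimes \Lambda^{n}(V)$ for a finite dimensional vector space $V$.

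To get self-duality, I would combine Proposition \ref{prop310} \ref{eqp4}), which gives $\fc_+ \simeq \fc_-$ as $M$-representations, with parts \ref{eqp2}) and \ref{eqp3}), according to which the pairing induced by $B$ is a $Z^{0}(\fb)$-equivariant (hence a fortiori $M$-equivariant) isomorphism $\fc_-^* \simeq \fc_+$. Composing, one obtains $\fc_+ \simeq \fc_- \simeq \fc_+^*$, and symmetrically $\fc_- \simeq \fc_-^*$, as $M$-modules.

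To trivialize the character $\chi = \Lambda^{n}(\fc_\pm)$, I would take top exterior powers of the self-duality, getting $\chi \simeq \chi^{-1}$, i.e.\ $\chi^{2} \simeq 1$. Thus $\chi : M \to \bC^{*}$ is a continuous homomorphism with image in $\{\pm 1\}$; since $M$ is connected by construction, $\chi$ is identically $1$. This connectedness step is the only delicate point of the argument; without it one would only obtain a character of order dividing $2$, not the trivial one.

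Assembling the pieces, applying $\Lambda^{n-j}(V) \simeq \Lambda^{j}(V^*) \otimes \Lambda^{n}(V)$ with $V = \fc_\pm$ and inserting the triviality of $\Lambda^{n}(\fc_\pm)$ gives the $M$-equivariant isomorphism $\Lambda^{n-j}(\fc_\pm) \simeq \Lambda^{j}(\fc_\pm^*)$. The self-duality established in the second paragraph then transforms the left-hand side into $\Lambda^{n-j}(\fc_\pm^*)$, which is exactly the claimed isomorphism \eqref{eqVAMs1}.
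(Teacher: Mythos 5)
Your proof is correct and proceeds through the same essential steps as the paper --- self-duality of $\fc_\pm$, triviality of the top exterior power, and the exterior-algebra identity --- but it differs in how it establishes that $\Lambda^{\dim\fc_\pm}(\fc_\pm)$ is the trivial $M$-character. The paper invokes Proposition \ref{propreal1t}: since $\delta(M)=0$, $M$ has compact centre, and any one-dimensional real representation of a connected reductive group with compact centre is trivial (the proof of Proposition \ref{propreal1t} uses the decomposition $G=G_{\rm ss}\cdot Z^{0}_{G}$ and the triviality of characters on semisimple and compact connected groups). You instead first set up the self-duality $\fc_\pm\simeq\fc_\pm^*$ and then observe that taking top exterior powers gives $\chi\simeq\chi^{-1}$, so $\chi$ is a continuous $\{\pm1\}$-valued character, hence trivial on the connected group $M$. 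This is a genuinely more elementary route to the triviality --- it needs only connectedness of $M$ rather than the structure theory behind Proposition \ref{propreal1t} --- and since you need the self-duality anyway for the final step, nothing is wasted. One small remark: since $\fc_\pm$ is a real vector space, $\chi$ is a character valued in $\bR^*$ rather than $\bC^*$; the argument is unaffected (in fact connectedness already forces $\chi(M)\subset\bR_{>0}$, so $\chi^2=1$ immediately gives $\chi=1$ without even invoking discreteness of $\{\pm 1\}$).
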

\begin{proof}
	Since $\delta(M)=0$, $M$ has a compact center. By Proposition  
	\ref{propreal1t}, $M$ acts trivially on $\Lambda^{\dim 
	\fc_{\pm}}(\fc^{*}_{\pm})$. 
	We have an isomorphism of real representations of $M$,  
	\begin{align}\label{eqetaljlj}
	\Lambda^{j}(\fc^{*}_{\pm})\simeq 
	\Lambda^{\dim\fc_{\pm}-j}(\fc_{\pm}). 
\end{align}
By  Proposition \ref{prop310} \ref{eqp3}) \ref{eqp4}) and \eqref{eqetaljlj}, we get  \eqref{eqVAMs1}. 
\end{proof}

\subsection{A lifting property}\label{skey}
Let $R(K)$ be the representation ring of $K$. We can identify $R(K)$ 
with the subring of the $\Ad(K)$-invariant smooth functions on $K$ which is generated by the 
characters of finite dimensional complex representations of $K$. 

The restriction 
induces  an injective  morphism of rings
\begin{align}\label{eqiKTunique}
i:	R(K)\to R(T). 
\end{align} 
Let $W(T:K)=N_{K}(T)/T$ be the  Weyl group of $K$, where $N_{K}(T)$ 
is the normaliser of  $T$ in $K$. Then 
$W(T:K)$ acts on $R(T)$. By \cite[Proposition VI.2.1]{BrockerDieck}, 
$i$ induces an isomorphism of rings
\begin{align}\label{eqRKRT}
	i: R(K)\simeq  R(T)^{W(T:K)}. 
\end{align}

\begin{prop}\label{propNp}
	The adjoint action of $N_{K}(T)$ preserves the decomposition 
	\begin{align}
	\fg=\ft\oplus \fb\oplus \mathfrak i_{\fk}\oplus \mathfrak i_{\fp}\oplus \fc_{\fk}\oplus 
	\fc_{\fp}. 
\end{align} 
\end{prop}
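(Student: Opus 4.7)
The plan is to verify that each of the six summands is individually stable under $\Ad(N_K(T))$, exploiting only a short chain of elementary observations. Throughout I use two general facts: since $N_K(T) \subset K$, the adjoint action preserves the Cartan decomposition $\fg = \fp \oplus \fk$; and since $\Ad(G)$ preserves $B$, orthogonal complements of $\Ad(N_K(T))$-stable subspaces are $\Ad(N_K(T))$-stable.

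First, $\Ad(N_K(T))$ preserves $\ft$ by definition of the normalizer. Next I claim it preserves $\fb$. Take $n \in N_K(T)$ and $a \in \fb$. For any $t \in \ft$, writing $t' = \Ad(n^{-1})t \in \ft$, we have
\begin{align}
[\Ad(n)a,\,t] = \Ad(n)[a,\,t'] = 0,
\end{align}
using \eqref{eqdefb}. Since $N_K(T) \subset K$ also preserves $\fp$, we get $\Ad(n)a \in \fb$. Thus $\fh = \fb \oplus \ft$ is preserved.

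Now $\fz(\fb)$ is preserved: if $X \in \fz(\fb)$ and $b \in \fb$, then, using that $\Ad(n^{-1})b \in \fb$ by the previous step,
\begin{align}
[\Ad(n)X,\,b] = \Ad(n)\bigl[X,\,\Ad(n^{-1})b\bigr] = 0.
\end{align}
Since $\fh$ is $\Ad(N_K(T))$-stable, so is $\fh^\bot$; hence by \eqref{eqihb0}, $\fii = \fz(\fb) \cap \fh^\bot$ is stable, and its $B$-orthogonal $\fc$ inside $\fh^\bot$ is stable as well. Finally, intersecting the stable subspaces $\fii$ and $\fc$ with the stable subspaces $\fp$ and $\fk$ (using \eqref{eqKMic}) shows that each of $\fii_\fp,\fii_\fk,\fc_\fp,\fc_\fk$ is preserved.

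There is no real obstacle here; the statement is essentially a bookkeeping consequence of the definitions together with the fact that $N_K(T)$ lies in $K$ and normalizes $\ft$. The only point that deserves care is making sure the definition of $\fb$ in \eqref{eqdefb} really is preserved — which requires substituting $\Ad(n^{-1})t$ rather than $t$ itself — after which every subsequent subspace in the decomposition is obtained by one of three stability-preserving operations (orthogonal complement with respect to $B$, intersection with $\fp$ or $\fk$, intersection with $\fh^\bot$).
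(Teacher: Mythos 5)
Your argument is correct and follows the same route as the paper: show $\ft$, then $\fb$, then $\fz(\fb)$ are stable, deduce stability of $\fii$ and $\fc$ via $B$-orthogonality, and finally intersect with $\fp$ and $\fk$ to split into the six summands. You simply make explicit a few steps (the substitution $\Ad(n^{-1})t\in\ft$ in the argument for $\fb$, and the stability of $\fh^\bot$) that the paper leaves implicit.
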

\begin{proof}
	Since $N_{K}(T)$ preserves $\ft$, by \eqref{eqdefb}, $N_{K}(T)$ 
	preserves $\fb$. Also, $N_{K}(T)$ preserves $\fz(\fb)$. Since $N_{K}(T)\subset K$ preserves $B$, by 
	\eqref{eqihb0}, $N_{K}(T)$ preserves $\mathfrak  i$. Using again 
	$N_{K}(T)$ preserves $B$, by \eqref{eqg=hic}, $N_{K}(T)$ 
	preserves $\mathfrak c$. By \eqref{eqKMic}, $N_{K}(T)$ preserves 	$\mathfrak i_{\fp},\mathfrak i_{\fk}, \mathfrak c_{\fp},\mathfrak 
	c_{\fk}$.
\end{proof}


\begin{thm}\label{thmlift}
	The adjoint action  of $T$ on $\fii_{\fp,\bC}$, $\fii_{\fk,\bC}$, 
	$\fc_{\fp,\bC}\simeq\fc_{\fk,\bC}\simeq 
	\fc_{+,\bC}\simeq\fc_{-,\bC}$ lift uniquely to virtual representations in $R(K)$. 
\end{thm}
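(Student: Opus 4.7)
The plan is to reduce the existence of a unique lift to $R(K)$ to a Weyl-group invariance check at the level of characters, using the isomorphism \eqref{eqRKRT}. Uniqueness is clear for any of the four spaces: if two elements of $R(K)$ restrict to the same virtual representation of $T$, they coincide by the injectivity of \eqref{eqiKTunique}. For existence, since a virtual representation of $T$ extends to one of $K$ if and only if its character is $W(T:K)$-invariant, it suffices to verify that the adjoint $T$-characters of $\fii_{\fp,\bC}$, $\fii_{\fk,\bC}$, $\fc_{\fp,\bC}$, $\fc_{\fk,\bC}$ are invariant under $W(T:K)=N_K(T)/T$.

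First I would invoke Proposition \ref{propNp}, which states that the adjoint action of $N_K(T)$ preserves each of the summands in $\fg=\ft\oplus\fb\oplus\fii_{\fk}\oplus\fii_{\fp}\oplus\fc_{\fk}\oplus\fc_{\fp}$. Consequently, for each of the four real subspaces $V\in\{\fii_{\fp},\fii_{\fk},\fc_{\fp},\fc_{\fk}\}$, and thus for its complexification $V_{\bC}$, the adjoint $T$-action extends to an adjoint $N_K(T)$-action. For any finite dimensional complex $T$-module $V_{\bC}$ with a compatible $N_K(T)$-action, the element $n\in N_K(T)$ intertwines $(V_{\bC},\Ad(t))$ with $(V_{\bC},\Ad(ntn^{-1}))$, so $\chi_{V_{\bC}}(ntn^{-1})=\chi_{V_{\bC}}(t)$ for all $t\in T$. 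Hence each of the four characters lies in $R(T)^{W(T:K)}$, and \eqref{eqRKRT} produces the desired unique element of $R(K)$.

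Finally, for the claimed compatibility of the lifts under the isomorphisms $\fc_{\fp,\bC}\simeq\fc_{\fk,\bC}\simeq\fc_{+,\bC}\simeq\fc_{-,\bC}$, I would appeal to Proposition \ref{prop310} \ref{eqp3})--\ref{eqp6}), where these are established as isomorphisms of $K_M$-representations. Since $T\subset K_M$, they restrict to isomorphisms of $T$-representations, so all four spaces have the same $T$-character; by the uniqueness part already proved, they determine one and the same virtual representation in $R(K)$. Note that Proposition \ref{prop310} \ref{eqp3})--\ref{eqp4}) does \emph{not} a priori give us lifts to $K$-representations, only to $M$- or $K_M$-representations, so the argument via $N_K(T)$-invariance is genuinely needed; but once the lift of, say, $\fc_{\fp,\bC}$ is produced, the remaining three come along for free.

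I do not anticipate a serious obstacle: the geometric content (the stability of the decomposition under $N_K(T)$) has been packaged into Proposition \ref{propNp}, and the algebraic content is the standard fact that $R(K)\simeq R(T)^{W(T:K)}$. The only subtlety worth double-checking is that the relevant isomorphisms in Proposition \ref{prop310} are at least $T$-equivariant, which is immediate from $T\subset K_M$.
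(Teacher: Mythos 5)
Your proof is correct and follows essentially the same route as the paper: invoke Proposition~\ref{propNp} to get $W(T:K)$-invariance of the $T$-characters, then apply the isomorphism~\eqref{eqRKRT} (with injectivity~\eqref{eqiKTunique} for uniqueness). The paper's proof is terser and only explicitly names $\fii_{\fp,\bC}$, $\fii_{\fk,\bC}$, $\fc_{\fp,\bC}$, leaving the $T$-equivariance of the isomorphisms $\fc_{\fp,\bC}\simeq\fc_{\fk,\bC}\simeq\fc_{\pm,\bC}$ from Proposition~\ref{prop310} implicit; you spell this out, which is a welcome clarification rather than a deviation.
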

\begin{proof} 
	By Proposition \ref{propNp},  
the characters of $T$ on $\fii_{\fp,\bC}$, $\fii_{\fk,\bC}$, and $\fc_{\fp,\bC}$ are 
$W(T:K)$-invariant. By \eqref{eqiKTunique} and \eqref{eqRKRT}, the $T$-actions on 
$\fii_{\fp,\bC}$, $\fii_{\fk,\bC}$, and $\fc_{\fp,\bC}$ lift uniquely to 
$R(K)$.
\end{proof}
	
\begin{cor}\label{corkey}
	For $i, j \in \mathbf N$, the adjoint representations of $K_M$ on 
$\Lambda^{i}(\fp_{\fm,\bC}^{*})$ and $\Lambda^{j}(\fc_{\pm,\bC}^{*})$ have unique 
lifts in $R(K)$. 
\end{cor}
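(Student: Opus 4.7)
The plan is to reduce the $K_M$-lifting problem to the $T$-lifting problem solved in Theorem \ref{thmlift}. The key observation is that $T$ sits inside $K_M = M \cap K$ as a maximal torus: indeed $\ft \subset \fk_\fm$ by \eqref{eqKMic2}, so $T \subset K_M$, and any torus of $K_M$ containing $T$ would also be a torus of $K$ containing $T$, hence equal to $T$ by maximality of $T$ in $K$. Moreover, since $M$ is connected reductive, its maximal compact subgroup $K_M$ is connected (via the Cartan decomposition $M = K_M \exp(\fp_\fm)$), so the restriction map $R(K_M) \to R(T)^{W(T:K_M)}$ is a ring isomorphism; in particular $R(K_M) \hookrightarrow R(T)$ is injective.

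Next, I would observe that $N_{K_M}(T) \subset N_K(T)$ yields $W(T:K_M) \subset W(T:K)$, hence $R(T)^{W(T:K)} \subset R(T)^{W(T:K_M)}$. By Theorem \ref{thmlift}, the $T$-characters of $\fp_{\fm,\bC} = \fii_{\fp,\bC}$ and of $\fc_{\pm,\bC}$ already lie in $R(T)^{W(T:K)}$, so they lift uniquely to virtual $K$-representations $\widehat V_{\fp_\fm}, \widehat V_{\fc_\pm} \in R(K)$. Their restrictions to $K_M$ then have the same $T$-characters as the original adjoint $K_M$-representations on $\fp_{\fm,\bC}$ and $\fc_{\pm,\bC}$, so the two elements of $R(K_M)$ must coincide by the injectivity established above.

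To handle the exterior powers, I would exploit the $\lambda$-ring structure on representation rings: the operations $\lambda^i$ commute with restriction to any closed subgroup, so $\lambda^i(\widehat V_{\fp_\fm}^{\,*}) \in R(K)$ restricts to $\lambda^i(\fp_{\fm,\bC}^{*}) = \Lambda^i(\fp_{\fm,\bC}^{*})$ in $R(K_M)$, and likewise $\lambda^j(\widehat V_{\fc_\pm}^{\,*})$ provides a lift of $\Lambda^j(\fc_{\pm,\bC}^{*})$. Uniqueness of these $R(K)$-lifts is automatic: any two lifts in $R(K)$ of the same $K_M$-representation have equal restrictions to $T \subset K_M$, hence are equal in $R(K) \simeq R(T)^{W(T:K)}$ by \eqref{eqRKRT}.

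The main subtlety I anticipate is invoking the connectedness of $K_M$ to ensure that $R(K_M) \hookrightarrow R(T)$ is injective; this follows from the Cartan decomposition of $M$ together with the connectedness of $M$, but must be stated explicitly. Once this injectivity and the $W(T:K)$-invariance coming from Theorem \ref{thmlift} are in hand, the corollary becomes a formal consequence of the $\lambda$-ring formalism.
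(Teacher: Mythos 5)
Your proof is correct and follows the same route the paper intends: reduce to Theorem~\ref{thmlift} via the identifications in \eqref{eqKMic2}. The paper's own proof is a one-line citation of those two ingredients; you have spelled out the implicit supporting facts, namely that $K_M$ is connected (so that $R(K_M)\hookrightarrow R(T)$ is injective and the $T$-lift determines the $K_M$-lift) and that the $\lambda$-ring operations commute with restriction (to pass from $\fp_{\fm,\bC}$ and $\fc_{\pm,\bC}$ to their exterior powers), both of which the paper takes for granted.
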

\begin{proof}
This is a consequence of	 \eqref{eqKMic2} and Theorem \ref{thmlift}, 
%
\end{proof}

\begin{re}
Let $RO(K)$ be the real representation ring of $K$. By 
\cite[Proposition II.7.8]{BrockerDieck}, the complexification  $V\in RO(K)\to V\otimes_{\bR}\bC\in R(K)$ induces an injective 
morphism of rings, 
\begin{align}
	RO(K)\to R(K). 
\end{align}
	In \cite[Theorem 6.11]{Shfried}, using the classification theory on the real 
	simple Lie algebras, we have shown that when $\delta(G)=1$, the 
	real representation $\fc_{\pm}$ of $K_{M}$ has a unique lift in $RO(K)$. 
	However, the above complexified version is enough for applications 	both in \cite{Shfried} and the current paper. 
\end{re}

\section{The Ruelle dynamical  zeta functions   for arbitrary 
twist}\label{SRuelle}


The purpose of this section is to introduce the Ruelle dynamical zeta 
function for arbitrary twist.  We restate the main result  of this 
article as Theorem \ref{thmRmeo}. Its proof will be given in Sections 
\ref{Sselbergzeta} and \ref{S:rep}.  
Theorem \ref{thmRmeo} generalises author's previous result 
\cite[Theorem 1.1]{Shfried}  
where $\rho$ is unitary, as well as the results on hyperbolic 
manifolds due to  M\"uller 
\cite[Theorem 1.1, Proposition 1.3]{Muller20} and  Spilioti  \cite[Theorem 1.2]{Spilioti18}, \cite[Theorem 2]{Spilioti20}.

We use the notation in Section \ref{Sselberg}. 
Recall that $Z=\Gamma\backslash G/K$ is a locally symmetric manifold 
and $\rho:\Gamma\to \GL_{r}(\bC)$ is a representation of $\Gamma$. 


Let us recall the definition of the Ruelle dynamical zeta function introduced by 
Fried \cite[Section 5]{Friedconj}. For $[\gamma]\in [\Gamma]$, recall 
that $B_{[\gamma]}$ (see \eqref{eqBgamma}) is the locally 
symmetric space  $\Gamma(\gamma)\backslash X(\gamma)$. 
By \cite[Proposition 5.15]{DuistermaatKolkVaradarajan}, the set of  
 nontrivial closed geodesics  on  $Z$ consists of a disjoint union 
\begin{align}\label{DKVB}
\coprod_{[\gamma]\in [\Gamma_{+}]}B_{[\gamma]}.
\end{align}
Moreover, if $[\gamma]\in [\Gamma_{+}]$, all the  elements of 
$B_{[\gamma]}$ have the same length $\ell_{[\gamma]}>0$.
\index{B@$B_{[\gamma]}$} 

If $[\gamma]\in [\Gamma_{+}]$,  the geodesic flow induces a  locally free action of 
$\mathbb{S}^1$ on $B_{[\gamma]}$, so that $ 
B_{[\gamma]}/ \mathbb{S}^1$ is a closed orbifold. Let $\chi_{\rm 
		orb}\(B_{[\gamma]}/\mathbb{S}^{1}\)\in \mathbf{Q}$ be the 
		orbifold Euler characteristic number \cite{SatakeGaussB}. We 
		refer the reader to \cite[Proposition 5.1]{Shfried} for an explicit formula for $\chi_{\rm 
		orb}\(B_{[\gamma]}/\mathbb{S}^{1}\)$. In particular,  
if $\delta(G)\g 2$, or if $\delta(G)=1$ and $\gamma$ can not be 
conjugate by an element of $G$ into the fundamental Cartan subgroup $H$, then  
\begin{align}\label{eqchi0}
\chi_{\rm 
		orb}\(B_{[\gamma]}/\mathbb{S}^{1}\)=0.
\end{align}

The $\mathbb{S}^1$-action on $B_{[\gamma]}$ is not necessarily effective. Let
\begin{align}
m_{[\gamma]}=\left|\ker\big(\bbS^1\to {\rm 
Diff}(B_{[\gamma]})\big)\right|\in \mathbf{N}^{*}
\end{align}
be the generic multiplicity.\index{M@$m_{[\gamma]}$}

By \cite[Theorem 5.6]{Shfried} and by \eqref{eqgno}, there is 
$\sigma_{0}>0$ such that
\begin{align}
	\sum_{[\gamma]\in [\Gamma_{+}]}\frac{\left|\chi_{\rm 
		orb}\(B_{[\gamma]}/\mathbb{S}^{1}\)\right|}{m_{[\gamma]}} 
		\left|\Tr\[\rho(\gamma)\]\right| e^{-\sigma_{0} 
		\ell_{[\gamma]}}<\infty. 
\end{align}

\begin{defin}
	For $\Re(\sigma)\g \sigma_{0}$, set 
	\begin{align}\label{defRrho}
		R_{\rho}(\sigma)=\exp\(\sum_{[\gamma]\in 
		[\Gamma_{+}]}\frac{\chi_{\rm 
		orb}\(B_{[\gamma]}/\mathbb{S}^{1}\)}{m_{[\gamma]}}\Tr\[\rho(\gamma)\]e^{-\sigma \ell_{[\gamma]}}\).
	\end{align}
\end{defin}

\begin{re}\label{reR=1}
	By \eqref{eqchi0}, if $\delta(G)\g 2$, the 
	dynamical zeta function $R_\rho(\sigma)$ is the constant function 
	$1$. Moreover, if $\delta(G)=1$, then the sum on the right-hand 	side of \eqref{defRrho} can be reduced to 
	 a sum over 	$[\gamma]\in [\Gamma_{+}]$ such that $\gamma$ can be conjugate into $H$. 
\end{re}

Recall that $\Box^{Z}$ is the flat   Laplacian on $Z$.  We restate 
Theorem \ref{Thm1} as follows.

\begin{thm}\label{thmRmeo}
	Assume that $\dim Z$ is odd.
	The following statements hold.  
	\begin{enumerate}[i)]
	\item\label{thm1i}  The dynamical zeta function 
	$R_{\rho}(\sigma)$ has a meromorphic extension to $\sigma\in 
	\bC$.  

	\item\label{thm1ii}   There exist explicit constants $C_{\rho}\in 
	\bR^{*}$ and $r_{\rho}\in \mathbf Z$ (see 
 \eqref{eqCr111} and \eqref{eqCr1111}) such that when $\sigma\to 0$, we 
 have 
\begin{align}\label{eqFcomtorsion}
R_{\rho}(\sigma)=C_{\rho}\left\{\prod_{i=1}^{m}\({\rm 
det}^{*}\(\Box^{Z}|_{\Omega^{i}(Z,F)}\)\)^{(-1)^{i}i}\right\}\sigma^{r_{\rho}}+\cO(\sigma^{r_{\rho}+1}). 	
\end{align} 

\item\label{thm1iii} There is $k\in \mathbf N$ such that  for any   acyclic and unitary 	
representation $\rho_{0}$ of $\Gamma$, there exists  $\e>0$ such that	 if $\rho$ is $C^{k}$ $\e$-close to $\rho_{0}$, then 	$\Box^{Z}$  is invertible and 
\begin{align}\label{eqdCt11}
&C_{\rho}=1,&r_{\rho}=0,
\end{align} 
 so that 
\begin{align}
	R_{\rho}(0)=T_{\rm CM}(F). 
\end{align} 
\end{enumerate} 
\end{thm}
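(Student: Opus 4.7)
The plan is to follow the strategy outlined in the introduction, adapting \cite{Shfried} while accommodating the non-unitarity of $\rho$. First, a parity check: since $\dim Z$ is odd, $\delta(G)$ is odd, so either $\delta(G)=1$ or $\delta(G)\g 3$. In the latter case Corollary \ref{corMS0} already gives $R_{\rho}\equiv 1$ and $\prod_{i}{\rm det}^{*}(\Box^Z|_{\Omega^i(Z,F)})^{(-1)^i i}=1$, and also \eqref{eqMS02} for part \ref{thm1iii}), so all three statements hold with $C_\rho=1$, $r_\rho=0$. Hence I may assume $\delta(G)=1$, so the fundamental Cartan subalgebra $\fh=\fb\oplus\ft$ has $\dim\fb=1$ and the decomposition $\fz^{\bot}(\fb)=\fn\oplus\theta\fn$ from the introduction is available.

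For parts \ref{thm1i}) and \ref{thm1ii}), introduce the Selberg zeta function $Z_{\eta,\rho}(\sigma)$ as in \eqref{ieqdefsel} for each virtual representation $\eta_{j}$ of $M$ on $\Lambda^{j}(\fn^{*})\otimes_{\bR}\bC$; by Corollary \ref{corkey}, each $\eta_{j}|_{K_{M}}$ lifts uniquely to a virtual representation of $K$, and the Casimir scalar hypothesis is handled as in \cite[Section 6]{Shfried}. Using Remark \ref{reR=1}, write $R_{\rho}(\sigma)$ as an alternating product of $Z_{\eta_{j},\rho}(\sigma)$ up to a nonzero entire factor. To meromorphically continue each $Z_{\eta_{j},\rho}$, apply M\"uller's Selberg trace formula (Theorem \ref{thmselnon}) to $\exp(-tC^{\fg,Z,\widehat{\eta}^{\pm},\rho})$, separate the contribution of $[\gamma]\in[\Gamma_+]$ via Proposition \ref{propdgc01}, integrate against a Mellin-transform kernel, and identify the resulting object with the logarithmic derivative of the graded regularised determinant in \eqref{eqdetCi1}. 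Theorem \ref{thmGLH} then guarantees that these determinants are entire, yielding \ref{thm1i}). For \ref{thm1ii}), expand the product of the graded determinants near $\sigma=0$ and compare with $T_{\rho}(\sigma)$ from \eqref{eqTs}--\eqref{eqTs1}; the Hodge-type isomorphism \eqref{eqHodgeCM} controls the difference between the characteristic-zero-eigenspace dimensions of $\Box^{Z}$ and those of $C^{\fg,Z,\widehat{\eta}^{\pm},\rho}+\sigma_{\eta}$, producing explicit $C_{\rho}\in\bR^{*}$ and $r_{\rho}\in\bZ$.

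For \ref{thm1iii}), the refined analysis of \cite[Section 8]{Shfried} expresses $C_\rho$ and $r_\rho$ as Euler characteristics of certain cohomologies of the Harish-Chandra modules $V_{\chi}(\Gamma,\rho)\subset C^{\infty}(G,\bC^{r})^{\Gamma}$ of $K$-finite vectors on which $z-\chi(z)$ acts nilpotently for every $z\in\mathscr Z(\fg_{\bC})$, and reduces \eqref{eqdCt11} to the single vanishing $V_{\chi_{\mathbf 1}}(\Gamma,\rho)=0$. When $\rho=\rho_0$ is unitary and acyclic, this vanishing is \cite[Proposition 8.12]{Shfried}, based on Vogan--Zuckerman and Salamanca-Riba. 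To propagate it to a neighbourhood of $\rho_0$, fix generators $z_1,\dots,z_{\dim\fh}$ of $\mathscr Z(\fg_{\bC})$ and recast the vanishing as simultaneous invertibility of a system built from $z_i-\chi_{\mathbf 1}(z_i)$ acting on sections of a locally homogeneous bundle twisted by $F_\rho$; under the identification \eqref{eq181}--\eqref{eqFNrho}, the system for $\rho$ differs from the invertible system for $\rho_0$ by differential operators of order at most $\deg(z_i)-1$ with coefficients polynomial in $A_\rho$ and its derivatives, so Neumann-series perturbation in an appropriate Sobolev scale yields invertibility once $\|A_\rho\|_{C^{k}}$ is small for $k$ large enough.

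The main obstacle is exactly this last perturbation step, and the reason one cannot simply take $k=0$ as in the hyperbolic case of \cite{Muller20,Spilioti20}: there, the flat Laplacian factors as the square of a Hodge--Dirac operator that is a zeroth-order perturbation of a self-adjoint invertible operator, whereas here the generators of $\mathscr Z(\fg_{\bC})$ are higher-order differential operators and their perturbations involve derivatives of $A_\rho$. The key technical task is therefore to quantify the Sobolev order $k$ needed, to set up a suitable elliptic (or overdetermined) package encoding the joint invertibility of all $z_i-\chi_{\mathbf 1}(z_i)$, and to verify that its Fredholm/invertibility properties are stable under $C^k$-small perturbations; once that is in place, the remaining arguments transpose mechanically from \cite{Shfried} and Section \ref{Snontorsionfree} then extends everything to orbifolds via Remarks \ref{re17} and \ref{re12}.
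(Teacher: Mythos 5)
Your proposal is correct and follows essentially the same route as the paper: reduction to $\delta(G)=1$ via Corollary \ref{corMS0}, expression of $R_\rho$ as an alternating product of Selberg zeta functions $Z_{\eta_j,\rho}$, meromorphic continuation via M\"uller's trace formula and the graded regularised determinant (Theorem \ref{thm:detfor}), cohomological formulas for $r_j$ reducing part \ref{thm1iii}) to $V_{\chi_{\mathbf 1}}(\Gamma,\rho)=0$, and a $C^k$ Neumann-series perturbation on the system built from generators $z_i-\chi_{\mathbf 1}(z_i)$ (Proposition \ref{propMBK}). The only point you elide is the paper's split of the $\delta(G)=1$ case by compactness of $Z_G$ (Proposition \ref{propGbm}, formulas \eqref{eqCr111} vs. \eqref{eqCr1111}; the noncompact case already yields $k=0$), and your invocation of \eqref{eqHodgeCM} for part \ref{thm1ii}) is slightly off target — the comparison of $R_\rho$ with $T_\rho(\sigma^2)$ really rests on the Lie-algebraic identity relating $\sum_j(-1)^{j-1}E_{\widehat{\eta}_j}$ to $E_{\widehat{\mathbf 1}}$ — but neither affects the soundness of the outline.
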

\begin{proof}
	Since $m$ and $\delta(G)$ have the same parity, we have 
	$\delta(G)\g1$ is odd. When $\delta(G)\g 3$, 	our theorem with 
	$k=0$ follows from  Corollary \ref{corMS0} and Remark 
	\ref{reR=1}. If $\delta(G)=1$, the proofs  of \ref{thm1i}),  
	\ref{thm1ii}) as well as the proof of \ref{thm1iii}) when $Z_{G}$ 
	is noncompact  are based on the 
	introducing of  the 	Selberg zeta functions with arbitrary 
	twist and will be given in Section  \ref{sproofthmRmeo}.  	The 
	proof of  \ref{thm1iii}) when $Z_{G}$ is compact will be given in Section \ref{secnonbetti}. 
\end{proof}

Let $\ol{R}_{\rho}$ be the meromorphic function 
defined for $\sigma\in \bC$ by
\begin{align}
	\ol{R}_{\rho}(\sigma)=\ol{R_{\rho}(\ol{\sigma})}.
\end{align}

\begin{prop}\label{propRrhob}
The following identities of meromorphic functions on $\bC$ hold, 
	\begin{align}\label{eqRrho}
		&R_{\rho^{*}}(\sigma)=R_{\rho}(\sigma),
&R_{\ol{\rho}}(\sigma)=	\ol{R}_{\rho}(\sigma).
	\end{align}
\end{prop}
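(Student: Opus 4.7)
Both identities are identities of meromorphic functions on $\bC$; since $R_{\rho}$, $R_{\rho^{*}}$, $R_{\ol\rho}$ all admit a meromorphic extension by Theorem \ref{thmRmeo} \ref{thm1i}) and $\ol{R}_{\rho}$ is meromorphic by construction, it suffices by the uniqueness of meromorphic continuation to verify the identities on the common half-plane $\Re(\sigma)\g \sigma_{0}$ where the defining sum in \eqref{defRrho} converges absolutely.

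For the second identity, I would simply take the complex conjugate of the series defining $R_{\rho}(\ol\sigma)$. Since $\ell_{[\gamma]}\in \bR_{+}^{*}$, $m_{[\gamma]}\in \mathbf{N}^{*}$ and $\chi_{\rm orb}(B_{[\gamma]}/\bbS^{1})\in \mathbf{Q}$ are real, and since $\Tr[\ol{\rho}(\gamma)]=\ol{\Tr[\rho(\gamma)]}$ and $\ol{e^{-\ol\sigma \ell_{[\gamma]}}}=e^{-\sigma \ell_{[\gamma]}}$, one gets termwise
\begin{align*}
\ol{\frac{\chi_{\rm orb}(B_{[\gamma]}/\bbS^{1})}{m_{[\gamma]}}\Tr[\rho(\gamma)]e^{-\ol\sigma \ell_{[\gamma]}}}=\frac{\chi_{\rm orb}(B_{[\gamma]}/\bbS^{1})}{m_{[\gamma]}}\Tr[\ol\rho(\gamma)]e^{-\sigma \ell_{[\gamma]}},
\end{align*}
so exponentiating yields $\ol{R}_{\rho}(\sigma)=R_{\ol\rho}(\sigma)$.

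For the first identity, the key point is the involution $[\gamma]\mapsto[\gamma^{-1}]$ on $[\Gamma_{+}]$. First, $\Tr[\rho^{*}(\gamma)]=\Tr[\rho(\gamma^{-1})^{T}]=\Tr[\rho(\gamma^{-1})]$. Next I would verify the three invariance statements: $\ell_{[\gamma^{-1}]}=\ell_{[\gamma]}$ (because $d_{\gamma^{-1}}(x)=d_{X}(x,\gamma^{-1}x)=d_{X}(\gamma x,x)=d_{\gamma}(x)$, so the two displacement functions coincide); the identification $X(\gamma)=X(\gamma^{-1})$ and $\Gamma(\gamma)=\Gamma(\gamma^{-1})$, which yields a canonical diffeomorphism $B_{[\gamma]}\simeq B_{[\gamma^{-1}]}$ that reverses the direction of the geodesic $\bbS^{1}$-action; hence the underlying orbifold $B_{[\gamma]}/\bbS^{1}$ is identified with $B_{[\gamma^{-1}]}/\bbS^{1}$ (the $\bbS^{1}$-action is conjugated by $z\mapsto z^{-1}$), forcing the equalities
\begin{align*}
\chi_{\rm orb}\bigl(B_{[\gamma^{-1}]}/\bbS^{1}\bigr)=\chi_{\rm orb}\bigl(B_{[\gamma]}/\bbS^{1}\bigr),\qquad m_{[\gamma^{-1}]}=m_{[\gamma]}.
\end{align*}
Once these are established, reindexing the series defining $R_{\rho^{*}}(\sigma)$ by $[\gamma]\mapsto [\gamma^{-1}]$ transforms it into the series defining $R_{\rho}(\sigma)$.

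The only mildly nontrivial step is the geometric verification that the $\bbS^{1}$-orbifold $B_{[\gamma^{-1}]}/\bbS^{1}$ has the same orbifold Euler characteristic and generic multiplicity as $B_{[\gamma]}/\bbS^{1}$; this is where one genuinely uses that orientation reversal of geodesics is an orientation-preserving diffeomorphism of the base orbifolds. Everything else is a straightforward manipulation of the defining series and its meromorphic continuation.
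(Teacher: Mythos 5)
Your proof is correct and follows essentially the same route as the paper: reduce to the half-plane of absolute convergence, reindex by $[\gamma]\mapsto[\gamma^{-1}]$ using $\Tr[\rho^{*}(\gamma)]=\Tr[\rho(\gamma^{-1})]$, and check that all the purely geometric data in the summand are invariant under this involution. The paper's treatment of the geometric invariance is slightly more direct: since $d_{\gamma^{-1}}=d_{\gamma}$, one has $X(\gamma^{-1})=X(\gamma)$ and $\Gamma(\gamma^{-1})=\Gamma(\gamma)$, so $B_{[\gamma^{-1}]}=B_{[\gamma]}$ is the same space on the nose (not merely diffeomorphic), and the $\bbS^1$-actions differ only by the inner automorphism $z\mapsto z^{-1}$; hence the quotient orbifolds and their generic isotropy agree tautologically, giving $\chi_{\rm orb}(B_{[\gamma^{-1}]}/\bbS^1)=\chi_{\rm orb}(B_{[\gamma]}/\bbS^1)$ and $m_{[\gamma^{-1}]}=m_{[\gamma]}$ without any orientation considerations. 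Your closing remark invoking ``orientation-preserving'' diffeomorphisms is therefore unnecessary (and a bit misleading, since $\chi_{\rm orb}$ does not depend on an orientation); the identity holds simply because the quotient $\bbS^1$-orbifold is literally unchanged.
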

\begin{proof}The second identity in \eqref{eqRrho} is trivial. Let us 
	show the first one. 	We have  
	\begin{align}\label{eqTrol}
	\Tr\[\rho^{*}(\gamma)\]=\Tr\[\rho\(\gamma^{-1}\)\].
\end{align}
Note that $\gamma\to 
\gamma^{-1}$ induces a bijection on $[\Gamma_{+}]$. Since 
$d_{\gamma^{-1}}(x)=d_{\gamma}(x)$, we have
\begin{align}\label{eqlglg1}
&\ell_{\[\gamma^{-1}\]}=\ell_{[\gamma]},&X\(\gamma^{-1}\)=X(\gamma),
\end{align} 
By \eqref{eqlglg1}, using $ \Gamma\(\gamma^{-1}\)=\Gamma(\gamma)$, we 
get \begin{align}\label{eqlglg12}
&	B_{[\gamma^{-1}]}=B_{[\gamma]},&m_{[\gamma^{-1}]}=m_{[\gamma]}. 
\end{align} 
By \eqref{defRrho} and  
\eqref{eqTrol}-\eqref{eqlglg12}, we get the first identity in 
\eqref{eqRrho}. 
\end{proof}

\section{The Selberg zeta function for arbitrary 
twist}\label{Sselbergzeta}
The purpose of this section is to extend the results of \cite[Theorems 7.6 and 7.7]{Shfried}  on the zeta functions of Ruelle 
and Selberg to arbitrary twist with the help of the M\"uller's  
Selberg trace formula (Theorem \ref{thmselnon}), inexplicitly of Bismut's 
orbital integral formula \cite[Theorem 6.1.1]{B09}, and of the lifting 
properties (Theorem \ref{thmlift}, Corollary \ref{corkey}). 

More precisely,   in \cite[Section 7]{Shfried}, if $\delta(G)=1$, we 
associate a Selberg zeta function $Z_{\eta,\rho}$ to  a representation  $\eta$ of $M$ satisfying \cite[Assumption 7.1]{Shfried} and to a unitary representation $\rho$ of $\Gamma$. Moreover, we show that $Z_{\eta,\rho}$ has a  
meromorphic extension to $\bC$ and satisfies a functional 
equation. Also, we prove that the Ruelle zeta function with a unitary twist is an alternating product of 
certain Selberg zeta functions. In this way, we obtained the 
meromorphic extension of the Ruelle zeta function. 
In this section, we extend all the above results to arbitrary $\rho$ 
and to a slightly larger class of $\eta$.


This section is organised as follows. In Section \ref{srank1}, we recall some results 
on the structure of real reductive groups with  $\delta(G)=1$ 
obtained in \cite[Section 6]{Shfried}.
%

In Sections \ref{sSelbergzeta} and \ref{spthm12}, when $\delta(G)=1$, we introduce a 
class of virtual representations $\eta$ of the group $M$. If $\rho:\Gamma\to 
\GL_{r}(\bC)$ is a representation of $\Gamma$, we introduce the Selberg zeta function $Z_{\eta,\rho}$ associated to $(\eta,\rho)$. We show the 
meromorphic extension and a functional equation for $Z_{\eta,\rho}$. 

Finally, in Section \ref{sproofthmRmeo}, we show that $R_{\rho}$ 
is an alternating product of certain Selberg zeta functions. In 
particular, we show Theorem \ref{thmRmeo} \ref{thm1i})    
\ref{thm1ii}), as well as \ref{thm1iii}) when $Z_{G}$ is non compact. 

In the whole section, we assume $\delta(G)=1$.

\subsection{The structure of the reductive group $G$ with 
$\delta(G)=1$}\label{srank1}
 We use the notation in Sections \ref{Sselberg} and  
\ref{Sfondcs}.   Assume that $\delta(G)=1$. To make this section 
readable, instead of writing  $\fc_{\pm}$, we use  notation as 
\cite{Shfried}, i.e.,
\begin{align}
&\fn=\fc_{+},&\ol{\fn}=\fc_{-}.
\end{align}

When $G$ has a non compact centre, thanks to the following proposition, $G$ has a very simple structure. 

\begin{prop}\label{propGbm}
	If $G$ has a non compact centre, then \begin{align}\label{eqGbM}
	&G=\exp(\fb)\times M,& \fz^{\bot}(\fb)=0. 
\end{align}
\end{prop}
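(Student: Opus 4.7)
The plan is to exploit the fact that $\delta(G)=1$ forces $\dim\fb=1$, so a nontrivial noncompact part of the centre can only fit inside $\fb$ itself. Concretely, first I would look at $\fz_{\fp}\subset \fp$, the $\fp$-part of the Lie algebra $\fz_{\fg}$ of the centre $Z_G$. Since $\fz_{\fp}$ commutes with everything in $\fg$, in particular $[\fz_{\fp},\ft]=0$, the defining relation \eqref{eqdefb} of $\fb$ gives the inclusion $\fz_{\fp}\subset \fb$. The hypothesis that $Z_G$ is noncompact, combined with the product decomposition $Z_G=\exp(\fz_{\fp})\times (Z_G\cap K)$ from \eqref{eq:ZG} and the paragraph that follows it, forces $\fz_{\fp}\neq 0$. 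Together with $\dim\fb=\delta(G)=1$, this yields the equality $\fz_{\fp}=\fb$.

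Next I would extract the two conclusions from the fact that $\fb$ is now central in $\fg$. Centrality implies $\fz(\fb)=\fg$, and hence $\fz^{\bot}(\fb)=0$, which is the second assertion of \eqref{eqGbM}. For the first assertion, centrality of $\fb$ also gives $Z(\fb)=G$, so $Z^{0}(\fb)=G^{0}=G$ since $G$ is linear connected by the standing hypotheses of Section \ref{sReductive}. Invoking the splitting
\begin{equation*}
Z^{0}(\fb)=\exp(\fb)\times M
\end{equation*}
from \eqref{eqZ(b)} then delivers $G=\exp(\fb)\times M$.

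There is really no obstacle here; the proposition is essentially a bookkeeping consequence of the chain of inclusions $\fz_{\fp}\subset \fb$ and the one-dimensionality of $\fb$. The only point that deserves a brief verification is that $\fz_{\fp}\neq 0$ truly follows from ``$G$ has noncompact centre'': this is exactly the content of the identification $Z_G=\exp(\fz_{\fp})\times (Z_G\cap K)$, whose compact factor is $Z_G\cap K$ and whose Euclidean factor $\exp(\fz_{\fp})$ must therefore be nontrivial whenever $Z_G$ fails to be compact.
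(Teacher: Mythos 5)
Your proof is correct and follows the same route as the paper: observe that $\fz_{\fp}\subset\fb$, use the noncompactness of $Z_G$ to get $\fz_{\fp}\neq 0$, conclude $\fz_{\fp}=\fb$ from $\dim\fb=\delta(G)=1$, and then read off both claims from $\fz(\fb)=\fg$ together with \eqref{eqZ(b)}. You merely spell out the inclusion $\fz_{\fp}\subset\fb$ (via \eqref{eqdefb}) and the nontriviality of $\fz_{\fp}$ (via the factorisation of $Z_G$) in more detail than the paper does.
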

\begin{proof}
Since $G$ has a non compact centre, we have $\dim \fz_{\fp}\g 1$. 	Since 
$\fz_{\fp}\subset \fb$ and since $\dim \fb=1$, we have 
$\fz_{\fp}=\fb$. So $\fg=\fz(\fb)$. By \eqref{eqZ(b)}, we get 
\eqref{eqGbM}. 
\end{proof}

Assume in the rest of this subsection that   $G$ has a compact 
centre. By Propositions \ref{prop38} and  \ref{prop310} 
\ref{eqp1}), $\dim \fn$ is even. Set
\begin{align}
  \ell=\frac{1}{2}\dim \fn.\index{L@$\ell$}
\end{align}
 Since $G$ has a compact centre, we have $\fb\not\subset \fz_{\fg}$. 
Therefore,  $\fz^{\bot}(\fb)\neq0$ and $\ell>0$. Recall that we have fixed a system of positive roots $R_{+}$. 

\begin{prop}\label{prop:nnam1}
There is $\alpha_{0}\in \fb^{*}$ 
such that for all $\alpha\in R_{+}^{\rm c}$, 
\begin{align}
	\alpha|_{\fb}=\alpha_{0}. 
\end{align}
Equivalently,	 $\fb$ acts on $\fn$ and $\ol{\fn}$ by 
$\pm\alpha_{0}\in \fb^{*}$, i.e., for $a\in 
\fb$, $f\in \fn$, $\ol{f}\in \ol{\fn}$, we have
\begin{align}\label{eq:ab1}
  &[a,f]=\<\alpha_{0},a\>f, &[a,\ol{f}]=-\<\alpha_{0},a\>\ol{f}.
\end{align}
In particular,  
\begin{align}\label{eq:nnam}
&[\fn,\ol{\fn}]\subset \fz(\fb),&[\fn,\fn]=\[\ol{\fn},\ol{\fn}\]=0,
\end{align}
and 
\begin{align}\label{eq:fzbot1}
&\[\fz(\fb),\fz(\fb)\]\subset 
\fz(\fb),&\[\fz(\fb),\fz^\bot(\fb)\]\subset 
\fz^{\bot}(\fb),&&\[\fz^{\bot}(\fb),\fz^\bot(\fb)\]\subset \fz(\fb).
\end{align}
\end{prop}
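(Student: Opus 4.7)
The plan is to exploit the one-dimensionality of $\fb$ and use the classification of real reductive groups with $\delta(G)=1$ from \cite[Section 6]{Shfried} to reduce to a reduced rank-one restricted root system on $\fb$. Fix a generator $\lambda_0$ of the one-dimensional space $\fb^{*}$; every root $\alpha\in R$ then restricts to $\fb$ as $c_\alpha\lambda_0$ for some $c_\alpha\in\bR$. The reality of $c_\alpha$ is a consequence of the fact that $i\fb\oplus\ft$ is a Cartan subalgebra of the compact form $\fu$, on which roots take purely imaginary values, so that roots are real on $\fb$ and imaginary on $\ft$. In this setup, imaginary roots are exactly those with $c_\alpha=0$, so $c_\alpha\neq 0$ for every $\alpha\in R^{\rm c}$.

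The central point is to verify that all $\alpha\in R_{+}^{\rm c}$ share a single value $c_\alpha=c_0$; equivalently, the restricted root system of $(\fg,\fb)$ is of reduced type $A_1$ rather than $BC_1$. By the classification recalled at \eqref{eqMSdcl} (see \cite[Theorem 6.15]{Shfried}), the non-$\delta=0$ simple factor of $\fg$ belongs to the short list $\bR$, $\mathfrak{sl}_{3}(\bR)$, or $\mathfrak{so}(p,q)$ with $p\geq q$, $pq$ odd and $pq>1$. In each case a direct computation shows that the nonzero restrictions $c_\alpha$ take precisely two values $\pm c_0$; the $BC_1$-type families $\mathfrak{su}(p,1)$ and $\mathfrak{sp}(p,1)$ that would produce an additional pair $\pm 2c_0$ all satisfy $\delta=0$ and are therefore excluded. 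Choosing $R_{+}$ compatibly with the orientation of $\fb$ used in defining $\fn$ then yields $\alpha|_\fb=\alpha_0:=c_0\lambda_0$ uniformly for $\alpha\in R_{+}^{\rm c}$, which is exactly \eqref{eq:ab1}.

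The consequences \eqref{eq:nnam} follow by weight-space bookkeeping under $\ad\fb$: $[\fn,\fn]$ sits in the $2\alpha_0$-weight space of $\fg_\bC$, which is zero since no root restricts to $2\alpha_0$ (reducedness); by the same token $[\ol{\fn},\ol{\fn}]=0$; and $[\fn,\ol{\fn}]$ lies in the zero-weight space, which is $\fz(\fb)$. The three inclusions in \eqref{eq:fzbot1} are then formal: $\fz(\fb)$ is a Lie subalgebra by definition; the splitting $\fz^{\bot}(\fb)=\fn\oplus\ol{\fn}$ is $\ad\fz(\fb)$-invariant because $\fz(\fb)$ commutes with $\fb$ and hence preserves $\fb$-weight spaces; and $[\fz^{\bot}(\fb),\fz^{\bot}(\fb)]\subset\fz(\fb)$ is just the sum of the three previous computations.

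The main obstacle is the structural step above --- ruling out the appearance of a restricted root equal to $2\alpha_0$. I do not see a way to avoid invoking the classification of real simple Lie algebras of fundamental rank one at this point, and indeed this is consistent with the framework of \cite[Section 6]{Shfried} on which the present paper builds.
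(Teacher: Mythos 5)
Your outline is reasonable: since roots are purely imaginary on $\sqrt{-1}\fb\oplus\ft\subset\fu$, they are indeed real on $\fb$; since $\fh$ is fundamental there are no real roots, so the complex roots are exactly those with nonzero restriction to $\fb$; and once \eqref{eq:ab1} is available, the derivations of \eqref{eq:nnam} and \eqref{eq:fzbot1} by $\ad\fb$-weight bookkeeping are correct (and indeed this is the standard deduction). The paper itself does not reprove the proposition but simply cites \cite[Propositions 6.2 and 6.3]{Shfried}, so a self-contained argument is in principle welcome.

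However, the central step --- that the map $\alpha\mapsto\alpha|_{\fb}$ takes a single value on $R_{+}^{\rm c}$ --- is where all the content of the statement lies, and it is asserted rather than proved. Two issues. First, the framing ``the restricted root system of $(\fg,\fb)$ must be $A_{1}$ or $BC_{1}$'' is not justified: $\fb$ is generally \emph{not} a maximal abelian subspace of $\fp$, and the orthogonal complement $\sqrt{-1}\,\ft^{*}$ of $\fb^{*}$ in $\fh_{\bR}^{*}$ need not be spanned by (imaginary) roots when $\fm$ has nontrivial centre; so the projection of $R$ onto $\fb^{*}$ has no a priori reason to be a rank-one root system, and one cannot exclude, say, $\{\pm c_{0},\pm 3c_{0}\}$ on abstract grounds. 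The relevant claim is the stronger one that the set of nonzero restrictions is literally a single pair $\{\pm\alpha_{0}\}$, and this must be checked. Second, that check is precisely what the sentence ``in each case a direct computation shows'' waves away: you would need to actually exhibit the fundamental Cartan and the root restrictions for $\mathfrak{sl}_{3}(\bR)$ and for $\mathfrak{so}(p,q)$ ($p\geq q\geq1$ odd, $pq>1$) --- e.g.\ for $\mathfrak{so}(p,q)$ one takes $\ft$ to rotate the first $n-1=\frac{p+q}{2}-1$ coordinate pairs and $\fb=\bR e_{n}$, and then the complex roots $\pm e_{i}\pm e_{n}$ ($i<n$) all restrict to $\pm e_{n}|_{\fb}$. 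Without those computations the proof is incomplete. Lastly, the remark about $\mathfrak{su}(p,1)$ and $\mathfrak{sp}(p,1)$ producing a $BC_{1}$ system is a category mismatch: there the $BC_{1}$ pattern is for the restricted roots of a maximal $\fa_{\fp}$, not of $\fb$, and since $\delta=0$ for these algebras one has $\fb=0$, so they illustrate nothing about the present situation.
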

\begin{proof}
	This is  \cite[Propositions 6.2 and 6.3]{Shfried}. 
\end{proof}



%
Let $\fu(\fb)\subset \fu,\fu_{\fm}\subset \fu$ be the compact forms of $\fz(\fb)$ and 
$\fm$. Then 
\begin{align}
&\fu(\fb)=\sqrt{-1}\fb\oplus \fu_{\fm},&\fu_{\fm}= 
\sqrt{-1}\fp_{\fm}\oplus \fk_{\fm}. 
\end{align}
Let 
$\fu^{\bot}(\fb)$ be the orthogonal space of $\fu(\fb)$ in $\fu$ with 
respect to $-B|_{\fu}$. Then,
\begin{align}
\fu^{\bot}(\fb)= 
	\sqrt{-1}\fp^{\bot}(\fb)\oplus \fk^{\bot}(\fb). 
\end{align}
By \eqref{eq:fzbot1}, we have
\begin{align}
  \label{eq:fubot}
&\[\fu(\fb),\fu(\fb)\]\subset \fu(\fb),& \[\fu(\fb),\fu^\bot(\fb)\]\subset \fu^{\bot}(\fb),&&  \[\fu^{\bot}(\fb),\fu^\bot(\fb)\]\subset \fu(\fb).
\end{align}
Thus, $(\fu,\fu(\fb))$ is a compact symmetric pair. 

Let $U(\fb)\subset U$, $U_{M}\subset U$, $A_{0}\subset U$ be the connected subgroups 
of $U$ associated to the Lie subalgebra 
$\fu(\fb),\fu_{\fm},\sqrt{-1}\fb$ of $\fu$, so that 
	\begin{align}
		U(\fb)=A_{0}U_{M}. 
	\end{align} 
\begin{re} All of the above three groups are compact. Indeed, $U(\fb)$ is 
the stabiliser of $\sqrt{-1}\fb$ in $U$, which is compact in $U$. 
Since $\delta(M)=0$, $M$ has compact centre. Therefore, $U_{M}$ is 
the compact form of $M$. The compactness of $A_{0}$ is established in \cite[Proposition 6.6]{Shfried}
\end{re}

\subsection{The Selberg zeta function}
\label{sSelbergzeta} 
Recall    that $K_{M}$ and $K$ have the maximal torus $T$.  By \eqref{eqRKRT}, the restriction to $K_{M}$ induces an 
injective morphism of rings  
$R(K)\to R({K_{M}})$. 

\begin{ass}\label{ass}
	Assume that $\eta=\eta^{+}-\eta^{-}$ is a virtual  $M$-representation  
	on the   finite 	dimensional complex vector space 
	$E_{\eta}=E_{\eta}^{+}-E_{\eta}^{+}$ 	such that 
\begin{enumerate}
	\item  $\eta|_{K_{M}}=\eta^{+}|_{K_{M}}-\eta^{-}|_{K_{M}}\in R(K_{M})$ has a unique lift in $R(K)$.
	\item  the Casimir $C^{\fu_{\fm}}$ of $\fu_{\fm}$ acts on 
	$\eta^{\pm}$  by the same 	scalar $C^{\fu_{\fm},\eta}\in \bR$. 
\end{enumerate}
\end{ass}

\begin{re}
	If $Z_G$ is non compact, then  $K_{M}=K$.  Assumption (1) is 
	empty. 
\end{re}

\begin{re}\label{re66}
	If $Z_G$ is compact, Assumption \ref{ass}  is slightly different from	
	\cite[Assumption 7.1]{Shfried}. 
	Indeed, Assumption \ref{ass} is  the complexified  virtual 
	version \cite[Assumption 
7.1 (1) (3)]{Shfried}. Moreover, the statement (2) in  \cite[Assumption 7.1]{Shfried}, which requires 
the $\fu_{\fm}$-action on $\eta$ lifts 
to $U_{M}$, is a 
technique condition. Since $\delta(M)=0$, $M$ has a compact centre. 
By Remark \ref{reweyltrick},  the $\fu_{\fm}$-action on $\eta$ lifts 
to a finite cover of $U_{M}$. 
 Thanks to this observation, most of arguments (see also Remark 
 \ref{re69} and Section \ref{ssdg1nc}) go though up to  evident  
 modifications. 


%
\end{re}

Recall that $\rho:\Gamma\to \GL_{r}(\mathbf{C})$ is a representation 
of $\Gamma$. Following \cite[Definition 7.4]{Shfried}, let us define  the 
Selberg zeta function associated to the pair $(\eta,\rho)$.
Recall 
that $H=\exp(\fb)\times T$ is the fundamental Cartan subgroup of $G$. 
For $e^{a}k^{-1}\in H$, we write $\gamma\sim e^{a}k^{-1}$ if there is $g_{\gamma}\in G$ such 
that $\gamma=g_{\gamma}e^{a}k^{-1}g_{\gamma}^{-1}$. 
By \cite[(7-62)]{Shfried} and by \eqref{eqgno}, there is 
$\sigma_{1}>0$ such that
\begin{align}
	\sum_{\tiny\substack{[\gamma]\in 
		[\Gamma_{+}]\\ \gamma\sim e^{a}k^{-1}\in H}}\frac{\left|\chi_{\rm 
		orb}\(B_{[\gamma]}/\mathbb{S}^{1}\)\right|}{m_{[\gamma]}} 
		\big|\Tr\[\rho(\gamma)\]\big| \frac{e^{-\sigma_{1} 
		\ell_{[\gamma]}}}{\left|\det(1-\Ad(e^{a}k^{-1}))|_{\fz^{\bot}(\fb)}\right|^{1/2}}<\infty. 
\end{align}

\begin{defin}
		For $\Re(\sigma)\g \sigma_{1}$, set 
	\begin{align}\label{eqdefsel}
		Z_{\eta,\rho}(\sigma)=\exp\Bigg(-\sum_{\tiny\substack{[\gamma]\in 
		[\Gamma_{+}]\\ \gamma\sim e^{a}k^{-1}\in H}}\frac{\chi_{\rm 
		orb}(B_{[\gamma]}/\mathbb{S}^{1})}{m_{[\gamma]}}\Tr\[\rho(\gamma)\]\frac{\Trs^{E_{\eta}}[k^{-1}]}{\left|\det(1-\Ad(e^{a}k^{-1}))|_{\fz^{\bot}(\fb)}\right|^{1/2}}e^{-\sigma \ell_{[\gamma]}}\Bigg).
	\end{align}
\end{defin}

Recall that by Corollary \ref{corkey},  
$\Lambda^{\scriptscriptstyle\bullet }(\fp_{\fm,\bC}^{*}) $ has a unique 
lift
in $R(K)$.  
\begin{defin}
Let
$\widehat{\eta}\in R(K)$ be the unique 
virtual representation of $K$ on $E_{\widehat{\eta}}=E^{+}_{\widehat\eta}-E^{-}_{\widehat\eta}$
such that the following identity in $R(K_M)$  holds, \begin{align}\label{eqetahat}
	E_{\widehat{\eta}}|_{K_{M}}=\Lambda^{\scriptscriptstyle\bullet }(\fp_{\fm,\bC}^{*})\widehat{\otimes} E_\eta|_{K_{M}} \in R(K_{M}). 
\end{align}
\end{defin}

Let $C^{\fg,Z,\widehat{\eta},\rho}$ be the generalised Laplacian 
acting on $C^{\infty}(Z,\cF_{\widehat{\eta}}\otimes F)$ introduced 
after \eqref{eqC316}. For $\lambda\in \bC$, set 
\begin{align}
	m_{\eta,\rho}(\lambda)=\dim \ker 
	\(C^{\fg,Z,\widehat{\eta}^{+},\rho}-\lambda\)^{N}-\dim \ker 
	\(C^{\fg,Z,\widehat{\eta}^{-},\rho}-\lambda\)^{N},
\end{align}
where $N\gg1$. When $\lambda=0$, set 
	\begin{align}\label{eqretarho}
		r_{\eta,\rho}=m_{\eta,\rho}(0). 
	\end{align} 

Let 
\begin{align}\label{eqdetCc}
\mathrm{det}_{\rm 
  gr}\(C^{\fg,Z,\widehat{\eta},\rho}+\sigma\)=\frac{\mathrm{det}\big(C^{\fg,Z,\widehat{\eta}^{+},\rho}+\sigma\big)}{\mathrm{det}\big(C^{\fg,Z,\widehat{\eta}^{-},\rho}+\sigma\big)}
 \end{align} 
  be a 
  graded determinant of $C^{\fg,Z,\widehat{\eta},\rho}+\sigma$.  By 
  Theorem \ref{thmGLH}, \eqref{eqdetCc} is a 
  meromorphic function on $\sigma\in \bC$. Its zeros and poles belong to the 
  set $\{-\lambda: \lambda\in \Sp( C^{\fg,Z,\widehat{\eta},\rho})\}$. 
  If $\lambda\in \Sp (C^{\fg,Z,\widehat{\eta},\rho})$, the order of 
  the zero  at $\sigma=-\lambda$ is $m_{\eta,\rho}(\lambda)$.

Following in \cite[(7-60)]{Shfried}, set
\begin{align}\label{eqseta}
	\sigma_\eta=\frac{1}{8}\Tr^{\fu^\bot(\fb)}\[C^{\fu(\fb),\fu^{\bot}(\fb)}\]-C^{\fu_{\fm},\eta}. 
\end{align}
Let $P_{\eta}(\sigma)$ be the odd polynomial  defined in 
\cite[(7-61)]{Shfried}. 
When $G$ has non compact centre, we have $\fu^{\bot}(\fb)=0$, so  
\begin{align}\label{eqsen1}
	\sigma_\eta=-C^{\fu_{\fm},\eta}, 
\end{align}
and the polynomial $P_{\eta}$ is given 
by
\begin{align}
	P_{\eta}(\sigma)=-\(\dim E_{\eta}^{+}-\dim E_{\eta}^{-}\)\[e\(TX_{M},\nabla^{TX_{M}}\)\]^{\max}\sigma.
\end{align}

\begin{re}\label{re69}
In \cite[(7-61)]{Shfried}, we assume the group $U_{M}$ acts on 
$\eta$. This action  extends to $U(\fb)=A_{0}U_{M}$ by requiring $A_{0}$ acts trivially on $\eta$. To define 
$P_{\eta}(\sigma)$, we use  characteristic forms of the homogenous 
vector bundle $\cF_{\fb,\eta}=U\times_{U(\fb)} E_{\eta}$ on 
$U/U(\fb)$.  In current  situation,  the $\fu_{\fm}$-action on 
$E_\eta$ does not necessarily lift to $U_{M}$.  So the vector bundle $\cF_{\fb,\eta}$ is not well defined globally on $U/U(\fb)$.  However, it is well defined in a neighborhood of  $[e]\in 
U/U(\fb)$, so the 
right-hand sides of \cite[(7-8) (7-61)]{Shfried} are still  well 
defined. In particular, $P_{\eta}(\sigma)$ is still well defined. 
\end{re}

We have a generalisation of  \cite[Theorem 7.6]{Shfried}.

\begin{thm}\label{thm:detfor}
	The Selberg zeta function $Z_{\eta,\rho}(\sigma)$ has a 
	meromorphic extension to $\sigma\in \bC$ such that  the following identity of meromorphic functions on $\bC$ holds, 
\begin{align}\label{eq:detfor}
  Z_{\eta,\rho}(\sigma)=\mathrm{det}_{\rm 
  gr}\(C^{\fg,Z,\widehat{\eta},\rho}+\sigma_{\eta}+\sigma^2\)\exp\big(r\vol(Z) P_{\eta}(\sigma)\big).
\end{align}
The zeros and poles of $Z_{\eta,\rho}(\sigma)$
belong to the set 
$\left\{\pm i\sqrt{\lambda+\sigma_{\eta}}:\lambda\in 
\Sp\(C^{\fg,Z,\widehat{\eta},\rho}\)\right\}.
$
If $\lambda\in 
\Sp\(C^{\fg,Z,\widehat{\eta},\rho}\)$ and 
$\lambda\neq -\sigma_{\eta}$, the order of the zero  at $\sigma=\pm 
i\sqrt{\lambda+\sigma_{\eta}}$ is $m_{\eta,\rho}(\lambda)$. The order 
of the zero at $\sigma=0$ is $2m_{\eta,\rho}(-\sigma_{\eta})$.
Also,
\begin{align}\label{eq:funceq}
  Z_{\eta,\rho}(\sigma)=Z_{\eta,\rho}(-\sigma)\exp\big(2r\vol(Z) P_{\eta}(\sigma)\big).
\end{align}
\end{thm}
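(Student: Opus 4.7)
The plan is to follow the strategy used in \cite[Theorem 7.6]{Shfried} for unitary twists, replacing Hodge-theoretic / self-adjoint arguments by the generalised Laplacian machinery of Section \ref{SgLap}. The central idea is to match the logarithmic derivatives of the two sides of \eqref{eq:detfor} as Mellin transforms of heat traces, and then integrate.

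First, for $\Re(\sigma)\gg 1$, compute $-\partial_\sigma\log Z_{\eta,\rho}(\sigma)$ directly from \eqref{eqdefsel}; the factor $\ell_{[\gamma]}e^{-\sigma\ell_{[\gamma]}}$ is the Mellin transform of a Gaussian in $t$, and the growth bound $|\Tr[\rho(\gamma)]|\leqslant Ce^{C\ell_{[\gamma]}}$ from Proposition \ref{proprhor} together with the exponential bound on $|\{[\gamma]:\ell_{[\gamma]}\leqslant r\}|$ guarantees absolute convergence. Next, apply the Selberg trace formula (Theorem \ref{thmselnon}) to the virtual heat operator $\exp(-t C^{\fg,Z,\widehat\eta,\rho})$, writing
\begin{align*}
\Trs\bigl[\exp\bigl(-tC^{\fg,Z,\widehat\eta,\rho}\bigr)\bigr]
=\sum_{[\gamma]\in[\Gamma]}\vol(B_{[\gamma]})\Tr[\rho(\gamma)]\,
\Trs^{[\gamma]}\bigl[\exp\bigl(-tC^{\fg,X,\widehat\eta}\bigr)\bigr].
\end{align*}
The key observation is that the orbital integrals $\Trs^{[\gamma]}[\exp(-tC^{\fg,X,\widehat\eta})]$ do \emph{not} depend on $\rho$ and have already been evaluated explicitly in \cite[Section 7]{Shfried} using Bismut's formula \cite[Theorem 6.1.1]{B09}, Proposition \ref{prop:nnam1}, and the splitting \eqref{eqetahat}. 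Crucially, these evaluations rely on the lifting property that $\widehat\eta$ is a genuine virtual $K$-representation, which is exactly Corollary \ref{corkey} (and which, in the present generality, is furnished by the conceptual proof of Section \ref{Sfondcs} rather than by the classification used in \cite{Shfried}).

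The identity-class contribution, by the structure \eqref{eqetahat} together with the fact that $\Lambda^\cdot(\fp_\fm^*)$ is self-dual as a $K_M$-representation (Corollary \ref{propVAM}), produces precisely the polynomial $P_\eta(\sigma)$ appearing in \eqref{eq:detfor}. The non-elliptic contributions, after the change of variable $\sigma\mapsto\sigma_\eta+\sigma^2$ in the Mellin transform, collapse (by \eqref{eqchi0} and Remark \ref{reR=1}) to a sum over $[\gamma]$ with $\gamma\sim e^a k^{-1}\in H$, and this sum reproduces $-\partial_\sigma\log Z_{\eta,\rho}(\sigma)$. On the spectral side, Proposition \ref{propexptd} gives long-time exponential decay and Proposition \ref{propasy} gives the short-time asymptotic expansion needed for zeta regularisation; combined with Theorem \ref{thmGLH}, this means $\det_{\rm gr}(C^{\fg,Z,\widehat\eta,\rho}+\sigma_\eta+\sigma^2)$ is a well-defined meromorphic function of $\sigma$ whose logarithmic derivative is the Mellin transform of the heat supertrace. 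Equating the two Mellin expressions and integrating yields \eqref{eq:detfor}; the constant of integration is pinned down by examining the asymptotic behaviour as $\Re(\sigma)\to+\infty$, where both sides tend to $1$ modulo the exponential correction. The claims about zeros, poles, and their multiplicities then follow from Theorem \ref{thmGLH} applied to $C^{\fg,Z,\widehat\eta^\pm,\rho}+\sigma_\eta+\sigma^2$ (doubling at $\sigma=0$ because of the quadratic substitution), and the functional equation \eqref{eq:funceq} is immediate from \eqref{eq:detfor} since $\sigma_\eta+\sigma^2$ and the graded determinant are invariant under $\sigma\mapsto -\sigma$ while $P_\eta$ is odd.

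The main obstacle, compared with \cite[Theorem 7.6]{Shfried}, is that $C^{\fg,Z,\widehat\eta,\rho}$ is no longer self-adjoint, so one cannot invoke the classical spectral theorem or the standard heat-kernel zeta function construction. This is precisely what Section \ref{SgLap} was set up to address: the resolvent bounds \eqref{eqSpre}--\eqref{eqres}, the semigroup representation \eqref{eqetp}, the trace-class long-time decay (Proposition \ref{propexptd}), and the short-time heat asymptotic (Proposition \ref{propasy}) together suffice to define $\det(\sigma+P)$ as an entire function (Theorem \ref{thmGLH}) and to justify the Mellin / logarithmic-derivative manipulations exactly as in the self-adjoint case. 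The only additional analytical input needed beyond \cite{Shfried} is absolute convergence of the geometric sides, which is Corollary \ref{cordpresel} combined with Proposition \ref{propdgc01}.
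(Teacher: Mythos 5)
Your proposal is correct and follows essentially the same route as the paper: match Mellin transforms of heat supertraces using the non-unitary Selberg trace formula (Theorem \ref{thmselnon}) and the explicit orbital integrals from \cite[Section~7]{Shfried} (which require the lifting property of Corollary \ref{corkey}), handle the non-self-adjointness via the regularised-determinant machinery of Section \ref{SgLap}, and then read off the zero/pole structure and the functional equation from Theorem \ref{thmGLH} together with the evenness of $\sigma_\eta+\sigma^2$ and oddness of $P_\eta$ in $\sigma$. One small misattribution worth fixing: Corollary \ref{propVAM} asserts the $M$-equivalence $\Lambda^{j}(\fc_{\pm}^{*})\simeq\Lambda^{\dim\fc_{\pm}-j}(\fc_{\pm}^{*})$, not self-duality of $\Lambda^{\scriptscriptstyle\bullet}(\fp_{\fm}^{*})$, and it is not what produces $P_\eta$; the polynomial $P_\eta$ comes from the identity orbital integral as computed in \cite[(7-8), (7-61)]{Shfried}.
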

\begin{proof}For the first part of the theorem, 
	it is enough to show \eqref{eq:detfor} for $\sigma\in \bR$ large 
	enough, which will be given in Section 
	\ref{spthm12}. The rest parts of our 
	theorem is a consequence of Theorem \ref{thmGLH} and  
	\eqref{eq:detfor}. 
\end{proof}

\subsection{Proof of  (\ref{eq:detfor}) 
for $\sigma\gg1$}\label{spthm12}
\subsubsection{The case where 
$\delta(G)=1$ and $Z_{G}$ is non compact}
For $\gamma'\in {M}$, let $X_{M}(\gamma')$ be the symmetric space 
defined in \eqref{eqXr=ZrKr}
when $G$ and $\gamma$ are replaced by $M$ and $\gamma'$. We have a generalisation of \cite[Proposition 4.14]{Shfried} and 
\cite[Proposition 5.7]{Shen_Yu}.

\begin{prop}\label{propdgn1}
	If $\gamma=e^{a}k^{-1}\in H$ with $a\in \fb$ and $k\in T$, then 
	\begin{multline}\label{eq:dgn1}
		\Trs^{[\gamma]}\[\exp\(-\frac{t}{2}C^{\fg,X,\widehat{\eta}}\)\]\\
		=\frac{1}{\sqrt{2\pi 
		t}}\exp\(-\frac{|a|^{2}}{2t}-\frac{t}{2}C^{\fu_\fm,\eta}\)\[e\(TX_{M}(k),\nabla^{TX_{M}(k)}\)\]^{\max}\Trs^{E_{\eta}}\[k^{-1}\].
	\end{multline}
	If $\gamma$ can not be conjugate into $H$, then 
	\begin{align}\label{eqd100}
			\Trs^{[\gamma]}\[\exp\(-\frac{t}{2}C^{\fg,X,\widehat{\eta}}\)\]=0.
	\end{align}
\end{prop}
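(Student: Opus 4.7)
Since we are in Section 7.3, we are in the case $\delta(G)=1$ with $Z_G$ non-compact. By Proposition~\ref{propGbm}, the group splits as a direct product
\begin{align*}
G=\exp(\fb)\times M,\qquad \fg=\fb\oplus\fm,\qquad K=K_M,
\end{align*}
and consequently $X=\bR\times X_M$ isometrically. Since $\fb\subset\fz_\fg$ is central, the Casimir decomposes as $C^{\fg}=C^{\fb}+C^{\fm}$, and on sections of $\cE_{\widehat\eta}$ over $X$ this gives the commuting decomposition $C^{\fg,X,\widehat\eta}=-\partial_a^2+C^{\fm,X_M,\widehat\eta|_{K_M}}$. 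The associated heat kernel therefore factors:
\begin{align*}
p_{t/2}^{X,\widehat\eta}\bigl((a,x),(a',x')\bigr)=\tfrac{1}{\sqrt{2\pi t}}e^{-|a-a'|^2/(2t)}\,p_{t/2}^{X_M,\widehat\eta|_{K_M}}(x,x').
\end{align*}

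For $\gamma=e^{a}k^{-1}\in H$ with $a\in\fb$, $k\in T$, the centraliser factors as $Z(\gamma)=\exp(\fb)\times Z_M(k^{-1})$, so the orbital integral from Definition~\ref{def:orbital} splits as a product. The $\bR$-direction contributes the Gaussian factor $\tfrac{1}{\sqrt{2\pi t}}e^{-|a|^2/(2t)}$ (the $a$-translation seen by the Euclidean heat kernel), and the $M$-direction contributes the $M$-orbital integral
\begin{align*}
\Trs^{[k^{-1}],M}\bigl[\exp(-\tfrac{t}{2}C^{\fm,X_M,\widehat\eta|_{K_M}})\bigr].
\end{align*}
By Assumption~\ref{ass}(2), the Casimir acts on $\eta$ by the scalar $C^{\fu_\fm,\eta}$ (using $C^{\fu_\fm}=C^{\fm}$ from \eqref{eqCu=Cg}); this scalar separates out as $e^{-\tfrac{t}{2}C^{\fu_\fm,\eta}}$. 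The remaining factor involves $\Lambda^{\scriptscriptstyle\bullet}(\fp_{\fm,\bC}^*)$, which by the standard Mathai–Quillen/Berezin-integral mechanism turns the $M$-supertrace into the Gauss–Bonnet-type localisation on the fixed point set $X_M(k)$, producing the Euler density $[e(TX_M(k),\nabla^{TX_M(k)})]^{\max}$ times $\Trs^{E_\eta}[k^{-1}]$. This computation is exactly the orbital integral with $\delta(M)=0$ already carried out in \cite{Shfried} and \cite{Shen_Yu} (Bismut's formula \cite[Theorem~6.1.1]{B09} specialised to $\delta(M)=0$); we quote it rather than re-derive it, which is precisely why the present proposition is labelled the ``trivial one''. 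Assembling the three factors yields \eqref{eq:dgn1}.

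Finally, for \eqref{eqd100}: since $\delta(M)=0$, every semisimple element of $M$ is conjugate under $M$ to an element of its fundamental (compact) Cartan subgroup $T$; combined with the splitting $G=\exp(\fb)\times M$, every semisimple element of $G$ is conjugate to some $e^a k^{-1}\in H$. Hence in the $Z_G$ non-compact case the hypothesis ``$\gamma$ can not be conjugated into $H$'' is vacuous for semisimple $\gamma$, and \eqref{eqd100} holds trivially. The main technical obstacle is to justify rigorously that the $M$-orbital integral in the second step reduces to the Euler-class formula; this is handled by invoking the established computations, so no new analytic work is needed beyond bookkeeping the factorisations.
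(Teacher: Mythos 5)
Your proof of the main formula \eqref{eq:dgn1} follows essentially the same route as the paper: factor $G=\exp(\fb)\times M$, decompose the Casimir $C^{\fg,X,\widehat\eta}=-\Delta^{\bR}+C^{\fm,X_M,\Lambda^{\scriptscriptstyle\bullet}(\fp_\fm^*)\widehat\otimes\eta|_{K_M}}$, factor the orbital integral accordingly, evaluate the Gaussian factor, and quote the established $\delta(M)=0$ computation from \cite{Shfried} for the $M$-orbital integral. That part is correct.

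However, your treatment of the vanishing statement \eqref{eqd100} has a genuine gap. You claim that since $\delta(M)=0$, ``every semisimple element of $M$ is conjugate under $M$ to an element of its fundamental (compact) Cartan subgroup $T$,'' and conclude that the hypothesis of \eqref{eqd100} is vacuous. This is false: $\delta(M)=0$ only says that the \emph{fundamental} Cartan subalgebra of $\fm$ is compact, not that $\fm_\bC$ has a single conjugacy class of Cartan subalgebras. A concrete counterexample occurs already for $G=\bR\times SL_2(\bR)$, where $\fb=\bR$ is the central direction, $M=SL_2(\bR)$, and $\delta(M)=0$; nevertheless the hyperbolic element $\mathrm{diag}(2,1/2)\in M$ is semisimple and non-elliptic, hence not conjugate into $T=SO(2)$, and the element $e^a\cdot\mathrm{diag}(2,1/2)\in G$ cannot be conjugated into $H$. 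So \eqref{eqd100} is not vacuous. The correct argument, which the paper uses, is that when $\gamma=e^a\gamma'$ with $\gamma'$ semisimple but \emph{not} elliptic in $M$, the $M$-orbital integral $\Trs^{[\gamma']}\bigl[\exp\bigl(-\tfrac{t}{2}C^{\fm,X_M,\Lambda^{\scriptscriptstyle\bullet}(\fp_\fm^*)\widehat\otimes\eta|_{K_M}}\bigr)\bigr]$ vanishes by \cite[(8-82)]{Shfried} (a consequence of Bismut's formula specialised to $\delta(M)=0$, where the Euler-class localisation produces zero on non-elliptic orbits), and then the full orbital integral for $\gamma$ vanishes by the factorisation. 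You would need to replace your vacuity claim by this vanishing argument.
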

\begin{proof}
	Since $K=K_{M}\subset M$ and $\fp=\fb\oplus \fp_{\fm}$, by 
	\eqref{eqetahat}, we have 
	\begin{align}\label{eqdgnn1}
		C^{\infty}\(G,E_{\widehat{\eta}}\)^{K}=C^{\infty}(\bR){\otimes} 
		C^{\infty}\(M,\Lambda^{\scriptscriptstyle\bullet }(\fp^{*}_{\fm,\bC})\widehat{\otimes} 
		E_{\eta|{K_{M}}}\)^{K_{M}}. 
	\end{align}
	By the identification \eqref{eqdgnn1},
	if $\Delta^{\bR}$ is the usual Laplacian operator on $\bR$, 
	we have 
	\begin{align}\label{eqCCXM}
		C^{\fg,X,\widehat{\eta}}=-\Delta^{\bR}+C^{\fm,X_{M},\Lambda^{\scriptscriptstyle\bullet }(\fp_{\fm}^{*})\widehat{\otimes} {\eta_{|K_{M}}}}.
	\end{align}
	
	Let $\gamma$ be a semisimple element in $G$. Since 
	$G=\exp(\fb)\times M$, write 	
	$\gamma=e^{a}\gamma'$ where $a\in \fb$ and $\gamma'$ is 
	semisimple in $M$. By \eqref{eqCCXM}, we have
	\begin{align}\label{eq9J1}
		\Trs^{[\gamma]}\[\exp\(-\frac{t}{2}C^{\fg,X,\widehat{\eta}}\)\]=\Tr^{[e^{a}]}\[\exp\(\frac{t}{2} 
		\Delta^{\bR}\)\]\Trs^{[\gamma']}\[\exp\(-\frac{t}{2}C^{\fm,X_{M},\Lambda^{\scriptscriptstyle\bullet }(\fp_{\fm}^{*})\widehat{\otimes}\eta_{|K_{M}}}\)\].
	\end{align}
	Clearly, we have
	\begin{align}
	\Tr^{[e^{a}]}\[\exp\(\frac{t}{2} 
		\Delta^{\bR}\)\]=\frac{1}{\sqrt{2\pi t}}e^{-\frac{|a|^{2}}{2t}}. 
	\end{align}

	If $\gamma$ can not be conjugate in $H$, then $\gamma'$ is not elliptic in $M$, by \cite[(8-82)]{Shfried}, we have 
\begin{align}
	\Trs^{[\gamma']}\[\exp\(-\frac{t}{2}C^{\fm,X_{M},\Lambda^{\scriptscriptstyle\bullet }(\fp_{\fm}^{*})\widehat{\otimes}\eta_{|K_{M}}}\)\]=0.
\end{align}
If $\gamma\in H$, then $\gamma'=k^{-1}\in T$. By \cite[(8-83)]{Shfried}, we have 
\begin{multline}\label{eq9J2}
	\Trs^{[k^{-1}]}\[\exp\(-\frac{t}{2}C^{\fm, 
		X_{M},\Lambda^{\scriptscriptstyle\bullet }(\fp_{\fm}^{*})\widehat{\otimes} \eta_{|{K_{M}}}}\)\]\\
		=\[e\(TX_{M}(k),\nabla^{TX_{M}(k)}\)\]^{\max}\Trs^{E_\eta}\[k^{-1}\]\exp\({-\frac{t}{2}C^{\fu_\fm,\eta}}\). 
	\end{multline}
By \eqref{eq9J1}-\eqref{eq9J2}, we get \eqref{eq:dgn1} and \eqref{eqd100}. 
\end{proof}


	Proceeding  as in the proof of \cite[Theorem 5.6]{Shfried}, using Theorem \ref{thmselnon} and Proposition \ref{propdgn1} instead of \cite[Theorem 
 	4.10 and Proposition 4.14]{Shfried},  we get \eqref{eq:detfor} 
	for $\sigma\gg1$. 
	 \qed

\subsubsection{The case where  $\delta(G)=1$ and $Z_{G}$ is compact}	
\label{ssdg1nc}



	Although,  Assumption \ref{ass} on $\eta$ is different from  
	\cite[Assumption 7.1]{Shfried}, by Remarks \ref{re66} and \ref{re69}, the statement of \cite[Theorem 7.3]{Shfried} still holds.
Therefore, the proof of \eqref{eq:detfor} for 
$\sigma\gg1$ is identical to the one given in \cite[Theorem 7.6]{Shfried}, except 	
that we use Theorem \ref{thmselnon} instead of \cite[Theorem 4.10]{Shfried}.  \qed

\subsection{Proof of Theorem \ref{thmRmeo}}\label{sproofthmRmeo}
\subsubsection{The case where $\delta(G)=1$ and $Z_{G}$ is non 
compact}		Since $G$ has a non compact centre, by the second 
equation of \eqref{eqGbM},  the denominator  
$\left|\det(1-\Ad(e^{a}k^{-1}))\right|_{\fz^{\bot}(\fb)}|^{1/2}$ in \eqref{eqdefsel}  
disappears.
When $\eta=\mathbf{1}$ is the trivial representation, by 
\eqref{defRrho}, \eqref{eqdefsel}, \eqref{eqetahat}, and \eqref{eqsen1}, we have
\begin{align}\label{eqRZi}
	&R_{\rho}(\sigma)=\big(Z_{\mathbf{1},\rho}(\sigma)\big)^{-1}, 
	&E_{\widehat{\mathbf 
	1}}=\sum_{i=1}^{m}(-1)^{i-1}i\Lambda^{i}(\fp^{*}_{\bC})\in R(K), 
	&&\sigma_{\mathbf 1}=0. 
\end{align}
By Theorem \ref{thm:detfor}, and by \eqref{eqTs} and \eqref{eqRZi}, we see that 
$R_{\rho}(\sigma)$ has a meromorphic extension and   
 \begin{align}\label{eqmuR}
 	R_{\rho}(\sigma)=\exp\(r 
 	\vol(Z)\[e\(TX_{M},\nabla^{TX_{M}}\)\]^{\rm 
 	max}\sigma\)T_{\rho}(\sigma^{2}).
 \end{align} 
 
Set
\begin{align}\label{eqCr111}
&	C_{\rho}=1,& r_{\rho}=2\chi'_{\rm CM}(F). 
\end{align} 
By \eqref{eqTs1}, \eqref{eqmuR},  and \eqref{eqCr111}, we get \eqref{eqFcomtorsion}. 
By Proposition \ref{propclosesign}, \eqref{eqchiCM},  and \eqref{eqCr111}, we see that  \eqref{eqdCt11} holds with $k=0$. 
The proof of Theorem \ref{thmRmeo} is complete when $\delta(G)=1$ and 
$Z_{G}$ is not compact. \qed
	
\subsubsection{The case where $\delta(G)=1$ and $Z_{G}$ is  
compact}\label{ssetajj}		For $0\l j\l 2\ell$, let $\eta_{j}$ be the representation of $M$ on  $	\Lambda^{j}(\fn^{*}_{\bC}). $
By Corollary 
		\ref{corkey} (or \cite[Corollary 6.12]{Shfried}) and \cite[Proposition 6.13]{Shfried}, $\eta_{j}$ satisfies Assumption 
\ref{ass}. By Theorem \ref{thm:detfor}, we  see that 
$Z_{\eta_{j},\rho}$ has a meromorphic extension to $\bC$.   Using the relation 	(\cite[(7-65)]{Shfried}) 
	\begin{align}\label{eqRZi3}
		R_{\rho}(\sigma)=\prod_{j=0}^{2\ell} 
		Z_{\eta_j,\rho}(\sigma+(j-\ell)|\alpha_{0}|)^{(-1)^{j-1}},
	\end{align}
 we see that $R_{\rho}(\sigma)$ has a 
meromorphic extension. 

Write $r_{j}=r_{\eta_{j},\rho}$. As in \cite[(7-74),(7-75)]{Shfried}, set
\begin{align}\label{eqCr1111}
&	
C_{\rho}=\prod_{j=0}^{\ell-1}\(-4(\ell-j)^{2}|\alpha_{0}|^{2}\)^{(-1)^{j-1}r_{j}},&r_{\rho}=2\sum_{j=0}^{\ell}(-1)^{j-1}r_{j}. 
\end{align} 
Proceeding exactly as in 
\cite[(7-71)-(7-78)]{Shfried}, we get \eqref{eqFcomtorsion}. The 
proof of \eqref{eqdCt11} will be given in  Section 
\ref{secnonbetti}. \qed

\begin{re}
Note that $r_{\rho}$ is even. By \eqref{eqCr1111} and by 
\cite[(7-74), (7-80)]{Shfried}, we have 
	\begin{align}
		\begin{aligned}
		&C_{\rho}=(-1)^{\chi'_{\rm 
		CM}(X,F)+\frac{r_{\rho}}{2}}\prod_{j=0}^{\ell-1}\(4(\ell-j)^{2}|\alpha_{0}|^{2}\)^{(-1)^{j-1}r_{j}},\\
		&r_{\rho}=\chi'_{\rm 
		CM}(Z,F)+(-1)^{\ell-1}r_{\ell}=2\chi'_{\rm 
		CM}(Z,F)+2\sum_{j=0}^{\ell-1}(-1)^{j}r_{j}.
		\end{aligned}
	\end{align} 	
\end{re}

\section{A cohomological formula for 
$r_{j}$}\label{S:rep}

The purpose of this section is to establish  \eqref{eqdCt11} when 
$\delta(G)=1$ and $Z_{G}$ is compact. Its proof relies on some 
results from the representation theory of real  reductive groups. 

This section is organised as follows.
In Section \ref{sec:rep}, we recall the definition of infinitesimal 
characters of $\fg_{\bC}$-modules and some basic properties of the Harish-Chandra $(\fg_{\bC},K)$-modules. 

In Section \ref{sRightR}, we study the right regular representation 
of $G$ on $C^{\infty}\(\Gamma\backslash G,\widehat{p}^{*}F\)$. Given 
a character $\chi:\mathscr Z(\fg_{\bC})\to \bC$, we introduce   
a  Harish-Chandra 
$(\fg_{\bC},K)$-modules  $V_{\chi}(\Gamma,\rho)\subset 
C^{\infty}\(\Gamma\backslash G,\widehat{p}^{*}F\)$ with generalised infinitesimal character $\chi$.


Finally, in Section \ref{secnonbetti}, assuming that $\delta(G)=1$ and $Z_{G}$ 
is compact, we write $r_j$ as a sum of Euler characteristic numbers of 
certain cohomologies of $V_{\chi_{\mathbf 1}}(\Gamma,\rho)$, where 
$\chi_{\mathbf 1}$ is the trivial character.   We establish  \eqref{eqdCt11}. 

In Sections \ref{sec:rep} and \ref{sRightR}, we  assume neither 
$\delta(G)=1$ nor $Z_{G}$ is compact.

\subsection{Preliminary on the representation theory}\label{sec:rep}
Recall that $G$ is a linear connected real reductive group. 
We use  the 
notation in Sections \ref{sReductive} and \ref{sCasimir}. Recall that 
$\mathscr Z(\fg_{\bC})$ is the centre of the enveloping algebra 
of $\fg_{\bC}$. 

 A morphism of algebras 
$\chi:\mathscr{Z}(\fg_\bC)\to \bC$ will be called a character of 
$\mathscr Z(\fg_\bC)$. Clearly, for $a\in \bC$, we have
\begin{align}
	\chi(a)=a. 
\end{align}

\begin{defin}\label{definfch}
A complex representation of $\fg_\bC$ is said to have an  infinitesimal character $\chi$ if $z\in \mathscr Z(\fg_\bC)$ acts as a scalar $\chi(z)\in\bC$.

A complex representation of $\fg_\bC$ is said to have a  generalised 
infinitesimal character $\chi$ if there is $i\gg1$ such that 
for all  $z\in \mathscr Z(\fg_\bC)$, $(z-\chi(z))^{i}$ acts like $0$.
\end{defin}

The infinitesimal character of a trivial representation will be 
called the trivial character $\chi_{\bf 1}$. Note that in 
\cite[Section 8]{Shfried},  such a character  is denoted by $0$. We will not use this notation here.

\begin{defin}\label{defHC}
	A complex $\mathscr U(\fg_\bC)$-module $V$, 
	equipped with an  action of $K$, is called a $(\fg_\bC,K)$-module, if 
	\begin{enumerate}
	\item every $v\in V$ is $K$-finite, i.e., $\{k\cdot v\}_{k\in K}$   
  spans a finite dimensional vector space;
  \item the actions of $\fg_\bC$ and $K$ are compatible. 
\end{enumerate} 
A $(\fg_{\bC},K)$-module  $V$ is called a Harish-Chandra $(\fg_\bC,K)$-module, if
\begin{enumerate}
  \item the space $V$ is finitely generated as a $\mathscr U(\fg_\bC)$-module;
  \item each irreducible $K$-module occurs only
  for a finite number of times in $V$.
\end{enumerate}
\end{defin}

 Let $V$ be a Harish-Chandra $(\fg_\bC,K)$-module. By 
 \cite[Proposition 10.43]{Knappsemi},  submodules  or quotient 
modules of $V$ is still  a Harish-Chandra $(\fg_{\bC},K)$-module. 
When $V$ is irreducible, by \cite[Corollary 8.13]{Knappsemi},  $V$ 
has an infinitesimal character. For general $V$, by 
 \cite[Proposition 10.41]{Knappsemi}, there are finitely many  Harish-Chandra $(\fg_\mathbf{C}, 
K)$-submodules $V_{\chi}\subset V$ of $V$ with  generalised 
infinitesimal character $\chi$, so that 
	\begin{align}
			V=\bigoplus_{\chi}V_{\chi}. 
	\end{align} 
%
%
%
	
By \cite[Corollary 10.42]{Knappsemi},  $V$ has  a finite composition series, i.e.,  there 
exist finitely many Harish-Chandra $(\fg_\bC, K)$-submodules 
\begin{align}\label{eqVcomf}
	V=V_{N_{0}}\supset V_{N_{0}-1}\supset \cdots\supset V_{0}\supset V_{-1}=0,
\end{align} 
such that each quotient $V_{s}/V_{s-1}$  with  $0\l  s \l  N_{0}$ is irreducible. Moreover, the length 
$N_{0}\in \mathbf N$, the set of 
all irreducible quotients and their multiplicities are the same for 
all the composition series. Also, if $E$ is a finite dimensional 
representation of $K$, then 
\begin{align}\label{eqVKVii1}
	\dim \(V\otimes E	\)^{K}=\sum_{s=0}^{N_{0}} \dim 
	\big((V_{s}/V_{s-1})\otimes E\big)^{K}. 
\end{align} 

%


\begin{prop}\label{propVc0}
Given a character $\chi:\mathscr Z(\fg_{\bC})\to \bC$, there is a 
finite dimensional representation $E$ of $K$ such that if $V$ 
is a Harish-Chandra $(\fg_{\bC},K)$-module with generalised infinitesimal 
character $\chi$, then 
\begin{align}\label{eqV0VE0}
	V=0\iff (V\otimes E)^{K}=0. 
\end{align} 
\end{prop}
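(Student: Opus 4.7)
The direction $V=0\Rightarrow (V\otimes E)^{K}=0$ is trivial for any finite dimensional $E$, so the content is to produce a single $E=E(\chi)$, depending only on $\chi$, that detects every nonzero Harish-Chandra module with generalised infinitesimal character $\chi$. The plan is to reduce to irreducible modules via the composition series \eqref{eqVcomf} and the identity \eqref{eqVKVii1}, and then to appeal to Harish-Chandra's finiteness theorem on irreducibles with a fixed infinitesimal character.

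The first step is to apply \eqref{eqVKVii1} to a composition series of $V$. If $V\ne 0$, then at least one quotient $V_{s}/V_{s-1}$ is nonzero, and is therefore an irreducible Harish-Chandra $(\fg_{\bC},K)$-module. Being a subquotient of $V$, it inherits the property that $z-\chi(z)$ acts nilpotently for every $z\in \mathscr Z(\fg_{\bC})$; but by \cite[Corollary 8.13]{Knappsemi} an irreducible Harish-Chandra module has an honest infinitesimal character, and the nilpotence then forces this character to be $\chi$. Hence every irreducible subquotient of $V$ has true infinitesimal character $\chi$.

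The second step is to invoke Harish-Chandra's finiteness theorem: up to equivalence there are only finitely many irreducible Harish-Chandra $(\fg_{\bC},K)$-modules $L_{1},\dots,L_{n}$ with infinitesimal character $\chi$. For each $i$, since $L_{i}\ne 0$ and $K$ acts semisimply, one can choose an irreducible $K$-representation $\tau_{i}$ occurring in $L_{i}$; then set
\begin{align*}
E=\bigoplus_{i=1}^{n}\tau_{i}^{*}.
\end{align*}
One has $(L_{i}\otimes E)^{K}\supset \Hom_{K}(\tau_{i},L_{i})\ne 0$ for each $i$, and combining this with \eqref{eqVKVii1} and the first step yields $(V\otimes E)^{K}\ne 0$ whenever $V\ne 0$.

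The only non-formal ingredient is the correct invocation of Harish-Chandra's finiteness theorem, whose standard reference I would take from Knapp's book or Wallach's \emph{Real Reductive Groups}; the rest is a direct unraveling of definitions. Note also that the resulting $E$ depends only on the finite set $\{L_{1},\dots,L_{n}\}$ of irreducibles attached to $\chi$, hence only on $\chi$, as required.
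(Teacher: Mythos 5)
Your proof is correct and follows essentially the same route as the paper: reduce to irreducible subquotients via \eqref{eqVKVii1}, invoke Harish-Chandra's finiteness theorem (\cite[Corollary 10.37]{Knappsemi}), and take $E$ to be the direct sum of duals of one $K$-type from each of the finitely many irreducibles with infinitesimal character $\chi$. Your write-up merely makes explicit the intermediate observation that a nonzero irreducible subquotient of $V$ has honest infinitesimal character $\chi$, which the paper leaves implicit.
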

\begin{proof}It is enough to show the inverse direction. By	a fundamental theorem  of Harish-Chandra  
	\cite[Corollary 10.37]{Knappsemi}, there are only finitely many irreducible Harish-Chandra $(\fg_{\bC},K)$-modules $V_{1},\ldots,V_{k}$  with 	infinitesimal character $\chi$. Let 
	$E_{i}$ be a $K$-type of $V_{i}$. Set
	\begin{align}\label{eqdefE}
	E=\bigoplus_{i=1}^{k} E^{*}_{i}. 
\end{align} 
By \eqref{eqdefE}, for all $1\l i\l k$, 
\begin{align}\label{eqEvin0}
	\(	V_{i}\otimes E\)^{K}\neq 0. 
\end{align} 
By \eqref{eqVKVii1} and \eqref{eqEvin0}, we get the inverse direction 
in  \eqref{eqV0VE0}. 
\end{proof}

\subsection{The right regular representation on 
$C^{\infty}\(\Gamma\backslash G,\widehat{p}^{*}F\)$}\label{sRightR}
Recall that 
$\Gamma\subset G$ is a discrete  cocompact and torsion free subgroup of $G$ and that $\rho:\Gamma\to \GL_{r}(\bC)$ is a representation of 
$\Gamma$. Recall also  that $F$ is the flat vector bundle on 
$Z=\Gamma\backslash G/K$
 defined in \eqref{eqFhol1}.  We  fix a Hermitian metric  
$g^{F}$ on $F$. Note that the choice of $g^{F}$ is irrelevant. 

Let 
$g^{\widehat{p}^{*}F}$ be the pull back metric on $\widehat{p}^{*}F$.  Let $L^{2}\(\Gamma\backslash G,\widehat{p}^{*}F\)$ be the
$L^{2}$-space associated to $g^{\widehat{p}^{*}F}$ and to an 
invariant measure on $\Gamma\backslash G$. 
The group $G$ acts on the right on  $L^{2}\(\Gamma\backslash 
G,\widehat{p}^{*}F\)$. Since $\rho$ is not necessarily unitary, this $G$-action is not always unitary. Since  $g^{\widehat{p}^{*}F}$ is 
$K$-invariant, the above $G$-action restricts to a  unitary 
representation of $K$. Let $\widehat{K}$ be the set of equivalent 
classes of irreducible representations of $K$. 
If $\tau\in \widehat{K}$,  let $L^{2}_{\tau}(\Gamma\backslash 
G,\widehat{p}^{*}F)$ be the $\tau$-isotropic  subspace of $L^{2}(\Gamma\backslash 
G,\widehat{p}^{*}F)$. By \eqref{eq:Etau} and \eqref{eqFtau0}, we have   
\begin{align}\label{eqL2taurho}
	L^{2}_{\tau}\(\Gamma\backslash 
	G,\widehat{p}^{*}F\)\simeq L^{2}\(Z,\cF_{\tau}^{*}\otimes F\)\otimes E_{\tau}. 
\end{align} 
By Peter-Weyl's theorem, we have a direct sum of Hilbert spaces 
\begin{align}\label{eqPeterWeyl}
	L^{2}\(\Gamma\backslash G,\widehat{p}^{*}F\)=\bigoplus_{\tau\in 
	\widehat{K}}^{\rm Hil}L^{2}_{\tau}\(\Gamma\backslash 
	G,\widehat{p}^{*}F\). 
\end{align} 

By \eqref{eqL2taurho}, 
the Casimir operator $C^{\fg,Z,\tau^{*},\rho}$ defines an  unbounded operator  on the Hilbert space $L^{2}_{\tau}\(\Gamma\backslash 
	G,\widehat{p}^{*}F\)$. If $\lambda\in 
	\Sp(C^{\fg,Z,\tau^{*},\rho})$, 	as in \eqref{eqVpch}, let  
	$C^{\infty}_{\tau,\lambda}(\Gamma\backslash G,\widehat{p}^{*}F)$ 
	be the corresponding finite dimensional 	characteristic space. As the notation indicates,
	\begin{align}
	C^{\infty}_{\tau,\lambda}\(\Gamma\backslash 
	G,\widehat{p}^{*}F\)\subset C^{\infty}\(\Gamma\backslash 
	G,\widehat{p}^{*}F\).
\end{align} 
	
	Let $V(\Gamma,\rho)$ be the subspace of $  C^{\infty}(\Gamma\backslash 	G,\widehat{p}^{*}F)
$ defined by the algebraic sum
\begin{align}\label{eq}
V(\Gamma,\rho)=\bigoplus_{\tau\in 
	\widehat{K}}\bigoplus_{\lambda\in 
	\Sp(C^{\fg,Z,\tau^{*},\rho})}C^{\infty}_{\tau,\lambda}\(\Gamma\backslash G,\widehat{p}^{*}F\).
\end{align} 
Clearly, the group $K$ acts on $V(\Gamma,\rho)$ and the elements of 
$V(\Gamma,\rho)$ are $K$-finite.  

By \eqref{eqvp}, \eqref{eqPeterWeyl}, and \eqref{eq}, we have
	\begin{align}
	L^{2}\(\Gamma\backslash G,\widehat{p}^{*}F\)=\ol{V(\Gamma,\rho)}. 
\end{align} 

\begin{prop}\label{propVgk}
The algebra $\mathscr U(\fg_{\bC})$ acts on $V(\Gamma,\rho)$, so that 
$V(\Gamma,\rho)$ is a $(\fg_{\bC},K)$-module. 
\end{prop}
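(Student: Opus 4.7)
My plan is to realise $V(\Gamma,\rho)$ as a $\mathscr U(\fg_\bC)$-stable subspace of the $K$-finite vectors of a smooth $G$-representation, using in an essential way that the Casimir $C^{\fg}$ is central in $\mathscr U(\fg_\bC)$.

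First I would make the $G$-module structure explicit: identify $\widehat{p}^{*}F\simeq\Gamma\backslash(G\times\bC^{r})$ so that sections correspond to $\Gamma$-equivariant smooth maps $f:G\to\bC^{r}$ satisfying $f(\gamma g)=\rho(\gamma)f(g)$. The right regular action $(R_{h}f)(g)=f(gh)$ commutes with the left $\Gamma$-action, and differentiating it gives an action of $\mathscr U(\fg_\bC)$ on $C^{\infty}(\Gamma\backslash G,\widehat{p}^{*}F)$. Under the identification \eqref{eqL2taurho}, the Casimir $C^{\fg}$ acts on $L^{2}_{\tau}(\Gamma\backslash G,\widehat{p}^{*}F)$ as $C^{\fg,Z,\tau^{*},\rho}\otimes\mathrm{id}_{E_{\tau}}$, so that $C^{\infty}_{\tau,\lambda}(\Gamma\backslash G,\widehat{p}^{*}F)$ is precisely the generalised $\lambda$-eigenspace of the restriction of $C^{\fg}$ to the $\tau$-isotypic part.

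The core step will be to show $\fg_\bC$ preserves $V(\Gamma,\rho)$. Fix $X\in\fg_\bC$ and $f\in C^{\infty}_{\tau,\lambda}(\Gamma\backslash G,\widehat{p}^{*}F)$. Centrality of $C^{\fg}$ in $\mathscr U(\fg_\bC)$ yields
\[
(C^{\fg}-\lambda)^{N}(Xf)=X(C^{\fg}-\lambda)^{N}f=0
\]
for $N$ sufficiently large. The identity $R_{k}XR_{k}^{-1}=\Ad(k)X$ then shows that $Xf$ lies in the image of a $K$-equivariant map from the finite-dimensional $K$-representation $\fg_\bC\otimes W$, where $W$ is any finite-dimensional $K$-invariant subspace containing $f$; hence $Xf$ is $K$-finite and its isotypic decomposition $Xf=\sum_{\sigma}\pi_{\sigma}(Xf)$ involves only finitely many $\sigma\in\widehat{K}$. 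Because $C^{\fg}$ is $\Ad(K)$-invariant, each projection $\pi_{\sigma}$ commutes with $C^{\fg}$, so every component satisfies $(C^{\fg,Z,\sigma^{*},\rho}-\lambda)^{N}\pi_{\sigma}(Xf)=0$; elliptic regularity ensures it is smooth. Whenever $\pi_{\sigma}(Xf)\neq 0$, this forces $\lambda\in\Sp(C^{\fg,Z,\sigma^{*},\rho})$ and $\pi_{\sigma}(Xf)\in C^{\infty}_{\sigma,\lambda}(\Gamma\backslash G,\widehat{p}^{*}F)\subset V(\Gamma,\rho)$. Iterating this across products of elements of $\fg_\bC$ gives stability under the full algebra $\mathscr U(\fg_\bC)$.

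The remaining verifications are routine. By construction $V(\Gamma,\rho)$ is a sum of $K$-isotypic pieces of finite-dimensional characteristic spaces, so every element is $K$-finite and $K$ preserves the decomposition; the relation $k\cdot(Xv)=(\Ad(k)X)\cdot(kv)$ established above together with the standard agreement between the $K^{0}$-action and the exponentiated $\fk$-action on smooth $K$-finite vectors supplies both compatibility conditions in Definition \ref{defHC}. The main obstacle is precisely the second step: once $X\in\fg_\bC$ may spread $f$ across several distinct $K$-types $\sigma$, one must track each resulting component into the correct characteristic space for the corresponding operator $C^{\fg,Z,\sigma^{*},\rho}$ with the \emph{same} eigenvalue $\lambda$. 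It is the centrality of the Casimir combined with the $K$-equivariance of the isotypic projections that makes this bookkeeping work cleanly.
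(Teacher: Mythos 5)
Your proposal is correct and takes essentially the same approach as the paper: both reduce to showing $\fg_\bC\cdot C^{\infty}_{\tau,\lambda}\subset V(\Gamma,\rho)$, both use the $\Ad(K)$-twisting identity to establish $K$-finiteness of $\fg_\bC\cdot f$, and both invoke centrality of the Casimir to preserve the generalised eigenvalue $\lambda$ across the finitely many $K$-types that appear. Your version is slightly more explicit about the isotypic projections $\pi_\sigma$ and the compatibility condition, but the underlying argument (modelled on Knapp's Proposition 8.5, which the paper cites) is identical.
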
	
\begin{proof}We have seen in the arguments below  
\eqref{eq} that elements of $V(\Gamma,\rho)$ are $K$-finite. We need to show that  $\fg_\bC$ acts on $V(\Gamma,\rho)$, or equivalently, for  
any pair $(\tau,\lambda)$, 
	\begin{align}\label{eqFM3}
		\fg \cdot  C^{\infty}_{\tau,\lambda}\(\Gamma\backslash 
G,\widehat{p}^{*}F\)\subset V(\Gamma,\rho).
	\end{align}

Let us follow the proof of  \cite[Proposition 
8.5]{Knappsemi}.	
 Since the space $ C^{\infty}_{\tau,\lambda}(\Gamma\backslash 
G,\widehat{p}^{*}F)\subset C^{\infty}(\Gamma\backslash 
G,\widehat{p}^{*}F)$ is $K$-invariant,  for $k\in K$ and $a\in 
\fg$, we have
\begin{multline}\label{eqadCtl}
	k \cdot a \cdot C^{\infty}_{\tau,\lambda}\(\Gamma\backslash 
G,\widehat{p}^{*}F\)\subset   (\Ad(k)a) \cdot 
C^{\infty}_{\tau,\lambda}\(\Gamma\backslash 
G,\widehat{p}^{*}F\)
\subset \fg \cdot C^{\infty}_{\tau,\lambda}\(\Gamma\backslash 
G,\widehat{p}^{*}F\). 
\end{multline} 
By \eqref{eqadCtl}, the finite dimensional  space   $\fg \cdot  C^{\infty}_{\tau,\lambda}(\Gamma\backslash 
G,\widehat{p}^{*}F)$ is $K$-invariant.   In particular, the elements of $\fg\cdot C^{\infty}_{\tau,\lambda}(\Gamma\backslash 
G,\widehat{p}^{*}F)$ are $K$-finite. Therefore,
\begin{align}\label{eq810}
	\fg \cdot  C^{\infty}_{\tau,\lambda}\(\Gamma\backslash 
G,\widehat{p}^{*}F\)\subset \bigoplus_{\tau\in 
	\widehat{K}}C^{\infty}_{\tau}\(\Gamma\backslash 
	G,\widehat{p}^{*}F\).
\end{align} 
Since the Casimir commutes with  the $\fg$-actions, by \eqref{eq810},  we have 
	\begin{align}\label{eqFM2}
		\fg \cdot  C^{\infty}_{\tau,\lambda}\(\Gamma\backslash 
G,\widehat{p}^{*}F\)\subset \bigoplus_{\tau\in 
	\widehat{K}}C^{\infty}_{\tau,\lambda}\(\Gamma\backslash 
	G,\widehat{p}^{*}F\).
	\end{align} 
	By \eqref{eq} and \eqref{eqFM2}, we get \eqref{eqFM3}. 
\end{proof}
	
Note that $\mathscr Z(\fg_{\bC})$ acts on the finite dimensional 
space $C^{\infty}_{\tau,\lambda}(\Gamma\backslash 
G,\widehat{p}^{*}F)$. For a character  $\chi$ of $\mathscr Z(\fg_{\bC})$, set
\begin{align}\label{eqVchi}
	V_{\chi}(\Gamma,\rho)=\left\{v\in V(\Gamma,\rho): 
	 \exists i\in \mathbf{N}, \forall z\in 
	\mathscr Z(\fg_{\bC}),	\big(z-\chi(z)\big)^{i}v=0\right\}. 
\end{align} 
By \eqref{eq} and \eqref{eqVchi}, we have an  infinite algebraic sum 
\begin{align}
	V(\Gamma,\rho)=\bigoplus_{\chi} V_{\chi}(\Gamma,\rho). 
\end{align} 
By Proposition \ref{propVgk}, $V_{\chi}(\Gamma,\rho)$ is a 
$(\fg_{\bC},K)$-module. 

\begin{prop}
	Each $V_{\chi}(\Gamma,\rho)$ is a Harish-Chandra $(\fg_{\bC},K)$-module. 
\end{prop}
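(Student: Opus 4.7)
The plan is to verify the two defining properties of a Harish--Chandra module for $V_{\chi}(\Gamma,\rho)$ beyond the $(\fg_{\bC},K)$-module structure already supplied by Proposition \ref{propVgk}: namely, (i) every irreducible $K$-representation appears with finite multiplicity, and (ii) $V_{\chi}(\Gamma,\rho)$ is finitely generated over $\mathscr U(\fg_{\bC})$.

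For (i) I would exploit the Casimir $C^{\fg}\in\mathscr Z(\fg_{\bC})$. If $v\in V_{\chi}(\Gamma,\rho)$ lies in the $\tau$-isotypic subspace $L^{2}_{\tau}(\Gamma\backslash G,\widehat{p}^{*}F)$, then by the definition \eqref{eqVchi} of $V_{\chi}(\Gamma,\rho)$ we have $(C^{\fg}-\chi(C^{\fg}))^{i}v=0$ for some $i$. Under the identification \eqref{eqL2taurho}, the right regular action of $C^{\fg}$ is precisely the generalised Laplacian $C^{\fg,Z,\tau^{*},\rho}$ acting on sections of $\cF_{\tau^{*}}\otimes F$ over $Z$. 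Thus $v$ lies in the characteristic space $C^{\infty}_{\tau,\chi(C^{\fg})}(\Gamma\backslash G,\widehat{p}^{*}F)$, which is finite dimensional by the elliptic theory of Section \ref{SgLap} (applied on the compact quotient). This bounds the $\tau$-multiplicity in $V_{\chi}(\Gamma,\rho)$ by $\dim C^{\infty}_{\tau,\chi(C^{\fg})}(\Gamma\backslash G,\widehat{p}^{*}F)$.

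For (ii) I would deduce finite generation from finite length. Any irreducible subquotient $W$ of $V_{\chi}(\Gamma,\rho)$ inherits finite $K$-multiplicities from step (i), and since $W$ is irreducible and carries a generalised infinitesimal character, each $z\in\mathscr Z(\fg_{\bC})$ must in fact act on $W$ by the scalar $\chi(z)$. Hence $W$ is an irreducible Harish--Chandra $(\fg_{\bC},K)$-module with infinitesimal character $\chi$. By Harish--Chandra's theorem \cite[Corollary 10.37]{Knappsemi}, only finitely many isomorphism classes $W_{1},\dots,W_{k}$ of such modules exist. Fixing a $K$-type $\tau_{j}$ occurring in $W_{j}$, the multiplicity of $W_{j}$ as a subquotient in any filtration of $V_{\chi}(\Gamma,\rho)$ is bounded by $\dim V_{\chi}(\Gamma,\rho)^{\tau_{j}}/\dim W_{j}^{\tau_{j}}$, which is finite by (i). Summing over $j$ gives a uniform bound on the length of any finite ascending chain of $(\fg_{\bC},K)$-submodules.

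The main obstacle is converting this uniform bound on chain length into genuine finite length, since a priori $V_{\chi}(\Gamma,\rho)$ is not known to admit a composition series. I would close this by verifying the Noetherian property directly: in any strictly ascending chain of $(\fg_{\bC},K)$-submodules either some $K$-isotypic dimension strictly increases, or a new irreducible isomorphism class appears among the successive quotients; both quantities are bounded by the previous steps, so the chain must stabilise. Once Noetherianity is established, finite length follows from the uniform chain bound, and finite length implies finite generation over $\mathscr U(\fg_{\bC})$, completing the verification of Definition \ref{defHC}.
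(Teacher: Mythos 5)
Your argument is correct in substance and follows essentially the same route as the paper: both prove finite $K$-multiplicity from the containment $V_{\chi}(\Gamma,\rho)\subset \bigoplus_{\tau} C^{\infty}_{\tau,\chi(C^{\fg})}(\Gamma\backslash G,\widehat{p}^{*}F)$, and both then derive finite generation from Harish--Chandra's theorem that only finitely many irreducible $(\fg_{\bC},K)$-modules have infinitesimal character $\chi$, combined with the $K$-multiplicity bound. The difference is only in the packaging. The paper argues by contradiction: if $V_{\chi}(\Gamma,\rho)$ were not finitely generated, one could build an infinite strictly ascending chain of \emph{finitely generated} submodules, which are then automatically Harish--Chandra and so have finite composition series, making the pigeonhole on irreducible constituents immediate. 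You instead bound chain lengths directly, which is fine, but you should make explicit the one step that the paper's construction handles for free: that every nonzero $(\fg_{\bC},K)$-subquotient of $V_{\chi}(\Gamma,\rho)$ contains an \emph{irreducible} Harish--Chandra submodule with infinitesimal character $\chi$. This follows by passing to a cyclic submodule (take $\mathscr U(\fg_{\bC})$ applied to the span of a $K$-orbit of a nonzero vector), which is finitely generated hence Harish--Chandra and so has a socle. Without that observation your chain-length bound only counts the \emph{irreducible} successive quotients and does not by itself bound the length of an arbitrary chain.

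Your final paragraph on Noetherianity is the one place I would flag: the stated dichotomy (``either some $K$-isotypic dimension strictly increases, or a new irreducible isomorphism class appears'') does not make sense as a dichotomy --- a $K$-isotypic dimension \emph{always} strictly increases in a strict inclusion, and that alone bounds nothing since there are infinitely many $K$-types. Moreover the paragraph is unnecessary: a uniform bound $N^{*}$ on the length of all finite strictly ascending chains of $(\fg_{\bC},K)$-submodules (which you have, once the missing step above is supplied) already implies finite length, since any infinite chain would give, by truncation, a finite chain of length exceeding $N^{*}$. Finite length then gives finite generation, and you are done. So the ``main obstacle'' you identify is not an obstacle; deleting that paragraph and making the irreducible-submodule step explicit yields a clean proof equivalent to the paper's.
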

\begin{proof}
	By \eqref{eqVchi}, we have
		\begin{align}\label{eqVchiCchiC}
		V_{\chi}(\Gamma,\rho)\subset \bigoplus_{\tau\in 
		\widehat{K}}C^{\infty}_{\tau,\chi(C^{\fg})}\(\Gamma\backslash 
G,\widehat{p}^{*}F\).
	\end{align} 
In particular,  the multiplicity of each $K$-type in $V_{\chi}(\Gamma,\rho)$ is finite. 

It remains to show 
that $V_{\chi}(\Gamma,\rho)$ is a finitely  generated $\mathscr U(\fg_\bC)$-module. Otherwise,
%
we 
can find a sequence  $(v_{n})_{n\in \mathbf N}$ in 
$V_{\chi}(\Gamma,\rho)$
such that 
\begin{align}
0\varsubsetneq \mathscr U(\fg_{\bC})v_{1}\varsubsetneq \ldots \varsubsetneq 
\sum_{i=1}^{n-1}\mathscr 
U(\fg_{\bC})v_{i}\varsubsetneq \sum_{i=1}^{n}\mathscr 
U(\fg_{\bC})v_{i}\varsubsetneq \cdots
\end{align} 
%
For $n\g 1$,  $\sum_{i=1}^{n}\mathscr U(\fg_{\bC})v_{i}$ is a 
Harish-Chandra $(\fg_{\bC},K)$-module.  Since there are only finitely many 
irreducible Harish-Chandra  $(\fg_{\bC},K)$-modules with infinitesimal character $\chi$ (\cite[Corollary 
10.37]{Knappsemi}), there is at least one, denoted by $V$,   occurs an infinite number of times  in $\sum_{i=1}^{n}\mathscr U(\fg_{\bC})v_{i}/\sum_{i=1}^{n-1}\mathscr 
U(\fg_{\bC})v_{i}$ with $n\g 1$. If $\tau\in \widehat K$ is a $K$-type 
in $V$, then $\tau$ occurs infinitely many times in $V_{\chi}(\Gamma,\rho)$, which is in contradiction with \eqref{eqVchiCchiC}. 
\end{proof}

The following proposition is an immediate consequence of \eqref{eq} 
and \eqref{eqVchi}. 

\begin{prop}
If $(\tau,E_{\tau})$ is a representation of $K$,  then we have a finite sum
\begin{align}\label{eqchsp}
C^{\infty}_{\lambda}\(Z,\cF_{\tau}\otimes 
F\)=\bigoplus_{\chi:\chi(C^{\fg})=\lambda}\(V_{\chi}(\Gamma,\rho)\otimes 
E_{\tau}\)^{K}. 
\end{align} 
\end{prop}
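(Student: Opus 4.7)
The plan is to combine the identification \eqref{eqL2taurho} with the simultaneous generalised eigencharacter decomposition of $\mathscr Z(\fg_{\bC})$ on finite-dimensional characteristic spaces. By additivity of both sides in $\tau$, I first reduce to the case where $\tau$ is irreducible. Applying \eqref{eqL2taurho} with $\tau$ replaced by $\tau^{*}$ and using $\cF_{\tau^{*}}^{*}\simeq\cF_{\tau}$ gives $L^{2}_{\tau^{*}}(\Gamma\backslash G,\widehat{p}^{*}F)\simeq L^{2}(Z,\cF_{\tau}\otimes F)\otimes E_{\tau^{*}}$, and under this identification the Casimir of the right regular representation on the left corresponds to $C^{\fg,Z,\tau,\rho}\otimes\mathrm{id}$ on the right (this is already implicit in the definition \eqref{eq} of $C^{\infty}_{\tau,\lambda}$). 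Tensoring with $E_{\tau}$, taking $K$-invariants, and applying Schur's lemma $(E_{\tau^{*}}\otimes E_{\tau})^{K}\simeq\bC$ then yields
\begin{align*}
C^{\infty}_{\lambda}(Z,\cF_{\tau}\otimes F)\simeq\bigl(C^{\infty}_{\tau^{*},\lambda}(\Gamma\backslash G,\widehat{p}^{*}F)\otimes E_{\tau}\bigr)^{K}.
\end{align*}

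Next, the space $C^{\infty}_{\tau^{*},\lambda}(\Gamma\backslash G,\widehat{p}^{*}F)$ is finite-dimensional by ellipticity, and $\mathscr Z(\fg_{\bC})$ acts on it by commuting endomorphisms. Standard linear algebra for a commutative family of operators on a finite-dimensional space gives a decomposition into simultaneous generalised eigenspaces indexed by characters,
\begin{align*}
C^{\infty}_{\tau^{*},\lambda}(\Gamma\backslash G,\widehat{p}^{*}F)=\bigoplus_{\chi}W_{\chi},\qquad W_{\chi}=\bigl\{v:\exists\, i\in\mathbf N,\ \forall z\in\mathscr Z(\fg_{\bC}),\ (z-\chi(z))^{i}v=0\bigr\}.
\end{align*}
Because $C^{\fg}$ acts with generalised eigenvalue $\lambda$ on the ambient characteristic space, a summand $W_{\chi}$ can be nonzero only when $\chi(C^{\fg})=\lambda$, and there are finitely many such summands by finite-dimensionality. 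Comparing with definition \eqref{eqVchi} and \eqref{eq}, $W_{\chi}$ is exactly the $\tau^{*}$-isotypic component of $V_{\chi}(\Gamma,\rho)$.

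Finally, since $K$-invariants of $V\otimes E_{\tau}$ only detect the $\tau^{*}$-isotypic component of a $K$-module $V$ (by Schur), I have $(V_{\chi}(\Gamma,\rho)\otimes E_{\tau})^{K}=(W_{\chi}\otimes E_{\tau})^{K}$. Applying the exact functor $(\,\cdot\,\otimes E_{\tau})^{K}$ to the decomposition of $C^{\infty}_{\tau^{*},\lambda}$ above and combining with the first step yields the claimed identity with only finitely many nonzero summands. The only non-routine point in the plan is the Casimir identification under \eqref{eqL2taurho}, which is standard Peter--Weyl theory and is already implicit in the definition of $V(\Gamma,\rho)$; the remaining steps are formal manipulations once the finite-dimensional decomposition by $\mathscr Z(\fg_{\bC})$ is in place.
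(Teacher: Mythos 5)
Your argument is correct and is just the careful unpacking of what the paper dismisses in one line ("The following proposition is an immediate consequence of \eqref{eq} and \eqref{eqVchi}"): identify $C^{\infty}_{\lambda}(Z,\cF_{\tau}\otimes F)$ with $(C^{\infty}_{\tau^{*},\lambda}(\Gamma\backslash G,\widehat{p}^{*}F)\otimes E_{\tau})^{K}$ via \eqref{eqL2taurho} and Schur, then decompose the finite-dimensional characteristic space under the commuting action of $\mathscr Z(\fg_{\bC})$ and observe that only the characters with $\chi(C^{\fg})=\lambda$ can contribute. This is the same approach, just spelled out in full.
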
 

We use the notation in Section \ref{sAT}. Recall that ${\rm 
Rep}(\Gamma,\bC^{r})$ is the set of all $r$-dimensional representations of $\Gamma$. 

\begin{prop}\label{propMBK}
	There is $k_{0}\in \mathbf N$ such that 
	if $\chi$ is a character of $\mathscr Z(\fg_{\bC})$ and if  
	$\rho_{0}\in {\rm Rep}(\Gamma,\bC^{r})$ is such that 
	$V_{\chi}(\Gamma,\rho_{0})=0$, then there is $\e>0$ such that  if 
	$\rho\in {\rm Rep}(\Gamma,\mathbf C^{r})$ is $C^{k_{0}}$ $\e$-close to 	$\rho_{0}$, then 
\begin{align}\label{eqVchi0}
	V_{\chi}(\Gamma,\rho)=0. 
\end{align} 
\end{prop}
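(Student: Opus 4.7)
The plan is to reduce the vanishing of $V_\chi(\Gamma, \rho)$ to that of a simultaneous generalised kernel on a finite-dimensional spectral subspace, and then to run a non-self-adjoint spectral perturbation argument in $\rho$. First I apply Proposition \ref{propVc0} to $\chi$ to obtain a finite-dimensional $K$-representation $E$, depending only on $\chi$, such that $V_\chi(\Gamma, \rho) = 0$ is equivalent to $(V_\chi(\Gamma, \rho) \otimes E)^K = 0$. Next, fix a finite set of generators $z_1 = C^\fg, z_2, \ldots, z_\ell$ of $\mathscr Z(\fg_\bC)$. Each $z_j$ is a bi-invariant differential operator on $G$ of bounded order and descends, via the identification
\begin{equation*}
\big(C^\infty(\Gamma\backslash G, \widehat{p}^{*} F_\rho) \otimes E\big)^K = C^\infty(Z, \cF_E \otimes F_\rho),
\end{equation*}
to a differential operator $D_j^\rho$ on $C^\infty(Z, \cF_E \otimes F_\rho)$. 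Combining \eqref{eqVchi} with \eqref{eqchsp}, and observing that on any summand $(V_{\chi'}(\Gamma,\rho)\otimes E)^K$ with $\chi'\neq\chi$ the operator $z_j-\chi(z_j)$ is invertible for some $j$, one verifies that $(V_\chi(\Gamma, \rho) \otimes E)^K$ equals the finite-dimensional simultaneous generalised kernel
\begin{equation*}
W_\chi^\rho \ :=\ \bigcap_{j=1}^\ell \bigcup_{N \g 1} \ker\big(D_j^\rho - \chi(z_j)\big)^N,
\end{equation*}
which is contained in the generalised $\chi(C^\fg)$-characteristic subspace of the generalised Laplacian $C^{\fg, Z, E, \rho}$.

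The second step is to compare $D_j^\rho$ with $D_j^{\rho_0}$. I use the smooth bundle identification $F_\rho \simeq F_{\rho_0}$ from \eqref{eq181}--\eqref{eqFNrho} to view every $D_j^\rho$ as an operator on the fixed space $C^\infty(Z, \cF_E \otimes F_{\rho_0})$. Since the flat connection differs by $A_\rho \in \Omega^1(Z, \End(F_{\rho_0}))$, the difference $D_j^\rho - D_j^{\rho_0}$ is a differential operator of order strictly less than the order of $z_j$, whose coefficients are universal polynomials in $A_\rho$ and its covariant derivatives up to an order at most $\mathrm{ord}(z_j)-1$. Taking $k_0$ to be the maximum of these orders across $j = 1, \ldots, \ell$ yields a constant depending only on $G$ such that, for every Sobolev exponent $s$,
\begin{equation*}
\big\|D_j^\rho - D_j^{\rho_0}\big\|_{\cH^s \to \cH^{s - \mathrm{ord}(z_j) + 1}} \l C_s \, \|A_\rho\|_{C^{k_0}},
\end{equation*}
so the $D_j^\rho$ are uniformly small perturbations of the $D_j^{\rho_0}$ in the relevant Sobolev topology whenever $\rho$ is $C^{k_0}$ $\e$-close to $\rho_0$.

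The conclusion then follows by a non-self-adjoint spectral stability argument parallel to Section \ref{sspGL}. Set $\lambda = \chi(C^\fg)$. Applying the resolvent estimates \eqref{eqres} to $C^{\fg, Z, E, \rho_0}$, pick a small circle $\Upsilon \subset \bC$ around $\lambda$ that avoids $\Sp(C^{\fg, Z, E, \rho_0})$. The difference $D_1^\rho - D_1^{\rho_0} = C^{\fg, Z, E, \rho} - C^{\fg, Z, E, \rho_0}$ is a first-order operator with $C^0$ coefficients of size $O(\|A_\rho\|_{C^{k_0}})$; a Neumann-series expansion of the resolvent along $\Upsilon$ shows that, for $\rho$ sufficiently $C^{k_0}$-close to $\rho_0$, $\Upsilon$ still avoids $\Sp(C^{\fg, Z, E, \rho})$ and the Riesz projector
\begin{equation*}
\pi_\lambda^\rho \ =\ \frac{1}{2i\pi} \oint_\Upsilon \big(\zeta - C^{\fg, Z, E, \rho}\big)^{-1}\, d\zeta
\end{equation*}
depends continuously on $\rho$ in operator norm. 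Its image is a finite-dimensional subspace of constant dimension, and the restrictions $(D_j^\rho - \chi(z_j))|_{\Ran(\pi_\lambda^\rho)}$ form a continuously varying commuting family of endomorphisms whose simultaneous generalised kernel equals $W_\chi^\rho$. Upper semicontinuity forces $W_\chi^\rho = 0$ whenever $W_\chi^{\rho_0} = 0$ and $\rho$ is sufficiently $C^{k_0}$-close to $\rho_0$, which proves \eqref{eqVchi0}.

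The main obstacle is the non-self-adjointness of $C^{\fg, Z, E, \rho}$: since $\rho$ is not assumed unitary, classical Kato-type spectral perturbation theorems do not apply directly, and the argument must rely on the non-self-adjoint resolvent estimates \eqref{eqres} developed in Section \ref{sspGL}. A secondary concern is the universality of $k_0$, which must accommodate the highest differential order occurring among the generators $z_1, \ldots, z_\ell$; this is controlled by the Harish-Chandra isomorphism and ultimately depends only on $G$, not on $\chi$ or $\rho_0$.
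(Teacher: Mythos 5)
Your proposal is correct, but it follows a genuinely different route from the one in the paper. Both proofs begin identically: apply Proposition \ref{propVc0} to reduce the vanishing of $V_{\chi}(\Gamma,\rho)$ to the vanishing of $(V_{\chi}(\Gamma,\rho)\otimes E_{\tau})^{K}$, and fix generators $z_{1}=C^{\fg},z_{2},\ldots$ of $\mathscr Z(\fg_{\bC})$. From there the two arguments diverge. The paper forms a single auxiliary \emph{self-adjoint} nonnegative elliptic operator
\begin{align*}
L_{\rho}=\left\{\(C^{\fg,Z,\tau,\rho}-\chi(C^{\fg})\)^{*}\(C^{\fg,Z,\tau,\rho}-\chi(C^{\fg})\)\right\}^{d}+\sum_{i\g 2}\(z_{i}^{Z,\tau,\rho}-\chi(z_{i})\)^{*}\(z_{i}^{Z,\tau,\rho}-\chi(z_{i})\)
\end{align*}
of order $4d$, with $2d$ exceeding the orders of the $z_{i}^{Z,\tau,\rho}$ for $i\g 2$, and shows directly that $\ker L_{\rho}=0$ is equivalent to $(V_{\chi}(\Gamma,\rho)\otimes E_{\tau})^{K}=0$: the forward direction is immediate since $\ker L_{\rho}=\bigcap_{i}\ker(z_{i}^{Z,\tau,\rho}-\chi(z_{i}))$, and the reverse direction uses a common eigenvector of the commuting $z_{i}$ on a finite-dimensional space. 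Self-adjointness turns $\ker L_{\rho_{0}}=0$ into invertibility, and a Neumann-series factorisation $L_{\rho}=(1+B_{\rho}L_{\rho_{0}}^{-1})L_{\rho_{0}}$ concludes, with $k_{0}=4d-1$. You instead reduce to the finite-dimensional $\lambda$-characteristic subspace via a Riesz projector of the (non-self-adjoint) Casimir $C^{\fg,Z,E,\rho}$, show the $z_{j}$ restrict to a continuously varying commuting family on the range of the projector, and conclude by upper semicontinuity of the simultaneous generalised kernel. Your route is more overtly spectral and in principle yields a smaller $k_{0}=\max_{j}(\mathrm{ord}(z_{j})-1)$; but it incurs extra bookkeeping — constancy of the dimension of the range, transporting the commuting family back to the fixed space $\Ran(\pi_{\lambda}^{\rho_{0}})$, and the argument (which you state but leave implicit) that for a commuting family on a finite-dimensional space the simultaneous \emph{generalised} kernel vanishes iff the simultaneous kernel $\bigcap_{j}\ker T_{j}$ vanishes, which is exactly the common-eigenvector step the paper also uses. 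The paper's construction of $L_{\rho}$ packages all of this into a single elliptic self-adjoint invertibility statement and avoids tracking the Riesz projector; your version is closer in spirit to classical spectral perturbation theory but must be carried out with the non-self-adjoint resolvent estimates of Section \ref{sspGL}, as you correctly observe.
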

\begin{proof}
	We fix a character $\chi$ of $\mathscr Z(\fg_{\bC})$. Let $E_{\tau}$ be 
	the $K$-representation constructed  in Proposition \ref{propVc0}, so that 
	\eqref{eqVchi0} is equivalent to  
\begin{align}\label{eqFM3A}
\(	V_{\chi}(\Gamma,\rho)\otimes E_{\tau}\)^{K}=0.
\end{align} 

By \cite[Theorem 8.19]{Knappsemi},  $\mathscr Z(\fg_{\bC})$ is a 
polynomial algebra of $\dim \fh$ variables. Let $z_{1}=C^{\fg},z_{2},\ldots,z_{\dim \fh}$ be a set of generators.  If 
$z\in \mathscr Z(\fg_{\bC})$, 
denote by $z^{Z,\tau,\rho}$ the action of $z$ on 
$C^{\infty}(Z,\cF_{\tau}\otimes F_{\rho})$. Clearly, 
$z^{Z,\tau,\rho}$ is a differential operator. Recall   that we have choosen a Hermitian metric on $F_{\rho}$ and it induces an  
$L^{2}$-product on $C^{\infty}(Z,\cF_{\tau}\otimes F_{\rho})$. Take 
$d\in \mathbf N$ such that $2d$ is strictly larger  than the order of  
the differential operators $z_{i}^{Z,\tau,\rho}$ with $2\l i\l \dim \fh$. 
Set
\begin{multline}\label{eqLrho1}
L_{\rho}=	
\left\{\(C^{\fg,Z,\tau,\rho}-\chi(C^{\fg})\)^{*}\(C^{\fg,\tau,E,\rho}-\chi(C^{\fg})\)\right\}^{d}\\
+\sum_{i=2}^{\dim 
	\fh}\(z^{Z,\tau,\rho}_{i}-\chi(z_{i})\)^{*}\(z_{i}^{Z,\tau,\rho}-\chi(z_{i})\),
\end{multline} 
where ${}^*$ is the  adjoint with respect to the $L^{2}$-product. Then $L_{\rho}$ is a self-adjoint elliptic differential operator of 
order $4d$. 

We claim that 
	\begin{align}\label{eqLrho15}
		\(V_{\chi}(\Gamma,\rho)\otimes E_{\tau}\)^{K} =0\iff  \ker 
		L_{\rho}=0. 
	\end{align} 
Indeed, by \eqref{eqLrho1}, we have
	\begin{align}\label{eqLrho2}
\ker L_{\rho}=\bigcap_{i=1}^{\dim \fh}\ker 
		\(z_i^{Z,\tau,\rho}-\chi(z_{i})\)\subset  \(V_{\chi}(\Gamma,\rho)\otimes 
		E_{\tau}\)^{K},
	\end{align} 
which implies the direction $\implies$. For the other direction, if $\(V_{\chi}(\Gamma,\rho)\otimes 
		E_{\tau}\)^{K}\neq0$, since the operators $z_{i}^{Z,\tau,\rho}$ commute and act on the finite dimensional vector space $\(V_{\chi}(\Gamma,\rho)\otimes 
		E_{\tau}\)^{K}$, they have  a common eigenvector. This means 	$\ker L_{\rho}\neq0$.

		Fix now $\rho_{0}\in {\rm Rep}(\Gamma,\bC^{r})$ such that 
		$V_{\chi}(\Gamma,\rho_{0})=0$. By \eqref{eqLrho15}, $L_{\rho_{0}}$ is 
		invertible.  As in 
		\eqref{eqFNrho1}, if $\rho$ is in a small  $C^{0}$-neighborhood  of 
		$\rho_{0}$, we can  identity $(F_{\rho},\nabla^{F_{\rho}})$ 
		with $(F_{\rho_{0}},\nabla^{F_{\rho_{0}}}+A_{\rho})$. We take $g^{F_{\rho}}=g^{F_{\rho_{0}}}$. 
%
Then, $L_{\rho}$ and $L_{\rho_{0}}$ act on the same space 
$C^{\infty}(Z,\cF_{\tau}\otimes F_{\rho_{0}})$ with the same principal symbol. Set
	\begin{align}\label{eqBrho}
		B_{\rho}=L_{\rho}-L_{\rho_{0}}. 
	\end{align} 
Then, $B_{\rho}$ is a differential operator of order $4d-1$. By \eqref{eqFNrho}, there is $C>0$ such that 	
\begin{align}\label{eq629}
		\|B_{\rho}\|_{\cH^{4d-1},L^{2}}\l C\|A_{\rho}\|_{C^{4d-1}}. 
	\end{align} 

By \eqref{eqBrho}, since $L_{\rho_{0}} $	is invertible, we have 
	\begin{align}
		L_{\rho}=\(1+B_{\rho}L^{-1}_{\rho_{0}}\)L_{\rho_{0}}. 
	\end{align} 
Take $k_{0}=4d-1$. By \eqref{eq629}, if 
$\|A_{\rho}\|_{C^{k_{0}}}$ is small enough, then  
$\|B_{\rho}L_{\rho_{0}}^{-1}\|_{L^{2},L^{2}}<1$ and  
 $L_{\rho}$ is invertible. By \eqref{eqLrho15}, we get \eqref{eqFM3A}. 
\end{proof}


We have an analogue \cite[(8-68)]{Shfried}. 

\begin{thm}\label{thml3}
	Let $\rho_{0}\in {\rm Rep}(\Gamma,\bC^{r})$ be an acyclic and unitary representation of $\Gamma$. There is $\e>0$ such that if 
$\rho$ is $C^{k_{0}}$ $\e$-close to $\rho_{0}$, then 
\begin{align}\label{eqV1=0}
	V_{\chi_{\mathbf 1}}(\Gamma,\rho)=0.
\end{align} 
\end{thm}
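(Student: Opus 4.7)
The proof is essentially a direct combination of two results: the perturbation principle established in Proposition \ref{propMBK}, and the author's earlier classification of when $V_{\chi_{\mathbf 1}}(\Gamma,\rho_0)$ vanishes in the unitary case.

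First, I would invoke \cite[Proposition 8.12]{Shfried}, whose proof relies on Vogan-Zuckerman's classification of $(\fg_{\bC},K)$-modules with trivial infinitesimal character together with Vogan's and Salamanca-Riba's results on cohomological unitary representations. That proposition asserts that for a \emph{unitary} representation $\rho_0:\Gamma\to\GL_r(\bC)$, one has the equivalence
\begin{align}
V_{\chi_{\mathbf 1}}(\Gamma,\rho_0)=0 \iff \rho_0 \text{ is acyclic}.
\end{align}
Since $\rho_0$ is assumed to be both unitary and acyclic, we conclude that $V_{\chi_{\mathbf 1}}(\Gamma,\rho_0)=0$.

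Second, I would apply Proposition \ref{propMBK} with $\chi=\chi_{\mathbf 1}$. That proposition furnishes $k_0\in\mathbf N$ (independent of $\chi$) such that the vanishing of $V_{\chi_{\mathbf 1}}(\Gamma,\rho_0)$ is preserved under $C^{k_0}$-small perturbations: there exists $\e>0$ such that whenever $\rho$ is $C^{k_0}$ $\e$-close to $\rho_0$ (in the sense of \eqref{eqkeclose}), one has $V_{\chi_{\mathbf 1}}(\Gamma,\rho)=0$. This yields precisely \eqref{eqV1=0}.

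The only subtlety, which is already absorbed in the statement of Proposition \ref{propMBK}, is the verification that $L_\rho$ depends continuously on $\rho$ in a controlled way — this is what forces us to use $C^{k_0}$-closeness rather than just $C^0$-closeness, since the generators $z_2,\ldots,z_{\dim\fh}$ of $\mathscr Z(\fg_{\bC})$ act as higher order differential operators whose lower order terms depend on the connection $1$-form $A_\rho$ and its derivatives. Since this machinery is already packaged in Proposition \ref{propMBK}, no further work is needed; the theorem follows by the two-step argument above.
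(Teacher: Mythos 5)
Your proof is correct and follows the paper's argument exactly: first apply \cite[Proposition 8.12]{Shfried} (which rests on Vogan-Zuckerman, Vogan, and Salamanca-Riba) to conclude $V_{\chi_{\mathbf 1}}(\Gamma,\rho_0)=0$ for the acyclic unitary $\rho_0$, then invoke Proposition \ref{propMBK} with $\chi=\chi_{\mathbf 1}$ to propagate this vanishing to all $\rho$ that are $C^{k_0}$ $\e$-close to $\rho_0$. Your closing remark about why $C^{k_0}$-closeness (rather than $C^0$) is needed is accurate but, as you note, already absorbed into Proposition \ref{propMBK}.
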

\begin{proof}
In \cite[Proposition 8.12]{Shfried}, using  fundamental results of 
	Vogan-Zuckerman \cite{VoganZuckerman}, Vogan \cite{Vogan2}, and 
	Salamanca-Riba \cite{Salamanca}, we have shown that  $\rho_{0}$ 
	is unitary and acyclic, if and only if 
\begin{align}\label{eqVVZSR}
	V_{\chi_{\bf 1}}(\Gamma,\rho_{0})=0. 
\end{align} 
By Proposition \ref{propMBK}, if $\rho$ is close enough to 
$\rho_{0}$ in the sense of $C^{k_{0}}$, we get \eqref{eqV1=0}. 
%
\end{proof}

\subsection{Formulas for $r_{\eta,\rho}$ and $r_{j}$}\label{secnonbetti}
Assume now $\delta(G)=1$ and $Z_{G}$ is compact. Let $\rho:\Gamma\to 
\GL_{r}(\bC)$ be a representation of $\Gamma$, and let  $\eta$ be a 
virtual representation of $M$ satisfying  Assumption \ref{ass}. 
Recall  that $\widehat{\eta}$ is defined in \eqref{eqetahat}.

If $V$ is a 
Harish-Chandra $(\fg_{\bC},K)$-module, denote by 
$H^{\scriptsize\bullet}(\fg,K;V)$ the associated  $(\fg_{\bC},K)$-cohomology and by $H_{\scriptsize\bullet}(\fn,V)$ the 
$\fn$-homology. By \cite[Proposition 2.24]{HechtSchmid}, 
$H_{\scriptsize\bullet}(\fn,V)$ is a Harish-Chandra 
$(\fm_{\bC},K_{M})$-module. Let $\chi(K/K_{M})$ be the Euler 
characteristic number 
of $K/K_{M}$. 

\begin{prop}\label{prop810}
	The following identity holds,  
\begin{multline}\label{eqretar1}
r_{\eta,\rho}=\frac{1}{\chi(K/K_{M})}
\sum_{\tiny{\substack{\chi:\chi(C^{\fg})=0\\ 0\l i\l \dim \fp_{\fm}\\ 
0\l k \l 2\ell}}}
(-1)^{i+k}
 \Big\{\dim H^{i}\(\fm,K_{M};H_{k}\(\fn,V_{\chi}(\Gamma,\rho)\)\otimes 
E_{\eta}^{+}\)\\
-\dim 
H^{i}\(\fm,K_{M};H_{k}\(\fn,V_{\chi}(\Gamma,\rho)\)\otimes 
E_{\eta}^{-}\)\Big\},
\end{multline} 
where the above sum is finite. 
\end{prop}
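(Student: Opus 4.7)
The starting point is the identification \eqref{eqchsp}: applied with $E_\tau = E_{\widehat{\eta}^\pm}$ and $\lambda = 0$, combined with the definition \eqref{eqretarho} of $r_{\eta,\rho}$, it yields
\begin{align*}
r_{\eta,\rho} = \sum_{\chi:\,\chi(C^\fg)=0}\Big[\dim\big(V_\chi(\Gamma,\rho)\otimes E_{\widehat\eta^+}\big)^K - \dim\big(V_\chi(\Gamma,\rho)\otimes E_{\widehat\eta^-}\big)^K\Big].
\end{align*}
Each $V_\chi(\Gamma,\rho)$ is Harish-Chandra, so by \cite[Corollary~10.37]{Knappsemi} it has a finite composition series whose irreducible factors have infinitesimal character $\chi$; in particular the $K$-type content that can pair nontrivially with the fixed virtual $E_{\widehat\eta}$ is controlled by $\chi$, and only finitely many characters with $\chi(C^\fg)=0$ contribute a nonzero term. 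This yields the finiteness of the sum in \eqref{eqretar1}.

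The heart of the proof is a universal cohomological identity valid for every Harish-Chandra $(\fg_\bC,K)$-module $V$:
\begin{align*}
\chi(K/K_M)\Big[\dim(V\otimes E_{\widehat\eta^+})^K - \dim(V\otimes E_{\widehat\eta^-})^K\Big] = \sum_{\substack{0\le i\le\dim\fp_\fm\\0\le k\le 2\ell}}(-1)^{i+k}\dim H^i\big(\fm,K_M;H_k(\fn,V)\otimes E_\eta\big),
\end{align*}
where the right-hand side is understood virtually in $E_\eta = E_\eta^+ - E_\eta^-$. Granted this, applying it to each $V=V_\chi(\Gamma,\rho)$ and summing over the finitely many relevant $\chi$ immediately yields \eqref{eqretar1}.

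To establish the universal identity I would follow the strategy of \cite[Proposition~8.11]{Shfried}. First, I would exploit the defining relation $E_{\widehat\eta}|_{K_M} = \Lambda^{\scriptscriptstyle\bullet}(\fp_{\fm,\bC}^*)\widehat{\otimes} E_\eta|_{K_M}$ and the splitting $\fp = \fb\oplus\fp_\fm\oplus\fc_\fp$ to convert $(V\otimes E_{\widehat\eta})^K$ into an expression involving $K_M$-invariants; the factor $\chi(K/K_M)$ enters at this stage from the Euler--Poincar\'e relation between $K$- and $K_M$-invariants on the finite-dimensional $K$-subspace of $V$ paired with $E_{\widehat\eta}$. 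Second, I would apply a Hochschild--Serre type spectral sequence for the parabolic-like subalgebra $\fm\oplus\fn\subset\fg$ (equivalently, the Koszul complex computing $(\fg,K_M)$-cohomology from $(\fm,K_M)$-cohomology with coefficients in $\fn$-homology) to rewrite the resulting Euler characteristic as the double sum on the right-hand side. Third, I would use the duality $\fc_-^*\simeq\fc_+$ from Proposition~\ref{prop310}, Corollary~\ref{propVAM}, and the isomorphism $\fc_\fp\simeq\fn$ of $K_M$-representations to identify the graded pieces with the stated summation range $0\le k\le 2\ell$.

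The main obstacle is the bookkeeping of signs in the second step. Both $E_\eta$ and $E_{\widehat\eta}$ are $\mathbf Z_2$-graded virtual modules, the Koszul complexes associated with $\fn$ and $\ol\fn$ contribute independent signed gradings, and the exterior-algebra isomorphism of Corollary~\ref{propVAM} shifts the degrees; one must verify that the total sign collapses to exactly $(-1)^{i+k}$. A secondary subtlety is that $\rho$ need not be unitary, so $V_\chi(\Gamma,\rho)$ carries only a \emph{generalised} infinitesimal character; this is resolved by noting that all the cohomological identities above are additive on short exact sequences and hold already on each irreducible factor of the composition series \eqref{eqVcomf}, so the extension to the generalised case is formal.
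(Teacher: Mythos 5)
Your proposal follows the same line as the paper. The paper first reduces $r_{\eta,\rho}$ via \eqref{eqretarho} and \eqref{eqchsp} to a finite sum of $\dim\bigl(V_\chi\otimes E_{\widehat\eta}^\pm\bigr)^K$ over $\chi$ with $\chi(C^{\fg})=0$, exactly as you do; it then states the key identity \eqref{eqreta10} (your ``universal cohomological identity,'' with the Euler formula \cite[(8-85)]{Shfried} converting $K_M$-invariants of $\Lambda^{\bullet}(\fp_\fm^*)\otimes\cdots$ to $(\fm,K_M)$-cohomology), establishes it on irreducible factors by invoking the non-unitary extension of \cite[Theorem~8.14]{Shfried}, and passes to the generalised-infinitesimal-character case by additivity over the composition series \eqref{eqVcomf} and the $\fn$-homology long exact sequence \eqref{eqexnV} — which is precisely the reduction you describe in your final paragraph.
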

\begin{proof}
We claim that	it is enough  to show that \begin{multline}\label{eqreta10}
\dim	\(V_{\chi}(\Gamma,\rho)\otimes  
E_{\widehat{\eta}}^{+}\)^{K}-\dim \(V_{\chi}(\Gamma,\rho)\otimes  
E_{\widehat{\eta}}^{-}\)^{K}\\
=\frac{1}{\chi(K/K_{M})}\sum_{\tiny{\substack{ 0\l i\l \dim \fp_{\fm}\\ 
0\l k \l 2\ell}}} (-1)^{i+k} \Big\{\dim 
\(\Lambda^{i}(\fp_{\fm}^{*})\otimes H_{k}\(\fn,V_{\chi}(\Gamma,\rho)\)\otimes 
E_{\eta}^{+}\)^{K_{M}}
 \\
-\dim 
\(\Lambda^{i}(\fp_{\fm}^{*})\otimes H_{k}\(\fn,V_{\chi}(\Gamma,\rho)\)\otimes 
E_{\eta}^{-}\)^{K_{M}}
\Big\}. 
\end{multline} 
Indeed, by \eqref{eqretarho} and \eqref{eqchsp},  we have a finite sum 
	\begin{align}\label{eqrt1}
r_{\eta,\rho}=\sum_{\chi:\chi(C^{\fg})=0}\left\{\dim \(V_{\chi}(\Gamma,\rho)\otimes 
E_{\widehat{\eta}}^{+}\)^{K}-\dim \(V_{\chi}(\Gamma,\rho)\otimes 
E_{\widehat{\eta}}^{-}\)^{K}\right\}.
\end{align} 
Equation \eqref{eqretar1} follows from  
\eqref{eqreta10}, \eqref{eqrt1},  and the Euler formula \cite[(8-85)]{Shfried}. 

If $V_{\chi}(\Gamma,\rho)$ is unitary and 
irreducible, then \eqref{eqreta10} is just  \cite[Theorem 
8.14]{Shfried}. However, the proof  extends to any irreducible 
$(\fg_{\bC},K)$-module without any change. In particular, 
if $V_{\chi}(\Gamma,\rho)$ has a composition series \eqref{eqVcomf}, 
for $0\l s\l N_{0}$, we have
\begin{multline}\label{eqreta1}
\dim	\((V_{s}/V_{s-1})\otimes  
E_{\widehat{\eta}}^{+}\)^{K}-\dim \((V_{s}/V_{s-1})\otimes  
E_{\widehat{\eta}}^{-}\)^{K}\\
=\frac{1}{\chi(K/K_{M})}\sum_{\tiny{\substack{ 0\l i\l \dim \fp_{\fm}\\ 
0\l k \l 2\ell}}} (-1)^{i+k} \Big\{\dim 
\(\Lambda^{i}(\fp_{\fm}^{*})\otimes H_{k}\(\fn,V_{s}/V_{s-1}\)\otimes 
E_{\eta}^{+}\)^{K_{M}}
 \\
-\dim 
\(\Lambda^{i}(\fp_{\fm}^{*})\otimes H_{k}\(\fn,V_{s}/V_{s-1}\)\otimes 
E_{\eta}^{-}\)^{K_{M}}
\Big\}. 
\end{multline}

Using the exact sequence of the Harish-Chandra $(\fg_{\bC},K)$-modules,
\begin{align}\label{eqexaVii1}
	0\to V_{s-1}\to V_{s}\to V_{s}/V_{s-1}\to0,
\end{align} 
 we have 
 \begin{align}\label{eqreta2}
 \dim \((V_{s}/V_{s-1})\otimes  
 E_{\widehat{\eta}}^{\pm}\)^{K}=\dim \(V_{s}\otimes  
 E_{\widehat{\eta}}^{\pm}\)^{K}-\dim \(V_{s-1}\otimes  
 E_{\widehat{\eta}}^{\pm}\)^{K}.
 \end{align} 

 On the other hand, by \eqref{eqexaVii1},
we have the long exact sequence of Harish-Chandra 
$(\fm_{\bC},K_{M})$-modules, 
\begin{align}\label{eqexnV}
\cdots \to	H_{k}(\fn,V_{s-1})\to H_{k}(\fn,V_{s})\to 
	H_{k}(\fn,V_{s}/V_{s-1})\to  H_{k-1}(\fn,V_{s-1})\to\cdots
\end{align} 
By \eqref{eqexnV}, for $0\l i\l \dim \fp_{\fm}$ and $0\l s\l N_{0}$, we have
\begin{multline}\label{eqreta3}
\sum_{k=0}^{2\ell}(-1)^{k}\dim 
\(\Lambda^{i}(\fp_{\fm}^{*})\otimes H_{k}\(\fn,V_{s}/V_{s-1}\)\otimes 
E_{\eta}^{\pm}\)^{K_{M}}\\
=\sum_{k=0}^{2\ell}(-1)^{k}\Big\{\dim 
\(\Lambda^{i}(\fp^{*}_{\fm})\otimes H_{k}\(\fn,V_{s}\)\otimes 
E_{\eta}^{\pm}\)^{K_{M}}\\
-\dim 
\(\Lambda^{i}(\fp_{\fm}^{*})\otimes H_{k}\(\fn,V_{s-1}\)\otimes 
E_{\eta}^{\pm}\)^{K_{M}}\Big\}.
\end{multline} 
By \eqref{eqreta1}, \eqref{eqreta2}, and \eqref{eqreta3}, we get 
\eqref{eqreta10}. 
\end{proof} 

Recall that for $0\l j\l 2\ell$, $\eta_{j}$ is the adjoint representation of $M$ on 
$\Lambda^{j}(\fn_{\bC}^{*})$. In Section \ref{ssetajj}, we 
have seen that $\eta_{j}$ satisfies Assumption \ref{ass}. 
\begin{prop}\label{prop811}
 If $V$ is an irreducible Harish-Chandra 	$(\fg_{\bC},K)$-module with infinitesimal character $\chi$ such that  $\chi(C^{\fg})=0$ 
 and 
\begin{align}
\bigoplus_{\tiny{\substack{ 0\l i\l \dim \fp_{\fm}\\ 
0\l k \l 2\ell}}} 
H^{i}\({\fm},K_{M}; H_{k}(\fn,V)\otimes 
\Lambda^{j}(\fn^{*}_{\bC})\)\neq0, 	
\end{align} 
then 
 \begin{align}
 	\chi=\chi_{\mathbf 1}.
 \end{align} 
\end{prop}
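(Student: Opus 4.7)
The plan is to transport the argument of \cite[Proposition 8.12]{Shfried} from the unitary setting to the general irreducible setting by purely infinitesimal-character considerations, replacing the Vogan--Zuckerman/Salamanca-Riba input (which crucially used unitarity) with Casselman--Osborne together with Wigner's lemma. Throughout, write $\chi$ via the Harish-Chandra isomorphism as a Weyl group orbit $W\lambda \subset \fh^*_\bC$, where $W$ is the Weyl group of $(\fg_\bC,\fh_\bC)$, and let $W_M$ denote the Weyl group of $(\fm_\bC,\ft_\bC)$; note $\fh \cap \fm = \ft$ by \eqref{eqKMic2}.

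First, I would apply the Casselman--Osborne theorem, in the refined form of Hecht--Schmid \cite{HechtSchmid} that is already invoked in the paper, to read off the infinitesimal characters of the Harish-Chandra $(\fm_\bC,K_M)$-module $H_k(\fn, V)$ from $\chi$: each generalised infinitesimal character of $H_k(\fn,V)$ is of the form $W_M\cdot(w\lambda|_\ft)$ for some $w\in W$. Since $\delta(M)=0$ by \eqref{eqdm=0}, the groups $M$ and $K_M$ have the same complex rank, so Wigner's lemma applies: for any Harish-Chandra $(\fm_\bC,K_M)$-module $U$ with generalised infinitesimal character $\xi$ and any finite-dimensional $\fm$-module $L$ with infinitesimal character $\xi'$, the relative Lie algebra cohomology $H^i(\fm,K_M;U\otimes L)$ vanishes unless $\xi=\xi'$. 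I would then invoke Kostant's theorem to decompose the $\fm$-module $\Lambda^j(\fn^*_\bC)$ into irreducible pieces indexed by minimal-length coset representatives in $W_M\backslash W$, each with infinitesimal character obtained from the half-sum of positive $\fg$-roots by the corresponding Weyl element, restricted to $\ft$.

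The non-vanishing hypothesis on $H^i(\fm,K_M;H_k(\fn,V)\otimes\Lambda^j(\fn^*_\bC))$ then forces some orbit $W_M\cdot(w\lambda|_\ft)$ to coincide with an infinitesimal character coming from Kostant's decomposition, which is to say that $\lambda|_\ft$ lies in the $W$-orbit of the restriction to $\ft$ of the half-sum of positive $\fg$-roots. Combined with the hypothesis $\chi(C^\fg)=0$, which via Harish-Chandra reads as a quadratic equality on $\lambda$ relative to the form induced by $B$ (matching the norm of the half-sum of positive roots), and using that $\dim\fb=1$ so that the restriction map $\fh^*_\bC\to\ft^*_\bC$ has a one-dimensional kernel along $\fb^*_\bC$, one concludes that $\lambda$ itself lies in the $W$-orbit of the half-sum of positive roots, hence $\chi=\chi_{\mathbf 1}$.

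The main obstacle will be bookkeeping the various $\rho$-shifts (for $\fg$ versus $\fm$, and for the $\fn$-cohomology versus $\fn^*$-decomposition) and verifying that the $\delta(G)=1$ structure from Section~\ref{srank1}, specifically that $\fb$ acts on $\fn$ via the single weight $\alpha_0$ by Proposition \ref{prop:nnam1}, is precisely what pins down the $\fb^*$-component of $\lambda$: without this, the Casimir equation alone would leave a one-real-parameter family of candidate $\lambda$'s, and only the rank-one nature of the weight system on $\fn$ makes the combinatorial matching in Kostant's decomposition rigid enough to eliminate all but $W\rho_\fg$.
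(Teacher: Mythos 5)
Your strategy is essentially the paper's own: the paper disposes of this proposition by observing that the proof of \cite[Proposition 8.17]{Shfried} (which uses Hecht--Schmid on $\fn$-homology, Wigner's lemma on relative Lie algebra cohomology, Kostant's decomposition of $\Lambda^{\scriptscriptstyle\bullet}(\fn^{*}_{\bC})$, and the Casimir condition together with the rank-one structure of $\fb$) never actually invokes unitarity and hence applies verbatim to any irreducible Harish--Chandra $(\fg_{\bC},K)$-module. The one confusion in your framing is that you cite \cite[Proposition 8.12]{Shfried} and speak of ``replacing the Vogan--Zuckerman/Salamanca--Riba input.'' Those results are used in \cite[Proposition 8.12]{Shfried}, which is the characterisation of acyclic unitary representations in terms of $V_{\chi_{\bf 1}}(\Gamma,\rho_{0})=0$; that is a genuinely different statement and is handled by the present paper in Theorem \ref{thml3} via a perturbation argument (Proposition \ref{propMBK}). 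The proposition you are asked to prove corresponds to \cite[Proposition 8.17]{Shfried}, whose proof already proceeds by the Casselman--Osborne/Hecht--Schmid $+$ Wigner $+$ Kostant route you describe; so you are not supplying a new argument to replace Vogan--Zuckerman, you are reconstructing the original argument and correctly noting it survives dropping unitarity. Your flagged ``main obstacle'' --- the $\rho$-shift bookkeeping and pinning down the $\fb^{*}$-component of $\lambda$ from the Kostant matching and the quadratic Casimir relation using $\dim\fb=1$ and Proposition \ref{prop:nnam1} --- is indeed where the real work lies, and you would need to verify (as \cite{Shfried} does) that the two roots of the quadratic in the $\fb^{*}$-variable both land in the $W$-orbit of the half-sum of positive roots, rather than leaving a spurious second infinitesimal character.
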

\begin{proof}
	The proof given in \cite[Proposition 
8.17]{Shfried} where $V$ is unitary extends to any 
irreducible Harish-Chandra 	$(\fg_{\bC},K)$-module without any 
change. 
\end{proof}

\begin{cor}\label{cor812}
If $V$ is Harish-Chandra 
	$(\fg_{\bC},K)$-module with generalised infinitesimal character 
	$\chi$ such that $\chi(C^{\fg})=0$ and 
\begin{align}\label{eq837}
	\sum_{\tiny{\substack{ 0\l i\l \dim \fp_{\fm}\\ 
0\l k \l 2\ell}}}(-1)^{i+k} \dim H^{i}\({\fm},K_{M}; 
H_{k}(\fn,V)\otimes \Lambda^{j}(\fn^{*}_{\bC})\)\neq0, 	
\end{align} 
then
\begin{align}\label{eqchi00}
	\chi=\chi_{\mathbf 1}.
\end{align} 
\end{cor}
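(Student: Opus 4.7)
The plan is to reduce the statement to the irreducible case handled by Proposition \ref{prop811} via a composition series argument, in the same spirit as the proof of Proposition \ref{prop810}. First, since $V$ is a Harish-Chandra $(\fg_{\bC},K)$-module, by \eqref{eqVcomf} it admits a finite composition series
\begin{align*}
V=V_{N_{0}}\supset V_{N_{0}-1}\supset \cdots\supset V_{0}\supset V_{-1}=0
\end{align*}
with irreducible quotients $V_{s}/V_{s-1}$. Since each $V_{s}/V_{s-1}$ is an irreducible quotient of a submodule of $V$, the action of $\mathscr Z(\fg_{\bC})$ on it is killed by some power of $z-\chi(z)$; but irreducibility combined with Schur's lemma forces this action to be scalar. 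Hence each composition factor $V_{s}/V_{s-1}$ has infinitesimal character exactly $\chi$, and in particular $C^{\fg}$ acts as $\chi(C^{\fg})=0$.

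Next, I would exploit exactness properties. For each $s$, the short exact sequence $0\to V_{s-1}\to V_{s}\to V_{s}/V_{s-1}\to 0$ of $(\fg_{\bC},K)$-modules induces, as recalled in \eqref{eqexnV}, a long exact sequence of Harish-Chandra $(\fm_{\bC},K_{M})$-modules in $\fn$-homology; tensoring by the finite dimensional $(\fm_{\bC},K_{M})$-module $\Lambda^{j}(\fn^{*}_{\bC})$ preserves exactness, and then passing to $H^{\scriptscriptstyle\bullet}(\fm,K_{M};\cdot)$ yields another long exact sequence. Taking the alternating sum in both $i$ and $k$ of dimensions along these long exact sequences shows that the Euler characteristic
\begin{align*}
\chi_{V}:=\sum_{\substack{0\l i\l \dim \fp_{\fm}\\ 0\l k\l 2\ell}}(-1)^{i+k}\dim H^{i}\(\fm,K_{M};H_{k}(\fn,V)\otimes \Lambda^{j}(\fn^{*}_{\bC})\)
\end{align*}
is additive on short exact sequences. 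Iterating over the composition series, one obtains
\begin{align*}
\chi_{V}=\sum_{s=0}^{N_{0}}\chi_{V_{s}/V_{s-1}}.
\end{align*}

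Now the hypothesis $\chi_{V}\neq 0$ forces $\chi_{V_{s_{0}}/V_{s_{0}-1}}\neq 0$ for some $s_{0}$. In particular, at least one term in this alternating sum is nonzero, so a fortiori
\begin{align*}
\bigoplus_{\substack{0\l i\l \dim \fp_{\fm}\\ 0\l k\l 2\ell}} H^{i}\(\fm,K_{M};H_{k}\(\fn,V_{s_{0}}/V_{s_{0}-1}\)\otimes \Lambda^{j}(\fn^{*}_{\bC})\)\neq 0.
\end{align*}
Since $V_{s_{0}}/V_{s_{0}-1}$ is irreducible with infinitesimal character $\chi$ and $\chi(C^{\fg})=0$, Proposition \ref{prop811} applies and yields $\chi=\chi_{\mathbf 1}$, which is the claim. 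The only non-routine point, which I would verify carefully, is the exactness of the relevant functors: $\fn$-homology is a homological functor by construction, tensoring with a finite dimensional module is exact, and $H^{\scriptscriptstyle\bullet}(\fm,K_{M};\cdot)$ is a derived functor, so the required long exact sequences and the additivity of Euler characteristics on the composition series are all automatic.
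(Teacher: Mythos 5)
Your proposal follows the paper's proof essentially step for step: pass to a composition series, use additivity of the Euler characteristic over the composition factors to find a factor with nonvanishing cohomology, and invoke Proposition \ref{prop811}. One small imprecision worth flagging: applying $H^{\scriptscriptstyle\bullet}(\fm,K_M;\cdot)$ to the long exact sequence \eqref{eqexnV} does \emph{not} directly yield another long exact sequence (a derived functor acts on short exact sequences, not long ones). The paper sidesteps this by invoking the Euler formula \cite[(8-85)]{Shfried} to rewrite the $(\fm,K_M)$-cohomology Euler characteristic as an alternating sum of dimensions of $K_M$-invariants, for which additivity along \eqref{eqexnV} is immediate (this is exactly \eqref{eqreta3}). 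Your conclusion — additivity of $\chi_{V}$ on short exact sequences, hence over the composition series — is still correct (chop the long exact sequence into short ones and apply the Euler–Poincar\'e principle twice), but the justification as written would need that patching. The remaining points, in particular that each irreducible composition factor inherits the infinitesimal character $\chi$ with $\chi(C^{\fg})=0$ so that Proposition \ref{prop811} applies, are fine.
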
 
\begin{proof}
	If $V$ has a composition series \eqref{eqVcomf}, by 
	\eqref{eqreta3} and the Euler formula \cite[(8-85)]{Shfried}, we have
	\begin{multline}\label{eq812}
	\sum_{\tiny{\substack{ 0\l i\l \dim \fp_{\fm}\\ 
0\l k \l 2\ell}}}(-1)^{i+k} \dim 
	H^{i}\({\fm},K_{M}; H_{k}(\fn,V)\otimes 
	\Lambda^{j}(\fn^{*}_{\bC})\)\\
	=\sum_{s=0}^{N_{0}}\sum_{\tiny{\substack{ 0\l i\l \dim \fp_{\fm}\\ 
0\l k \l 2\ell}}}(-1)^{i+k} \dim 
H^{i}\({\fm},K_M; H_{k}(\fn,V_{s}/V_{s-1})\otimes 
	\Lambda^{j}(\fn^{*}_{\bC})\).
\end{multline} 
By \eqref{eq837}, \eqref{eq812}, there is $0\l s\l N_{0}$ such that  
\begin{align}
\bigoplus_{\tiny{\substack{ 0\l i\l \dim \fp_{\fm}\\ 
0\l k \l 2\ell}}} 
H^{i}\({\fm},K_{M}; H_{k}(\fn,V_{s}/V_{s-1})\otimes 
\Lambda^{j}(\fn^{*}_{\bC})\)\neq0.
\end{align} 
By Proposition \ref{prop811}, we get \eqref{eqchi00}. 
\end{proof}
We have a generalisation of \cite[Corollary 8.18]{Shfried}. 
\begin{thm}\label{thmcor818}
	For $0\l j\l 2\ell$, we have 
\begin{align}\label{eqrjma}
r_{j}=\frac{1}{\chi(K/K_{M})}\sum_{\tiny{\substack{ 0\l i\l \dim \fp_{\fm}\\ 
0\l k \l 2\ell}}} (-1)^{i+k}\dim 
H^{i}\(\fm,K_{M};H_{k}\(\fn,V_{\chi_{\bf 1}}(\Gamma,\rho)\)\otimes 
\Lambda^{j}(\fn^{*}_{\bC})\). 	
\end{align} 
In particular, if $V_{\chi_{\bf 1}}(\Gamma,\rho)=0$, for $0\l j\l 
2\ell$, we have 
\begin{align}\label{eqrj0}
	r_{j}=0. 
\end{align}
\end{thm}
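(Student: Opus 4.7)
The plan is to combine Proposition \ref{prop810} with Corollary \ref{cor812} applied to $\eta=\eta_{j}$. Recall that $\eta_{j}$ is an honest (not virtual) representation of $M$ on $\Lambda^{j}(\fn_{\bC}^{*})$, so $E_{\eta_{j}}^{+}=\Lambda^{j}(\fn_{\bC}^{*})$ and $E_{\eta_{j}}^{-}=0$. As noted in Section \ref{ssetajj}, $\eta_{j}$ satisfies Assumption \ref{ass}, so Proposition \ref{prop810} applies and gives
\begin{align*}
r_{j}=r_{\eta_{j},\rho}=\frac{1}{\chi(K/K_{M})}\sum_{\chi:\chi(C^{\fg})=0}\sum_{\substack{0\l i\l\dim\fp_{\fm}\\ 0\l k\l 2\ell}}(-1)^{i+k}\dim H^{i}\bigl(\fm,K_{M};H_{k}(\fn,V_{\chi}(\Gamma,\rho))\otimes\Lambda^{j}(\fn_{\bC}^{*})\bigr),
\end{align*}
where the outer sum is a priori infinite but is in fact finite by Proposition \ref{prop810}.

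Next, I would apply Corollary \ref{cor812} to each $V=V_{\chi}(\Gamma,\rho)$, which is a Harish-Chandra $(\fg_{\bC},K)$-module with generalised infinitesimal character $\chi$ satisfying $\chi(C^{\fg})=0$. The corollary says that if the alternating sum of dimensions of the $(\fm,K_{M})$-cohomologies appearing in the inner double sum is non-zero, then necessarily $\chi=\chi_{\mathbf 1}$. Contrapositively, every term with $\chi\neq\chi_{\mathbf 1}$ contributes $0$ to the outer sum. This collapses the outer sum to the single term $\chi=\chi_{\mathbf 1}$, yielding formula \eqref{eqrjma}.

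Finally, \eqref{eqrj0} is immediate: if $V_{\chi_{\mathbf 1}}(\Gamma,\rho)=0$, then the $\fn$-homology $H_{k}(\fn,V_{\chi_{\mathbf 1}}(\Gamma,\rho))$ vanishes for every $k$, so each $(\fm,K_{M})$-cohomology in \eqref{eqrjma} vanishes and $r_{j}=0$.

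The argument is essentially a bookkeeping consequence of previously established results; there is no genuine obstacle, since the heavy lifting (the cohomological formula for $r_{\eta,\rho}$ in Proposition \ref{prop810}, the Vogan--Zuckerman/Salamanca-Riba vanishing used in Proposition \ref{prop811}, and its extension to modules with a composition series in Corollary \ref{cor812}) has already been carried out. The only minor point to verify is that substituting $E_{\eta_{j}}^{+}=\Lambda^{j}(\fn_{\bC}^{*})$ and $E_{\eta_{j}}^{-}=0$ into Proposition \ref{prop810} really produces the single alternating sum appearing in \eqref{eqrjma} rather than a difference of two such sums; this is immediate.
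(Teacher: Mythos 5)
Your proof is correct and follows exactly the route the paper takes: apply Proposition \ref{prop810} with $\eta=\eta_{j}$ (so $E_{\eta_{j}}^{+}=\Lambda^{j}(\fn^{*}_{\bC})$, $E_{\eta_{j}}^{-}=0$), then use Corollary \ref{cor812} contrapositively to kill all terms with $\chi\neq\chi_{\mathbf 1}$, and observe that vanishing of $V_{\chi_{\mathbf 1}}(\Gamma,\rho)$ forces each $H_{k}(\fn,\cdot)$ and hence each cohomology group to vanish. The paper's proof is a one-line citation of the same two results, so there is no gap and no difference in method.
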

\begin{proof}
	This is a consequence of  Proposition 
\ref{prop810} and  Corollary \ref{cor812}. 
\end{proof}

%

\begin{re}\label{re613}
	By \eqref{eqCr1111}, Theorems \ref{thml3} and  \ref{thmcor818}, we get \eqref{eqdCt11} in the case $\delta(G)=1$ and 
	$Z_{G}$ is compact. We complete the proof of Theorem \ref{thmRmeo} in full generality. 
\end{re}

\section{An extension to orbifolds}\label{Snontorsionfree}
 In this section, we  no longer assume that  $\Gamma\subset G$ is 
torsion free. Then $Z=\Gamma\backslash G/K$ is a closed  orbifold. 
The purpose of this section is to extend Theorem \ref{thmRmeo} to orbifolds. 

This section is organised as follows. In Section \ref{sobsle}, we establish  M\"uller's Selberg trace formula for orbifolds. 

In Section \ref{sorbRuelle}, we indicate the essential steps in 
generalising  Theorem \ref{thmRmeo} to orbifolds.

\subsection{M\"uller's Selberg trace formula on locally symmetric orbifolds}\label{sobsle}
We use the notation in Section \ref{sec:Gamma}. Recall that  
$\Gamma\subset G$ is a discrete cocompact subgroup of $G$. Then, 
$Z=\Gamma\backslash G/K$ is a closed orbifold.  Recall also 
that 
$\rho:\Gamma\to \GL_{r}(\bC)$ is a representation of $\Gamma$, and 
that $(\tau,E_{\tau})$ is a representation of 
$K$. Define $F$ and 
$\cF_{\tau}$  as in \eqref{eqFhol1} and \eqref{eqFtau0}. Then $F$ is 
a flat  orbifold vector bundle and 
$\cF_{\tau}$  is a Hermitian orbifold vector bundle.  As in the case  where $\Gamma$ is torsion free, the Casimir operator $C^{\fg}$ induces a generalised  Laplacian $C^{\fg,Z,\tau,\rho}$ acting on 
$C^{\infty}(Z,\cF_{\tau}\otimes F)$. 

For $\gamma\in \Gamma$, we have seen in Section \ref{sec:Gamma} 
that   $\gamma$ is semisimple.  The group $K(\gamma)$ acts on the right on $\Gamma(\gamma)\backslash 
Z(\gamma)$. For $h\in \Gamma(\gamma)\backslash 
Z(\gamma)$, let $K(\gamma)_h$ be the stabiliser of $h$ in 
$K(\gamma)$.  Since $\Gamma(\gamma)\backslash X(\gamma)$ is 
connected,  the cardinality of a generic 
stabiliser $K(\gamma)_{h}$ is well defined (see \cite[Section 
3.1]{BismutLabourie99}) and depends only on the 
conjugacy class of $\gamma$ in $\Gamma$. We denote it by 
$n_{[\gamma]}$. By \cite[(3.10)]{BismutLabourie99},  we have 
\begin{align}
\frac{	\vol(\Gamma(\gamma)\backslash 
Z(\gamma))}{\vol(K(\gamma))}=\frac{\vol(\Gamma(\gamma)\backslash 
X(\gamma))}{n_{[\gamma]}}. 
\end{align}
By \cite[Proposition 5.3]{Shen_Yu} and by \eqref{eqzpkgamma},\eqref{eqXr=ZrKr}, we have
\begin{align}
	n_{[\gamma]}=\left|K\cap \Gamma(\gamma)\cap Z(\fp(\gamma))\right|=\left|\ker\(\Gamma(\gamma)\to {\rm Diffeo} 
	\big(X(\gamma)\big)\)\right|.
\end{align} 

Recall that $[\Gamma_{+}]\subset [\Gamma]$ is the set of non elliptic  conjugacy 
classes of $\Gamma$. For $[\gamma]\in [\Gamma]$, $B_{[\gamma]}$ is 
defined in \eqref{eqBgamma}. We have a generalisation of  \cite[Theorem 5.4]{Shen_Yu} and Theorem 
\ref{thmselnon}. 

\begin{thm}\label{thmselnonob}
There exist $c>0$, $C>0$ such that
  for $t>0$, we have
  \begin{align}\label{eq:h11expob}
    \sum_{[\gamma]\in [\Gamma_{+}]} 
	\frac{\vol\big(B_{[\gamma]}\big) 
	}{n_{[\gamma]}}
	\big|\Tr[\rho(\gamma)]\big| 
	\left|\Tr^{[\gamma]}\[\exp\(-tC^{\fg,X,\tau}\)\]\right|
	\l C\exp\(-\frac{c}{t}+Ct\).
  \end{align}
  For $t>0$, the following identity holds,
\begin{align}\label{eq:selob}
  \Tr\[\exp\(-tC^{\fg,Z,\tau,\rho}\)\]=\sum_{[\gamma]\in [\Gamma]} 
  \frac{\vol\big(B_{[\gamma]}\big)}{n_{[\gamma]}} \Tr[\rho(\gamma)] 
  \Tr^{[\gamma]}\[\exp\(-tC^{\fg,X,\tau}\)\].
\end{align}
\end{thm}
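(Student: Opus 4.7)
The plan is to follow the proof of Theorem \ref{thmselnon} step by step, inserting the modifications due to the orbifold structure exactly where the torsion-freeness was used. By Remarks \ref{re17} and \ref{re12}, the heat operator $\exp(-tC^{\fg,Z,\tau,\rho})$ still exists, has a smooth integral kernel, and is of trace class, so the spectral side of the formula is well defined. All the bundle-theoretic identifications of Section \ref{sgLflat} carry over once one interprets $\cF_{\tau}$ and $F$ as orbifold vector bundles and $F_Z\subset X$ as an open set of full measure on which $\Gamma$ acts freely, which is enough to write the kernel as a $\Gamma$-twisted sum of the lifted kernel $p_t^{X,\tau}(x,y)$.

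First I would reprove the orbifold analogue of Corollary \ref{cordpresel}: the proof of Theorem \ref{thmpsump} uses only the propagation/heat kernel estimate \eqref{eqehet}, the growth bound \eqref{eqgno} for the holonomy $\rho$, and the counting estimate \eqref{eqestgamma}, none of which require $\Gamma$ to be torsion free. Consequently
\begin{align*}
\Tr\bigl[\exp\bigl(-tC^{\fg,Z,\tau,\rho}\bigr)\bigr]=\sum_{\gamma\in\Gamma}\Tr[\rho(\gamma)]\int_{x\in F_{Z}}\Tr^{E_\tau}\bigl[\gamma_{*}\,p^{X,\tau}_{t}(\gamma^{-1}x,x)\bigr]\,dv_{X},
\end{align*}
with absolute convergence. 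Similarly, the bound \eqref{eqestheat} of Proposition \ref{propdgc01} depends only on estimates on $X$ and on $\Gamma$, and so carries over without change to the orbifold setting, giving the exponential $\exp(-c/t+Ct)$ control that drives \eqref{eq:h11expob}.

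Second, I would group the sum by conjugacy classes and replace each inner sum by an orbital integral. The torsion-free identity \eqref{eq:sel2} must be upgraded to its orbifold analogue, which is precisely \cite[Proposition 5.3, (5.14)]{Shen_Yu}:
\begin{align*}
\sum_{\gamma'\in[\gamma]}\int_{x\in F_{Z}}\Tr^{E_\tau}\bigl[\gamma'_{*}\,p^{X,\tau}_{t}((\gamma')^{-1}x,x)\bigr]\,dv_{X}=\frac{\vol(B_{[\gamma]})}{n_{[\gamma]}}\Tr^{[\gamma]}\bigl[\exp\bigl(-tC^{\fg,X,\tau}\bigr)\bigr].
\end{align*}
The denominator $n_{[\gamma]}$ appears because $\Gamma(\gamma)$ has a nontrivial kernel in its action on $X(\gamma)$: it is the generic stabiliser of the action of $K(\gamma)$ on $\Gamma(\gamma)\backslash Z(\gamma)$, and by the volume identity recalled before the statement it exactly converts $\vol(\Gamma(\gamma)\backslash X(\gamma))$ into $\vol(\Gamma(\gamma)\backslash Z(\gamma))/\vol(K(\gamma))$, which is what appears when one unfolds the orbital integral \eqref{eq:TRrz}. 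Summing over $[\gamma]\in[\Gamma]$ yields \eqref{eq:selob}, and restricting the absolute estimate of Proposition \ref{propdgc01} to the non-elliptic classes (where $\ell_{[\gamma]}\g i_Z>0$ forces exponential decay) together with $1/n_{[\gamma]}\l 1$ yields \eqref{eq:h11expob}.

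The only delicate point, and the main obstacle, is to justify the identity $(*)$ above in the orbifold setting; in the torsion-free case it reduces to a direct unfolding of the fundamental domain, but for general $\Gamma$ one must argue that the possible elliptic torsion inside $\Gamma(\gamma)$ contributes exactly the factor $n_{[\gamma]}$ and does not create new diagonal contributions outside $X(\gamma)$. This is already handled in \cite{Shen_Yu} via the geometric structure of $\Gamma(\gamma)\backslash X(\gamma)$ as a Riemannian orbifold and the explicit identification of the stabilisers, so I would simply invoke their computation rather than redo it. Once this identity is in hand, the remainder of the proof is the same grouping and estimation as in Theorem \ref{thmselnon}.
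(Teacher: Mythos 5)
Your proposal is correct and follows essentially the same approach as the paper, which gives only the one-line indication that the proof combines the argument of Theorem \ref{thmselnon} with the orbifold Selberg trace formula of Shen-Yu. You elaborate faithfully on this: the pointwise and counting estimates (\eqref{eqgno}, \eqref{eqestgamma}, \eqref{eqehet}) and the trace-class analysis carry over unchanged, and the only genuinely new input is the orbifold unfolding identity grouping the $\Gamma$-sum into conjugacy classes with the weight $\vol(B_{[\gamma]})/n_{[\gamma]}$, which you correctly attribute to Shen-Yu rather than redo. A minor point: the paper cites Shen-Yu's Theorem~5.4 (their orbifold trace formula) as the model for this statement; Proposition~5.3 of that paper is what identifies $n_{[\gamma]}$, so both references are relevant, but the unfolding identity itself comes from their Theorem~5.4.
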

\begin{proof}
The proof is similar to the one given in \cite[Theorem 5.4]{Shen_Yu} and Theorem \ref{thmselnon}.
\end{proof}

By Theorem \ref{thmselnonob}, proceeding as in Corollary 
\ref{corMS0}, we get the following corollary. 

\begin{cor}\label{corMS0orb}
	The statement of Corollary \ref{corMS0}  holds for orbifolds. 
\end{cor}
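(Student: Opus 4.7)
The plan is to reproduce, mutatis mutandis, the three-step argument used in Corollary \ref{corMS0}, replacing every ingredient by its orbifold counterpart that is already available in the paper. Since $m=\dim Z$ is odd, by \eqref{eqd=g-k} $\delta(G)$ is also odd; combined with the hypothesis $\delta(G)\neq 1$ this forces $\delta(G)\g 3$, so we are in the range where Moscovici--Stanton's vanishing applies.

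First I would apply the orbifold Selberg trace formula (Theorem \ref{thmselnonob}) to the Casimir acting on $C^{\infty}(Z,\Lambda^{\scriptscriptstyle\bullet}(T^{*}Z)\otimes F)$, with the $\mathbf{Z}_2$-grading twisted by the number operator. Writing $E_{\tau}=\Lambda^{\scriptscriptstyle\bullet}(\fp^{*})$ with grading operator $N$, the absolute convergence bound \eqref{eq:h11expob} continues to hold with $\Tr^{[\gamma]}$ replaced by $\Trs^{[\gamma]}$ (the semisimple orbital integral is bounded above by its absolute-value version, which is what \eqref{eq:h11expob} controls). Therefore we obtain
\begin{align*}
  \Trs\!\left[N^{\Lambda^{\scriptscriptstyle\bullet}(T^{*}Z)}\exp\!\left(-tC^{\fg,Z,\rho}\right)\right]
  =\sum_{[\gamma]\in[\Gamma]}\frac{\vol(B_{[\gamma]})}{n_{[\gamma]}}\,\Tr[\rho(\gamma)]\,
  \Trs^{[\gamma]}\!\left[N^{\Lambda^{\scriptscriptstyle\bullet}(T^{*}X)}\exp\!\left(-tC^{\fg,X}\right)\right].
\end{align*}
Theorem \ref{thmMS} is a statement purely about the symmetric space $X$ and semisimple elements $\gamma\in G$; it makes no use of torsion-freeness of $\Gamma$. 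Since $\delta(G)\g 2$, every term on the right vanishes, yielding the heat-supertrace identity
$\Trs[N^{\Lambda^{\scriptscriptstyle\bullet}(T^{*}Z)}\exp(-t\Box^{Z})]=0$ for all $t>0$, noting $C^{\fg,Z,\rho}=\Box^Z$.

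Next I would pass from this heat-supertrace vanishing to the product identity for regularised determinants. By Remark \ref{re17}, all the machinery of Section \ref{srDet} (the asymptotic expansion, the exponential decay of $\exp(-tP)\pi_{>}$, and the holomorphy of $\det(\sigma+P)$) extends to orbifolds. Therefore the weighted zeta function $\sum_{i}(-1)^{i}i\,\zeta_{\Box^Z|_{\Omega^i}}(s)$ is identically zero as a meromorphic function in $s$, and differentiating at $s=0$ gives $\prod_{i=1}^{m}\det{}^{*}(\Box^{Z}|_{\Omega^{i}(Z,F)})^{(-1)^{i}i}=1$, exactly as in the manifold case.

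Finally, for the statement on $T_{\rm CM}$, I would invoke the orbifold analogue of Proposition \ref{propclosesign}, whose validity is explicitly recorded in Remark \ref{re12}: if $\rho_{0}$ is acyclic and unitary, there is $\e>0$ such that any $\rho$ which is $C^{0}$ $\e$-close to $\rho_0$ yields an invertible flat orbifold Laplacian, so $T_{\rm CM}(F)$ is defined by the same product as in the invertible case; applying the already-established product identity then gives $T_{\rm CM}(F)=1$. No genuine obstacle arises: all three ingredients (the Selberg trace formula, the vanishing of orbital integrals for $\delta(G)\g 2$, and the openness of invertibility near an acyclic unitary representation) have direct orbifold versions in the paper, and the algebraic manipulations connecting them are formally identical to those in Corollary \ref{corMS0}. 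The only point requiring minor care is checking that the absolute-value bound \eqref{eq:h11expob} suffices to legitimately interchange the sum with the heat operation on the supertraced side, but this is immediate since $|\Trs^{[\gamma]}|\l \dim\Lambda^{\scriptscriptstyle\bullet}(\fp^{*})\cdot|\Tr^{[\gamma]}|$.
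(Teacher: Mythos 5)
Your proposal is correct and follows exactly the same strategy as the paper, which gives a one-line proof: invoke the orbifold Selberg trace formula (Theorem~\ref{thmselnonob}) and repeat the argument of Corollary~\ref{corMS0} verbatim, noting that the vanishing of the orbital integrals in Theorem~\ref{thmMS} is a statement about $X$ and $G$ alone (no torsion-freeness needed) and that Remarks~\ref{re17} and~\ref{re12} supply the orbifold versions of the determinant theory and of Proposition~\ref{propclosesign}. One small imprecision: your closing inequality
\[
\left|\Trs^{[\gamma]}\right|\le \dim\Lambda^{\scriptscriptstyle\bullet}(\fp^{*})\cdot\left|\Tr^{[\gamma]}\right|
\]
is not quite right as written, since the trace on the right can have cancellations that the sum of degree-wise traces does not. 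The clean way to get absolute convergence is to apply \eqref{eq:h11expob} separately to each $\tau_{p}=\Lambda^{p}(\fp^{*})$, $0\le p\le m$, and then sum the finitely many resulting bounds with weights $|(-1)^{p}p|\le m$; this is what you are doing in substance, and the conclusion is unaffected.
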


\subsection{The Ruelle zeta functions on locally symmetric orbifolds}\label{sorbRuelle}
Let us follow 
\cite[Section 5.6]{Shen_Yu}. 
By \cite[Remark 
5.6, (5.59)]{Shen_Yu}, as in \eqref{DKVB}, the  closed geodesics (see \cite{GuruprasadHaefiger06} or \cite[Remark 2.26]{Shen_Yu}) on the 
orbifold $Z$ with positive length are given by 
\begin{align}
	\coprod_{[\gamma]\in [\Gamma_{+}]} B_{[\gamma]}. 
\end{align} 
Moreover, if $[\gamma]\in [\Gamma_{+}]$, all the elements of $B_{[\gamma]}$ have the same length 
$\ell_{[\gamma]} > 0$. Also,  the group $\mathbb{S}^{1}$ acts locally freely on the orbifold 
$B_{[\gamma]}$ by rotation. So, 
$B_{[\gamma]}/\mathbb{S}^{1}$ is still a closed  orbifold.   Set
\begin{align}\label{eqmggorb}
	m_{[\gamma]}=n_{[\gamma]} \left|\ker\(\bbS^{1}\to 
	\rm{Diffeo}\big(B_{[\gamma]}\big) \)\right|\in \mathbf{N}^{*}. 
\end{align}

Following \cite[Definition 5.10]{Shen_Yu}, for $\Re(\sigma)\gg1$ large enough, we define the Ruelle dynamical zeta 
function by the same formula   
	\begin{align}\label{defRrhoob}
		R_{\rho}(\sigma)=\exp\(\sum_{[\gamma]\in 
		[\Gamma_{+}]}\frac{\chi_{\rm 
		orb}(B_{[\gamma]}/\mathbb{S}^{1})}{m_{[\gamma]}}\Tr\[\rho(\gamma)\]e^{-\sigma \ell_{[\gamma]}}\),
	\end{align}
as in \eqref{defRrho}.  
By \cite[Proposition 5.9]{Shen_Yu}, the statement of  Remark \ref{reR=1} still holds for orbifolds. 

\begin{thm}
The statements of Theorem \ref{thmRmeo} holds for orbifolds. 
\end{thm}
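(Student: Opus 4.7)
The plan is to follow the same architecture as the proof of Theorem \ref{thmRmeo} in the torsion-free setting, systematically replacing each ingredient by its orbifold counterpart. Since $m$ and $\delta(G)$ have the same parity and $m$ is odd, $\delta(G)$ is odd. When $\delta(G)\geq 3$, Corollary \ref{corMS0orb} together with the orbifold version of \eqref{eqchi0}, valid by the formula for $\chi_{\rm orb}(B_{[\gamma]}/\mathbb{S}^1)$ given in \cite[Proposition 5.1]{Shfried} and \cite{Shen_Yu}, forces $R_\rho\equiv 1$ and $\prod_i \det^*(\Box^Z|_{\Omega^i(Z,F)})^{(-1)^i i}=1$, so every assertion of the theorem holds trivially with $C_\rho=1$ and $r_\rho=0$. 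Hence the substantive case is $\delta(G)=1$.

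Assume $\delta(G)=1$. Define the orbifold Selberg zeta function $Z_{\eta,\rho}(\sigma)$ by the same formula \eqref{eqdefsel}, but with $m_{[\gamma]}$ now given by \eqref{eqmggorb}. The meromorphic extension and the determinant identity \eqref{eq:detfor} of Theorem \ref{thm:detfor} then transcribe line by line from Section \ref{spthm12}, with three substitutions: the heat-trace Selberg formula Theorem \ref{thmselnon} is replaced by its orbifold version Theorem \ref{thmselnonob}; the evaluation of the orbital integrals in Proposition \ref{propdgn1} is used unchanged, since those integrals depend only on the conjugacy class of $\gamma$ in $G$; and the holomorphicity and spectral description of the regularised determinant of a generalised Laplacian (Theorem \ref{thmGLH}) extend to the orbifold setting by Remark \ref{re17}. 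The decomposition \eqref{eqRZi3} of $R_\rho$ as an alternating product of Selberg zeta functions $Z_{\eta_j,\rho}$ depends only on the spectral identities just proved, so parts \ref{thm1i}) and \ref{thm1ii}) of Theorem \ref{thmRmeo} follow with the same explicit constants \eqref{eqCr111}, \eqref{eqCr1111}.

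For part \ref{thm1iii}), when $Z_G$ is non-compact one has $R_\rho=Z_{\mathbf 1,\rho}^{-1}$, which forces $r_\rho=2\chi'_{\rm CM}(F)$; the orbifold form of Proposition \ref{propclosesign} (valid by Remark \ref{re12}) ensures that $\Box^Z$ is invertible when $\rho$ is $C^0$-close to an acyclic unitary $\rho_0$, whence $\Omega_0^\bullet(Z,F)=0$ and $\chi'_{\rm CM}(F)=0$. When $Z_G$ is compact, the representation-theoretic apparatus of Sections \ref{sec:rep}--\ref{secnonbetti} must be re-examined. Since $\Gamma$ acts freely on $G$ by left translation, $\Gamma\backslash G$ is still a smooth manifold even when $\Gamma$ has torsion, so the constructions of $V(\Gamma,\rho)$ and of $V_\chi(\Gamma,\rho)\subset C^\infty(\Gamma\backslash G,\widehat{p}^*F)$ carry over verbatim. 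The Harish-Chandra property, Proposition \ref{propMBK}, Theorem \ref{thml3}, and the cohomological identity \eqref{eqrjma} of Theorem \ref{thmcor818} all depend only on the compactness of $Z$ and the discrete cocompactness of $\Gamma$, not on torsion-freeness. Combining these as in Remark \ref{re613} yields \eqref{eqdCt11}.

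The only delicate point is bookkeeping: the factor $1/n_{[\gamma]}$ appearing in the orbifold Selberg trace formula \eqref{eq:selob} must be absorbed into the revised multiplicity $m_{[\gamma]}$ of \eqref{eqmggorb} consistently with the orbifold definition \eqref{defRrhoob} of $R_\rho$ and with the modified \eqref{eqdefsel}, and the orbital-integral evaluations of \cite{Shfried, Shen_Yu} must be invoked in the correct normalisation. No essentially new analytic or representation-theoretic input is required beyond what the paper has already collected in Remarks \ref{re17}, \ref{re12}, Corollary \ref{corMS0orb}, and Theorem \ref{thmselnonob}.
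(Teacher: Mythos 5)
Your proposal is correct and follows essentially the same route as the paper's proof: reduce to $\delta(G)=1$ via Corollary~\ref{corMS0orb}, define $Z_{\eta,\rho}$ with $m_{[\gamma]}$ as in \eqref{eqmggorb}, substitute Theorem~\ref{thmselnonob} for Theorem~\ref{thmselnon} and invoke Remarks~\ref{re17} and \ref{re12} to transcribe Theorem~\ref{thm:detfor} and the identities \eqref{eqRZi}, \eqref{eqRZi3}, and finally observe that Section~\ref{S:rep} carries over. Your remark that $\Gamma$ acts freely on $G$ by left translation (so $\Gamma\backslash G$ is a genuine manifold even when $\Gamma$ has torsion) makes explicit the justification the paper leaves implicit for why the representation-theoretic apparatus extends unchanged.
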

\begin{proof}
	As previous, by Corollary \ref{corMS0orb},  we need only consider the case 		$\delta(G)=1$. If $\eta$ satisfies Assumption \ref{ass}, we 	
	can define the Selberg zeta function by the same formula 
	\eqref{eqdefsel} with $m_{[\gamma]}$ defined in \eqref{eqmggorb}. 
	Using Theorem \ref{thmselnonob}
	instead of Theorem \ref{thmselnon}, by Remark \ref{re17}, we can deduce that the statements of 
	Theorem \ref{thm:detfor}, \eqref{eqRZi}, \eqref{eqRZi3} still 
	hold when $\Gamma$ is not  torsion free. Moreover, all the 
	results in Section \ref{S:rep} hold when $\Gamma$ is not torsion 
	free. In this way, we get our theorem. 
\end{proof}

\def\cprime{$'$}
\providecommand{\bysame}{\leavevmode\hbox to3em{\hrulefill}\thinspace}
\providecommand{\MR}{\relax\ifhmode\unskip\space\fi MR }
\providecommand{\MRhref}[2]{%
  \href{http://www.ams.org/mathscinet-getitem?mr=#1}{#2}
}
\providecommand{\href}[2]{#2}

\end{document}